\newcommand{\Th}{\operatorname{Th}}
\numberwithin{equation}{section}
\newcounter{casenum}
\newcommand\blfootnote[1]{%
  \begingroup
  \renewcommand\thefootnote{}\footnote{#1}%
  \addtocounter{footnote}{-1}%
  \endgroup
}
\theoremstyle{plain}
\newtheorem{theorem}{Theorem}[section]
\newtheorem{prop}[theorem]{Proposition}
\newtheorem{fact}[theorem]{Fact}
\newtheorem{lemma}[theorem]{Lemma}
\newtheorem{cor}[theorem]{Corollary}
\newtheorem{claim}[theorem]{Claim}
\theoremstyle{definition}
\newtheorem{defn}[theorem]{Definition}
\newtheorem{remark}[theorem]{Remark}
\newtheorem{problem}[theorem]{Problem}
\newtheorem{expl}[theorem]{Example}
\newcommand{\tp}{\operatorname{tp}}
\newcommand{\ac}{\operatorname{ac}}
\newcommand{\bdd}{\operatorname{bdd}}
\newcommand{\DLO}{\operatorname{DLO}}
\newcommand{\acl}{\operatorname{acl}}
\newcommand{\dcl}{\operatorname{dcl}}
\newcommand{\eq}{\operatorname{eq}}
\newcommand{\lex}{\operatorname{lex}}
\newcommand{\indu}{\operatorname{ind}}
\newcommand{\revlex}{\operatorname{revlex}}
\newcommand{\VC}{\operatorname{VC}}
\newcommand{\CP}{\operatorname{P}}
\newcommand{\lift}{\operatorname{lift}}
\newcommand{\Aut}{\operatorname{Aut}}
\newcommand{\ACF}{\operatorname{ACF}}
\newcommand{\ACVF}{\operatorname{ACVF}}
\newcommand{\CH}{\operatorname{CH}}
\title{Semi-equational theories}
\author{Artem Chernikov}
\address{Department of Mathematics, University of California, Los Angeles, Los Angeles, CA 90095, USA}
\email{chernikov@math.ucla.edu}
\author{Alex Mennen}
\address{Department of Mathematics, University of California, Los Angeles, Los Angeles, CA 90095, USA}
\email{alexmennen@math.ucla.edu}
\begin{document}

\def\Ind#1#2{#1\setbox0=\hbox{$#1x$}\kern\wd0\hbox to 0pt{\hss$#1\mid$\hss}
\lower.9\ht0\hbox to 0pt{\hss$#1\smile$\hss}\kern\wd0}
\def\Notind#1#2{#1\setbox0=\hbox{$#1x$}\kern\wd0\hbox to 0pt{\mathchardef
\nn="3236\hss$#1\nn$\kern1.4\wd0\hss}\hbox to 0pt{\hss$#1\mid$\hss}\lower.9\ht0
\hbox to 0pt{\hss$#1\smile$\hss}\kern\wd0}
\def\indi{\mathop{\mathpalette\Ind{}}}
\def\nindi{\mathop{\mathpalette\Notind{}}}

\global\long\def\ind{\operatorname{\indi}}

\global\long\def\nind{\operatorname{\nindi}}

\begin{abstract}
	We introduce and study (weakly) semi-equational theories, generalizing equationality in stable theories (in the sense of Srour) to the NIP context. In particular, we establish a connection to distality via one-sided strong honest definitions; demonstrate that certain trees are semi-equational, while algebraically closed valued fields are not weakly semi-equational; and obtain a general criterion for weak semi-equationality of an expansion of a distal structure by a new  predicate.
	\end{abstract}

\maketitle
\section{Introduction}
\blfootnote{This version of the article was significantly shortened for the journal publication, resulting in some details being omitted. For the full version of the article see \cite{semieqExp}.}
Equations and equational theories were introduced by Srour \cite{srour1988notion1, srour1988notion2, srour1990notion} in order to distinguish ``positive'' information in an arbitrary first order theory, i.e.~to find a well behaved class of ``closed'' sets among the definable sets, by analogy to the algebraic sets among the constructible ones in algebraically closed fields. We recall the definition:

\begin{defn}
	\begin{enumerate}
		\item A partitioned formula $\varphi(x,y)$, with $x,y$ tuples of variables, is an \emph{equation} (with respect to a first-order theory $T$) if there do not exist $\mathcal{M} \models T$ and tuples $\left( a_i, b_i : i \in \omega \right)$ in $\mathcal{M}$ such that 
		$\mathcal{M} \models\varphi\left(a_{i},b_{j}\right)$ for all $j<i$ and $\mathcal{M} \models \neg \varphi\left(a_{i},b_{i}\right)$ for all $i$.
		\item 	A theory $T$ is \emph{equational} if every formula $\varphi(x,y)$, with $x,y$ arbitrary finite tuples of variables, is equivalent in $T$ to a Boolean combination of finitely many equations $\varphi_1(x,y), \ldots, \varphi_n(x,y)$.
	\end{enumerate}
\end{defn}
\noindent It is immediate from the definition that every equational theory is stable. Structural properties of equational theories in relation to forking and stability theory are studied in \cite{pillay1984closed, hrushovski1989stable, junker2000note, junker2002theories, junker2001indiscernible}. Many natural stable theories are equational; \cite{hrushovski1989stable} provided the first example of a stable non-equational theory. More recently it was demonstrated that the stable theory of non-abelian free groups is not equational \cite{sela2012free, muller2017nonequational}, and further examples are constructed in \cite{martin2021trois}. It is demonstrated in \cite{martin2020equational} that all theories of  separably closed fields are equational (generalizing earlier work of Srour \cite{srour1986independence}). See also \cite{OHara} for an accessible introduction to equationality.

We propose a generalization of equations and equational theories to the larger class of NIP theories (see Section \ref{def: weak semi-eq} for a more detailed discussion):

\begin{defn}\label{def: weak semi-eq}
Let $T$ be a first-order theory and $\mathbb{M} \models T$ a monster model of $T$.
\begin{enumerate}
	\item A partitioned formula $\varphi(x,y)$ is a \emph{semi-equation} (in $T$)  if there is no  sequence $\left(a_{i},b_{i} : i \in \omega \right)$ with $a_i \in \mathbb{M}^{x}, b_i \in \mathbb{M}^{y}$  such
that for all $i,j \in \omega$, $\models\varphi\left(a_{i},b_{j}\right)\iff i\neq j$.
\item A (partitioned) formula	$\varphi\left(x,y\right)$ is a \emph{weak semi-equation} if there is no $b \in \mathbb{M}^y$ and an ($\emptyset$-)indiscernible sequence 
$\left(a_{i} : i \in \mathbb{Z} \right)$
with $a_i \in \mathbb{M}^{x}$ such that the subsequence $\left(a_{i} : i \neq 0 \right)$ is indiscernible
over $b$, $\models\varphi\left(a_{i},b\right)$ for all $i \neq 0$,
but $\models \neg \varphi\left(a_{0},b\right)$.
\item  A theory $T$ is (\emph{weakly}) \emph{semi-equational} if every formula $\varphi(x,y) \in \mathcal{L}$, with $x,y$ arbitrary finite tuples of variables, 
is a Boolean combination of finitely many (weak) semi-equations $\psi_1(x,y), \ldots, \psi_n(x,y) \in \mathcal{L}$.	
\end{enumerate}
\end{defn}
\noindent Semi-equations are in particular weak semi-equations, every weakly semi-equational theory is NIP, and in a stable theory all three notions coincide (see Proposition \ref{prop: semieq and stab}). Some parts of the basic theory of equations naturally generalize to (weak) semi-equations, 
but there are also some new phenomena and complications appearing outside of stability. In particular, weak semi-equationality provides a simultaneous generalization of equationality and distality, bringing out some curious parallels  between those two notions (see Section \ref{sec: strong honest defs}). In this paper we develop the basic theory of (weak) semi-equations, and investigate (weak) semi-equationality in some examples. We view this as a first step, and a large number of questions remain open and can be found throughout the paper. 

In Section \ref{def: weak semi-eq} we provide some  equivalent characterizations of (weak) semi-equationality in terms of indiscernibles. We discuss closure of (weak) semi-equations under Boolean combinations (Proposition \ref{prop: Bool combs}), reducts and expansions (Proposition \ref{prop: semieq reduct}). In Section \ref{sec: rel to NIP etc} we discuss  how (weak) semi-equationality relates to the more familiar notions: all weakly semi-equational theories are NIP, distal theories are weakly semi-equational, and in a stable theory a formula is an equation if and only if it is a (weak) semi-equation (Proposition \ref{prop: semieq and stab}). In Section \ref{sec: quant semieq 1-based} we introduce some quantitive parameters associated to semi-equations. This parameter is related to breadth (Definition \ref{def: breadth}) of the family defined by the instances of a formula, and we observe that a formula is a semi-equation if and only if the family of its instances has finite breadth (Proposition \ref{prop: semieq iff fin breadth}).
The case when this parameter is minimal, i.e.~$1$-semi-equations, provide a generalization of weakly normal formulas characterizing $1$-based stable theories (Proposition \ref{prop: weak norm iff 1-semieq stab}). Hence $1$-semi-equationality can be viewed as a form of ``linearity'', or ``$1$-basedeness'' for NIP theories. We discuss its connections to a different form of ``linearity'' considered in \cite{basit2021zarankiewicz}, namely basic relations and almost linear Zarankiewicz bounds (see Proposition \ref{prop: 1-semieq iff basic} and Remark \ref{rem: Zarank etc for 1-semieq}), observing  that $(2,1)$-semi-equational theories do not define infinite fields.

In Section \ref{sec: semieq examples} we consider some examples of semi-equational theories. In Section \ref{sec: ex omin} we show that an $o$-minimal expansion of a group is linear if and only if it is $(2,1)$-semi-equational. It remains open if the field of reals is semi-equational (Problem \ref{prob: reals semieq}). We demonstrate that arbitrary unary expansions of linear orders (Section \ref{sec: lin ords}) and many ordered abelian groups (Section \ref{sec: OAGS}) are $1$-semi-equational.  
In Section \ref{sec: trees} we demonstrate that the theory of infinitely-branching dense trees is semi-equational (Theorem \ref{thm: trees are semieq}), but not $1$-semi-equational (even after naming parameters, see Theorem \ref{thm: trees not 1-semieq} and Corollary \ref{cor: tree const not 1-semieq}). Semi-equationality of arbitrary trees remains open (Problem \ref{prob: trees semieq}). In Section \ref{sec: circ orders} we observe that dense circular orders are not semi-equational, but become $1$-semi-equational after naming a single constant (in contrast to equationality being preserved under naming and forgetting constants).

In Section \ref{sec: strong honest defs} we consider the relation of weak semi-equationality and distality in more detail.
We show that in an NIP theory, weak semi-equationality of a formula is equivalent to the existence of a \emph{one-sided} strong honest definition for it  (Theorem \ref{thm: w semieq and str hon def}). 
This is a simultaneous generalization of the existence of strong honest definitions in distal theories from \cite{chernikov2015externally} and the isolation property for the positive part of $\varphi$-types for equations (replacing a conjunction of finitely many instances of $\varphi$ by some formula $\theta$, see Fact \ref{fact: equation iff isolates positive}). 

In Section \ref{sec: val fields} we show that many theories of NIP valued fields with an infinite stable residue field, e.g.~$\ACVF$, are not weakly semi-equational (see Theorem \ref{thm: non weak semieq val fields} and Remark \ref{rem: val field more examples}). In Section \ref{sec: MS simple} we provide a sufficient criterion for when a formula is not a Boolean combination of weak semi-equations (generalizing  the criterion for equations from \cite{muller2017nonequational}). We then apply it to show that the partitioned formula $\psi(x_1,x_2; y_1,y_2) := \nu\left(x_{1}-y_{1}\right)<\nu\left(x_{2}-y_{2}\right)$
is not a Boolean combination of weak semi-equations via a detailed analysis of the behavior of indiscernible sequences. It remains open if the field $\mathbb{Q}_p$ is semi-equational (Problem \ref{prob: p-adics}).

In Section \ref{sec: exp by pred} we consider preservation of weak semi-equationality in expansions by naming a new predicate, partially adapting a result for NIP from \cite{chernikov2013externally}. Namely, we demonstrate in Theorem \ref{thm: semi-equational pairs} that if $\mathcal{M} \models T$ is distal, $A$ is a subset of $\mathcal{M}$ with a distal induced structure and the pair $\left(M,A \right)$ is almost model complete (i.e.~every formula in the pair is equivalent to a Boolean combination of formulas which only quantify existentially over the predicate, see Definition \ref{def: almost model compl}), then the pair $\left( \mathcal{M}, A \right)$ is weakly semi-equational. This implies in particular that dense pairs of $o$-minimal structures 
are weakly semi-equational (but not distal by \cite{hieronymi2017distal}).

\subsection*{Acknowledgements}
We thank the referee for many suggestions on improving the paper. We are grateful to Matthias Aschenbrenner, Gabe Conant, Allen Gehret, Anand Pillay, Sergei Starchenko, Erik Walsberg and Martin Ziegler for  some helpful comments and conversations. Both authors were partially supported by the NSF CAREER grant DMS-1651321, and Chernikov was additionally supported by a Simons fellowship. 

\section{Semi-equations and their basic properties}

Let $T$ be a complete theory in a language $\mathcal{L}$, we work inside a sufficiently saturated and homogeneous monster model $\mathbb{M} \models T$.
All sequences of elements are assumed to be small relative to the
saturation of $\mathbb{M}$, and we write $x,y,\ldots$ to denote finite tuples of variables. Given two linear orders $I,J$, $I+J$ denotes the linear order given by their sum (i.e.~$I < J$); and $(0)$ denotes a linear order with a single element. We write $\mathbb{N} = \{0, 1, \ldots\}$ and for $k \in \mathbb{N}$, $[k] = \{1, \ldots, k\}$. Given a partitioned formula $\varphi(x,y)$, we let $\varphi^*(y,x) := \varphi(x,y)$.

\subsection{Some basic properties of (weak) semi-equations}

\begin{remark}\label{rem: lin ord dont matter}
	By Ramsey and compactness we may equivalently replace $\omega$ by an arbitrary infinite linear order in Definition \ref{def: weak semi-eq}(1), and $\mathbb{Z}$ by  $I_{L}+\left(0\right)+I_{R} $ with $I_{L}, I_{R}$ arbitrary infinite linear orders in Definition \ref{def: weak semi-eq}(2). 
\end{remark}
\noindent By Ramsey, compactness, and taking automorphisms we also have:
\begin{prop}\label{prop: equiv to semi-eq}
A formula $\varphi(x,y)$ is a semi-equation if and only if there are no $b$, infinite linear orders $I_L, I_R$ and an indiscernible sequence $\left(a_{i}\right)_{i\in I_{L}+\left(0\right)+I_{R}}$
such that $\models\varphi\left(a_{i},b\right)$ for $i\in I_{L}+I_{R}$,
but $\not\models\varphi\left(a_{0},b\right)$.

\end{prop}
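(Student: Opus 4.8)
The plan is to prove both implications by contraposition, so that each direction starts from one explicit configuration of parameters and produces the other; the real content is just bookkeeping with index orders, using Remark~\ref{rem: lin ord dont matter} freely to pass between infinite index orders. Recall that $\varphi$ fails to be a semi-equation precisely when, for \emph{some} infinite linear order $I$, there are $(a_i,b_i)_{i\in I}$ with $\models\varphi(a_i,b_j)\iff i\neq j$ for all $i,j\in I$.

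\emph{From the displayed configuration to a failure of semi-equationality.} Suppose there are $b$, infinite $I_L,I_R$ and an indiscernible sequence $(a_i)_{i\in K}$, where $K:=I_L+(0)+I_R$, with $\models\varphi(a_i,b)$ for all $i\neq 0$ and $\not\models\varphi(a_0,b)$. I would show that for \emph{every} $p\in K$ the partial type $\Sigma_p(x):=\{\varphi(a_i,x):i\in K\setminus\{p\}\}\cup\{\neg\varphi(a_p,x)\}$ is consistent. In a finite subtype only finitely many indices $i_1<\dots<i_r$ occur; by indiscernibility $(a_{i_1},\dots,a_{i_r})$ realizes the same type as $(a_{j_1},\dots,a_{j_r})$ for any increasing tuple from $K$, and since $I_L$ and $I_R$ are infinite we may choose the $j_t$ so that, if $p$ occurs among the $i_t$, the corresponding $j_t$ equals $0$ while all other $j_t$ avoid $0$; then $b$ witnesses the shifted finite subtype, hence the original one is consistent. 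By saturation choose $b_p\models\Sigma_p$ in $\mathbb{M}$. Now $(a_p,b_p)_{p\in K}$ satisfies $\models\varphi(a_p,b_q)\iff p\neq q$, and $K$ is an infinite linear order, so by Remark~\ref{rem: lin ord dont matter} $\varphi$ is not a semi-equation.

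\emph{From a failure of semi-equationality to the displayed configuration.} Suppose $\varphi$ is not a semi-equation; fix an infinite linear order $I$ and $(a_i,b_i)_{i\in I}$ with $\models\varphi(a_i,b_j)\iff i\neq j$. Pick any infinite $I_L,I_R$, put $J:=I_L+(0)+I_R$, and by Ramsey and compactness extract an indiscernible sequence $(a'_i,b'_i)_{i\in J}$ realizing the Ehrenfeucht--Mostowski type of $(a_i,b_i)_{i\in I}$. Since $\neg\varphi(a_i,b_i)$ holds for all $i\in I$, while $\varphi(a_i,b_j)$ holds for all $i<j$ and also for all $i>j$ in $I$, all three of these formulas lie in the EM-type; hence $\not\models\varphi(a'_i,b'_i)$ for every $i\in J$ and $\models\varphi(a'_i,b'_j)$ for all $i\neq j$ in $J$. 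Setting $b:=b'_0$ and passing to the reduct $(a'_i)_{i\in J}$, which is still indiscernible, gives exactly the required configuration.

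The only subtlety, and what I would treat as the ``hard'' point, is the index-order bookkeeping: in the first direction one must verify that inside any \emph{finite} tuple of indices from $K$ the distinguished index $0$ can be relocated to the needed slot by indiscernibility, which is precisely why $I_L$ and $I_R$ need to be infinite rather than merely nonempty; and dually, in the second direction both cases $i<j$ and $i>j$ must be read off the EM-type separately. Neither step is genuinely difficult, but getting these quantifiers right is where a careless argument would break.
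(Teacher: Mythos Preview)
Your proof is correct and is essentially a detailed unpacking of the paper's one-line justification ``By Ramsey, compactness, and taking automorphisms.'' Both directions match the paper's intended argument: extracting an indiscernible sequence via Ramsey/compactness in one direction, and using indiscernibility plus compactness (equivalently, automorphisms and saturation) to manufacture the $b_p$'s in the other.
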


\begin{prop}\label{prop: Bool combs}
	\begin{enumerate}
	\item 	If $\varphi(x,y)$ is a semi-equation, then $\varphi(x,y)$ is a weak semi-equation. Hence every semi-equational theory is  weakly semi-equational.
		\item Semi-equations are closed under conjunctions and exchanging the roles of the variables.
		\item Weak semi-equations are closed under conjunctions and
disjunctions.
	\end{enumerate}
\end{prop}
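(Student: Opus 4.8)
The plan is to handle the three parts by unwinding the definitions, with Proposition~\ref{prop: equiv to semi-eq} doing the work in (1). For (1) I would argue the contrapositive: if $\varphi$ fails to be a weak semi-equation, there are $b$ and an $\emptyset$-indiscernible sequence $(a_i)_{i\in\mathbb{Z}}$ with $(a_i)_{i\neq 0}$ indiscernible over $b$, $\models\varphi(a_i,b)$ for $i\neq 0$, and $\models\neg\varphi(a_0,b)$. Splitting $\mathbb{Z}$ as $I_L+(0)+I_R$ with $I_L,I_R$ the negative and positive integers, and simply forgetting the (now superfluous) clause that $(a_i)_{i\neq 0}$ be $b$-indiscernible, this is precisely a configuration forbidden by Proposition~\ref{prop: equiv to semi-eq}, so $\varphi$ is not a semi-equation. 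The implication for theories is then formal, since a Boolean combination of semi-equations is in particular a Boolean combination of weak semi-equations.

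For (2): exchanging the roles of the variables is immediate, because the defining condition ``$\models\varphi(a_i,b_j)\iff i\neq j$'' is symmetric in $i$ and $j$, so transposing a counterexample array for $\varphi$ (replacing $(a_i,b_i)$ by $(b_i,a_i)$) yields one for $\varphi^*$ and conversely. For conjunctions, I would start from a counterexample array $(a_i,b_i)_{i\in\omega}$ for $\varphi_1\wedge\varphi_2$: off the diagonal both $\varphi_1(a_i,b_j)$ and $\varphi_2(a_i,b_j)$ hold, while for each $i$ at least one of $\neg\varphi_1(a_i,b_i)$, $\neg\varphi_2(a_i,b_i)$ holds. By infinite pigeonhole there is an infinite $S\subseteq\omega$ and a fixed $k$ with $\models\neg\varphi_k(a_i,b_i)$ for all $i\in S$; reindexing $S$ by $\omega$ gives a counterexample array for $\varphi_k$ (the off-diagonal conditions survive restriction), contradicting that $\varphi_k$ is a semi-equation.

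For (3): the conjunction case is the same argument but easier, since there is only one ``hole''---given a witness $b,(a_i)_{i\in\mathbb{Z}}$ that $\varphi_1\wedge\varphi_2$ is not a weak semi-equation, whichever $\varphi_k$ satisfies $\models\neg\varphi_k(a_0,b)$ inherits the whole configuration. For disjunctions, given such a witness $b,(a_i)_{i\in\mathbb{Z}}$ for $\varphi_1\vee\varphi_2$, I would use that $(a_i)_{i\neq 0}$ is indiscernible over $b$: the truth value of $\varphi_k(a_i,b)$ is then constant in $i$ for $i\neq 0$ (for each $k$), and since $\varphi_1\vee\varphi_2$ holds off the hole, $\models\varphi_k(a_i,b)$ for all $i\neq 0$ for some fixed $k$; combined with $\models\neg\varphi_k(a_0,b)$, which follows from $\models\neg(\varphi_1\vee\varphi_2)(a_0,b)$, this witnesses that $\varphi_k$ is not a weak semi-equation.

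None of this should be hard; the only places needing a little care are the bookkeeping when applying Proposition~\ref{prop: equiv to semi-eq} in (1), and, in the conjunction case of (2), the passage to an infinite subarray carrying the diagonal failures of a single $\varphi_k$---legitimate because both the off-diagonal conditions and the property of being a semi-equation are insensitive to restricting to an infinite set of indices. Everything else is unwinding the definitions.
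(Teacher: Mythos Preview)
Your proof is correct and, for parts (1), (3), and the variable-exchange half of (2), essentially identical to the paper's. The one genuine difference is the conjunction case of (2): the paper argues via Proposition~\ref{prop: equiv to semi-eq} (the indiscernible-sequence characterization of semi-equations), which produces a single ``hole'' at index $0$, so one conjunct must fail there and the configuration immediately witnesses that conjunct is not a semi-equation---no pigeonhole needed. You instead work directly from the array definition (Definition~\ref{def: weak semi-eq}(1)), where every diagonal entry is a potential failure point, and then use infinite pigeonhole to pass to a subarray on which a fixed conjunct fails along the diagonal. Both arguments are short and elementary; the paper's route is marginally slicker in that it sidesteps the pigeonhole step, while yours has the virtue of staying with the original definition and not invoking Proposition~\ref{prop: equiv to semi-eq}.
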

\begin{proof}
	(1) Clear from definitions using  Proposition \ref{prop: equiv to semi-eq}.

	(2) Suppose $\varphi\left(x,y\right)\land\psi\left(x,y\right)$
is not a semi-equation. By Proposition \ref{prop: equiv to semi-eq},  there are $b$ and an indiscernible
sequence $\left(a_{i}\right)_{i\in\mathbb{Z}}$ such that $\models\varphi\left(a_{i},b\right)\land\psi\left(a_{i},b\right)\iff i\neq0$.
Either $\not\models\varphi\left(a_{0},b\right)$, in which case $\varphi\left(x,y\right)$
is not a semi-equation, or $\not\models\psi\left(a_{0},b\right)$,
in which case $\psi\left(x,y\right)$ is not a semi-equation. And $\varphi(x,y)$ is a semi-equation if and only if $\varphi^*(y,x) := \varphi(x,y)$ is a semi-equation by the symmetry of the definition.

(3) For conjunctions, same as the proof of (2), 
but with the stipulation that $\left(a_{i}\right)_{i\neq0}$ is $b$-indiscernible
added. Now suppose $\varphi\left(x,y\right)\lor\psi\left(x,y\right)$
is not a weak semi-equation. Then there is $b$ and an indiscernible sequence
$\left(a_{i}\right)_{i\in\mathbb{Z}}$ such that $\left(a_{i}\right)_{i\neq0}$
is $b$-indiscernible, and $\models\varphi\left(a_{i},b\right)\lor\psi\left(a_{i},b\right)\iff i\neq0$.
Either $\models\varphi\left(a_{1},b\right)$ or $\models\psi\left(a_{1},b\right)$,
and then, by $b$-indiscernibility, either $\models\varphi\left(a_{i},b\right)$
for all $i\neq0$ or $\models\psi\left(a_{i},b\right)$ for all $i\neq0$.
In the first case, $\varphi\left(x,y\right)$ is not a weak semi-equation,
and in the second case, $\psi\left(x,y\right)$ is not a weak semi-equation.
\end{proof}

\begin{remark}
(1) To see that neither property is closed under negation, note that $x=y$
is a semi-equation (hence also a weak semi-equation), but $x\neq y$
is not a weak semi-equation in the theory of infinite sets. 

(2) To see that
 semi-equations need not be closed under disjunction, note that
in a linear order, $x<y$ and $y<x$ are both  semi-equations,
but their disjunction is equivalent to $x\neq y$, which is not.
\end{remark}

\begin{problem}\label{prob: weak semieq symm}
	Are weak semi-equations closed under exchanging the roles of the variables, at least in NIP theories? Fact \ref{lem: lifting distality over a predicate} can be viewed as establishing this for the definition of distality, however the proof is not sufficiently local with respect to a formula witnessing failure of distality.
\end{problem}
%We observe some basic properties of (weak) semi-equations with respect to reducts and expansions of theories.
%***
%Although semi-equationality is not invariant under expansions of the
%language, it is monotone under expansions, in the following sense:
%***
\begin{prop}\label{prop: semieq reduct}
Assume we are given languages $\mathcal{ L} \subseteq \mathcal{ L}'$, a complete $\mathcal{L}$-theory $T$ and an $\mathcal{L}'$-theory $T'$ with $T \subseteq T'$, and a formula $\varphi(x,y) \in \mathcal{L}$.
	\begin{enumerate}
	\item The formula $\varphi(x,y)$ is a semi-equation in $T$ if and only if it is  in $T'$.
		\item 
If $\varphi\left(x,y\right)$ is a weak semi-equation in $T$,
then it is  a weak semi-equation in $T'$. 
	\end{enumerate}
\end{prop}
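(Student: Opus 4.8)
The plan is to reduce both parts to a single observation: taking $\mathcal{L}$-reducts preserves the combinatorial configurations that witness failure of (weak) semi-equationality, and for part (1) the completeness of $T$ supplies the reverse transfer via compactness.

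For part (1), the implication ``semi-equation in $T'$ $\Rightarrow$ semi-equation in $T$'' is the easy direction: if $\varphi$ is not a semi-equation in $T'$, fix $\mathcal{M}' \models T'$ and a sequence $(a_i, b_i : i \in \omega)$ with $\models \varphi(a_i, b_j) \iff i \neq j$; since $T \subseteq T'$, the $\mathcal{L}$-reduct of $\mathcal{M}'$ models $T$, and because $\varphi \in \mathcal{L}$ the same sequence witnesses that $\varphi$ is not a semi-equation in $T$. For the converse, suppose $\varphi$ is not a semi-equation in $T$. Then the partial type $\{\varphi(x_i, y_j) : i \neq j\} \cup \{\neg\varphi(x_i, y_i) : i \in \omega\}$ is consistent with $T$, so by compactness, for each $n$ the $\mathcal{L}$-sentence $\sigma_n := \exists (x_i, y_i)_{i < n}\big(\bigwedge_{i \neq j < n} \varphi(x_i, y_j) \wedge \bigwedge_{i < n} \neg\varphi(x_i, y_i)\big)$ is consistent with $T$; since $T$ is complete this upgrades to $T \models \sigma_n$, hence $T' \models \sigma_n$ (as $T \subseteq T'$). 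A final application of compactness to $T'$ together with that same partial type then produces a model of $T'$ carrying an infinite bad configuration, so $\varphi$ is not a semi-equation in $T'$.

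For part (2), I would argue the contrapositive and again pass to a reduct. If $\varphi$ is not a weak semi-equation in $T'$, there are $\mathcal{M}' \models T'$, an element $b$, and an $\mathcal{L}'$-indiscernible sequence $(a_i : i \in \mathbb{Z})$ with $(a_i : i \neq 0)$ $\mathcal{L}'$-indiscernible over $b$, $\models \varphi(a_i, b)$ for all $i \neq 0$, and $\models \neg\varphi(a_0, b)$. In the $\mathcal{L}$-reduct, which models $T$, the sequence $(a_i : i \in \mathbb{Z})$ remains $\mathcal{L}$-indiscernible and $(a_i : i \neq 0)$ remains $\mathcal{L}$-indiscernible over $b$, since every $\mathcal{L}$-formula is an $\mathcal{L}'$-formula, and the $\varphi$-conditions persist because $\varphi \in \mathcal{L}$. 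Hence $\varphi$ is not a weak semi-equation in $T$.

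I do not expect a genuine obstacle; the work is entirely in correctly bookkeeping the two compactness applications in part (1) and in recording that reducts weaken indiscernibility in exactly the direction needed. The one conceptual point worth flagging is why part (2) yields only an implication rather than an equivalence: transferring a weak-semi-equation witness from $T$ up to $T'$ would require turning an $\mathcal{L}$-indiscernible sequence into an $\mathcal{L}'$-indiscernible one inside a model of $T'$, and neither the indiscernibility nor the ambient model need survive the passage to the richer language, so the completeness-plus-compactness argument from part (1) is unavailable in the weak case.
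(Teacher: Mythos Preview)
Your arguments are correct and match the paper's proof essentially verbatim. One slip in part (1): you have the direction labels swapped—the argument ``not a semi-equation in $T' \Rightarrow$ not a semi-equation in $T$'' that you give first is the contrapositive of ``semi-equation in $T \Rightarrow$ semi-equation in $T'$'' (what the paper calls the immediate left-to-right direction), while your ``converse'' paragraph using completeness and compactness actually establishes ``semi-equation in $T' \Rightarrow$ semi-equation in $T$''.
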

\begin{proof}
(1) Left to right is immediate from the definition (Proposition \ref{prop: equiv to semi-eq}). For the converse, assume that in some model of $T$ we can find an infinite sequence $\left(a_{i},b_{i}\right)_{i\in I}$ such
that for all $i,j \in I$, $\models\varphi\left(a_{i},b_{j}\right)\iff i\neq j$. By completeness of $T$, we can find arbitrarily long finite sequences with the same property in \emph{every} model of $T$, in particular in some model of $T'$. By compactness we can thus find an infinite sequence with the same property in a model of $T'$, demonstrating that $\varphi(x,y)$ is not a semi-equation in $T'$.

\noindent	(2) If $\varphi\left(x,y\right) \in \mathcal{L}$ is not a weak semi-equation in $T'$,
then (in a monster model of $T'$, and hence of $T$) there is $b$ and an $\mathcal{ L}'$-indiscernible $\left(a_{i}\right)_{i\in I_{L}+\left(0\right)+I_{R}}$
such that $\left(a_{i}\right)_{i\in I_{L}+I_{R}}$ is $\mathcal{ L}'$-indiscernible
over $b$ and $\models\varphi\left(a_{i},b\right)$ for $i\in I_{L}+I_{R}$,
but $\not\models\varphi\left(a_{0},b\right)$, for infinite linear
orders $I_{L},I_{R}$. Then, in particular, $\left(a_{i}\right)_{i\in I_{L}+(0) + I_{R}}$
is $\mathcal{ L}$-indiscernible, and $\left(a_{i}\right)_{i\in I_{L}+I_{R}}$
is $\mathcal{ L}$-indiscernible over $b$, so $\varphi\left(x,y\right)$
is a not a weak semi-equation in $T$.
\end{proof}

\begin{remark}
	The converse to Proposition \ref{prop: semieq reduct}(2) does not hold. Let $T' := \DLO$ be the theory of dense linear orders, and $T$ its reduct to $\mathcal{L} := \{ = \}$. Then the $\mathcal{L}$-formula $x \neq y$ is not a weak semi-equation in $T$ by inspection, but it is a weak semi-equation in $T'$ since it is equivalent to a disjunction of weak semi-equations $(x < y) \lor (x >y)$ (Proposition \ref{prop: Bool combs}).
\end{remark}

\begin{problem}
	Is weak semi-equationality of a theory preserved under reducts? This appear to be open already for equationality (see \cite[Question 3.10]{junker2000note}), and fails for semi-equationality (see Section \ref{sec: circ orders}).
\end{problem}

\begin{problem}
Is (weak) semi-equationality of theories invariant under bi-interpretability without parameters?
Equivalently, if $T$ is (weakly) semi-equational, does it follow that so is $T^{\eq}$?
\end{problem}

%\begin{remark}
%	Note that semi-equationality of a formula $\varphi(x,y)$ depends only on the quantifier-free
%theory of the relation $\varphi$, and hence is invariant under expansions
%of the language and (non-elementary) embeddings, whereas this is not
%true for weak semi-equations. So semi-equationality can be thought
%of as a version of weak semi-equationality that is specific to the formula
%$\varphi$ rather than depending on the rest of the theory.
%\end{remark}

\subsection{Relationship to equations and NIP}\label{sec: rel to NIP etc}
We provide some evidence that semi-equationality can be naturally viewed as a generalization of equationality (in the sense of Srour) in stable theories to the NIP  context.
\begin{prop}\label{prop: semieq and stab}
	\begin{enumerate}
		\item  Weak semi-equations are NIP formulas, hence weakly semi-equational theories are NIP.
		\item Equations are semi-equations.
		\item  A formula is an equation if and only if it is both stable and
a  semi-equation.
\item In a stable theory, all weak semi-equations are equations. In particular, a stable theory is equational if and only if it is (weakly) semi-equational.
	\end{enumerate}
\end{prop}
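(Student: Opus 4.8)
Throughout I would use the standard extraction of indiscernibles: from any array witnessing the failure of one of these properties, Ramsey and compactness produce a sequence indiscernible over $\emptyset$ (or over the relevant parameter), which by Remark \ref{rem: lin ord dont matter} may be taken indexed by any prescribed infinite linear order, and the desired truth-value pattern is then read off uniformly. With this, item (2) is immediate: if $\varphi$ is not a semi-equation, witnessed by $(a_i,b_i)_{i\in\omega}$ with $\models\varphi(a_i,b_j)\iff i\neq j$, then in particular $\models\varphi(a_i,b_j)$ for all $j<i$ and $\models\neg\varphi(a_i,b_i)$ for all $i$, so $\varphi$ is not an equation. For item (3), ``$\Rightarrow$'' combines (2) with the classical fact that an equation is a stable formula (if $\varphi$ has the order property, witnessed by $(a_i,b_i)_{i\in\omega}$ with $\models\varphi(a_i,b_j)\iff j<i$, the same sequence shows $\varphi$ is not an equation). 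For ``$\Leftarrow$'', assume $\varphi$ is a stable formula and a semi-equation but not an equation, and extract an $\emptyset$-indiscernible sequence $(a_ib_i)_{i\in\mathbb{Z}}$ with $\models\varphi(a_i,b_j)$ for $j<i$ and $\models\neg\varphi(a_i,b_i)$; by indiscernibility the truth value of $\varphi(a_i,b_j)$ for $i<j$ is independent of the chosen pair, giving a dichotomy. If it is always true, then $\models\varphi(a_i,b_j)\iff i\neq j$, contradicting that $\varphi$ is a semi-equation; if it is always false, then $\models\varphi(a_i,b_j)\iff j<i$, contradicting that $\varphi$ is stable. Hence $\varphi$ is an equation.

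\textbf{Item (1).} I would prove the contrapositive: if $\varphi$ has IP then $\varphi$ is not a weak semi-equation; since NIP formulas are closed under Boolean combinations, weakly semi-equational theories are then NIP. Since $\varphi$ has IP, there is an $\emptyset$-indiscernible sequence $(a_i)_{i\in I}$, indexed by a large linear order, shattered by $\varphi$ (for every $S\subseteq I$ there is $b_S$ with $\models\varphi(a_i,b_S)\iff i\in S$). By compactness and Remark \ref{rem: lin ord dont matter}, it suffices to realize, for every finite $\Delta\subseteq\mathcal{L}$ and every $n$, a finite configuration $c_{-n},\dots,c_n,b$ with $\models\varphi(c_k,b)\iff k\neq 0$ which is $\Delta$-indiscernibly patterned over $\emptyset$ in the $c_k$'s and whose subtuple $(c_k)_{k\neq 0}$ is $\Delta$-indiscernibly patterned over $b$. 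To get this, fix $i_0\in I$ with much of $I$ on either side, take $b$ with $\models\varphi(a_i,b)\iff i\neq i_0$ for all $i\in I$ by shattering, and then extract — from the two $\emptyset$-indiscernible halves of $(a_i)_{i\in I}$ lying below and above $i_0$ — short subtuples that are $\Delta$-indiscernibly patterned over $b$ and \emph{mutually} so (an iterated extraction, which terminates because only finitely many formulas and a bounded length are involved). Setting $c_0:=a_{i_0}$ and filling in the remaining $c_k$ from these subtuples yields the configuration: $\emptyset$-indiscernibility of the whole is automatic since it is a subsequence of $(a_i)_{i\in I}$, the $b$-pattern on the complement of the middle is what we extracted, and the $\varphi$-values are as prescribed.

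\textbf{Item (4).} In a stable theory every formula is a stable formula, so by (3) it is enough to show that a weak semi-equation is a semi-equation; equivalently, that a $\varphi$ which is not a semi-equation is not a weak semi-equation. Extract an $\emptyset$-indiscernible sequence $(a_ib_i)_{i\in\mathbb{Z}}$ with $\models\varphi(a_i,b_j)\iff i\neq j$ (the pattern survives the extraction in both directions). In a stable theory this is an indiscernible \emph{set}, so $a_0b_0\ind_\emptyset (a_ib_i)_{i\neq 0}$, and hence $b_0\ind_\emptyset (a_i)_{i\neq 0}$. But $(a_i)_{i\neq 0}$ is an infinite $\emptyset$-indiscernible set, hence a Morley sequence over $\emptyset$, and a Morley sequence independent over $\emptyset$ from $b_0$ remains a Morley sequence — in particular indiscernible — over $b_0$. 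Then $(a_i)_{i\in\mathbb{Z}}$ (which is $\emptyset$-indiscernible), together with the parameter $b_0$, $\models\varphi(a_i,b_0)$ for $i\neq 0$ and $\models\neg\varphi(a_0,b_0)$, witnesses via Remark \ref{rem: lin ord dont matter} that $\varphi$ is not a weak semi-equation. The final sentence is then formal: a Boolean combination of equations is, by (2) and Proposition \ref{prop: Bool combs}(1), a Boolean combination of (weak) semi-equations, and conversely in a stable theory every (weak) semi-equation is an equation, so the three notions of being equational coincide.

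\textbf{Main obstacle.} Items (2) and (3) are essentially pattern-chasing, and (4) is a short application of standard stability theory once (3) is available. The real work is in (1): arranging, inside the compactness argument, that the complement of the distinguished point be \emph{genuinely} indiscernible over the parameter. The difficulty is that shattering by $\varphi$ controls only $\varphi$-traces, whereas parameter-indiscernibility of a subsequence is a condition on all formulas; so the engineering happens in the (iterated, terminating) extraction of mutually indiscernible pieces on the two sides of the chosen point, which is the technical heart of the argument.
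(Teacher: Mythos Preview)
Your arguments for (2) and (3) match the paper's exactly. Your argument for (1) is correct and close in spirit to the paper's, but the paper avoids the ``mutual indiscernibility'' extraction on two halves: instead of shattering and choosing $b=b_{I\setminus\{i_0\}}$, it uses the alternation characterization of IP (an indiscernible $(a_i)_{i\in\mathbb{N}}$ with $\models\varphi(a_i,b)\iff i$ even), applies a single Ramsey to the even-indexed subsequence to get $\Delta$-indiscernibility over $b$ on an infinite $I\subseteq 2\mathbb{N}$, and then inserts one sufficiently large odd index as $a_0'$; the split into $I_L+(0)+I_R$ is automatic and no iterated extraction is needed.

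For (4) your argument is correct but imports more machinery than necessary. Once you have a totally indiscernible $(a_ib_i)_{i\in\mathbb{Z}}$, indiscernibility of $(a_i)_{i\neq 0}$ over $b_0$ is immediate: for any two tuples of distinct indices from $\mathbb{Z}\setminus\{0\}$, append $0$ to each and use that any two $(n{+}1)$-tuples of distinct elements of an indiscernible set have the same type, then project. No forking calculus or Morley sequences are needed. The paper also starts directly from ``not an equation'' (rather than routing through (3) and ``not a semi-equation''), extracts an indiscernible witness to the failure of the descending chain condition, and reads off both the $\varphi$-pattern $\models\varphi(a_i,b_0)\iff i\neq 0$ and the $b_0$-indiscernibility of $(a_i)_{i\neq 0}$ from total indiscernibility in one step.
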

\begin{proof}
	(1) If $\varphi\left(x,y\right)$ is not NIP, then there are an indiscernible
sequence $\left(a_{i}\right)_{i\in\mathbb{N}}$ and $b$ such that
$\models\varphi\left(a_{i},b\right) \iff i$ is even. For any finite
set of formulas $\Delta\left(x_{1}, \ldots, x_{n},y\right)$, by Ramsey's
theorem, there is an infinite $I\subseteq2\mathbb{N}$ on which the truth value of all formulas in $\Delta\left(a_{i_{1}}, \ldots, a_{i_{n}},b\right)$
is constant for all $i_{1}<\ldots < i_{n}\in I$. Thus, by letting
$a_{0}':=a_{i}$ for some sufficiently large odd $i$, we can find
an indiscernible sequence $\left(a_{i}'\right)_{i\in I_{L}+\left(0\right)+I_{R}}$
(using $I_{L}\sqcup I_{R}=I$, and $a_{i}'=a_{i}$ for $i\in I$)
for some infinite $I_{R}$ and arbitrarily large finite $I_{L}$,
such that $\left(a_{i}'\right)_{i\in I_{L}+I_{R}}$ is $\Delta$-indiscernible
over $b$. By compactness, it follows that $\varphi\left(x,y\right)$
is not a weak semi-equation.

\noindent (2) If $\varphi\left(x,y\right)$ is not a semi-equation, then
there is a sequence $\left(a_{i},b_{i}\right)_{i\in\mathbb{N}}$ such
that $\models\varphi\left(a_{i},b_{j}\right)\iff i\neq j$. In particular,
$\models\varphi\left(a_{i},b_{j}\right)$ for all $j<i$, and $\not\models\varphi\left(a_{i},b_{i}\right)$,
so this is a counterexample to the descending chain condition, and
$\varphi\left(x,y\right)$ is not an equation.

\noindent (3) If $\varphi\left(x,y\right)$ is not an equation, then by Ramsey and compactness there
is  an indiscernible sequence $\left(a_{i},b_{i}\right)_{i\in\mathbb{N}}$ such that $\models\varphi\left(a_{i},b_{j}\right)$
for all $j<i$, and $\not\models\varphi\left(a_{i},b_{i}\right)$.
If
$\varphi\left(a_{i},b_{j}\right)$ holds for $i<j$ then $\varphi\left(x,y\right)$ is not a semi-equation.
Otherwise, $\varphi\left(x,y\right)$ is not stable.

\noindent (4) If $\varphi\left(x,y\right)$ is not an equation, by Ramsey and compactness we can choose an indiscernible sequence $\left(a_{i},b_{i}\right)_{i\in\mathbb{Z}}$ such that $\models\varphi\left(a_{i},b_{j}\right)$ for all $j<i$,
and $\not\models\varphi\left(a_{i},b_{i}\right)$. The indiscernible sequence $(a_i,b_i)_{i \in \mathbb{Z}}$ is totally indiscernible by stability of $T$, hence we have $\models \varphi(a_i, b_0) \iff i \neq 0$, and also $(a_i : i \neq 0)$ is indiscernible over $b_0$. This shows that $\varphi\left(x,y\right)$ is not a weak semi-equation.
\end{proof}

\subsection{Weakly normal formulas, $\left(k,n\right)$-semi-equations and breadth.} \label{sec: quant semieq 1-based}

\begin{defn}(see \cite[Chapter 4, Definition 1.1]{pillay1996geometric}) A formula $\varphi\left(x,y\right)$ is 
 \emph{$k$-weakly normal} if for
every $b_{1}, \ldots, b_{k}\in \mathbb{M}^{y}$ such that $\models\exists x\,\varphi\left(x,b_{1}\right)\land \ldots \land\varphi\left(x,b_{k}\right)$,
there are some $i\neq j\in\left[k\right]$ such that $\models\forall x\,\varphi\left(x,b_{i}\right)\leftrightarrow\varphi\left(x,b_{j}\right)$.
It is \emph{weakly normal} if it is $k$-weakly normal
for some $k$ (by compactness this is equivalent to: an infinite
collection of pairwise distinct instances of $\varphi(x,y)$ must have empty intersection). 
\end{defn}
\noindent A formula $\varphi(x,y)$ is \emph{normal} in the sense of  
 \cite{pillay1983countable} if and only if it is $2$-weakly normal. Weakly normal formulas are a special kind  of equations characterizing ``linearity'' of forking in stable theories (see \cite[Chapter 4, Proposition 1.5 + Remark 1.8.4 + Lemma 1.9]{pillay1996geometric}):
\begin{fact}\label{fac: 1-based iff weakly normal}
	A stable theory $T$ is 1-based if and only if in $T$, every formula $\varphi(x,y) \in \mathcal{L}$, with $x,y$ arbitrary finite tuples of variables, is equivalent to a Boolean combination of finitely many weakly normal formulas $\psi_1(x,y), \ldots, \psi_n(x,y) \in \mathcal{L}$.
\end{fact}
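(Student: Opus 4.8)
The plan is to reduce everything to the canonical-base characterization of $1$-basedness, the standard bridge between this geometric condition and the combinatorics of formulas: $T$ is $1$-based if and only if for every global type $p$ and every realization $a\models p$ (in a monster extension) one has $\operatorname{Cb}(p)\subseteq\acl^{\eq}(a)$, and $\operatorname{Cb}(p)$ is the definable closure of the canonical parameters $\ulcorner d_p\varphi\urcorner$, $\varphi\in\mathcal{L}$, of the $\varphi$-definitions of $p$. Weak normality of a formula is precisely the local shadow of this condition, so both implications come down to matching ``$\varphi$ is a Boolean combination of weakly normal formulas'' with ``the $\varphi$-definitions of types are over $\acl^{\eq}$ of realizations''.

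For the implication from right to left I would first reduce to a single weakly normal formula: since $d_p(-)$ commutes with Boolean combinations, it is enough to show that for every weakly normal $\psi(x,y)$, every global $p$ and every $a\models p$ the definition $d_p\psi$ is over $\acl^{\eq}(a)$. For this, note that by stability $E_\psi(y,y') := \forall x\,\bigl(\psi(x,y)\leftrightarrow\psi(x,y')\bigr)$ is a definable equivalence relation, and that if $\psi$ is $k$-weakly normal then the set $S_a$ of $E_\psi$-classes $[b]$ with $\models\psi(a,b)$ has size $<k$ (otherwise $k$ pairwise distinct instances of $\psi$ would share the point $a$). Being invariant under $\Aut(\mathbb{M}/a)$, each class in $S_a$ lies in $\acl^{\eq}(a)$; and $d_p\psi(y)$ is just the disjunction $\bigvee_{[b]\in S_a}E_\psi(y,b)$, hence over $\acl^{\eq}(a)$. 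So $\operatorname{Cb}(p)\subseteq\acl^{\eq}(a)$ and $T$ is $1$-based.

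For the converse I would assume $T$ is $1$-based, fix $\varphi(x,y)$, and work with the definable family $\left\{\varphi(a,\mathbb{M}):a\in\mathbb{M}^{x}\right\}$ and its canonical parameters $c_a:=\ulcorner\varphi(a,\mathbb{M})\urcorner\in\dcl^{\eq}(a)$, together with the dual family of $E_\varphi$-classes of the fibres $\varphi(\mathbb{M},b)$, aiming to decompose this family into finitely many ``normal'' subfamilies. The input is that $1$-basedness --- via the modularity of the lattice of $\acl^{\eq}$-closed sets that it entails --- forces a bound $N$ and finitely many $\emptyset$-definable families of ``atoms'' such that membership $\models\varphi(a,b)$ is decided by which of at most $N$ atoms the $E_\varphi$-class of $\varphi(\mathbb{M},b)$ lies in, where each atom is $\acl^{\eq}(a)$-definable and is cut out, as a subset of $\mathbb{M}^{x}\times\mathbb{M}^{y}$, by a weakly normal formula; a compactness argument keeps $N$ uniform in $a$. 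Collecting these weakly normal formulas (and their $\varphi^*$-analogues) and taking the resulting finite Boolean combination recovers $\varphi$.

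I expect the converse to be the main obstacle. The difficulty is that $\varphi$ itself is usually not weakly normal --- weak normality is destroyed by disjunction, e.g.\ $x=y_1$ and $x=y_2$ are normal but $(x=y_1)\vee(x=y_2)$ is not --- so one cannot argue formula by formula and must instead use $1$-basedness \emph{globally}, through the canonical-base and modularity machinery of geometric stability theory, to split $\left\{\varphi(\mathbb{M},b)\right\}_b$ into finitely many normal pieces whose number is controlled uniformly. This uniform decomposition is exactly what is carried out in \cite[Chapter~4]{pillay1996geometric}, to which I would defer for the remaining details.
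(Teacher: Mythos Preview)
The paper does not prove this statement: it is recorded as a \emph{Fact} with a citation to \cite[Chapter~4, Proposition~1.5 + Remark~1.8.4 + Lemma~1.9]{pillay1996geometric}, and no argument is given. So there is no ``paper's own proof'' to compare against; your sketch and the paper ultimately point to the same source.

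On the content of your sketch: the right-to-left direction is essentially correct and standard. One small point worth tightening is the sentence ``Being invariant under $\Aut(\mathbb{M}/a)$, each class in $S_a$ lies in $\acl^{\eq}(a)$'': what is $\Aut(\mathbb{M}/a)$-invariant is the \emph{set} $S_a$, not each individual class, so the classes may be permuted; but since $|S_a|<k$ this still puts each class in $\acl^{\eq}(a)$, and the canonical parameter of $\bigvee_{[b]\in S_a}E_\psi(y,b)$ --- which is the code for the finite set $S_a$ --- is then in $\dcl^{\eq}(a)$. Your reduction ``$d_p$ commutes with Boolean combinations'' is fine in a stable theory.

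For the left-to-right direction you give only an outline and explicitly defer to Pillay for the actual decomposition of $\varphi$ into weakly normal pieces. That is honest, but as written the paragraph is more of a narrative than an argument: the passage about ``atoms'' and ``modularity of the lattice of $\acl^{\eq}$-closed sets'' does not yet pin down which formulas the $\psi_i$ are or why there are only finitely many up to equivalence. Since the paper itself treats this as a black box from \cite{pillay1996geometric}, simply citing the reference (as you do at the end) is adequate here.
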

\noindent We introduce some numeric parameters characterizing semi-equations, minimal values of which give rise to a generalization of weak normality.

\begin{defn}\label{def: (k,n)-semieq}
 For $k,n \in \mathbb{N}$, a formula $\varphi\left(x,y\right)$ is a \emph{$\left(k,n\right)$-semi-equation} if, for every $b_{1}, \ldots, b_{k}\in\mathbb{M}^y$,
if $\models\exists x \ \varphi\left(x,b_{1}\right)\land \ldots \land\varphi\left(x,b_{k}\right)$,
then for some pairwise distinct $i_{1}, \ldots, i_{n},j\in\left[k\right]$, $\models\forall x\,\left(\varphi\left(x,b_{i_{1}}\right)\land \ldots \land\varphi\left(x,b_{i_{n}}\right)\right)\rightarrow\varphi\left(x,b_{j}\right)$.
And $\varphi\left(x,y\right)$ is an \emph{$n$-semi-equation} if it is
a $\left(k,n\right)$-semi-equation for some $k$.
A theory $T$ is \emph{$n$-semi-equational} (respectively, \emph{$(k,n)$-semi-equational}) if every formula $\varphi(x,y) \in \mathcal{L}$, with $x,y$ arbitrary finite tuples of variables, is equivalent in $T$ to a Boolean combination of $n$-semi-equations (respectively, $(k,n)$-semi-equations) $\psi_1(x,y), \ldots, \psi_n(x,y) \in \mathcal{L}$.
\end{defn}

\begin{prop}\label{prop: k,n semieq basic}
\begin{enumerate}
	\item If $\varphi\left(x,y\right)$ is a $\left(k,n\right)$-semi-equation, then $n<k$, and $\varphi\left(x,y\right)$ is also
an $\left(\ell,m\right)$-semi-equation for any $\ell\geq k$
and $n\leq m<\ell$. If $\varphi\left(x,y\right)$ is an $n$-semi-equation, then it is also an $m$-semi-equation for every
$m\geq n$.
\item A formula is a semi-equation if and only if it is an
$n$-semi-equation for some $n$, if and only if it is an $(n,n-1)$-semi-equation for some $n$.
\end{enumerate}
	
\end{prop}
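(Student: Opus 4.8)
The plan is to derive both parts directly from Definition \ref{def: (k,n)-semieq}, using compactness only once. Throughout we may assume $\varphi(x,b)$ is consistent for some $b \in \mathbb{M}^y$, the remaining case ($\varphi$ equivalent to $x \neq x$) being trivial. For part (1), monotonicity in the first coordinate is immediate: if $\varphi$ is a $(k,n)$-semi-equation and $\ell \geq k$, then any $b_1,\dots,b_\ell$ with $\models \exists x\,\bigwedge_{i\in[\ell]}\varphi(x,b_i)$ in particular satisfies $\models \exists x\,\bigwedge_{i\in[k]}\varphi(x,b_i)$, so the witnessing indices $i_1,\dots,i_n,j \in [k]\subseteq[\ell]$ still work and $\varphi$ is a $(\ell,n)$-semi-equation. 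The bound $n<k$ follows by taking $b_1=\dots=b_k=b$ for a consistent instance: the conclusion then forces $n+1$ pairwise distinct indices in $[k]$, so $k\geq n+1$. For monotonicity in the second coordinate, given a $(\ell,n)$-semi-equation and $n\leq m<\ell$, from witnessing pairwise distinct $i_1,\dots,i_n,j\in[\ell]$ adjoin any $m-n$ further indices from $[\ell]\setminus\{i_1,\dots,i_n,j\}$ — a set of size $\ell-n-1\geq m-n$ exactly because $m<\ell$ — and note that enlarging the antecedent only weakens $\bigwedge_s\varphi(x,b_{i_s})\rightarrow\varphi(x,b_j)$, so the implication persists; hence $\varphi$ is a $(\ell,m)$-semi-equation. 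Chaining these two steps yields the first sentence of (1), and applying them with $\ell=\max(k,m+1)$ yields the second.

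For part (2) I would close the cycle [$(n,n-1)$-semi-equation for some $n$] $\Rightarrow$ [$n$-semi-equation for some $n$] $\Rightarrow$ [semi-equation] $\Rightarrow$ [$(n,n-1)$-semi-equation for some $n$]. The first implication holds by definition, a $(n,n-1)$-semi-equation being in particular a $(n-1)$-semi-equation. For the second, assume $\varphi$ is a $(k,n)$-semi-equation but, for contradiction, not a semi-equation, and fix $(a_i,b_i)_{i\in\omega}$ with $\models\varphi(a_i,b_j)\iff i\neq j$. Applying the $(k,n)$-semi-equation property to $b_0,\dots,b_{k-1}$, which have the common solution $a_k$, gives pairwise distinct $i_1,\dots,i_n,j\in\{0,\dots,k-1\}$ with $\models\forall x\,(\bigwedge_{s=1}^{n}\varphi(x,b_{i_s})\rightarrow\varphi(x,b_j))$; instantiating at $x=a_j$ and using that $\varphi(a_j,b_{i_s})$ holds for every $s$ (since $i_s\neq j$) yields $\models\varphi(a_j,b_j)$, contradicting $\not\models\varphi(a_j,b_j)$.

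The main obstacle is the last implication, [semi-equation] $\Rightarrow$ [$(n,n-1)$-semi-equation for some $n$], which I would prove contrapositively. Suppose $\varphi$ is not a $(n,n-1)$-semi-equation for any $n$, and fix $n$. The key combinatorial observation is that a pairwise distinct tuple $i_1,\dots,i_{n-1},j$ in $[n]$ must satisfy $\{i_1,\dots,i_{n-1}\}=[n]\setminus\{j\}$; thus failure of the $(n,n-1)$-semi-equation property provides $b_1,\dots,b_n$ with $\models\exists x\,\bigwedge_{i\in[n]}\varphi(x,b_i)$ such that for \emph{every} $j\in[n]$ there is $a_j$ with $\models\bigwedge_{i\neq j}\varphi(a_j,b_i)\wedge\neg\varphi(a_j,b_j)$. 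Then $\models\varphi(a_j,b_i)\iff i\neq j$ for all $i,j\in[n]$, i.e.\ (pairing $a_j$ with $b_j$) a finite instance of the configuration forbidden in the definition of semi-equation. Since such finite instances exist for every $n$, the partial type over $\emptyset$ expressing the infinite configuration, namely $\{\varphi(x_i,y_j):i\neq j\}\cup\{\neg\varphi(x_i,y_i):i\in\omega\}$, is finitely satisfiable in $\mathbb{M}$ — any finite fragment constrains only finitely many indices — hence realized in $\mathbb{M}$, so $\varphi$ is not a semi-equation. The delicate points are precisely this reindexing-and-compactness step together with the combinatorial observation above, which is what forces the negated $(n,n-1)$-semi-equation condition to produce a genuine $n\times n$ array rather than something weaker.
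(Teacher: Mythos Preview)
Your proof is correct and follows essentially the same approach as the paper's: the same contrapositive-and-compactness cycle for part (2), with the same key observation that failure of the $(n,n-1)$-condition forces $\{i_1,\dots,i_{n-1}\}=[n]\setminus\{j\}$ and hence produces a full $n\times n$ array. You give considerably more detail for part (1), which the paper dismisses as ``clear from the definitions''; your observation that the claim $n<k$ requires a consistent instance (and is vacuously false otherwise) is a genuine edge case the paper's statement glosses over.
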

\begin{proof}
	(1) Clear from the definitions.
	
\noindent	(2) If $\varphi\left(x,y\right)$ is not a semi-equation,
let $\left(a_{i},b_{i}\right)_{i\in\mathbb{N}}$ be such that $\models\varphi\left(a_{i},b_{j}\right)\iff i\neq j$.
Then for any $\left(k,n\right)$ we have $\models\varphi\left(a_{0},b_{1}\right)\land \ldots \land\varphi\left(a_{0},b_{k}\right)$,
but for any pairwise distinct $i_{1}, \ldots, i_{n},j\in\left[k\right]$, $\models\varphi\left(a_{j},b_{i_{1}}\right)\land \ldots \land\varphi\left(a_{j},b_{i_{n}}\right)\land\neg\varphi\left(a_{j},b_{j}\right)$, hence $\varphi(x,y)$ is not a $(k,n)$-semi-equation.
Conversely, for any $k \in \mathbb{N}$, if $\varphi\left(x,y\right)$ is not a $\left(k,k-1\right)$-semi-equation, then there exist $b_{1}, \ldots, b_{k}$ such that for each
$j\in\left[k\right]$, there is $a_{j}$ such that $\models\varphi\left(a_{j},b_{i}\right)$
for $i\neq j$, but $\not\models\varphi\left(a_{j},b_{j}\right)$.
Hence if $\varphi\left(x,y\right)$ is not a $\left(k,k-1\right)$-semi-equation for any $k$, then by compactness $\varphi\left(x,y\right)$
is not a semi-equation. 
And if $\varphi\left(x,y\right)$ is not an $n$-semi-equation, then it is not an $\left(n+1,n\right)$-semi-equation by definition, so a formula that is not an $n$-semi-equation for any $n$ is also not a $\left(k,k-1\right)$-semi-equation for any $k$.
\end{proof}

We recall the notion of breadth from lattice theory.
\begin{defn}\cite[Section 2.4]{aschenbrenner2016vapnik}\label{def: breadth}
	Given a set $X$ and $d \in \mathbb{N}_{\geq 1}$, a family of subsets $\mathcal{F} \subseteq \mathcal{P}\left(X\right)$
has \emph{breadth} $d$ if any nonempty intersection of finitely many sets
in $\mathcal{F}$ is the intersection of at most $d$ of them, and $d$
is minimal with this property.
\end{defn}

\begin{prop}\label{prop: semieq iff fin breadth}
	A formula  $\varphi\left(x,y\right)$ is a $\left(k+1,k\right)$-semi-equation if and only if the family of sets $\mathcal{F}_{\varphi} := \left\{ \varphi\left(\mathbb{ M},b\right)\mid b\in\mathbb{ M}^{y}\right\} $
has breadth at most $k$.
In particular, $\varphi(x,y)$ is a semi-equation if and only if the family of sets $\mathcal{F}_{\varphi}$ has finite breadth.
\end{prop}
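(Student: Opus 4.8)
The plan is to translate both sides into purely lattice-theoretic statements about the family $\mathcal{F}_{\varphi} = \{\varphi(\mathbb{M},b) : b \in \mathbb{M}^{y}\}$ of subsets of $\mathbb{M}^{x}$, and then invoke the standard fact that the breadth of a family is already controlled by its $(k+1)$-fold intersections. First I would record the dictionary: for tuples $b_1,\dots,b_m$, the condition $\models\forall x\,\bigl(\bigwedge_{i\in S}\varphi(x,b_i)\to\bigwedge_{i\in[m]}\varphi(x,b_i)\bigr)$ says exactly that the nonempty-or-not intersection $\bigcap_{i\in[m]}\varphi(\mathbb{M},b_i)$ equals $\bigcap_{i\in S}\varphi(\mathbb{M},b_i)$, and $\models\exists x\,\bigwedge_{i\in[m]}\varphi(x,b_i)$ says this intersection is nonempty. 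Crucially, in the definition of $(k+1,k)$-semi-equation the indices $i_1,\dots,i_k,j\in[k+1]$ are required to be pairwise distinct, hence they exhaust $[k+1]$; so being a $(k+1,k)$-semi-equation says precisely: whenever $b_1,\dots,b_{k+1}$ satisfy $\bigcap_{i=1}^{k+1}\varphi(\mathbb{M},b_i)\neq\emptyset$, there is some $j$ with $\bigcap_{i\neq j}\varphi(\mathbb{M},b_i)\subseteq\varphi(\mathbb{M},b_j)$, i.e.\ one of the $k+1$ sets can be dropped without changing the intersection.

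For the direction ``$(k+1,k)$-semi-equation $\Rightarrow$ breadth at most $k$'', I would show by induction on $m\geq 1$ that every nonempty intersection of $m$ members of $\mathcal{F}_{\varphi}$ is equal to an intersection of at most $k$ of them. For $m\leq k$ this is trivial; for $m=k+1$ it is exactly the reformulated hypothesis above. For $m>k+1$, given $b_1,\dots,b_m$ with $\bigcap_{i=1}^{m}\varphi(\mathbb{M},b_i)\neq\emptyset$, the intersection of the first $m-1$ sets is also nonempty, so by the inductive hypothesis it equals $\bigcap_{i\in S}\varphi(\mathbb{M},b_i)$ for some $S\subseteq[m-1]$ with $|S|\leq k$; intersecting with $\varphi(\mathbb{M},b_m)$ gives $\bigcap_{i=1}^{m}\varphi(\mathbb{M},b_i)=\bigcap_{i\in S\cup\{m\}}\varphi(\mathbb{M},b_i)$ with $|S\cup\{m\}|\leq k+1$, and if equality to $k+1$ occurs we apply the hypothesis once more to discard a further index. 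This yields the defining property of breadth with parameter $d=k$; since the property is monotone in $d$, this is exactly ``breadth at most $k$''.

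For the converse, assume $\mathcal{F}_{\varphi}$ has breadth at most $k$ and take $b_1,\dots,b_{k+1}$ with $\models\exists x\,\bigwedge_{i=1}^{k+1}\varphi(x,b_i)$, i.e.\ $\bigcap_{i=1}^{k+1}\varphi(\mathbb{M},b_i)\neq\emptyset$. By breadth this equals $\bigcap_{i\in S}\varphi(\mathbb{M},b_i)$ for some $S$ with $|S|\leq k<k+1$; pick $j\in[k+1]\setminus S$ and set $\{i_1,\dots,i_k\}:=[k+1]\setminus\{j\}\supseteq S$. Then $\bigcap_{\ell=1}^{k}\varphi(\mathbb{M},b_{i_\ell})\subseteq\bigcap_{i\in S}\varphi(\mathbb{M},b_i)\subseteq\varphi(\mathbb{M},b_j)$, which is the required instance, so $\varphi$ is a $(k+1,k)$-semi-equation. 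The ``in particular'' clause then follows by combining this equivalence with Proposition \ref{prop: k,n semieq basic}(2): $\varphi$ is a semi-equation iff it is an $(n,n-1)$-semi-equation for some $n$, i.e.\ a $(k+1,k)$-semi-equation for some $k$, iff $\mathcal{F}_{\varphi}$ has breadth at most $k$ for some $k$, i.e.\ has finite breadth. I do not expect a real obstacle here: the content is entirely the induction reducing arbitrary finite intersections to $(k+1)$-fold ones, and the only thing to be careful about is the bookkeeping of index sets and the observation that pairwise-distinctness forces the $k+1$ indices in the semi-equation condition to be all of $[k+1]$, which makes the two formulations line up on the nose.
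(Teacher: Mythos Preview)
Your proposal is correct and follows essentially the same approach as the paper: both directions amount to the observation that the $(k+1,k)$-semi-equation condition is precisely the ``drop one redundant set from any nonempty $(k+1)$-fold intersection'' property, and the nontrivial direction is handled by iteratively removing redundant instances (you phrase this as an induction on $m$, the paper as repeatedly picking $k+1$ of the $n$ sets and discarding one, but the content is identical). The ``in particular'' clause is handled the same way via Proposition~\ref{prop: k,n semieq basic}(2).
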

\begin{proof}
	The family of sets $\left\{ \varphi\left(\mathbb{M},b\right)\mid b\in\mathbb{ M}^{y}\right\} $
has breadth at most $k$ if and only if every finite consistent conjunction of instances
of $\varphi$ is implied by the conjunction of at most $k$ of those
instances. In particular this applies to consistent conjunctions of $(k+1)$ instances of $\varphi$, showing that if the breadth of $\mathcal{F}_{\varphi}$ is $\leq k$, then it is a $(k+1,k)$-semi-equation. Conversely, assume $\varphi(x,y)$ is a $(k+1,k)$-semi-equation. Given any consistent conjunction of $n>k$
instances of $\varphi$, any $(k+1)$ of them contain an instance implied
by the other $k$ instances. Removing this implied instance, we reduce to the case of a consistent conjunction of
$(n-1)$ instances, and after $(n-k)$ steps to a conjunction of $k$ instances of $\varphi$ implying all the other ones.
The ``in particular'' part is Proposition \ref{prop: k,n semieq basic}(2).
%
%
%in order to show that $\mathcal{F}_{\varphi}$ has breadth $k$, it suffices to show 
%
% By induction (starting from $k$) on the number of instances,
%it suffices, in order to show that the family of instances of $\varphi$
%has finite breadth, that every consistent conjunction of $k+1$ instances
%of $\varphi$ is implied by the conjunction of $k$ of them (i.e.
%that $\varphi\left(x,y\right)$ is a $\left(k+1,k\right)$-semi-equation), because then, given any consistent conjunction of $n>k$
%instances of $\varphi$, any $k+1$ of them contains an instance implied
%by the others, reducing to the case of a consistent conjunction of
%$n-1$ instances. 
\end{proof}

\begin{expl}
Let $T$ be an NIP theory expanding a group, and let a formula $\varphi(x,y)$ be such that for every $b \in \mathbb{M}^y$, $\varphi( \mathbb{M}, b)$ is a subgroup. Then, by Baldwin-Saxl \cite{baldwin1976logical}, there exists $n \in \omega$ such that for all finite $B\subseteq \mathbb{M}^y$, there is $B_0\subseteq B$ with $\left|B_0\right|\leq n$ such that $\bigcap_{b\in B_0}\varphi\left(\mathbb{M},b\right)=\bigcap_{b\in B}\varphi\left(\mathbb{M},b\right)$. So $\varphi(x,y)$ is a semi-equation by Proposition \ref{prop: semieq iff fin breadth}.
\end{expl}
\begin{remark}
	If $\varphi(x,y)$ is stable with infinitely many distinct instances $\varphi(\mathbb{M},b), b \in \mathbb{M}^y$, then either $\varphi(x,y)$ is not a semi-equation, or $\neg \varphi(x,y)$ is not a semi-equation (combining \cite[Proposition 2.20]{aschenbrenner2016vapnik} and Proposition \ref{prop: semieq iff fin breadth}).
\end{remark}
%\begin{fact}[Baldwin-Saxl]
%If $G$ is an expansion of a group, and $\varphi\left(x,y\right)$ defines a family of subgroups, in the sense that for every $b\in G^y$, $\varphi\left(G,b\right)$ is a subgroup, then there is a number $N$ such that the intersection of any finite set of subgroups from this family is the intersection of at most $N$ of them (i.e. for all finite $B\subseteq G^y$, there is $B_0\subseteq B$ with $\left|B_0\right|\leq N$ such that $\bigcap_{b\in B_0}\varphi\left(G,b\right)=\bigcap_{b\in B}\varphi\left(G,b\right)$).
%\end{fact}
%By proposition \ref{prop: semieq iff fin breadth}, this means that definable families of subgroups are semi-equations.

%Proposition 4.4 is a quantitative version of theorem 3.3, which says
%that $\varphi\left(x,y\right)$ is a strong semi-equation iff $\left\{ \varphi\left(\mathcal{ M},b\right)\mid b\in\mathcal{ M}^{\left|y\right|}\right\} $
%has finite breadth.
The following suggests that $1$-semi-equationality can be viewed as a generalization of being $1$-based from stable to the NIP context.
\begin{prop}\label{prop: weak norm iff 1-semieq stab}
	A formula $\varphi\left(x,y\right)$ is weakly normal
if and only if it is stable and a $1$-semi-equation. Hence a stable theory is $1$-based if and only if it is $1$-semi-equational.
\end{prop}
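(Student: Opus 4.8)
The plan is to prove the two implications of the formula-level statement separately, then read off the theory-level consequence by combining with the known facts. For the direction ``weakly normal $\Rightarrow$ stable and $1$-semi-equation'': weakly normal formulas are equations by the cited results of Pillay (Fact~\ref{fac: 1-based iff weakly normal} references the relevant chapter of \cite{pillay1996geometric}), hence stable; and they are $1$-semi-equations essentially by unwinding definitions. Indeed, if $\varphi(x,y)$ is $k$-weakly normal and $\models \exists x\, \varphi(x,b_1)\wedge\dots\wedge\varphi(x,b_k)$, then some $\varphi(x,b_i)$ and $\varphi(x,b_j)$ with $i\neq j$ are equivalent, so in particular $\models \forall x\, \varphi(x,b_i)\to\varphi(x,b_j)$; taking $n=1$ this is exactly the condition that $\varphi(x,y)$ is a $(k,1)$-semi-equation, hence a $1$-semi-equation. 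The mild subtlety is that weak normality as defined only gives a pair of \emph{distinct} indices, and $(k,1)$-semi-equationality demands $i_1\neq j$, which is the same requirement; so no real work is needed here.

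For the converse, ``stable and $1$-semi-equation $\Rightarrow$ weakly normal'': suppose $\varphi(x,y)$ is a $(k,1)$-semi-equation for some $k$ and is stable, and suppose toward a contradiction that $\varphi(x,y)$ is not weakly normal. Then there is an infinite collection of pairwise distinct instances $\varphi(\mathbb{M},b_i)$ with nonempty total intersection; equivalently (by compactness and the characterization of weak normality in the definition) for every $m$ there exist pairwise distinct instances whose intersection is consistent. I want to leverage the $(k,1)$-semi-equation hypothesis to show that from a consistent intersection of many pairwise distinct instances one can extract two that are equivalent, contradicting distinctness — but a single application only yields an implication $\varphi(x,b_{i_1})\to\varphi(x,b_j)$, not an equivalence. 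The natural fix is to combine stability with the $1$-semi-equation property: stability forbids the ``half-graph'' pattern of strict implications, so a directed family of implications among pairwise $\varphi$-inequivalent instances would produce an infinite strictly descending (or, dually, ascending) chain of the sets $\varphi(\mathbb{M},b_i)$, and such a chain, being linearly ordered by inclusion with the $b_i$ realizing a definable order on $y$, gives an instance of the order property for $\varphi$ (or a related formula), contradicting stability. So the argument is: $1$-semi-equationality $+$ finitely many instances being inconsistent-unless-redundant forces, in the presence of many distinct instances, an infinite chain, which stability kills; hence only finitely many distinct instances can have consistent intersection, i.e. $\varphi$ is weakly normal.

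The main obstacle I anticipate is making the extraction of an infinite strictly decreasing chain precise and uniform: a $(k,1)$-semi-equation only tells us that \emph{among any $k$} consistent instances one is implied by \emph{one single other}, which is weaker than a uniform transitive order, so I will need a Ramsey/compactness argument to organize countably many pairwise-distinct instances $\varphi(\mathbb{M}, b_i)$, $i\in\omega$, with consistent intersection, into a sub-configuration on which the implication relation is ``coherent'' enough to yield a genuine chain. Concretely, I would take an indiscernible sequence $(b_i)_{i\in\omega}$ of such parameters (with the whole intersection still consistent, witnessed by a single $a$), apply the $(k,1)$-semi-equation property to the first $k$ terms to get $\models\forall x\,\varphi(x,b_{i_0})\to\varphi(x,b_{j_0})$ for some $i_0\neq j_0$, and then transport this implication along the indiscernible sequence; since the $\varphi(\mathbb{M},b_i)$ are pairwise distinct, this implication is strict on one side, and iterating along the indiscernible sequence produces the required infinite strict chain, hence an order pattern witnessing instability of $\varphi$ (or of a Boolean combination controlled by $\varphi$ and equality on $y$), a contradiction. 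Once the formula-level equivalence is in hand, the theory-level statement is immediate: $T$ is $1$-based iff every formula is a Boolean combination of weakly normal formulas (Fact~\ref{fac: 1-based iff weakly normal}) iff (using stability of $T$, so ``weakly normal'' $=$ ``stable $+$ $1$-semi-equation'' $=$ ``$1$-semi-equation'' for $\mathcal{L}$-formulas over a stable $T$) every formula is a Boolean combination of $1$-semi-equations, i.e. $T$ is $1$-semi-equational.
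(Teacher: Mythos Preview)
Your proposal is correct. The forward direction matches the paper exactly: $k$-weakly normal trivially gives $(k,1)$-semi-equation, and weakly normal implies equation, hence stable.

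For the converse you take a genuinely different route from the paper. The paper argues constructively: from stability (in fact just NSOP) it fixes a bound $\ell$ on the length of strict chains $\varphi(\mathbb{M},b_0)\subsetneq\dots\subsetneq\varphi(\mathbb{M},b_\ell)$, and then shows directly that $\varphi$ is $k^{\ell}$-weakly normal via a tree induction: given $k^{\ell}$ consistent instances indexed by $[k]^{\ell}$, one builds increasing chains of length $m+1$ among the instances whose indices extend a given $\sigma\in[k]^{\ell-m}$, using the $(k,1)$-semi-equation hypothesis at each branching to glue one more step onto a chain; at the root this produces a chain of length $\ell+1$, forcing two instances to coincide. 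Your argument instead extracts an indiscernible sequence of parameters with pairwise distinct, jointly consistent instances, applies the $(k,1)$-property once to a $k$-tuple, and propagates the resulting strict inclusion along the sequence to get an infinite strict chain, hence the order property for $\varphi$.

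Both arguments ultimately use only NSOP (your infinite chain directly gives SOP for $\varphi$, so ``contradicting stability'' could be sharpened to ``contradicting NSOP'', as the paper notes). The trade-off is that the paper's approach yields an explicit bound $k^{\ell}$ on the weak-normality constant, while yours is shorter and avoids the tree induction but is non-effective. One small point to make explicit in your write-up: when extracting the indiscernible sequence, the properties ``pairwise distinct instances'' and ``finitely consistent'' are part of the EM-type of the original sequence, so they survive the extraction---this is routine, but worth a sentence since the argument hinges on it.
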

\begin{proof}
	Clearly every $k$-weakly normal formula is a $\left(k,1\right)$-semi-equation and is also an equation, hence stable. 
%	If $\varphi\left(x,y\right)$ is unstable, then for
%every $k \in \mathbb{N}$ it has the \emph{$k$-order property}, meaning that there exist
%$\left(a_{i},b_{i}\right)_{i\in\left[k\right]}$ such that $\models\varphi\left(a_{i},b_{j}\right)\iff i \leq j$. Then 
%$\left(b_{1}, \ldots, b_{k}\right)$ is a counter-example to $\varphi\left(x,y\right)$
%being $k$-weakly normal, as $a_1 \models \bigwedge_{i \in [k]} \varphi(x,b_i)$, but for any $i < j \in [k]$ we have $\models \varphi(a_j, b_j) \land \neg \varphi(a_j,b_i)$.
Conversely, suppose that $\varphi\left(x,y\right)$ is a $\left(k,1\right)$-semi-equation and is stable, or just NSOP: there is some $\ell\in\omega$ such that there is no strictly increasing chain of sets of the form $\varphi\left(\mathbb{M},b_{0}\right) \subsetneq \ldots \subsetneq \varphi\left(\mathbb{M},b_{\ell}\right)$.
 We will
show that then $\varphi(x,y)$ is $k^{\ell}$-weakly normal.
Let $\left(b_{\eta}\right)_{\eta\in\left[k\right]^{\ell}}$ be such
that $\models\exists x\,\bigwedge_{\eta\in\left[k\right]^{\ell}}\varphi\left(x,b_{\eta}\right)$.
For $\sigma\in\left[k\right]^{\leq\ell}$, we will show by induction
on $m:=\ell-\left|\sigma\right|$ that there are pairwise distinct $\eta_{0}, \ldots, \eta_{m}\in\left[k\right]^{\ell}$
extending $\sigma$ (as sequences) such that $\varphi\left(\mathbb{ M},b_{\eta_{0}}\right)\subseteq\varphi\left(\mathbb{ M},b_{\eta_{1}}\right)\subseteq \ldots \subseteq\varphi\left(\mathbb{ M},b_{\eta_{m}}\right)$.
With $m=\ell$, so that $\sigma= \langle \rangle$ is the empty sequence, this implies by the choice of $\ell$ that there are $\eta \neq \eta'\in\left[k\right]^{\ell}$
such that $\varphi\left(\mathbb{ M},b_{\eta}\right)=\varphi\left(\mathbb{ M},b_{\eta'}\right)$,
as desired. The base case ($m=0$) is trivial, with $\eta_{0}=\sigma$.

Assume the claim holds for $m$, and let $\sigma\in\left[k\right]^{\ell-\left(m+1\right)}$.
For each $i\in\left[k\right]$, there exist pairwise distinct $\eta_{i,0}, \ldots, \eta_{i,m}\in\left[k\right]^{\ell}$
extending $\sigma^{\frown} i$ such that $\varphi\left(\mathbb{ M},b_{\eta_{i,0}}\right)\subseteq \ldots \subseteq\varphi\left(\mathbb{ M},b_{\eta_{i,m}}\right)$.
Among the sets $\left\{ \varphi\left(\mathbb{ M},b_{\eta_{i,0}}\right)\mid i\in\left[k\right]\right\} $,
one must be contained in another by $\left(k,1\right)$-semi-equationality. Say $\varphi\left(\mathbb{ M},b_{\eta_{j,0}}\right)\subseteq\varphi\left(\mathbb{ M},b_{\eta_{i,0}}\right)$
for some $i\neq j$. Then $\varphi\left(\mathbb{ M},b_{\eta_{j,0}}\right)\subseteq\varphi\left(\mathbb{ M},b_{\eta_{i,0}}\right)\subseteq  \varphi\left(\mathbb{ M},b_{\eta_{i,1}}\right)  \subseteq \ldots \subseteq\varphi\left(\mathbb{ M},b_{\eta_{i,m}}\right)$,
and $\eta_{j,0},\eta_{i,0}, \eta_{i,1}, \ldots ,\eta_{i,m}$ are pairwise distinct and extend $\sigma$, as desired.
The ``in particular'' part follows by Fact \ref{fac: 1-based iff weakly normal}.
\end{proof}

\begin{remark}
It is well known that the family of weakly normal formulas
is closed under conjunctions (but we could not find a direct reference). 
% Indeed, let $\varphi_{1}\left(x,y\right),\varphi_{2}\left(x,y\right)$ be both $k$-weakly
%normal, and let $\varphi\left(x,y\right):=\varphi_{1}\left(x,y\right)\land\varphi_{2}\left(x,y\right)$.
%Then $\varphi\left(x,y\right)$ is weakly normal (potentially, for some $K\gg k$).
%Assume $\varphi\left(x,y\right)$ is not weakly normal, then by compactness
%we have an infinite sequence $\left(b_{i}:i\in\omega\right)$ such
%that $\bigcap_{i \in \omega}\varphi\left(\mathbb{M},b_{i}\right)\neq\emptyset$, yet all of the
%sets $\varphi\left(\mathbb{M},b_{i}\right)$ are pairwise different.
%By the choice of $k$, for every $i_{1}<\ldots<i_{k} \in \omega$ there are some
%$j_{1}<j_{1}'<k$ and $j_{2}<j_{2}'<k$ such that $\varphi_{i}\left(\mathbb{M},b_{i_{j_{t}}}\right)=\varphi_{i}\left(\mathbb{M},b_{i_{j_{t}'}}\right)$
%for $t\in\left\{ 1,2\right\} $. By Ramsey's theorem, we can find
%an infinite subsequence $I\subseteq\omega$ such that $j_{1},j_{1}',j_{2},j_{2}'$
%are fixed for all increasing tuples $i_{1}<\ldots<i_{k}$ from $I$.
%But then, choosing such increasing tuples in $I$ appropriately, we
%find some $j<j'\in I$ such that $\varphi_{i}\left(\mathbb{M},b_{j}\right)=\varphi_{i}\left(\mathbb{M},b_{j'}\right)$
%for both $i\in\left\{ 1,2\right\} $ simultaneously, hence $\varphi\left(\mathbb{M},b_{j}\right)=\varphi\left(\mathbb{M},b_{j'}\right)$,
%a contradiction.
 While semi-equations are closed under conjunctions by Proposition \ref{prop: Bool combs}(2), this is not the case for the family of $1$-semi-equations.
Indeed, in  a dense linear order, the formulas $x < y_1$ and $x > y_2$ are $1$-semi-equations, but the formula $\varphi(x; y_1, y_2) := y_2 < x < y_1$ is not a $1$-semi-equation since we can have any number of intervals with a non-empty intersection, so that none of them is  contained in the other.

%\noindent (3)  The definition of $(2,1)$-semi-equationality is analogous to VC-minimality \cite{adler2008theories}, but for formulas $\varphi(x,y)$ with $x$ an arbitrary tuple of variables, as opposed to a singleton in the latter case.
\end{remark}
We  observe a connection to another notion of ``linearity'' for NIP theories considered in \cite{basit2021zarankiewicz}, where various combinatorial results are proved for relations that are Boolean combinations of \emph{basic} relations.
The following is \cite[Definition 2]{basit2021zarankiewicz}, in the case of binary relations (using the equivalence in \cite[Proposition 2.8 + Remark 2.9]{basit2021zarankiewicz}).
\begin{defn}\label{defn: basic}
A binary relation $R \subseteq X \times Y$  is \emph{basic}	 if there exist a linear order $(S,<)$ and functions $f: X \to S, g: Y \to S $  for   such that for any $a \in X, b \in Y$, $(a,b) \in R \iff f(a) < g(b)$.
\end{defn}

\begin{fact}\cite[Claim 1 in the proof of Proposition 2.5]{guingona2013vc}\label{fac: GuingonaLask}
	Let $X$ be a set and $\mathcal{F} \subseteq \mathcal{P}(X)$ a family of subsets of $X$ such that there are no $A,B \in \mathcal{F}$ satisfying $A \cap B \neq \emptyset, B \setminus A \neq \emptyset$ and $B \setminus A \neq \emptyset$ simultaneously. Then there exists a linear order $<$ on $X$ so that every $A \in \mathcal{F}$ is a $<$-convex subset of $X$.
\end{fact}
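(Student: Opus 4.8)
The plan is to exhaust $X$ one point at a time via Zorn's lemma, maintaining convexity at every stage. First I would record the content of the hypothesis: saying that no $A,B\in\mathcal{F}$ simultaneously satisfy $A\cap B\neq\emptyset$, $A\setminus B\neq\emptyset$ and $B\setminus A\neq\emptyset$ means precisely that any two members of $\mathcal{F}$ which intersect are $\subseteq$-comparable. In particular, for a fixed $x\in X$ the family $\{F\in\mathcal{F}:x\in F\}$ is a $\subseteq$-chain, since any two such sets contain $x$, hence meet, hence are comparable. This chain condition is the only way the hypothesis will be used.

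Next I would consider the poset $\mathbb{P}$ of all pairs $(A,<_A)$, where $A\subseteq X$ and $<_A$ is a linear order on $A$ such that $F\cap A$ is $<_A$-convex for every $F\in\mathcal{F}$, ordered by end-extension: $(A,<_A)\le(B,<_B)$ iff $A\subseteq B$ and $<_B$ restricts to $<_A$ on $A$. Then $\mathbb{P}\neq\emptyset$ since $(\emptyset,\emptyset)\in\mathbb{P}$, and the union of a chain in $\mathbb{P}$ again lies in $\mathbb{P}$: if $a<b<c$ in the union with $a,c\in F$, all three elements lie in a common member of the chain, on which $F$ is convex, so $b\in F$. By Zorn's lemma there is a maximal $(A^*,<^*)\in\mathbb{P}$, and it suffices to show $A^*=X$, for then $<^*$ is the desired order on $X$.

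The heart of the argument is to show that if $x\in X\setminus A^*$ then $(A^*,<^*)$ extends to a member of $\mathbb{P}$ with domain $A^*\cup\{x\}$, contradicting maximality. Since $A^*$ is already linearly ordered, inserting $x$ amounts to choosing a cut $(L,R)$ of $(A^*,<^*)$, and I would first check the elementary facts: writing $A^*=A^*_{<I}\sqcup I\sqcup A^*_{>I}$ for a convex $I\subseteq A^*$, the set $I\cup\{x\}$ is convex after placing $x$ at $(L,R)$ if and only if $A^*_{<I}\subseteq L\subseteq A^*_{<I}\cup I$; and for $I$ not containing $x$ in the corresponding $F$, the set $I$ stays convex if and only if $I\subseteq L$ or $I\subseteq R$. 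Thus I need a single cut satisfying the first condition for every nonempty $I$ of the form $F\cap A^*$ with $x\in F$, and the second for every $I$ of the form $F\cap A^*$ with $x\notin F$. I would take
\[
L:=\bigcup\{\,A^*_{<(F\cap A^*)}\ :\ F\in\mathcal{F},\ x\in F,\ F\cap A^*\neq\emptyset\,\},
\]
which is downward closed, hence defines a cut. The first condition follows directly from the fact that the traces $F\cap A^*$ over $F\ni x$ form a $\subseteq$-chain (so the union defining $L$ is "swept out" by a chain of the $A^*_{<I}$). For the second, note that if $F\in\mathcal{F}$ with $x\notin F$ meets some $F'\ni x$, then by comparability $F\subsetneq F'$, so $F\cap A^*\subseteq F'\cap A^*$; this, together with convexity, is what forces $F\cap A^*$ to lie entirely on one side of the cut.

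The main obstacle is precisely this last verification: one must choose the cut so that it simultaneously places $x$ adjacent to every $x$-containing trace and never slices through an $x$-avoiding trace, and the bookkeeping with the decompositions $A^*_{<I}\sqcup I\sqcup A^*_{>I}$ must be carried out carefully, since in the infinite setting the relevant chains of traces may fail to be well-founded and need not contain a least or greatest element. Once this is done, inserting $x$ at $(L,R)$ produces an extension in $\mathbb{P}$, contradicting the maximality of $(A^*,<^*)$, so $A^*=X$ and we are done.
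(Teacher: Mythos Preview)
The paper does not prove this statement; it is recorded as a Fact with a citation to Guingona--Laskowski, so there is no in-paper argument to compare against. Your Zorn's lemma approach is correct and is a standard way to establish such results.

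One remark on the final step. Your verification that an $x$-avoiding trace $I=F\cap A^*$ (with $x\notin F$) cannot straddle the cut is only sketched, and the case analysis deserves to be spelled out. Concretely: if $a\in I\cap L$ and $b\in I\setminus L$, pick $F'\ni x$ with $a\in A^*_{<I'}$ where $I'=F'\cap A^*$. If $b\in I'$ then $F\cap F'\ni b$, so $F\subsetneq F'$ (as $x\in F'\setminus F$), hence $a\in I\subseteq I'$, contradicting $a<I'$. Otherwise $b>I'$, so $a<I'<b$ and convexity of $I$ gives $\emptyset\neq I'\subseteq I\subseteq F$; thus $F\cap F'\neq\emptyset$, and again $x\in F'\setminus F$ forces $F\subsetneq F'$, whence $I\subseteq I'$, so $I=I'$ and $a\in I'$, a contradiction. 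This is exactly the ``comparability plus convexity'' mechanism you allude to, but the two cases are worth writing out, since your phrase ``if $F$ meets some $F'\ni x$'' does not by itself cover the situation where $F$ and $F'$ happen to be disjoint as subsets of $X$ yet their traces on $A^*$ interact with the cut.
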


\begin{prop}\label{prop: 1-semieq iff basic}
\begin{enumerate}
	\item Given a formula $\varphi(x,y) \in \mathcal{L}$, if the relation $R_{\varphi} := \{(a,b) \in \mathbb{M}^{x} \times \mathbb{M}^y : \models \varphi(a,b) \}$ is basic, then $\varphi(x,y)$ is a $(2,1)$-semi-equation.
	\item If $\varphi(x,y)$ is $(2,1)$-semi-equation, then $R_{\varphi} = R_1 \cap R_2$ for some (not necessarily definable) basic relations $R_1, R_2 \subseteq \mathbb{M}^{x} \times \mathbb{M}^y$.
	\end{enumerate}
\end{prop}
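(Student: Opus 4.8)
The plan is to treat the two directions separately: (1) is an immediate computation, and (2) goes through Fact \ref{fac: GuingonaLask}.

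For (1), suppose $R_{\varphi}$ is basic, witnessed by a linear order $(S,<)$ and functions $f \colon \mathbb{M}^{x} \to S$, $g \colon \mathbb{M}^{y} \to S$ with $\models \varphi(a,b) \iff f(a) < g(b)$. Then $\varphi(\mathbb{M},b) = f^{-1}(\{ s \in S : s < g(b) \})$, so for any $b_1, b_2 \in \mathbb{M}^{y}$ the one of $g(b_1), g(b_2)$ that is smaller in $(S,<)$ yields a column contained in the other. Thus $\mathcal{F}_{\varphi}$ is linearly ordered by inclusion, and in particular $\varphi(x,y)$ is a $(2,1)$-semi-equation (indeed the implication in Definition \ref{def: (k,n)-semieq} holds here even without the consistency hypothesis).

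For (2), first note that $\varphi(x,y)$ being a $(2,1)$-semi-equation says exactly that there are no $b_1, b_2 \in \mathbb{M}^{y}$ with $\varphi(\mathbb{M},b_1) \cap \varphi(\mathbb{M},b_2) \neq \emptyset$, $\varphi(\mathbb{M},b_1) \nsubseteq \varphi(\mathbb{M},b_2)$ and $\varphi(\mathbb{M},b_2) \nsubseteq \varphi(\mathbb{M},b_1)$ all at once; that is, $\mathcal{F}_{\varphi}$ satisfies the hypothesis of Fact \ref{fac: GuingonaLask}. So we obtain a linear order $\prec$ on $X := \mathbb{M}^{x}$ for which every $C_b := \varphi(\mathbb{M},b)$ is $\prec$-convex. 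Letting $D_b$ and $U_b$ denote the $\prec$-downward and $\prec$-upward closures of $C_b$, convexity gives $C_b = D_b \cap U_b$; hence it suffices to exhibit basic relations $R_1, R_2$ with columns $R_1(\cdot,b) = D_b$ and $R_2(\cdot,b) = U_b$, as then $R_{\varphi} = R_1 \cap R_2$.

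To realize $R_1$, take $S_1$ to be the set of $\prec$-downward-closed subsets of $X$ ordered by $\subseteq$ — a linear order, since of any two downward-closed subsets of a linear order one contains the other — and put $g_1(b) := D_b$ and $f_1(a) := \{ a' \in X : a' \prec a \}$, both elements of $S_1$. Then $f_1(a) \subsetneq g_1(b) \iff a \in D_b$: if $a \in D_b$ then $\{ a' : a' \prec a \} \subseteq D_b$, properly because $a$ itself lies in $D_b$; and if $a \notin D_b$ then downward-closedness of $D_b$ forces $D_b \subseteq \{ a' : a' \prec a \}$, so the inclusion cannot be proper. Hence $R_1$ is basic, and the same construction applied to the reverse of $\prec$ shows $R_2$ is basic, completing the proof. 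The essential points are the identification of $(2,1)$-semi-equationality with the laminarity hypothesis of Fact \ref{fac: GuingonaLask} and the observation that a convex set is the intersection of its downward and upward closures, each ranging over a chain; the remaining checks — that $\subseteq$ linearly orders the downward-closed subsets, and the strict-versus-non-strict bookkeeping in the last equivalence — are routine and present no genuine obstacle.
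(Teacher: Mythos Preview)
Your proof is correct and follows essentially the same approach as the paper: part (1) is the same direct computation, and part (2) likewise invokes Fact \ref{fac: GuingonaLask} to obtain a linear order making each $\varphi(\mathbb{M},b)$ convex, then decomposes each convex set as an intersection of a downward-closed and an upward-closed piece, each giving a basic relation. The only difference is cosmetic: the paper passes to the Dedekind completion and uses infima/suprema (deferring the verification that the resulting relations are basic to \cite[Remark 2.7]{basit2021zarankiewicz}), whereas you work directly with the poset of $\prec$-downward-closed subsets under inclusion, which makes the argument self-contained and sidesteps any endpoint bookkeeping.
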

\begin{proof}
	(1) Let $(S,<), f,g$ be as in Definition \ref{defn: basic} for $R_{\varphi}$.
	Given any $b_1, b_2 \in \mathbb{M}^{y}$, the sets $\left\{x \in S : x < g(b_i) \right\}$ for $i \in \{1,2\}$ are initial segments of $S$. Say $g(b_1) \leq g(b_2)$. Then for any $a \in \mathbb{M}^x$, $f(a) < g(b_1) \Rightarrow f(a) < g(b_2)$, so $\varphi(\mathbb{M},b_1) \subseteq \varphi(\mathbb{M},b_2)$, and the other case is symmetric.
	
\noindent 	(2) If $\varphi(x,y)$ is a $(2,1)$-semi-equation, then the family $\mathcal{F}_{\varphi}$ of subsets of $\mathbb{M}^{x}$ satisfies the assumption in Fact \ref{fac: GuingonaLask}. Hence there exists a (not necessarily definable) linear ordering $<'$ of $\mathbb{M}^{x}$ so that for every $b \in \mathbb{M}^y$,  $\varphi(\mathbb{M},b)$ is $<'$-convex. Let $(S,<)$ be the Dedekind completion of $\left( \mathbb{M}^x, <' \right)$. Consider the functions $g_1, g_2: \mathbb{M}^{y} \to S$ so that  $g_1(b)$ is the infimum of $\varphi(\mathbb{M},b)$ in $S$, and $g_2(b)$ is the supremum of $\varphi(\mathbb{M},b)$ in $S$. Then $R_{\varphi} = \left\{ (a,b) \in \mathbb{M}^x \times \mathbb{M}^y  : g_1(b)\leq a \right\} \cap \left\{ (a,b) \in \mathbb{M}^x \times \mathbb{M}^y: a  \leq  g_2(b) \right\}$, and both of this relations are basic (see \cite[Remark 2.7]{basit2021zarankiewicz}).
	\end{proof}

\begin{remark}\label{rem: Zarank etc for 1-semieq}
(1) In view of Proposition \ref{prop: 1-semieq iff basic}(2), if $\varphi(x,y)$ is a Boolean combination of $(2,1)$-semi-equations, then by \cite[Theorem 2.17 + Remark 2.20]{basit2021zarankiewicz} the relation $R_{\varphi}$ satisfies an almost linear Zarankiewicz bound.  In particular, no infinite field can be defined in a $(2,1)$-semi-equational theory (see \cite[Corollary 5.11]{basit2021zarankiewicz} or \cite[Proposition 6.3]{walsberg2021notes} for a detailed explanation).

\noindent (2) If $\varphi(x,y)$ is a $(2,1)$-semi-equation, then $R_{\varphi}$ need not be basic. Indeed, the family of cosets of a subgroup is $(2,1)$-semi-equational. If it was basic, then its complement is also basic, hence $(2,1)$-semi-equational by the lemma above. But if the index of the subgroup is $\geq 3$, the family of complements of cosets is clearly not $(2,1)$-semi-equational.
\end{remark}

\begin{problem}
	If $\varphi(x,y)$ is a $(k,1)$-semi-equation for $k \geq 3$, is it still a Boolean combination of basic relations?
\end{problem}

\begin{problem}\label{prob: 1-semieq no field}
	Show that no infinite field is definable in a $1$-semi-equational theory.
\end{problem}

\begin{problem}
	Is every $1$-semi-equational theory rosy? (Note that dense trees are not $1$-semi-equational by Theorem \ref{thm: trees not 1-semieq}.)
\end{problem}

\section{Examples of semi-equational theories}\label{sec: semieq examples}
\subsection{O-minimal structures}\label{sec: ex omin}

All $o$-minimal theories (and more generally, ordered dp-minimal theories) are distal (see \cite{simon2013distal}), hence they are  weakly semi-equational by Remark \ref{rem: distal implies w semieq}. Semi-equationality appears more subtle. An $o$-minimal structure $\mathcal{M}$ is \emph{linear} if it has the \emph{CF property} in the sense of \cite{loveys1993linear}, i.e.~if every interpretable normal family of curves is of dimension at most $1$. This is a weakening of local modularity of the pregeometry induced by the algebraic closure, and  by the $o$-minimal trichotomy \cite{peterzil1998trichotomy} it is equivalent to no infinite field being definable in $\mathcal{M}$. We will only need the following fact about linear $o$-minimal structures from \cite{loveys1993linear}:
\begin{fact}\label{fac: CF}
	Let $T = \Th(\mathcal{M})$, with $\mathcal{M} = \left(M; <, +, \ldots \right)$ a linear $o$-minimal expansion of a group. Let $\mathcal{L} = (<,+,\ldots)$ be the language of $T$. A \emph{partial endomorphism} of $\mathcal{M}$ is a map $f: (-c,c) \to M$, for $c$ an element of $M$ or $\infty$, such that if $a,b,a+b$ are all in the domain, then $f(a+b) = f(a) + f(b)$.
Let $\mathcal{M}'$ be the reduct of $\mathcal{M}$ to the language $\mathcal{L}'$ consisting of $+,<$, constant symbols naming $\acl_{\mathcal{L}}(\emptyset)$, and for each $\mathcal{L}(\emptyset)$-definable partial endomorphism $f: (-c,c) \to M$ with $c \in \acl_{\mathcal{L}}(\emptyset)$ or $c = \infty$, a unary function symbol interpreted as $f$ on $(-c,c)$ and as $0$ outside of the domain of $f$. Let $T' := \Th_{\mathcal{L}'}(\mathcal{M}')$. 

\begin{enumerate}
	\item \cite[Proposition 4.2]{loveys1993linear} A subset of $M^n$ is $\emptyset$-definable in $\mathcal{M}$ if and only if it is $\emptyset$-definable in $\mathcal{M}'$.
	\item \cite[Corollary 6.3]{loveys1993linear} $T'$ admits quantifier elimination in the language $\mathcal{L}'$. 
\end{enumerate}

\end{fact}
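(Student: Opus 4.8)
The plan is to derive both parts directly from \cite{loveys1993linear}, the real work being to match the definitions in the statement with the framework considered there. First I would recall that a linear $o$-minimal expansion of a group in the sense above is precisely one with the CF property of \cite{loveys1993linear}, and that in such a structure every $\mathcal{L}(\emptyset)$-definable unary function is, on each piece of a suitable $\emptyset$-definable partition into cells, of the form $x \mapsto f(x-a)+b$ for an $\mathcal{L}(\emptyset)$-definable partial endomorphism $f$ and parameters $a,b$; more generally $\emptyset$-definable sets are built from graphs of such functions together with $+$ and $<$. The key closure observation is that the maximal domain $(-c,c)$ of an $\mathcal{L}(\emptyset)$-definable partial endomorphism is itself an $\mathcal{L}(\emptyset)$-definable set, so its right endpoint $c$ lies in $\acl_{\mathcal{L}}(\emptyset)$ (or $c = \infty$); hence the function symbols added to $\mathcal{L}'$ already capture all the relevant partial endomorphisms, and the translation parameters $a,b$ can be absorbed using the group operation at the cost of quantifiers, which is exactly why $\mathcal{M}$ and $\mathcal{M}'$ have the same $\emptyset$-definable sets. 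This gives (1), i.e.\ \cite[Proposition 4.2]{loveys1993linear}.

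For (2) I would invoke \cite[Corollary 6.3]{loveys1993linear}: once $\mathcal{L}'$ contains $+$, $<$, constants for $\acl_{\mathcal{L}}(\emptyset)$, and a (now total, via the ``$0$ off the domain'' convention) symbol for each $\mathcal{L}(\emptyset)$-definable partial endomorphism, the resulting theory $T'$ eliminates quantifiers. The substantive input here, which I would not reprove, is the cell-decomposition and linear-algebra argument of \cite{loveys1993linear} showing that in a linear $o$-minimal structure every definable set is a finite Boolean combination of sets defined by (in)equalities between terms of the form $\sum_i n_i x_i + \sum_j f_j(t_j) + c$ with $n_i \in \mathbb{Z}$, $f_j$ partial endomorphisms, $t_j$ terms and $c$ a constant, so that after adding the listed symbols no quantifier is needed.

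Since all the genuinely difficult mathematics is imported from \cite{loveys1993linear}, the only obstacle on our side is bookkeeping. One must check that extending each partial endomorphism by $0$ outside its domain does not create definable sets absent from $\mathcal{M}$ — it does not, since the domain is $\acl_{\mathcal{L}}(\emptyset)$-definable and $0$ is $\emptyset$-definable — and that restricting attention to partial endomorphisms whose domain endpoint lies in $\acl_{\mathcal{L}}(\emptyset) \cup \{\infty\}$, rather than allowing arbitrary parameters, loses nothing; this again follows from the $\emptyset$-definability of maximal domains noted above. Beyond these routine verifications the statement is a direct consequence of the two cited results.
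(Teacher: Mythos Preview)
Your proposal is appropriate, but note that the paper does not prove this statement at all: it is stated as a \emph{Fact} with direct citations to \cite[Proposition 4.2 and Corollary 6.3]{loveys1993linear} and used as a black box. Your sketch of how to extract the result from that reference, together with the bookkeeping about extending partial endomorphisms by $0$ and the $\emptyset$-definability of their maximal domains, is exactly the kind of verification one does when quoting such a fact, and goes somewhat beyond what the paper itself records.
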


\begin{prop}\label{prop: mod o-min 1-semieq}
	Let $T = \Th(\mathcal{M})$, with $\mathcal{M} = \left(M; <,  \ldots \right)$ an $o$-minimal structure.
	
	\begin{enumerate}
		\item If $T$ is an expansion of an ordered  group and linear, then $T$ is $(2,1)$-semi-equational.
		\item Conversely, if $T$ is $(2,1)$-semi-equational, then $T$ is linear.
	\end{enumerate}
\end{prop}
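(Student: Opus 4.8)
\textbf{Proof proposal for Proposition \ref{prop: mod o-min 1-semieq}.}

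For part (1), the plan is to use the quantifier elimination result of Fact \ref{fac: CF} to reduce to a concrete analysis of quantifier-free formulas in the language $\mathcal{L}'$. Since $\mathcal{M}$ and $\mathcal{M}'$ have the same $\emptyset$-definable sets, it suffices to show that every $\mathcal{L}'$-formula $\varphi(x,y)$ (with $x,y$ tuples) is a Boolean combination of $(2,1)$-semi-equations. By quantifier elimination, $\varphi(x,y)$ is a Boolean combination of atomic $\mathcal{L}'$-formulas, each of which is of the form $t_1(x,y) \bowtie t_2(x,y)$ for $\bowtie \in \{=, <\}$, where $t_1,t_2$ are terms built from $+$, the partial-endomorphism function symbols, and the constants for $\acl_\mathcal{L}(\emptyset)$. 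Each such term, after expanding, is of the form $h(x) + g(y) + c$ where $h$ is a $\mathbb{Z}$-linear-combination-of-partial-endomorphisms function of $x$, $g$ likewise in $y$, and $c$ is a constant. The key step is thus to show that a formula of the shape $h(x) + g(y) < 0$ (or $= 0$), with $h,g$ fixed definable functions of the displayed variables, is a $(2,1)$-semi-equation, using Proposition \ref{prop: 1-semieq iff basic}(1): the relation $R_\varphi = \{(a,b) : h(a) + g(b) < 0\}$ is basic via $f(a) := h(a)$ into the ordered group (with its linear order) and the map $b \mapsto -g(b)$, so $\varphi$ is a $(2,1)$-semi-equation; the equality case $h(x) + g(y) = 0$ is the intersection of $h(x) + g(y) < 0$'s negation-flip, i.e.\ a Boolean combination of two such, which is fine since we only need Boolean combinations at the level of the theory. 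I need to be slightly careful about the domains of the partial endomorphisms (they are defined piecewise, as the endomorphism on $(-c,c)$ and $0$ outside), but the defining conditions $|x_i| < c$ are again of the basic form and enter only as extra atomic formulas in the Boolean combination, so this does not obstruct the argument.

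For part (2), the plan is the contrapositive: if $T$ is not linear, then by the $o$-minimal trichotomy \cite{peterzil1998trichotomy} an infinite field is definable in $\mathcal{M}$, and then Remark \ref{rem: Zarank etc for 1-semieq}(1) applies: no infinite field is definable in a $(2,1)$-semi-equational theory (since the graph of multiplication would have to be a Boolean combination of $(2,1)$-semi-equations, contradicting the almost-linear Zarankiewicz bound from \cite[Theorem 2.17 + Remark 2.20]{basit2021zarankiewicz} as explained in \cite[Corollary 5.11]{basit2021zarankiewicz}). Hence $T$ is linear.

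The main obstacle I anticipate is in part (1): bookkeeping the exact form of $\mathcal{L}'$-terms and verifying that, after separating the $x$-part from the $y$-part, each atomic formula really does have an $R_\varphi$ of basic form. The partial endomorphisms are only group homomorphisms on an interval, not globally, so "$h(x)$" is genuinely a piecewise-defined function of the tuple $x$, and one must check that the finitely many pieces are cut out by atomic formulas that are themselves handled by the same argument — i.e.\ the reduction is by induction on term complexity and the pieces never reintroduce a non-basic relation. Once this piecewise structure is organized correctly, each atomic piece is linear in the ordered-group sense and Proposition \ref{prop: 1-semieq iff basic}(1) closes it out; the rest is routine.
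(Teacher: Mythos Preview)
Your proposal is correct and follows essentially the same route as the paper. For part (2) it is identical; for part (1) the paper also reduces via Fact \ref{fac: CF} to atomic $\mathcal{L}'$-formulas and then to formulas of the separated shape $f(x)\ \square\ g(y)+c$ with $\square\in\{<,=,>\}$, which are basic and hence $(2,1)$-semi-equations by Proposition \ref{prop: 1-semieq iff basic}(1). The one difference is that the paper outsources the step you flag as the main obstacle --- separating variables despite the piecewise nature of the partial-endomorphism terms --- to an external reference (the proof of \cite[Theorem 4.3]{anderson2021combinatorial}), which produces \emph{total} multivariate $\mathcal{L}'(\emptyset)$-definable homomorphisms $f,g$, whereas you plan an induction on term complexity handling the domain conditions directly. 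Both are viable; just note that the case-splitting conditions are not only of the form $|x_i|<c$ but can mix variables (e.g.\ $|x_i+y_j|<c$ or $|g(x_i)+y_j|<c$), though these too are basic once rewritten as $h(x)< c - g'(y)$, so your inductive plan goes through.
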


\begin{proof}

(1) Let $\mathcal{L} = (<,+,\ldots)$, $\mathcal{M}'$ and $\mathcal{L}'$ be as in Fact \ref{fac: CF}. By Fact \ref{fac: CF}(1) it suffices to show that $T' := \Th_{\mathcal{L}'} \left(\mathcal{M}'\right)$ is $(2,1)$-semi-equational. By Fact \ref{fac: CF}(2), it then  suffices to show that every atomic $\mathcal{L}'$-formula $\varphi(x,y)$, with $x,y$ arbitrary finite tuples of variables, is equivalent in $T'$ to a Boolean combination of $(2,1)$-semi-equations. 
 By the proof of Theorem 4.3 in \cite{anderson2021combinatorial}, every atomic $\mathcal{L}'$-formula $\varphi(x,y)$ is equivalent in $T'$ to a Boolean combination of atomic formulas of the form $f(x)  \square  g(y) + c$, where $\square \in \left\{<, = , > \right\}$,  $f: M^{|x|} \to M, g: M^{|y|} \to M$ are total multivariate $\mathcal{L}'(\emptyset)$-definable homomorphisms and $c \in \dcl_{\mathcal{L}'}(\emptyset)$. Every formula of this form clearly defines a basic relation on $M^{|x|} \times M^{|y|}$, hence is a $(2,1)$-semi-equation by Proposition \ref{prop: 1-semieq iff basic}(1).

\noindent (2) By the $o$-minimal trichotomy theorem (see \cite{peterzil1998trichotomy} and  Remark 2 after the statement of Theorem 1.7 there), if $\mathcal{M}$ is not linear, then it defines an infinite field. But then Remark \ref{rem: Zarank etc for 1-semieq}(1) implies that $T$ is not $(2,1)$-semi-equational.
%	In an o-minimal ordered vector space $\mathcal{ V}$
%over an ordered division ring $R$, inequalities between terms (i.e.
%formulas $\varphi\left(x,y\right)$ of the form $t\left(x,y\right)\geq v$
%or $t\left(x,y\right)>v$, where $v\in\mathcal{ V}$ and $t$ is some
%linear combination of variables) are strong semi-equations. Consequently,
%the theory is strongly semi-equational.
%
%Inequalities between terms are all equivalent to formulas of
%the form $\alpha\cdot x\geq\beta\cdot y+v$ or $\alpha\cdot x>\beta\cdot y+v$,
%where $\alpha\in R^{\left|x\right|}$, $\beta\in R^{\left|y\right|}$,
%and $v\in\mathcal{ V}$. Let $b\in\mathcal{ V}^{\left|y\right|}$, and $\left(a_{i}\right)_{i\in\mathbb{Z}}$
%be indiscernible, with $a_{i}\in\mathcal{ V}^{\left|x\right|}$. By indiscernibility,
%$\left(\alpha\cdot a_{i}\right)_{i\in\mathbb{Z}}$ is monotone, since
%the truth-value of $\alpha\cdot a_{i}\geq\alpha\cdot a_{i+1}$ must
%not depend on $i$. Thus, if $\alpha\cdot a_{-1}\geq\beta\cdot b+v$
%and $\alpha\cdot a_{1}\geq\beta\cdot b+v$, then $\alpha\cdot a_{0}\geq\beta\cdot b+v$,
%and likewise if $\alpha\cdot a_{-1}>\beta\cdot b+v$ and $\alpha\cdot a_{1}>\beta\cdot b+v$,
%then $\alpha\cdot a_{0}>\beta\cdot b+v$. Thus, using 1.3(1), these
%formulas are strong semi-equations.
\end{proof}

\begin{problem}
Is every $o$-minimal $1$-semi-equational structure linear? 
A positive answer would follow from a positive answer to Problem \ref{prob: 1-semieq no field}.
\end{problem}

\begin{problem}\label{prob: reals semieq}
	Which $o$-minimal theories are semi-equational? In particular, is $\Th(\mathbb{R}, +, \times)$ semi-equational? 
	\end{problem}

\subsection{Colored linear orders}\label{sec: lin ords}

	Given a linearly ordered set $(S,<)$, a binary relation $R \subseteq S^2$ 
is \emph{monotone} if $(x,y) \in R$, $x' \leq x$, and $y\leq y'$ 
implies $\left(x',y'\right) \in R$.

\begin{fact}	Let $\mathcal{M} = \left( M, <, \left(C_i \right)_{i \in I}, \left(R_j \right)_{j \in J} \right)$ be a linear order expanded by arbitrary unary ($C_i$) and monotone binary ($R_j$) relations. Then $\Th \left( \mathcal{M} \right)$ is $(2,1)$-semi-equational.
\end{fact}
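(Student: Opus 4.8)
The plan is to prove a quantifier-elimination-type normal form and read off $(2,1)$-semi-equationality from it, in the spirit of the proof of Proposition~\ref{prop: mod o-min 1-semieq}(1). Call an $\mathcal{L}$-formula $\rho(w,w')$ in two free variables \emph{monotone-like} if, for some ordering of its variables as a pair $(u,v)$, the relation it defines on $M^2$ is either monotone in the above sense or the complement of such a relation; equivalently, for each value of $u$ the $u$-fibre $\{v:\rho(u,v)\}$ is an initial or a final segment of $(M,<)$, consistently, with the corresponding cut moving monotonically in $u$. The first, easy observation is that for \emph{any} way of splitting the variables of a formula into an $x$-tuple and a $y$-tuple, a unary $\mathcal{L}$-formula (one free variable) and a monotone-like binary $\mathcal{L}$-formula $\rho(w,w')$ are both $(2,1)$-semi-equations for that partition: the family $\mathcal{F}_\varphi=\{\varphi(\mathbb{M},b):b\in\mathbb{M}^{y}\}$ is then a singleton, equal to $\{\emptyset,\mathbb{M}^{x}\}$, or (when $w$ and $w'$ lie on opposite sides of the partition) a chain under inclusion by monotonicity, and in every case has breadth $\le 1$, so Proposition~\ref{prop: semieq iff fin breadth} applies.

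The heart of the argument is the normal form: \emph{every $\mathcal{L}$-formula $\chi(w_1,\dots,w_N)$ is equivalent in $\Th(\mathcal{M})$ to a Boolean combination of $\mathcal{L}$-formulas each of which is unary or monotone-like binary}. Granting this, any partitioned formula $\varphi(x,y)$ becomes a Boolean combination of such $\psi_i\in\mathcal{L}$, each a $(2,1)$-semi-equation for the induced partition by the previous paragraph; since ``$\psi$ is a $(2,1)$-semi-equation'' is expressed by a single $\mathcal{L}$-sentence and depends only on the definable set $\psi$ defines, this shows $\Th(\mathcal{M})$ is $(2,1)$-semi-equational. (Each formula involves only finitely many of the $C_i,R_j$, so one may assume the index sets finite.)

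I would prove the normal form by induction on $\chi$. Atomic formulas are already of the right shape: $C_i(w_j)$ is unary; $w_j<w_k$ and $R_j(w_k,w_l)$ are monotone after suitably ordering their variables (the latter exactly by the hypothesis on $R_j$); and $w_j=w_k$ is a Boolean combination of $w_j<w_k$ and $w_k<w_j$. Boolean combinations are immediate (and the complement of a monotone-like formula is monotone-like). The only substantial step is eliminating $\exists z$ from $\exists z\,\psi(z,w_1,\dots,w_N)$, where by induction $\psi$ is a Boolean combination of unary and monotone-like binary $\mathcal{L}$-formulas. Passing to disjunctive normal form and distributing $\exists z$ over disjuncts, pulling out conjuncts not mentioning $z$ (already of the required form), and substituting in any conjunct of the form $z=w_i$, we reduce to formulas $\exists z\bigl(\alpha(z)\wedge\bigwedge_i\gamma_i(z,w_i)\bigr)$, where $\alpha$ defines a fixed set $A\subseteq M$ and each $\gamma_i$ is a conjunction of (possibly negated) monotone-like binary formulas in $z,w_i$. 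The key structural point is that every such binary formula arising so far is monotone or anti-monotone with a consistent orientation, so for each value of $w_i$ the set $I_i(w_i):=\{z:\models\gamma_i(z,w_i)\}$ is an intersection of initial and final segments of $(M,<)$, hence \emph{convex}, and its lower and upper endpoint cuts $\ell_i(w_i),r_i(w_i)\in\widehat{M}$ (Dedekind completion) are \emph{non-decreasing} in $w_i$. Then $\bigcap_i I_i(w_i)$ is convex with endpoints $\max_i\ell_i(w_i)$ and $\min_i r_i(w_i)$, and $\exists z(\dots)$ asserts that $A$ meets this convex set. Writing $h_A(t):=\sup\{s\in A:s<t\}$ (non-decreasing) and using that non-decreasing maps commute with $\min$, ``$A$ meets $\bigcap_i I_i(w_i)$'' rewrites as $\bigwedge_{i,j}\bigl(h_A(r_j(w_j))>\ell_i(w_i)\bigr)$; each conjunct is an inequality between two non-decreasing functions of $w_j,w_i$, hence a monotone-like binary $\mathcal{L}$-formula in those two variables (unary if $i=j$), and is $\emptyset$-definable since $r_j,\ell_i$ and $A$ are definable from $\gamma_j,\gamma_i,\alpha$. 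This yields the normal form and completes the proof.

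The main obstacle is exactly this elimination step, and within it the bookkeeping of whether endpoints are \emph{attained}: the convex sets $I_i(w_i)$ and the set $A$ need not contain their suprema and infima, and the degenerate cases ($I_i(w_i)=\emptyset$, or $A$ disjoint from $\bigcap_i I_i(w_i)$) must be separated out, so the clean identity above has to be refined by finitely many further cases involving unary predicates like ``$I_i(w_i)$ has a largest element'' and monotone-like comparisons like ``$\max I_i(w_i)\le\inf I_j(w_j)$'' — all still unary or monotone-like, so no genuinely new type of formula appears. Relatedly, when $(M,<)$ is not dense one must also track successor/predecessor configurations, but the relevant predicates (e.g.\ ``$\exists z\,w_i<z<w_j$'', which is monotone, together with its finite iterates) are again monotone-like or unary; carrying out this bookkeeping carefully, rather than any conceptual difficulty, is where the remaining work lies.
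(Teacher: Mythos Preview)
Your approach is essentially the same as the paper's: reduce every formula to a Boolean combination of unary and monotone binary $\mathcal{L}$-definable relations, then observe each such relation is a $(2,1)$-semi-equation (the family of fibres is a chain, hence breadth $\le 1$). The only difference is packaging: the paper expands $\mathcal{M}$ by \emph{all} $\emptyset$-definable unary and monotone binary relations and then simply cites \cite[Proposition~4.1]{simon2011dp} for quantifier elimination in that expansion, whereas you sketch this quantifier elimination by hand. Your sketch is along the right lines (convex $z$-fibres with monotone endpoint cuts, reducing $\exists z$ to a conjunction of monotone comparisons), and the bookkeeping you flag---attained versus unattained endpoints, empty fibres, discrete neighbourhoods---is exactly what a full proof of Simon's result has to handle; but there is no need to redo this, and the paper's one-line citation is the cleaner route. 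One small correction: breadth $\le 1$ gives $(2,1)$-semi-equationality directly from Definition~\ref{def: (k,n)-semieq}, not from Proposition~\ref{prop: semieq iff fin breadth} (which concerns the equivalence for semi-equations in general).
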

\begin{proof}
Let $\mathcal{M}'$ be an expansion of $\mathcal{M}$ obtained by naming all $\mathcal{L}_{\mathcal{M}}(\emptyset)$-definable unary and monotone binary relations, then a subset of $M^n$ is $\emptyset$-definable in $\mathcal{M}$ if and only if it is $\emptyset$-definable in $\mathcal{M}'$, so it suffices to show that $T' := \Th \left(\mathcal{M}' \right)$ is $(2,1)$-semi-equational.
By \cite[Proposition 4.1]{simon2011dp}, $T'$ eliminates quantifiers. Note that if $R(x,y)$ is monotone, then it is a $(2,1)$-semi-equation (given any $b_1 \leq b_2 \in M$, for any $a \in M$ we have $\models R(a,b_1) \rightarrow R(a,b_2)$ by monotonicity, hence $R(M,b_1) \subseteq R(M,b_2)$). And any unary relation $C_i(x)$ is trivially a $(2,1)$-semi-equation, hence $T'$ is $(2,1)$-semi-equational.
\end{proof}

%\begin{prop}
%	Every monotone relation is a $(2,1)$-semi-equation,
%and consequently, every linear order is  $1$-semi-equational.
%\end{prop}
%
%\begin{proof}
%	If a binary relation $R$ is not a strong semi-equation, then
%there is a sequence $\left(a_{i},b_{i}\right)_{i\in\mathbb{N}}$ such
%that $\models R\left(a_{i},b_{j}\right)\iff i\neq j$. Wlog $a_{0}<a_{1}$.
%Then $\models R\left(a_{1},b_{0}\right)$ but $\not\models R\left(a_{0},b_{0}\right)$,
%so $R$ is not monotone. That every linear order is strongly semi-equational
%thus follows by fact 1.11. 
%\end{proof}

\subsection{Cyclic orders}\label{sec: circ orders}

% A \emph{cyclic order}  is a structure with a single ternary relation ${\circlearrowright}$ satisfying cyclicity (if ${\circlearrowright}\left(a,b,c\right)$, then ${\circlearrowright}\left(b,c,a\right)$), antisymmetry (if ${\circlearrowright}\left(a,b,c\right)$, then not ${\circlearrowright}\left(c,b,a\right)$), transitivity (if ${\circlearrowright}\left(a,b,c\right)$ and ${\circlearrowright}\left(a,c,d\right)$, then ${\circlearrowright}\left(a,b,d\right)$) and totality (if $a,b,c$ are distinct, then ${\circlearrowright}\left(a,b,c\right)$ or ${\circlearrowright}\left(c,b,a\right)$).
 A cyclic order ${\circlearrowright}(x,y,z)$ (see e.g.~\cite[Section 5]{chernikov2012forking} or \cite{tran2017family}) is \emph{dense} if its underlying set is infinite and for every distinct $a,b$, there is $c$ such that ${\circlearrowright}\left(a,b,c\right)$, and $d$ such that ${\circlearrowright}\left(d,b,a\right)$. The theory $T_{{\circlearrowright}}$ of dense cyclic orders is complete and has quantifier elimination (see e.g. \cite[Proposition 3.7]{semieqExp}).

%\begin{proof}
% For a dense cyclic order $\mathcal{M}$, let $c\in \mathcal{ M}$ be arbitrary.
%Then there is a dense linear order $<$ on $\mathcal{ M}\setminus\left\{ c\right\} $
%defined by $a<b\iff{\circlearrowright}\left(a,b,c\right)$. The theory
%of dense linear orders is complete, and ${\circlearrowright}$ can be
%recovered from $<$.
%
%Let $\mathcal{ M},\mathcal{ N}$ be dense cyclic orders, $A\subset\mathcal{ M}$
%finite, $m\in\mathcal{ M}\setminus A$, and $f:A\rightarrow\mathcal{ N}$
%a  partial isomorphism. Then $f$ can be extended to a partial isomorphism $A\cup\left\{ m\right\} \rightarrow\mathcal{ N}$
%as follows. If $A=\emptyset$, then $m$ can be sent to any element
%of $\mathcal{ N}$. Otherwise, there are elements $\ell,r\in A$ closest
%to $m$ on either side, in the sense that for $a\in A\setminus\left\{ \ell\right\} $,
%$\mathcal{ M}\models{\circlearrowright}\left(\ell,m,a\right)$, and for
%$a\in A\setminus\left\{ r\right\} $, $\mathcal{ M}\models{\circlearrowright}\left(a,m,r\right)$.
%By density of $\mathcal{ N}$, there is $n\in\mathcal{ N}$ such that $\mathcal{ N}\models{\circlearrowright}\left(f\left(\ell\right),n,b\right)$
%for $b\in f\left(A\setminus\left\{ \ell\right\} \right)$ and $\mathcal{ N}\models{\circlearrowright}\left(b,n,f\left(r\right)\right)$
%for $b\in f\left(A\setminus\left\{ r\right\} \right)$. Then $m$ can be
%sent to $n$.
%\end{proof}

\begin{prop}\label{prop: cyclic order semieqs}
\begin{enumerate}
	\item The theory $T_{{\circlearrowright}}$ is not semi-equational.
\item The theory $T_{{\circlearrowright}}$ expanded with one constant
symbol $c$ is $(2,1)$-semi-equational.
\end{enumerate}

\end{prop}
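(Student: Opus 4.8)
The plan is to treat the two parts separately, using the quantifier elimination for $T_{\circlearrowright}$ throughout. For part (1), I would show directly that the ternary relation ${\circlearrowright}(x,y,z)$, viewed as a partitioned formula $\varphi(x; y, z) := {\circlearrowright}(x,y,z)$ (or perhaps with the variable partitioned the other way), fails to be a Boolean combination of semi-equations. Since semi-equations are closed under conjunctions and under exchanging variables (Proposition \ref{prop: Bool combs}(2)) but not under disjunction, the obstruction to being a Boolean combination must come from an indiscernible sequence witnessing a ``dip'' of the form in Proposition \ref{prop: equiv to semi-eq}. Concretely, I would build, for the relevant atomic formula $\theta$, an indiscernible sequence $(a_i)_{i \in I_L + (0) + I_R}$ over a parameter $b$ so that $\theta$ holds off the middle point but fails at it; the rotational symmetry of a dense cyclic order means that ``betweenness'' intervals wrap around, so given an indiscernible sequence of points one can place $b$ (a pair $b_1, b_2$) so that the convex-in-the-cyclic-sense condition picks out exactly the complement of a single indiscernible position. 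The point is that a cyclic order locally looks like a linear order but has no global ``initial/final'' structure, so a formula like ${\circlearrowright}(x, b_1, b_2)$ behaves like an interval whose endpoints are both defined by $b$ — and such intervals are exactly the kind of thing that is not a semi-equation (cf.\ the Remark after Proposition \ref{prop: weak norm iff 1-semieq stab} for the linear-order analogue $y_2 < x < y_1$). I then argue, using QE, that every Boolean combination of semi-equations in $T_{\circlearrowright}$ would have to respect some bound coming from finite breadth (Proposition \ref{prop: semieq iff fin breadth}), which the witnessing configuration violates.

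For part (2), after naming a constant $c$, the key observation is that $T_{\circlearrowright} \cup \{c\}$ becomes bi-interpretable with (indeed essentially is) a dense linear order: ``cutting'' the cycle at $c$ gives a linear order on $M \setminus \{c\}$ (or on $M$, placing $c$ at one end), and by quantifier elimination for $T_{\circlearrowright}$ every $\mathcal{L}(c)$-formula is equivalent to a Boolean combination of formulas of the form $x \lessdot_c y$ where $\lessdot_c$ is this induced linear order, together with equalities. I would first make this reduction precise: using QE for $T_\circlearrowright$, each atomic $\mathcal{L}$-formula ${\circlearrowright}(t_1, t_2, t_3)$ with the $t_i$ among the variables and the constant $c$ unwinds into a Boolean combination of instances of $\lessdot_c$ and of $=$. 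Then, by Proposition \ref{prop: 1-semieq iff basic}(1), it suffices to note that the relation $x \lessdot_c y$ (and each of its coordinate specializations, which are still of the form ``$f(x) < g(y)$'' for the obvious projections $f, g$ into the linearly ordered set) is basic, hence a $(2,1)$-semi-equation; equalities $x = y$ are trivially $(2,1)$-semi-equations; and $(2,1)$-semi-equations are closed under the needed conjunctions (breadth $\le 1$ is preserved, or more simply invoke that a Boolean combination of $(2,1)$-semi-equations is what $(2,1)$-semi-equationality of a theory asks for). This gives that $T_\circlearrowright$ with a constant is $(2,1)$-semi-equational.

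The main obstacle I anticipate is part (1): making rigorous that no Boolean combination of semi-equations can define ${\circlearrowright}$. Closure of semi-equations under conjunction and variable-swap is easy, but Boolean combinations also involve negations and disjunctions, under which semi-equations are not closed, so one cannot simply reduce to a single semi-equation. The clean route is via breadth: I would show that if $\varphi(x,y)$ is a Boolean combination of semi-equations $\psi_1, \ldots, \psi_n$, then — after massaging — one gets a uniform finite bound on the complexity of the $\varphi$-definable family, and then exhibit, using the density and the rotational homogeneity of $T_\circlearrowright$ together with QE, a configuration of instances of ${\circlearrowright}(x; b_1, b_2)$ realizing arbitrarily large ``irredundant'' intersections (nested or crossing intervals wrapping the circle), contradicting the bound. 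Getting the bookkeeping right so that the breadth bound for the $\psi_i$ genuinely obstructs ${\circlearrowright}$ — rather than being circumvented by the Boolean structure — is the delicate point; the rest is routine given QE and Proposition \ref{prop: 1-semieq iff basic}.
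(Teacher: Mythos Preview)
Your treatment of part (2) is correct and matches the paper's approach: cut the cycle at $c$ to obtain a dense linear order $<_c$ on the complement of $\{c\}$, observe that $x <_c y$ and equality are $(2,1)$-semi-equations, rewrite $\circlearrowright(x,y,z)$ as a Boolean combination of these, and conclude via QE.

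For part (1), however, there is a genuine gap in the proposed route via breadth. You suggest that if $\varphi$ is a Boolean combination of semi-equations then the family $\{\varphi(\mathbb{M},b) : b\}$ must satisfy some uniform breadth-type bound, which the cyclic-interval family would violate. But this implication already fails in $\DLO$: the formula $y_1 < x < y_2$ is a conjunction of two semi-equations, yet the family of open intervals has infinite breadth (one can have arbitrarily many pairwise overlapping intervals with none contained in another). So a breadth obstruction cannot separate ``is a Boolean combination of semi-equations'' from ``is not.'' Your earlier idea of exhibiting a dip in an indiscernible sequence only shows that the formula \emph{itself} is not a semi-equation, which is not what is required.

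The paper's argument sidesteps this by exploiting type completeness. Working with the partition $\psi(x_1,x_2;y):=\circlearrowright(x_1,x_2,y)$, QE gives that $\circlearrowright(x_1,x_2,y)$ and $\circlearrowright(x_2,x_1,y)$ each isolate a complete $3$-type over $\emptyset$. Hence any Boolean combination of formulas equivalent to $\psi$ must contain some single constituent $\varphi(x_1,x_2;y)$ that is implied by one of these two types and inconsistent with the other. One then directly exhibits (taking $c_i$ on the circle with $\circlearrowright(c_k,c_i,c_j)$ for $i<j<k$, and setting $a_{1,i}=c_{2i}$, $a_{2,i}=c_{2i+2}$, $b_i=c_{2i+1}$) a sequence with $\models\varphi(a_{1,i},a_{2,i};b_j)\iff i\neq j$, so this constituent is not a semi-equation. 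The key move you are missing is this reduction from the full Boolean combination to a single separating formula via completeness of the relevant types.
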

\begin{proof}
(1) We show that $ \psi(x_1,x_2; y) := {\circlearrowright}\left(x_{1},x_{2};y\right)$
is not a Boolean combination of semi-equations. By quantifier elimination, the formulas ${\circlearrowright}\left(x_{1},x_{2};y\right)$
and ${\circlearrowright}\left(x_{2},x_{1};y\right)$ each isolate a complete
$3$-type (over $\emptyset$). Any Boolean combination of formulas that is equivalent to ${\circlearrowright}\left(x_{1},x_{2};y\right)$
must contain some formula $\varphi\left(x_{1},x_{2};y\right)$ that
is implied by ${\circlearrowright}\left(x_{1},x_{2};y\right)$ and is
inconsistent with ${\circlearrowright}\left(x_{2},x_{1};y\right)$,
or vice versa. Assume the former. Let $\left(c_{i}\right)_{i\in\mathbb{Z}}$ be 
such that $\models{\circlearrowright}\left(c_{k},c_{i},c_{j}\right)$
for $i<j<k$. Let $a_{1,i}=c_{2i}$, $a_{2,i}=c_{2i+2}$, and $b_{i}=c_{2i+1}$.
Then $\models{\circlearrowright}\left(a_{2,i},a_{1,i};b_{j}\right)\Leftrightarrow i=j$
and $\models{\circlearrowright}\left(a_{1,i},a_{2,i};b_{j}\right) \Leftrightarrow i\neq j$,
so $\models\varphi\left(a_{1,i},a_{2,i};b_{j}\right) \Leftrightarrow i\neq j$,
so $\varphi\left(x_{1},x_{2};y\right)$ is not a semi-equation.
If instead, $\varphi\left(x_{1},x_{2};y\right)$ is implied by ${\circlearrowright}\left(x_{2},x_{1};y\right)$
and inconsistent with ${\circlearrowright}\left(x_{1},x_{2};y\right)$,
we switch the roles of $x_{1}$ and $x_{2}$ to get the same
result.

\noindent(2) Let $<$ be defined by $x<y\Leftrightarrow{\circlearrowright}\left(x,y,c\right)$. Then 
$<$ is a dense linear order on the complement of $\left\{ c\right\} $,
so $x < y$ is a $\left(2,1\right)$-semi-equation. We have 
that ${\circlearrowright}\left(x,y,z\right)$ is equivalent to 
	$x<y<z\lor y<z<x\lor z<x<y\lor\left(z=c\land x<y\right)\lor\left(y=c\land z<x\right)\lor\left(x=c\land y<z\right)$.
Hence ${\circlearrowright}\left(x,y,z\right)$ is 
a Boolean combination of $\left(2,1\right)$-semi-equations (with $c$ as a parameter),
under any partition of the variables. By quantifier elimination, it follows
that every formula is a Boolean combination of $\left(2,1\right)$-semi-equations (using $c$ as a parameter).
\end{proof}
This example shows that a theory being semi-equational, or 1-semi-equational, is not preserved under forgetting constants (naming constants clearly preserves $k$-semi-equationality).  This is in contrast to equationality (\cite[Proposition 3.5]{junker2000note}) and distality (\cite[Corollary 2.9]{simon2013distal}), which are invariant under naming or forgetting constants. This is also an example of a distal, non-semi-equational theory.

\begin{problem}Is weak semi-equationality of theories preserved by forgetting constants?\end{problem}

\subsection{Ordered abelian groups} \label{sec: OAGS}
We consider ordered abelian groups, as structures in the language $\mathcal{L}_{\CH}$ introduced in \cite{cluckers2011quantifier}. Given an ordered abelian group $(G,+,<)$ and prime $p$, for $a \in G \setminus pG$ we let $G_p(a)$ be the largest convex subgroup of $G$ such that $a \notin G_p(a) +pG$, and for $a \in pG$ let $G_p(a) := \{0\}$. Let $\mathcal{S}_p := \{G_p(a) : a \in G\}$. Then the $\mathcal{L}_{\CH}$-structure $\bar{G}$ corresponding to $G$ consists of the main sort $\mathcal{G}$ for $G$, an auxiliary sort $\mathcal{S}_p$ for each $p$, along with countably many further auxiliary sorts and relations between them. A relative quantifier elimination result is obtained for such structures in \cite{cluckers2011quantifier}, to which we refer for the details (see also \cite[Section 3.2]{aschenbrenner2020distality} for a quick summary). 

\begin{prop}
Every ordered abelian group (either as a pure ordered abelian group, or the corresponding structure $\bar{G}$) with finite auxiliary sorts $\mathcal{S}_p$ for all prime $p$ is $1$-semi-equational (this includes Presburger arithmetic, and any ordered abelian group with a strongly dependent theory by \cite{chernikov2015groups, dolich2018characterization, farre2017strong, halevi2019strongly}). \end{prop}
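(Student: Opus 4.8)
The plan is to run the relative quantifier elimination of Cluckers and Halupczok \cite{cluckers2011quantifier} for $\mathcal{L}_{\CH}$-structures (see also the summary in \cite[Section 3.2]{aschenbrenner2020distality}) and to check that each atomic formula it produces is a Boolean combination of $(2,1)$-semi-equations; since a Boolean combination of Boolean combinations of $(2,1)$-semi-equations is again one, and every $(2,1)$-semi-equation is in particular a $1$-semi-equation, this yields that $T$ is $1$-semi-equational. First I would reduce to formulas in main-sort variables only. The hypothesis is that every $\mathcal{S}_p$ is finite; in the $\mathcal{L}_{\CH}$-framework this forces all auxiliary sorts to be finite, and since each of them carries a definable linear order (e.g.\ $\mathcal{S}_p$ is ordered by inclusion of convex subgroups) they are rigid, so every element of an auxiliary sort is $\emptyset$-definable. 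Hence a formula $\varphi(x,y)$ with some variables in auxiliary sorts is equivalent to a finite disjunction, over the finitely many $\emptyset$-definable values of those variables, of formulas in main-sort variables (with those values substituted as parameters) conjoined with equalities pinning down the auxiliary coordinates, the latter being trivially $(2,1)$-semi-equations; so it suffices to treat $\varphi(x,y)$ with $x,y$ tuples from the main sort $\mathcal{G}$.

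For such $\varphi(x,y)$, by \cite{cluckers2011quantifier} we may assume it is a Boolean combination of atomic formulas of three kinds, where $t(x,y)$ always denotes a $\mathbb{Z}$-linear form, written $t(x,y)=\alpha(x)+\beta(y)$ with $\alpha\colon\mathbb{M}^{x}\to\mathbb{M}$ and $\beta\colon\mathbb{M}^{y}\to\mathbb{M}$ homomorphisms (here $\mathbb{M}$ denotes the main sort of the monster). \emph{(i) Linear conditions} $t(x,y)\ \square\ c$ with $\square\in\{<,=\}$ and $c$ an $\emptyset$-definable constant: the set $\{(a,b):\alpha(a)<c-\beta(b)\}$ is a basic relation in the sense of Definition \ref{defn: basic}, hence a $(2,1)$-semi-equation by Proposition \ref{prop: 1-semieq iff basic}(1), exactly as in the proof of Proposition \ref{prop: mod o-min 1-semieq}(1); in the equality case each fibre is a coset of $\ker\alpha$ or empty, so the formula is $2$-weakly normal, hence again a $(2,1)$-semi-equation. \emph{(ii) Divisibility conditions} $t(x,y)\in p^{n}\mathbb{M}+c$: for each fixed $b$ the fibre $\{a:\alpha(a)\in p^{n}\mathbb{M}+(c-\beta(b))\}$ is empty or a coset of the subgroup $\alpha^{-1}(p^{n}\mathbb{M})\leq\mathbb{M}^{x}$, which does not depend on $b$; so the family $\mathcal{F}_{\varphi}$ of instances consists of cosets of a fixed subgroup (possibly together with $\emptyset$), has breadth at most $1$, and hence $\varphi$ is a $(2,1)$-semi-equation by Proposition \ref{prop: semieq iff fin breadth} (equivalently, by Fact \ref{fac: GuingonaLask}). \emph{(iii) Valuational conditions}, i.e.\ conditions on the $\mathcal{S}_p$-valued invariant of $t(x,y)$ such as $G_{p}(t(x,y))\subseteq C$, or comparisons of such invariants of two terms: since $\mathcal{S}_p$ is finite, a comparison reduces to a finite disjunction of conjunctions of conditions of the form $G_{p}(t(x,y))\subseteq C$ with $C$ an $\emptyset$-definable convex subgroup, and for the latter one uses the identity $\{a\in\mathbb{M}:G_{p}(a)\subseteq C\}=C^{+}+p\mathbb{M}$, where $C^{+}$ is the successor of $C$ in the finite chain of convex subgroups occurring as values of $G_{p}$; this set is a subgroup of $\mathbb{M}$, so as in (ii) every $b$-fibre of $G_{p}(t(x,y))\subseteq C$ is empty or a coset of the fixed subgroup $\alpha^{-1}(C^{+}+p\mathbb{M})\leq\mathbb{M}^{x}$, making it a $(2,1)$-semi-equation. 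Combining (i)--(iii), every $\varphi(x,y)$ is a Boolean combination of $(2,1)$-semi-equations, so $T$ is $1$-semi-equational; the identical argument applied to the one-sorted language covers the pure ordered abelian group $(G,+,<)$, the finiteness hypothesis being what makes the valuational thresholds in (iii) range over a finite chain.

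The main obstacle I anticipate is matching the shapes (i)--(iii) against the actual output of \cite{cluckers2011quantifier}: one must verify that, once every $\mathcal{S}_p$ is finite, the relative quantifier elimination genuinely reduces main-sort formulas to Boolean combinations of atomic formulas of precisely these forms --- in particular that the ``spine'' and the remaining auxiliary (quotient) sorts, together with the maps between them and the main sort, contribute nothing beyond finitely-valued data that dissolves into cases --- and one must extract cleanly the group-theoretic description of the sublevel sets $\{a:G_{p}(a)\subseteq C\}$ used in (iii). A lesser technical point is making the reduction to main-sort variables in the multi-sorted structure fully rigorous, for which rigidity of the finite auxiliary sorts is the key input. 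Finally, one should observe that the analysis above in fact delivers that $T$ is $(2,1)$-semi-equational (each atomic piece being of coset or half-space type); should some atomic formula coming out of \cite{cluckers2011quantifier} turn out to be genuinely a $(k,1)$-semi-equation for $k>2$ but not a $(2,1)$-semi-equation, the same scheme still gives the $1$-semi-equationality that is claimed.
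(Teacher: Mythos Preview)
Your overall strategy---reduce to main-sort formulas using finiteness of the auxiliary sorts, then apply the Cluckers--Halupczok quantifier elimination and verify each atomic piece is a $(2,1)$-semi-equation---is exactly what the paper does. The difference is in how the atomic output of the QE is organized, and here your case split (i)--(iii) has a genuine gap that you yourself anticipated in the last paragraph.

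The Cluckers--Halupczok QE does not reduce, even under finite $\mathcal{S}_p$, to conditions purely of the forms you list. What it actually yields (the paper quotes \cite[Proposition~3.14]{aschenbrenner2020distality}) are atomic conditions of the shape $\pi_\alpha(f(x))\ \diamond\ \pi_\alpha(g(y))+k_\alpha$ with $\diamond\in\{=,<,\equiv_m\}$, where $\pi_\alpha:G\to G/G_\alpha$ is the quotient by a definable convex subgroup $G_\alpha$ determined by an auxiliary-sort element $\alpha$. In particular the order condition lives in the quotient, so it is not of your form ``$t(x,y)<c$'' in $G$: the set $\{u\in G:\pi_\alpha(u)<0\}$ is an initial segment of $G$ not cut out by any constant. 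Likewise the congruence condition is $t(x,y)\in mG+G_\alpha+c$, a coset of $mG+G_\alpha$ rather than of $p^nG$. Your case (iii) treats only the $\mathcal{S}_p$-valued invariant $G_p(t)$ and does not absorb these quotient conditions.

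None of this is fatal---each quotient condition is still a $(2,1)$-semi-equation, for the same reasons you give (order conditions are basic, congruence conditions give a family of cosets of a fixed subgroup). But the paper packages this more cleanly via a one-line lemma you omit: if $\varphi(x,y)$ is a $(k,n)$-semi-equation and $f,g$ are $\emptyset$-definable, then $\varphi(f(x),g(y))$ is again a $(k,n)$-semi-equation. With this in hand one only needs to check that $x=y$, $x<y$, $x\equiv_m y$ are $(2,1)$-semi-equations in each quotient $G/G_\alpha$, which is immediate, and then pull back along $(\pi_\alpha\circ f,\ \pi_\alpha\circ g+k_\alpha)$. This replaces your ad hoc treatment of (iii) (including the identity $\{a:G_p(a)\subseteq C\}=C^{+}+pG$, which is correct but no longer needed) and closes the gap without further case analysis.
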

\begin{proof}
	Since every auxiliary sort is finite and linearly ordered by a (definable) relation in $\mathcal{L}_{\CH}$, all auxiliary sorts are contained in $\dcl(\emptyset)$. Hence we only need to verify that every formula $\varphi(x,y)$ with $x,y$ tuples of the main sort $\mathcal{G}$ is a Boolean combination of $1$-semi-equations in the expansion with every element of every auxiliary sort named by a new constant symbol (countably many in total).
	As explained in \cite[Proposition 3.14]{aschenbrenner2020distality}, it then follows from the relative quantifier elimination that $\varphi(x,y)$ is equivalent to a Boolean combination of atomic formulas of the form $\pi_{\alpha}(f(x)) \diamond_{\alpha} \pi_{\alpha}(g(y)) + k_{\alpha}$, where $\diamond \in \left\{ =, <, \equiv_{m} \right\}$, $k \in \mathbb{Z}$, $\alpha$ is an element of an auxiliary sort, $f,g$ are $\mathbb{Z}$-linear functions on $\mathcal{G}$, $G_\alpha$ is a corresponding convex subgroup of $G$, $\pi_{\alpha}: G \to G/G_{\alpha}$ is the quotient map, $1_{\alpha}$ is the minimal positive element of $G/G_{\alpha}$ if it is discrete or $0 \in G/G_{\alpha}$ otherwise, and $k_{\alpha} = k \cdot 1_{\alpha}$ in $G/G_{\alpha}$, and for $g,h \in G/G_{\alpha}$ we have $g \equiv_m h$ if $g - h \in m \left( G/G_{\alpha} \right)$  (note that these relations on $G$ are expressible in the pure language of ordered abelian groups).
	
	It is straightforward from Definition \ref{def: (k,n)-semieq} that 
		if $\varphi(x,y)$ is a $(k,n)$-semi-equation  and $f(x), g(y)$ are $\emptyset$-definable functions, then the formula $\psi(x,y) := \varphi(f(x), g(y))$ is also a $(k,n)$-semi-equation.
		Using this (in an expansion of $\bar{G}$ naming $\pi_{\alpha}$, and the ordered group structure on $G/G_{\alpha}$ together with the constants for $k_{\alpha}$), we only have to show that the relations $x = y, x < y, x \in y + m\left(G/G_{\alpha} \right)$ on $G/G_{\alpha}$ are $(2,1)$-semi-equations, which is straightforward.
\end{proof}

\begin{problem}
	Is every ordered abelian group $1$-semi-equational, or at least (weakly) semi-equational?
\end{problem}

\subsection{Trees} \label{sec: trees}

In this section we use ``$\land$'' to denote ``meet'', and ``$\&$'' to denote conjunction.
 By a \emph{tree} we mean a meet-semilattice $(M, \land)$ with an associated partial order $\leq$ (defined by $x \leq y \iff x \land y = x$) so that all of its initial segments are linear orders.
An \emph{infinitely-branching dense tree} is a tree 
whose initial segments are dense linear orders and such that for each
element $x$, there are infinitely many elements any two of which
have meet $x$.

\begin{fact}\label{lem: QE in dense trees}(see e.g.~\cite[Lemma 3.14]{semieqExp} or \cite[Section 1]{mennuni2022weakly})
	The theory of infinitely-branching dense trees is complete and eliminates quantifiers
in the language $\left\{ \land\right\} $.
\end{fact}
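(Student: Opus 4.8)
The plan is to prove quantifier elimination by the standard one-point back-and-forth test and then derive completeness from it.

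First I would describe finitely generated substructures. Since $\land$ is associative, commutative and idempotent, the substructure generated by a finite set $\{a_1,\dots,a_n\}$ is $\{\bigwedge_{i\in I}a_i : \emptyset\neq I\subseteq [n]\}$; in particular it is finite, it is again a meet-semilattice all of whose initial segments are linear (a \emph{finite rooted tree}), with least element $\bigwedge_i a_i$. Thus the finitely generated substructures of models of $T$ are exactly the finite rooted trees, and what really needs to be understood is a one-point extension. So let $A$ be such a finite substructure inside a model and $b$ a further element. For every $a\in A$ we have $a\land b\leq b$, and $\{z:z\leq b\}$ is linearly ordered, hence the finite set $\{a\land b:a\in A\}$ is a chain; let $c:=a_0\land b$ be its maximum. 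A short lattice computation using $a\land b\leq c\leq b$ gives $a\land b=a\land c$ for every $a\in A$, so the substructure generated by $A\cup\{b\}$ is $A\cup\{c,b\}$ with $c\leq b$; applying the same argument to $c$ (and $a_0\land c=c$) shows the substructure generated by $A\cup\{c\}$ is just $A\cup\{c\}$. Consequently $\qftp(b/A)$ is determined by $\qftp(c/A)$ together with the one bit ``$b=c$'' versus ``$c<b$ and $b$ is a fresh point above $c$'' (meaning $b\notin A$ and $b\land a=c\land a$ for all $a\in A$), while $\qftp(c/A)$ is in turn pinned down by purely combinatorial data about the position of $c$ in the finite tree $A$: the smallest $a^\ast\in A$ with $a^\ast\geq c$ if one exists (equivalently, the branch of $A$ on which $c$ lies), and the location of $c$ in the finite chain $\{a\in A:a\leq a^\ast\}$ — strictly between two consecutive elements, below all of them, or strictly above the point where $c$ leaves $A$.

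For the QE test it then suffices to show: given models $M,N\models T$, a common finite substructure $A$, and $b\in N$, there is $b'\in M$ with $\qftp(b'/A)=\qftp(b/A)$; and since $A$ is finite it is enough to realize this type finitely many formulas at a time, i.e.\ to verify finite satisfiability in an arbitrary model. By the analysis above this splits into two tasks. First, realize $\qftp(c/A)$ by some $c'\in M$: this amounts to inserting a point at a prescribed cut of a finite chain lying below a fixed node of $M$ and on a prescribed branch; density of initial segments produces a point strictly between two given chain elements (or below all of them, and below the relevant node), and the infinite branching at each node supplies a branch distinct from the finitely many branches occupied by elements of $A$. Second, given $c'$, find $b'\in M$ with $b'>c'$ and $b'\land a=c'\land a$ for all $a\in A$: the axiom that there are infinitely many elements pairwise meeting at $c'$ yields an element on a branch above $c'$ different from the finitely many branches through elements of $A$ that lie above $c'$, and a direct computation (distinguishing $a\geq c'$ from $a\not\geq c'$) shows that element has $\land a=c'\land a$ for every $a\in A$; density then lets one take $b'$ strictly above $c'$ when the type requires it. This gives QE, and completeness follows at once: the language $\{\land\}$ has no constant symbols, so the only quantifier-free sentences are trivial, hence by QE every sentence is $T$-equivalent to $\top$ or $\bot$, and $T$ is consistent (it has models), so $T$ is complete.

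The main obstacle I expect is exactly the back-and-forth step, specifically confirming that the stated axioms — density of initial segments, infinite branching at every node, together with whatever the precise axiomatization adds to forbid minimal elements and maximal elements — are \emph{rich enough} to realize every one-point configuration over a finite rooted tree, i.e.\ that the combinatorial data identified above is a complete invariant of $\qftp(c/A)$ with no hidden additional parameter. Carefully enumerating the finitely many shapes such a configuration can take (point on an existing branch vs.\ on a new branch, above/below/between prescribed nodes) and matching each to the corresponding existential axiom is where the real work lies; once this is done, the rest is bookkeeping.
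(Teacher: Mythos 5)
The paper does not prove this statement at all—it is quoted as a Fact with a citation to the full version of the article and to Mennuni—so there is no in-paper argument to compare against; your one-point back-and-forth analysis of extensions of a finite meet-closed substructure (split into realizing the meet $c=\max\{a\land b:a\in A\}$ and then a fresh point above it, using density and infinite branching) is exactly the standard proof used in those sources, and it is essentially correct. Two small repairs: completeness does not follow from ``the only quantifier-free sentences are trivial'' (with no constant symbols there are \emph{no} quantifier-free sentences); instead note that any singleton is a substructure, all singletons are isomorphic and realize the same quantifier-free $1$-type, so after adding a dummy variable QE transfers every sentence between any two models. Also, maximal elements are already excluded by infinite branching, whereas the absence of a minimal element really must be read into the axiomatization (``dense'' initial segments without a least point), as you yourself flag—if a root were allowed the theory would be neither complete nor have QE; and incidentally your invariant for $\qftp(c/A)$ is coarser than you state, since once $a^\ast=\min\{a\in A: a\geq c\}$ is fixed, linearity of initial segments forces every element of $A$ strictly below $a^\ast$ to lie below $c$, which only simplifies the realizability check.
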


\begin{lemma}\label{lem: trees 3-vars}
	In any tree $\mathcal{M} = (M, \land)$  with no additional structure,
if every formula of the form $\varphi\left(x;y_{1},y_{2}\right)$ with $x,y_1,y_2$ singletons is a Boolean
combination of semi-equations, then every formula is a Boolean
combination of semi-equations.
\end{lemma}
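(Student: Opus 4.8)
The plan is to reduce an arbitrary formula $\varphi(x;y)$ with $x = (x_1,\dots,x_m)$ and $y = (y_1,\dots,y_n)$ tuples of singletons to the three-variable case by two successive reductions: first reducing the number of parameter variables $y$ to one, then reducing the number of object variables $x$ to one, using quantifier elimination (Fact \ref{lem: QE in dense trees}) at the start to assume $\varphi$ is a Boolean combination of atomic formulas in the language $\{\land\}$. Since the class of Boolean combinations of semi-equations is closed under Boolean combinations (Proposition \ref{prop: Bool combs}(2) gives closure of semi-equations under $\land$ and swapping variables; full Boolean closure of the \emph{Boolean-combination} class is then automatic), it suffices to treat a single atomic formula, i.e.\ one of the form $t_1(x,y) \land t_2(x,y) = t_3(x,y) \land t_4(x,y)$ or $t_1(x,y) \leq t_2(x,y)$ where each $t_i$ is an iterated meet of variables. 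Actually, by introducing the meet terms as new variables and quantifying them away (or more cleanly, by noting that the structure $\mathcal{M}$ expands uniformly when we add parameters), one observes that an atomic formula in variables from $x$ and $y$ together with finitely many fixed parameters depends only on finitely many ``meet patterns''; the key combinatorial fact about trees is that the meet of any subset of $\{x_1,\dots,x_m,y_1,\dots,y_n\}$ together with parameters lies in a small sublattice generated by pairwise meets, and this is where I expect to lean on an auxiliary description of how meets interact.

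The cleaner route I would actually pursue: induct on $m + n$. Given $\varphi(x_1,\dots,x_m; y_1,\dots,y_n)$, I want to show it is a Boolean combination of semi-equations, knowing the result for all smaller tuples and in particular for $(1;1,1)$. For the parameter-side reduction, I would use the standard trick that in a tree, an atomic formula $\psi(x; y_1,\dots,y_n)$ (with $x$ now a full tuple) comparing meets of the variables can be rewritten by a case split on the relative order of $y_1 \land y_2, y_1 \land y_3, \dots$ (finitely many definable conditions on $y$ alone, hence semi-equations in the trivial sense — no $x$ occurs — or rather their instances are $\emptyset$ or everything), reducing inside each case to a formula where the dependence on $y$ factors through a single ``representative'' meet expression, effectively a single parameter variable. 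Symmetrically, a formula $\varphi(x_1,\dots,x_m; y)$ with $y$ a single variable is reduced, via a case split on the relative positions of $x_1 \land x_2, \dots$ and on the $x_i \land y$, to the three-variable building blocks. Each case split is by formulas that are semi-equations (being $\land$-combinations of the basic order/meet comparisons, each of which one checks directly to be a semi-equation just as $x \leq y$ and meet-comparisons are in a tree), and within each case the formula becomes (a substitution instance of) a formula in at most one $x$-variable and one or two $y$-variables, which by hypothesis and the $f,g$-substitution stability of semi-equations is a Boolean combination of semi-equations.

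The main obstacle — and the step I would spend the most care on — is making the ``reduce to a single representative variable'' claim precise: after conditioning on the order type of all the pairwise meets among $\{x_i\} \cup \{y_j\} \cup (\text{parameters})$, showing that the original atomic formula becomes equivalent to one in which only one variable from $y$ (respectively $x$) genuinely appears, and verifying that the conditioning formulas and the resulting substitutions are legitimate (the substitution $\varphi(f(x),g(y))$ stays a Boolean combination of semi-equations when $f,g$ are $\emptyset$-definable, which I would isolate as a small observation, analogous to the one used in the ordered-abelian-group proof). A subtlety is that ``the meet of $x_i$ and $y_j$'' is itself a term, so after substitution the three-variable hypothesis must be applied to formulas whose variables are meets of the original ones; this is fine because meets are $\emptyset$-definable functions, but the bookkeeping (ensuring we never need a formula with two independent $x$-variables and two independent $y$-variables simultaneously) is the delicate point. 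I would organize this as: (i) QE to atomic; (ii) Boolean closure reduces to one atomic $\varphi$; (iii) the pairwise-meet case split; (iv) within a case, an explicit rewriting of $\varphi$ as a substitution instance of a $(\leq 1; \leq 2)$-variable formula; (v) invoke the hypothesis plus substitution-stability. Step (iv) is the crux.
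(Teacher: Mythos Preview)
Your proposal has a genuine gap precisely where you flag the crux. The case-split on $y$-only conditions (the relative order of $y_1 \land y_2$, $y_1 \land y_3$, \dots) does not reduce an atomic formula to one with a single $y$-representative. Writing an atomic formula $\bigwedge S_1 = \bigwedge S_2$ as $\left(\bigwedge S_1^x\right) \land c_1 = \left(\bigwedge S_2^x\right) \land c_2$ with $c_i := \bigwedge S_i^y$, you are left with \emph{two} parameter-side terms; after the symmetric $x$-reduction you arrive at a genuine $(2;2)$ formula $d_1 \land c_1 = d_2 \land c_2$, and nothing in your scheme pushes this down to $(1;2)$. (It is true that this particular formula decomposes as the conjunction of the four three-variable conditions $d_1 \land c_1 \leq d_2$, $d_1 \land c_1 \leq c_2$, $d_2 \land c_2 \leq d_1$, $d_2 \land c_2 \leq c_1$ --- but you did not observe this, and it is an instance of the structural fact the paper invokes rather than a consequence of your case-split.) There is also a secondary issue: you appeal to quantifier elimination at the outset, but Fact~\ref{lem: QE in dense trees} is stated only for infinitely-branching dense trees, whereas the lemma concerns arbitrary trees in the bare language $\{\land\}$.

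The paper's argument is short and bypasses all of this. It quotes a result of Simon: in any tree, two $n$-tuples have the same type over $\emptyset$ if and only if every corresponding pair of $3$-element subtuples has the same type. Hence every complete type $\tp(\bar a \bar b)$ is axiomatized by its restrictions to triples of coordinates, and by compactness every formula $\varphi(\bar x;\bar y)$ is equivalent to a finite Boolean combination of formulas each involving at most three of the variables from $\bar x \bar y$. Under the induced partition, each such formula is of shape $(1;2)$ (covered by the hypothesis), $(2;1)$ (covered by the hypothesis together with the symmetry of semi-equations under exchanging the roles of $x$ and $y$), or has one side empty (trivially a semi-equation). No quantifier elimination, no induction on $m+n$, and no case analysis are needed.
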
 

\begin{proof}
By \cite[Corollary 4.6]{simon2011dp} (using that $x \leq y \iff x \land y = x$), in any tree $\mathcal{M}=(M, \land)$ we have: two tuples $\bar{a} = (a_i : i \in [n]),\bar{b} = \left( b_j: j \in [n] \right) \in M^{n}$ have
the same type if and only if $\left(a_{i},a_{j},a_{k}\right)$ and $\left(b_{i},b_{j},b_{k}\right)$
have the same type for every $i,j,k\in\left[n\right]$. Hence for any $\bar{a},\bar{b}$,
$\tp\left(\bar{a}\bar{b}\right)$ is implied by the set of formulas
satisfied by 3-element subtuples of $\bar{a}\bar{b}$. So if every
partitioned formula with 3 total free variables is a Boolean combination
of semi-equations, then $\tp\left(\bar{a}\bar{b}\right)$ is
implied by a Boolean combination of  semi-equations. It is enough
that every formula of the form $\varphi\left(x;y_{1},y_{2}\right)$
is a Boolean combination of semi-equations, because then by
symmetry, every formula of the form $\varphi\left(x_{1},x_{2};y\right)$
is as well, and every partitioned formula with one of the parts empty
(i.e.~$\varphi\left(;y_{1},y_{2},y_{3}\right)$ or $\varphi\left(x_{1},x_{2},x_{3};\right)$)
is automatically a semi-equation.	
\end{proof}

\begin{theorem}\label{thm: trees are semieq}
	The theory of infinitely-branching dense trees is semi-equational.
\end{theorem}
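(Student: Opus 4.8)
By Lemma~\ref{lem: trees 3-vars}, it suffices to show that every partitioned formula $\varphi(x; y_1, y_2)$ with $x, y_1, y_2$ singletons is a Boolean combination of semi-equations. By Fact~\ref{lem: QE in dense trees}, the theory eliminates quantifiers in the language $\{\land\}$, so $\varphi(x; y_1, y_2)$ is a Boolean combination of atomic formulas in the three variables $x, y_1, y_2$. Every such atomic formula is of the form $s = t$ where $s, t$ are $\land$-terms in $x, y_1, y_2$; equivalently (since $u \leq v \iff u \land v = u$ and $u \land v \leq u$ always) each atomic formula is equivalent to a Boolean combination of statements asserting $\leq$-comparabilities and equalities among the finitely many meets one can form from $x, y_1, y_2$ (such as $x, y_1, y_2, x\land y_1, x \land y_2, y_1 \land y_2, x \land y_1 \land y_2$). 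So the plan is: enumerate a finite generating set of ``basic'' formulas $\theta(x; y_1, y_2)$ — comparabilities like $x \leq y_1$, $x \land y_1 \leq x \land y_2$, $y_1 \leq y_2$, equalities like $x \land y_1 = x \land y_2$, $x \land y_1 = y_1 \land y_2$, and so on — such that every atomic formula (hence every formula $\varphi(x;y_1,y_2)$) is a Boolean combination of these, and then check that each basic formula on this list, together with its negation where needed, is a semi-equation (recalling from Proposition~\ref{prop: Bool combs}(2) that semi-equations are closed under conjunction and swapping variable blocks, so it is enough to handle a generating family and, since a Boolean combination of semi-equations is what we want, we need not close under negation — we just need each member of a set of generators, and of its complement's generators, to be a semi-equation, or more carefully: we need to write $\varphi$ as a Boolean combination of semi-equations, so it suffices that both each basic formula and its negation are semi-equations, OR that the basic formulas and enough of their negations are).

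Let me restructure: the cleanest route is to produce, for each potential ``type'' of atomic relationship, an explicit list of semi-equations and show $\varphi$ is a Boolean combination of them. Concretely, I would argue: (i) each of the formulas $x \leq y_1$, $y_1 \leq x$ is a semi-equation — given an indiscernible sequence $(a_i)_{i \in I_L + (0) + I_R}$ and a parameter $b$ with $a_i \leq b$ for all $i \neq 0$ but $a_0 \not\leq b$, derive a contradiction from indiscernibility (if $a_1 \leq b$ and $a_{-1} \leq b$ then $a_1 \land a_{-1} \leq b$; since all $a_i$ lie below $b$ for $i \neq 0$ and initial segments are linearly ordered, the $a_i$ with $i \neq 0$ together with $b$ sit in a chain below $b$, and one analyzes where $a_0$ could go using the fact that $(a_i)_{i \neq 0}$ is part of the same ordered configuration — indiscernibility forces $a_0$ into the same relation); (ii) similarly $x \land y_1 \leq x \land y_2$ and its companions are semi-equations; (iii) equalities such as $x \land y_1 = y_1 \land y_2$ are conjunctions of two $\leq$'s hence semi-equations by closure under conjunction; (iv) finally check that the relevant negations needed to express $\varphi$ as a Boolean combination are also on the list as semi-equations — the key point being that in a tree, $\neg(x \leq y_1)$ over the relevant configurations is itself equivalent (given the ambient comparabilities that hold) to a positive meet-relation, or at worst we absorb it: since we only need a \emph{Boolean} combination, it is enough that the atomic formulas themselves are semi-equations and that they suffice to separate types, which they do by quantifier elimination plus Simon's 3-element criterion.

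\textbf{Carrying out the semi-equation checks.} The core computations are all of the following shape. Fix a basic formula $\theta(x; y_1 y_2)$, a parameter $b = (b_1, b_2)$, infinite linear orders $I_L, I_R$, and an indiscernible sequence $(a_i)_{i \in I_L + (0) + I_R}$ with $\models \theta(a_i, b)$ for $i \neq 0$ and $\models \neg\theta(a_0, b)$ (using Proposition~\ref{prop: equiv to semi-eq}); I want a contradiction. The two recurring tools are: first, that $\{c : c \leq b_1\}$ and $\{c : c \leq b_2\}$ and their intersections are initial segments, so the $a_i$'s lying in such a set form a \emph{chain}, and on a chain an indiscernible sequence is either constant or strictly monotone; second, for meet-based relations, that $a_i \land a_j$ depends (by indiscernibility) only on the order-relation between $i$ and $j$, so $a_i \land a_j$ takes at most three values $p_- < p_0$ (for $i < 0 < j$ or appropriate), $p_0$, etc., and one tracks which of $b_1, b_2$ lies above which of these. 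In each case the hypothesis ``$\theta$ holds off $0$ but fails at $0$'' contradicts the rigidity of this finite picture: e.g.\ if $a_1 \leq b_1$ and $a_{-1} \leq b_1$ then $a_1$ and $a_{-1}$ are $\leq$-comparable, so the sequence restricted to $i \neq 0$ is monotone in the chain below $b_1$; inserting $a_0$ into an indiscernible sequence whose $i\neq 0$ part is a monotone chain below $b_1$ forces $a_0$ into that chain as well, so $a_0 \leq b_1$ — contradiction. I expect \textbf{the main obstacle} to be not any single computation but the \emph{bookkeeping}: assembling the right finite generating list of basic formulas $\theta(x; y_1, y_2)$ so that (a) every atomic formula is provably a Boolean combination of them, and (b) every one of them — and every negation of one of them that is genuinely needed — is verified to be a semi-equation; there are on the order of a dozen cases (comparabilities and equalities among $x, y_1, y_2, x\land y_1, x \land y_2, y_1 \land y_2$, with $x \land y_1 \land y_2 = (x\land y_1) \land (x \land y_2) = \dots$ reducing the genuinely new ones), and in a few of them one must be slightly careful because the negation of a comparability is \emph{not} generally a semi-equation (cf.\ the Remark after Proposition~\ref{prop: Bool combs}), so one has to check that the Boolean combination expressing $\varphi$ can be arranged to use only the ``safe'' direction — which works precisely because, by quantifier elimination, $\varphi$ is determined by which \emph{complete} atomic diagram in $x, y_1, y_2$ holds, and each complete atomic diagram of a tree on three points is a \emph{conjunction} of positively-stated meet-relations that one can read off from the finite tree it describes.
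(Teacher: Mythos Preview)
Your overall strategy --- reduce to formulas $\varphi(x;y_1,y_2)$ via Lemma~\ref{lem: trees 3-vars}, then to atomic formulas via quantifier elimination, then check a finite list of basic meet-relations --- is exactly the paper's. But there is a genuine gap in the execution.

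Your claim (ii), that $x\land y_1 \leq x\land y_2$ ``and its companions'' are semi-equations, is false. Take a root $r$ with infinitely many branches $B_i$ above it; put $a_i, b_i$ on $B_i$ with $a_i \land b_i > r$, and $b'_i$ on any other branch. Then for $i\neq j$ we have $a_i\land b_j = r \leq a_i\land b'_j$, while $a_j\land b_j > r = a_j\land b'_j$. So $\models a_i\land b_j \leq a_i\land b'_j \iff i\neq j$, and the formula is \emph{not} a semi-equation. The same construction shows $x\land y_1 = x\land y_2$ and $x\land y_1 \leq y_2$ are not semi-equations either. This also kills your fallback argument: the ``positively-stated meet-relations'' that isolate the complete $3$-type in which $x,y_1,y_2$ are pairwise incomparable with a common meet necessarily include something equivalent to $x\land y_1 = x\land y_2$, which is not a semi-equation, so that type is not a conjunction of semi-equations in the way you hope.

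What the paper does instead is identify precisely which of the basic formulas are semi-equations ($x\leq y_1$ and $x \geq y_1\land y_2$) and which have their \emph{negations} as semi-equations ($x\land y_1 = x\land y_2$ and $x\land y_1 \leq y_2$). For the latter two one must rule out a sequence $(a_i,b_i,b'_i)$ with $\models\varphi(a_i,b_j,b'_j)\iff i=j$, and the arguments are more delicate than your chain-based sketch: they use that the three elements $a_0,a_1,a_2$ end up forcing inconsistent orderings among their pairwise meets. Your proposal lacks both the correct identification of which side (formula or negation) is the semi-equation, and the corresponding combinatorial argument for the ``negated'' cases.
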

\begin{proof}
	Let $\mathcal{ M} = (M, \land)$ be an infinitely-branching dense tree. By Lemma \ref{lem: trees 3-vars}, it is
enough to check that every formula $\varphi\left(x;y_{1},y_{2}\right)$
is a Boolean combination of semi-equations, and, by Fact \ref{lem: QE in dense trees}, it is enough to check this for positive atomic formulas
$\varphi\left(x;y_{1},y_{2}\right)$. Using the fact that $\land$
is associative, commutative, and idempotent, each such formula is equivalent
to a formula of the form $\bigwedge A=\bigwedge B$ for non-empty
$A,B\subseteq\left\{ x,y_{1},y_{2}\right\} $.  By a direct case analysis (see \cite[Theorem 3.16]{semieqExp} for the details) every such formula is either a tautology, or does not mention $x$, or an equality between two variables, or is equivalent to a Boolean combination of the following formulas (possibly replacing $y_2$ by $y_1$):

\noindent(1) $x=x\land y_{1}$, i.e.~$x\leq y_{1}$ --- a semi-equation: given $\left(a_{i},b_{i}\right)_{i\in\mathbb{Z}}$
such that $\models a_{i}\leq b_{j}\iff i\neq j$, $a_{i}\leq b_{0}$
for $i\neq0$, so $\left(a_{i}\right)_{i\neq0}$ forms a chain. This
is not consistent with $a_{1}\leq b_{2}$, $a_{2}\leq b_{1}$, $a_{1}\not\leq b_{1}$,
$a_{2}\not\leq b_{2}$. 

\noindent(2) $x \land y_1 \land y_2 = y_1 \land y_2$, i.e.~$x \geq y_1 \land y_2$ --- a semi-equation for the same
reason.

\noindent(3) $x\land y_{1}=x\land y_{2}$ --- a negated semi-equation: given $\left(a_{i},b_{i},b_{i}'\right)_{i\in\mathbb{Z}}$
such that $\models a_{i}\land b_{j}=a_{i}\land b_{j}'\iff i=j$,
for every $i\neq0$ we have: either $a_{i}\land b_{0}>a_{0}\land b_{0}$ or $a_{i}\land b_{0}'>a_{0}\land b_{0}$.
By pigeonhole, there are $i_{1}\neq i_{2}$ such that the same case
holds for both. Without loss of generality, $a_{1}\land b_{0}>a_{0}\land b_{0}$ and $a_{2}\land b_{0}>a_{0}\land b_{0}$.
But then $a_{1}\land a_{2}>a_{0}\land b_{0}=a_0\land a_1$, so $a_{1}$ and $a_{2}$ meet strictly closer to each other than to $a_{0}$. But, since $a_1\land b_1\leq a_0\land b_1$ and $a_1\land b_1\leq a_2\wedge b_1$, it also must be true that $a_{0}\land a_{2}\geq a_1\land b_1=a_1\land a_0$, so $a_{0}$
and $a_{2}$ meet at least as closely to each other as to $a_{1}$.
These are inconsistent.

\noindent(4) $x\land y_{1}=x\land y_{1}\land y_{2}$ (i.e.~$x\land y_{1}\leq y_{2}$) --- a negated semi-equation: given $\left(a_{i},b_{i},b_{i}'\right)_{i\in\mathbb{Z}}$
such that $\models a_{i}\land b_{j}\leq b_{j}'\iff i=j$, in particular $a_{0}\land b_{0}\leq b_{0}'$ and $a_i\land b_0\not\leq b_{0}'$ for $i\neq0$. Since the initial segment below $b_0$ is totally ordered, it follows that $a_0\land b_0 < a_i \land b_0$ for $i\neq0$. $a_1\land a_2\geq \left(a_1\land b_0\right)\land\left(a_2\land b_0\right)>a_0\land b_0=a_0\land a_1$. That is, $a_1$ and $a_2$ meet strictly closer together with each other than with $a_0$. But, by switching the roles of the indices $0$ and $2$ in that argument, $a_0$ and $a_1$ must meet strictly closer together with each other than with $a_2$ as well, a contradiction.
\end{proof}

%\begin{remark}
%	In particular, this gives an example of an unstable 
%semi-equational theory that is not distal.
%\end{remark}

\begin{theorem}\label{thm: trees not 1-semieq}
	In an infinitely-branching dense tree $\mathcal{M}=(M, \land)$, the formula $x<y$
is not a Boolean combination of $1$-semi-equations (without
parameters).
\end{theorem}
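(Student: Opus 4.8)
The plan is to exhibit an explicit configuration of parameters witnessing failure of $n$-semi-equationality for $x<y$, for every fixed $n$, and then conclude via Proposition~\ref{prop: k,n semieq basic}(1) that $x<y$ cannot be a Boolean combination of $1$-semi-equations. Recall (Definition~\ref{def: (k,n)-semieq}) that a Boolean combination of $1$-semi-equations is itself a $1$-semi-equation up to equivalence only if each piece is, so the right target is: show that in the family of definable sets generated by Boolean combinations of instances of $x<y$ (in the three-variable form forced by Lemma~\ref{lem: trees 3-vars}), one can always find a consistent conjunction of $k$ instances none of which is implied by the other $k-1$, for arbitrarily large $k$. Equivalently, following Proposition~\ref{prop: semieq iff fin breadth} and the fact that $1$-semi-equations are exactly the $(k,1)$-semi-equations for some $k$, I would show directly that the relevant family of subsets of $M$ (or $M^2$) fails the Fact~\ref{fac: GuingonaLask} criterion: there exist definable instances $A,B$ with $A\cap B$, $A\setminus B$, and $B\setminus A$ all nonempty, \emph{and this persists under passing to any candidate Boolean combination}.

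First I would pin down which sets $\varphi(\mathcal M, b_1, b_2)$ arise. The formula $x<y$ in the tree, after the reduction of Lemma~\ref{lem: trees 3-vars} to a three-variable partitioned formula $\varphi(x;y_1,y_2)$, is built by quantifier elimination (Fact~\ref{lem: QE in dense trees}) from the atomic pieces classified in the proof of Theorem~\ref{thm: trees are semieq}: namely $x\le y_i$, $x\ge y_1\land y_2$, $x\land y_1 = x\land y_2$, and $x\land y_1\le y_2$, together with equalities. The key geometric point is that a ``cone'' $\{x : x > b\}$ in an infinitely-branching dense tree, as $b$ ranges over the tree, forms a family with \emph{infinite breadth}: for pairwise incomparable $b_1,\dots,b_k$ lying above a common node $c$, the sets $\{x : x> b_i\}$ are pairwise disjoint, hence any conjunction of instances $\{x: x < b_i\}$ (downward cones) over an antichain witnesses the breadth blowup --- a downward cone $\{x : x < b\}$ is an initial segment, and the intersection of the downward cones of an antichain $b_1,\dots,b_k$ above $c$ equals $\{x : x \le c\}$ but removing any one $b_i$ strictly enlarges the intersection (it now contains points strictly between $c$ and $b_i$, by density). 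So no $k-1$ of the $k$ instances suffice, for any $k$.

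The main work, and the main obstacle, is to show this survives taking Boolean combinations: it is not enough that \emph{one particular} atomic formula has infinite breadth, since $x<y$ must be shown \emph{not} equivalent to \emph{any} Boolean combination of $1$-semi-equations in \emph{any} partition. So I would argue contrapositively in the style of Proposition~\ref{prop: cyclic order semieqs}(1): if $x<y$ were equivalent to a Boolean combination of $1$-semi-equations $\psi_1,\dots,\psi_m$, then by quantifier elimination and type-counting some $\psi_j(x;y_1,y_2)$ would be implied by one complete atomic configuration and inconsistent with a suitably ``close'' one; I would then feed in the antichain configuration above (an infinitely-branching node $c$ with $b_i > c$ pairwise incomparable, and $a_i$ chosen strictly between $c$ and $b_i$) to force $\psi_j$ to define, for these parameters, exactly a family of the downward-cone type, contradicting that $\psi_j$ is a $(k,1)$-semi-equation for the relevant $k$. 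Concretely I expect to choose, for each target $k$, parameters $b_1,\dots,b_{k}$ forming an antichain above $c$ and elements $a_1,\dots,a_{k}$ with $a_i$ realizing ``$a_i$ is between $c$ and $b_i$, and incomparable to $b_\ell$ for $\ell\ne i$'', arranged so that $\models \psi_j(a_i; b_\ell)$ holds iff $i\ne\ell$ (or the dual pattern), which directly contradicts $(k,1)$-semi-equationality of $\psi_j$ once $k$ exceeds the bound. The bookkeeping --- translating each of the four atomic possibilities for $\psi_j$'s building blocks into a statement about meets in the tree and checking the truth pattern on the chosen configuration --- is the part requiring care but is routine given the case analysis already carried out in the proof of Theorem~\ref{thm: trees are semieq}; I would relegate it to a lemma or cite the full version.
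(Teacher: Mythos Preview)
Your proposal has a basic misunderstanding of the setup that sends the argument in the wrong direction, together with a genuine gap in the Boolean-combination step.

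\textbf{Wrong variable arity.} The formula $x<y$ has two singleton variables, so by Definition~\ref{def: (k,n)-semieq} the candidate $1$-semi-equations $\psi_1,\ldots,\psi_m$ in a putative Boolean combination are formulas $\psi_j(x,y)$ with $x,y$ singletons --- not three-variable formulas $\psi_j(x;y_1,y_2)$. Lemma~\ref{lem: trees 3-vars} plays no role here: it is a reduction \emph{from} arbitrary arities \emph{to} three variables for the purpose of checking semi-equationality of the whole theory, not a tool for analyzing a fixed two-variable formula. Once you realize the $\psi_j$ live among the finitely many formulas in two singletons, the paper's approach becomes available: by quantifier elimination there are exactly four complete $2$-types ($x=y$, $x<y$, $x>y$, $x\perp y$), hence only $16$ formulas $\psi(x,y)$ up to equivalence. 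One then checks directly which of these $16$ are $1$-semi-equations (only $x\neq x$, $x=x$, $x=y$, $x>y$, $x\geq y$), observes that none of them separate the type $x<y$ from the type $x\perp y$, and concludes that no Boolean combination of them can be equivalent to $x<y$.

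\textbf{The breadth computation is wrong, and the Boolean-combination step is a gap.} Your antichain configuration does show that $x<y$ itself is not a $1$-semi-equation (no $\{x:x<b_i\}$ contains another), but your stated reason --- that removing one $b_i$ strictly enlarges the intersection --- is false: for $k\geq 3$, removing one $b_i$ still leaves two incomparable $b_j,b_\ell$ with meet $c$, so the intersection remains $\{x:x\le c\}$. More importantly, the passage from ``$x<y$ is not a $1$-semi-equation'' to ``$x<y$ is not a Boolean combination of $1$-semi-equations'' is where the real content lies, and your sketch (``some $\psi_j$ would be implied by one complete atomic configuration and inconsistent with a close one, then feed in the antichain'') does not work as stated: you have not shown how to force any specific $\psi_j$ to exhibit the antichain pattern, and the analogy with Proposition~\ref{prop: cyclic order semieqs}(1) does not carry over directly. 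The paper sidesteps this entirely via the finiteness of two-variable formulas and the separation argument.
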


\begin{proof}
	By quantifier elimination, there are $4$ complete $2$-types over $\emptyset$ axiomatized by $\left\{ x=y,\,x>y,\,x<y,\,x\perp y\right\} $,
where $\perp$ denotes incomparable elements. Thus, up to equivalence,
there are only $16$ formulas $\varphi\left(x,y\right)$ with $x,y$ singletons without parameters. By a direct case analysis (see \cite[Theorem 3.17]{semieqExp} for the details) the only $1$-semi-equations among them are $x\neq x$, $x=x$,  $x=y$, $x>y$, $x\geq y$.  
None of them separate $x<y$ from $x\perp y$, so any Boolean combination
of $1$-semi-equations implied by $x<y$ must also be implied by $x\perp y$, so 
$x<y$ is not equivalent to a Boolean combination of $1$-semi-equations.
\end{proof}

\begin{cor}\label{cor: tree const not 1-semieq}
In any expansion of an infinitely-branching dense tree $\mathcal{M}=(M, \land)$ by naming constants, the formula $x<y$ is not a Boolean combination of $1$-semi-equations.
\end{cor}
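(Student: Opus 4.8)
The plan is to reduce the statement over parameters to the parameter-free statement of Theorem \ref{thm: trees not 1-semieq}, by exploiting homogeneity of the ambient tree. First I would fix a finite tuple $\bar c$ of constants to be named, and suppose toward a contradiction that in the expansion $\mathcal{M}_{\bar c}$ the formula $x < y$ is equivalent to a Boolean combination $B(x,y,\bar c)$ of $1$-semi-equations $\psi_1(x,y,\bar c), \dots, \psi_m(x,y,\bar c)$ (where the $\psi_i$ are now $\mathcal{L}$-formulas in the variables $x,y$ together with the parameter slots filled by $\bar c$). The key point is that the property of being a $1$-semi-equation is preserved under naming constants (as noted after Proposition \ref{prop: cyclic order semieqs}): each $\psi_i(x,y,\bar c)$, viewed as a formula $\psi_i(x,y;\bar z)$ with the parameters $\bar c$ substituted, has the property that $\psi_i(x,y;\bar z)$ is a $1$-semi-equation as a partitioned formula in $(x;y\bar z)$ of the original theory, hence so is its specialization.

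The hard part is that, a priori, the new parameters $\bar c$ could genuinely help separate the $2$-types $x<y$ and $x\perp y$ over $\bar c$, since over $\bar c$ there are more than four $2$-types. To handle this I would choose the configuration witnessing failure carefully, \emph{relative to $\bar c$}. Concretely, using that $\mathcal{M}$ is infinitely-branching and dense, I would find elements $a, a'$ with $a < a'$, and then find $a''$ with $a \perp a''$, chosen so that $a,a',a''$ all lie ``far above'' $\bar c$ in the tree --- precisely, so that for each component $c \in \bar c$, the meets $a \land c, a' \land c, a'' \land c$ are all equal to a single element $d_c$ strictly below $a \land a' \land a''$ (when $c$ is not comparable to this common part), or $c$ lies below everything, etc.; in any case so that $\qftp(a; \bar c) = \qftp(a''; \bar c)$ and $\qftp(a,a';\bar c)$ restricted to the part ``$a < a'$'' and $\qftp(a,a'';\bar c)$ restricted to ``$a \perp a''$'' agree on all subtuples involving $\bar c$. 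By quantifier elimination in the tree (Fact \ref{lem: QE in dense trees}) and the dp-minimality reduction to $3$-element subtuples (as in Lemma \ref{lem: trees 3-vars}), this ensures $\tp(a,a'/\bar c)$ and $\tp(a,a''/\bar c)$ differ \emph{only} in whether $x < y$ or $x \perp y$ holds.

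With such a configuration in hand, the argument of Theorem \ref{thm: trees not 1-semieq} transfers verbatim: $x<y$ holds of $(a,a')$ over $\bar c$, so $B(a,a',\bar c)$ holds, so some $1$-semi-equation $\psi_i$ appearing positively in $B$ satisfies $\models \psi_i(a,a',\bar c)$; I then need to show $\models \psi_i(a,a'',\bar c)$, which would force $B(a,a'',\bar c)$ and hence $a \perp a'' \models x<y$, the desired contradiction. To see $\models \psi_i(a,a'',\bar c)$, I would use that $\psi_i(x,y;\bar z)$ is a $1$-semi-equation: build, as in the proof of Theorem \ref{thm: trees not 1-semieq}, an infinite family of instances $\psi_i(\mathcal{M}, b_j, \bar c)$ of $\psi_i(x; y, \bar z)$ with nonempty common intersection but no containments --- this is where infinite branching is used, to produce incomparable elements $b_j$ all meeting at the same point and all above the relevant part of $\bar c$ --- showing $\psi_i(x,y,\bar c)$ cannot itself be a $1$-semi-equation in $(x;y)$ \emph{unless} its positive $y$-instances behave like one of the five allowed formulas $x \neq x, x=x, x=y, x>y, x\ge y$ on the relevant types; none of these distinguishes $x<y$ from $x\perp y$, giving $\models \psi_i(a,a'',\bar c)$. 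I expect the main obstacle to be the bookkeeping in the first step: verifying that one can place $a,a',a''$ generically enough over an arbitrary finite $\bar c$ that the only type-difference is the comparability relation; once that is set up, the combinatorial core is exactly Theorem \ref{thm: trees not 1-semieq}.
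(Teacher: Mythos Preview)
Your high-level strategy --- work in a region generic over the constants $\bar c$ and reduce to the parameter-free Theorem \ref{thm: trees not 1-semieq} --- matches the paper's, but two points in your execution need repair.

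First, a logical slip: from $\models \psi_i(a,a',\bar c)$ for one $i$, showing $\models \psi_i(a,a'',\bar c)$ does \emph{not} ``force $B(a,a'',\bar c)$''; $B$ is an arbitrary Boolean combination. The correct shape of the argument is contrapositive: since $B(a,a',\bar c)$ holds and $B(a,a'',\bar c)$ fails, \emph{some} $\psi_i$ must take different truth values on the two pairs, and you must derive a contradiction from that for an arbitrary such $\psi_i$.

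Second, and more seriously, your appeal to ``the five allowed formulas $x\neq x,\,x=x,\,x=y,\,x>y,\,x\ge y$'' does not transfer. That list is the classification of \emph{parameter-free} $1$-semi-equations in two singleton variables; with the parameters $\bar c$ present there are infinitely many $2$-types and no such finite list, so you cannot conclude that $\psi_i(x,y,\bar c)$ ``behaves like one of the five'' on your generic region without further work. What you actually need is a direct construction: assuming $\psi_i$ separates $(a,a')$ from $(a,a'')$, build $(b_j)_{j\in\mathbb N}$ and $a_{j,\ell}$ \emph{all inside the generic region over $\bar c$} with the right relative positions (so that $\tp(a_{j,\ell},b_j/\bar c)=\tp(a,a'/\bar c)$ and $\tp(a_{j,\ell},b_\ell/\bar c)=\tp(a,a''/\bar c)$), witnessing that $\psi_i(x,y,\bar c)$ is not a $1$-semi-equation. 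This is doable, but it is exactly the content you left as a sketch.

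The paper sidesteps both issues elegantly: rather than analyze $\varphi_i(x,y,\bar c)$ directly, it absorbs the parameters into a new \emph{parameter-free} formula
\[
\psi_i(x,y)\ :=\ \exists \bar z\ \Big(\tp(\bar z)=\tp(\bar c)\ \&\ x\wedge y\perp\textstyle\bigwedge_j z_j\ \&\ \varphi_i(x,y,\bar z)\Big),
\]
checks (via quantifier elimination and homogeneity) that for $a,b$ above a fixed $d\perp\bigwedge_j c_j$ one has $\psi_i(a,b)\iff\varphi_i(a,b,\bar c)$, hence $\Phi(\psi_1,\ldots,\psi_k)$ defines $x<y$ everywhere, and finally verifies that each $\psi_i$ is a $1$-semi-equation (any putative counterexample can be pushed into the region above $d$, where it becomes a counterexample for $\varphi_i(x,y,\bar c)$). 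This lands squarely in the hypotheses of Theorem \ref{thm: trees not 1-semieq} and avoids having to redo its case analysis over parameters. Your approach can be completed along the lines above, but the paper's ``existentially quantify out $\bar c$'' trick is the cleaner packaging of the same homogeneity idea.
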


\begin{proof}
Suppose $x<y$ is equivalent to a Boolean combination of $1$-semi-equations with parameters $c=\left(c_1, \ldots, c_n\right)$. Say $x<y\iff \Phi\left(\varphi_1\left(x,y,c\right), \ldots, \varphi_k\left(x,y,c\right)\right)$, where $\Phi$ is a Boolean formula in $k$ variables, and $\varphi_1\left(x,y,c\right), \ldots, \varphi_k\left(x,y,c\right)$ are $1$-semi-equations. Let $d$ be an element such that $d\perp \bigwedge_{i\leq n}c_i$.
 For each $i$, let 
$\psi_i\left(x,y\right)$ be the formula $\exists z \left( \tp(z) =  \tp\left(c\right) \& \left(x\wedge y\perp\bigwedge_{i\leq n}z_i\right)  \& \, \varphi_i\left(x,y,z\right) \right)$. 
As $\tp\left(c\right)$ is isolated by quantifier elimination, this is indeed a first-order formula. For $a,b>d$, if $\models\varphi_i\left(a,b,c\right)$, then $\models\psi_i\left(a,b\right)$. By quantifier elimination and \cite[Lemma 4.4]{simon2011dp}, the converse also holds. Thus, for $a,b>d$, $
	\models a<b\iff\models\Phi\left(\varphi_1\left(a,b,c\right), \ldots, \varphi_k\left(a,b,c\right)\right)$$\iff\models\Phi\left(\psi_1\left(a,b\right), \ldots,\psi_k\left(a,b\right)\right)$.
Since all singletons have the same type, it follows that this holds for all $a,b$. It thus remains to show that each $\psi_i\left(x,y\right)$ is a $1$-semi-equation, contradicting Theorem \ref{thm: trees not 1-semieq}.
If this were not the case for some $i\leq k$, then there would be $\left(b_j\right)_{j\in\mathbb N}$ and $a$ such that $\models\psi_i\left(a,b_j\right)$ for all $j\in\mathbb N$, but such that for every $j\neq\ell\in\mathbb N$, there is $a_{j,\ell}$ such that $\models\psi_i\left(a_{j,\ell},b_j\right)$ but $\not\models\psi_i\left(a_{j,\ell},b_\ell\right)$. But, again because all singletons have the same type, and every finite set of elements has a lower bound, it is consistent that furthermore all of these elements are above $d$. But then this would also provide a counterexample to $\varphi_i\left(x,y\right)$ being a $1$-semi-equation.
\end{proof}

\begin{remark}
	Since $x>y$ is a $\left(2,1\right)$-semi-equation
and $x<y$ is not, this shows that being an $\left(n,k\right)$-semi-equation for fixed $n,k$ (or even being a Boolean combination of them) is not preserved under exchanging the roles of the variables (while being a semi-equation is preserved).
\end{remark}

\begin{remark}
	Note also that every tree admits an expansion in which
$x<y$ is a Boolean combination of $\left(2,1\right)$-semi-equations.
In a tree, let $\leq_{\lex}$ be a linear order refining $\leq$ such
that for $a,b,b'$ such that $a\perp b$ and $b\wedge b'>b\wedge a$,
$a\leq_{\lex}b\iff a\leq_{\lex}b'$. Then let $\leq_{\revlex}$ be
given by $x\leq_{\revlex}y \ :\iff  \ x\leq y\lor\left(x\perp y\&y\leq_{\lex}x\right)$.
Then $\leq_{\revlex}$ satisfies the same conditions as $\leq_{\lex}$
(so both are $\left(2,1\right)$-semi-equations as both are
linear orders), and $x\leq y\iff x\leq_{\lex}y\&x\leq_{\revlex}y$.
\end{remark}

\begin{problem}\label{prob: trees semieq}
	Is every theory of trees semi-equational? Is every theory of trees (expanded by constants) not $1$-semi-equational?
\end{problem}

\section{Weak semi-equations and strong honest
definitions} \label{sec: strong honest defs}

In this section we discuss how (weak) semi-equationality naturally  generalizes both distality and equationality.

\begin{defn}
	Given a formula $\varphi\left(x,y\right)\in\mathcal{L}$ and a type $p$, we  
denote by  
$p_{\varphi}^{+} := \left\{ \varphi\left(x,b\right):\varphi\left(x,b\right)\in p\right\} $ 
 the positive $\varphi$-part of the type $p$.
\end{defn}

	Given small sets $A,B,C \subseteq \mathbb{M}$, let $A\ind_{C}^{u}B$ denote that
$\tp\left(A/BC\right)$ is finitely satisfiable in $C$.
 We recall
the following characterization of equations from \cite[Lemma 2.4]{martin2020equational}, which in turn is a variant of \cite[Theorem 2.5]{srour1988notion1}. Note that Fact \ref{fact: equation iff isolates positive}(3) below is equivalent to 
 \cite[Lemma 2.4(3)]{martin2020equational} since in stable theories non-forking is symmetric and equivalent to finite
satisfiability over models. Existence of $k$ in Fact \ref{fact: equation iff isolates positive}(2)
is not stated explicitly in \cite[Lemma 2.4(2)]{martin2020equational}, but is immediate from the proof.
\begin{fact}
\label{fact: equation iff isolates positive} Given a formula $\varphi\left(x,y\right)$
in a stable theory $T$, the following are equivalent:
\begin{enumerate}
\item  $\varphi\left(x,y\right)$ is an equation (equivalently, $\varphi^*(y,x) := \varphi(x,y)$ is an equation);
\item there is some $k\in\mathbb{N}$ such that for any $a \in \mathbb{M}^x$ and small $B \subseteq \mathbb{M}^y$, there is a subset $B_{0}$ of $B$ of size at most $k$
such that $\tp_{\varphi}^{+}\left(a/B_{0}\right)\vdash\tp_{\varphi}^{+}\left(a/B\right)$.
%\item There is a regular cardinal $\kappa>\left|T\right|$ such that for
%any $a\in\mathbb{M}^{x}$ and small elementary substructures $M \preceq N \prec \mathbb{M}$
%with $N \ind_{M}^{u} a$ and $\left|N\right|=\kappa$, there is a subset
%$B_{0}$ of $N$ with $\left|B_{0}\right|<\kappa$ such that $\tp\left(a/MB_{0}\right)\vdash\tp_{\varphi}^{+}\left(a/N\right)$.
\end{enumerate}
\end{fact}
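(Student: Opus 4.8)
The plan is to route everything through the family of ``$\varphi$-closed sets'' $\mathcal{F}_{\varphi}=\{\varphi(\mathbb{M},b):b\in\mathbb{M}^{y}\}$ together with the results on semi-equations proved above. The basic dictionary I would set up is this: for a tuple $a$ and a small set $B\subseteq\mathbb{M}^{y}$, putting $B':=\{b\in B:\models\varphi(a,b)\}$, we have $\tp^{+}_{\varphi}(a/B)=\{\varphi(x,b):b\in B'\}$, and a subset $B_{0}\subseteq B$ of size $\le k$ satisfies $\tp^{+}_{\varphi}(a/B_{0})\vdash\tp^{+}_{\varphi}(a/B)$ if and only if $B_{0}\subseteq B'$ and $\bigcap_{b\in B_{0}}\varphi(\mathbb{M},b)=\bigcap_{b\in B'}\varphi(\mathbb{M},b)$. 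When $B$ is finite, $\bigcap_{b\in B'}\varphi(\mathbb{M},b)$ is a nonempty finite intersection of members of $\mathcal{F}_{\varphi}$ (it contains $a$), and every nonempty such intersection arises this way; so condition (2) restricted to finite $B$, with constant $k$, says precisely that $\mathcal{F}_{\varphi}$ has breadth at most $k$ in the sense of Definition \ref{def: breadth}. By Proposition \ref{prop: semieq iff fin breadth} this is equivalent to $\varphi(x,y)$ being a semi-equation, and --- $T$ being stable, so that $\varphi(x,y)$ is a stable formula --- to $\varphi(x,y)$ being an equation by Proposition \ref{prop: semieq and stab}(3). This already gives (2)$\Rightarrow$(1) (apply (2) to finite $B$), and reduces (1)$\Rightarrow$(2) to upgrading condition (2) from finite to arbitrary small $B$ under the assumption that $\varphi$ is an equation.

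For that upgrade I would use the classical reformulation of equationality as the descending chain condition on finite intersections of instances of $\varphi$ (see e.g.\ \cite{junker2000note}; if a self-contained proof is wanted one extracts from a bad configuration a strictly descending chain $\varphi(\mathbb{M},b_{0})\supsetneq\varphi(\mathbb{M},b_{0})\cap\varphi(\mathbb{M},b_{1})\supsetneq\cdots$, contradicting it by compactness). Given $a$ and small $B$, set $B':=\{b\in B:\models\varphi(a,b)\}$ and consider $\mathcal{D}:=\{\bigcap_{b\in F}\varphi(\mathbb{M},b):F\subseteq B'\text{ finite}\}$. By the descending chain condition $\mathcal{D}$ has a minimal element, which is its unique minimum --- if $\bigcap_{b\in F_{1}}\varphi(\mathbb{M},b)$ and $\bigcap_{b\in F_{2}}\varphi(\mathbb{M},b)$ are both minimal, each contains $\bigcap_{b\in F_{1}\cup F_{2}}\varphi(\mathbb{M},b)\in\mathcal{D}$, forcing all three to coincide. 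Let $D^{*}=\bigcap_{b\in F^{*}}\varphi(\mathbb{M},b)$ be this minimum, $F^{*}\subseteq B'$ finite. For each $b\in B'$, $D^{*}\cap\varphi(\mathbb{M},b)\in\mathcal{D}$ lies below $D^{*}$, hence equals $D^{*}$; thus $D^{*}=\bigcap_{b\in B'}\varphi(\mathbb{M},b)$, and $a\in D^{*}$ so $D^{*}\neq\emptyset$. Since an equation is a semi-equation (Proposition \ref{prop: semieq and stab}(2)), $\mathcal{F}_{\varphi}$ has some finite breadth $k$ by Proposition \ref{prop: semieq iff fin breadth}; applying it to the nonempty finite intersection $D^{*}$ yields $B_{0}\subseteq F^{*}$ with $|B_{0}|\le k$ and $\bigcap_{b\in B_{0}}\varphi(\mathbb{M},b)=D^{*}=\bigcap_{b\in B'}\varphi(\mathbb{M},b)$. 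As $B_{0}\subseteq B'\subseteq B$, this $B_{0}$ witnesses (2), with the constant $k$ independent of $a,B$.

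Finally, the parenthetical equivalence (``$\varphi$ is an equation iff $\varphi^{*}(y,x):=\varphi(x,y)$ is'') I would get from the same bridge instead of proving it directly: $\varphi$ is a stable formula iff $\varphi^{*}$ is, and $\varphi$ is a semi-equation iff $\varphi^{*}$ is by Proposition \ref{prop: Bool combs}(2), so the characterization ``equation $=$ stable $+$ semi-equation'' is symmetric in $x$ and $y$. The one step I expect to require actual care is the finite-to-infinite reduction in (1)$\Rightarrow$(2), i.e.\ using the descending chain condition to replace the possibly infinite intersection $\bigcap_{b\in B'}\varphi(\mathbb{M},b)$ by a finite subintersection $D^{*}$ (and the bookkeeping hidden in the cited DCC reformulation of equationality); everything else is formal once Propositions \ref{prop: Bool combs}, \ref{prop: semieq and stab} and \ref{prop: semieq iff fin breadth} are in hand.
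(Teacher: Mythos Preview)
Your argument is correct. Note, however, that the paper does not give its own proof of this statement: it is recorded as a \emph{Fact}, attributed to \cite[Lemma 2.4]{martin2020equational} (which in turn goes back to \cite{srour1988notion1}), with only the remark that the uniform bound $k$ in (2) ``is immediate from the proof'' there. So there is no proof in the paper to compare against directly.

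That said, your route is worth noting because it is essentially internal to the paper: you derive Fact \ref{fact: equation iff isolates positive} from the results on semi-equations (Propositions \ref{prop: Bool combs}(2), \ref{prop: semieq and stab}(3), \ref{prop: semieq iff fin breadth}) together with the classical DCC characterization of equations. In effect, you are observing that Fact \ref{fact: equation iff isolates positive} is Corollary \ref{stronger honest def} specialized to stable $\varphi$ (via Proposition \ref{prop: semieq and stab}(3)), upgraded from finite $B$ to small $B$ via DCC. The cited proof in \cite{martin2020equational} proceeds directly and does not pass through the semi-equation/breadth reformulation. One very minor quibble: in your dictionary, the biconditional ``$B_0\subseteq B'$ and $\ldots$'' is not literally an ``if and only if'' --- a $B_0$ not contained in $B'$ can still work, since elements of $B_0\setminus B'$ contribute nothing to $\tp_\varphi^+(a/B_0)$ --- but this is harmless, as replacing $B_0$ by $B_0\cap B'$ preserves the conclusion.
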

On the other hand, we recall one of the standard characterizations of distality (see e.g.~\cite[Corollary 1.11]{aschenbrenner2020distality}), which we use as a definition here:
\begin{defn}
	A theory is \emph{distal} if and only if every formula $\varphi\left(x,y\right)$ is \emph{distal}, that is, for any $I_{L}$
and $I_{R}$ infinite linear orders,  $b \in \mathbb{M}^y$ and
indiscernible sequence $\left(a_{i}\right)_{i\in I_{L}+\left(0\right)+I_{R}}$ with $a_i \in \mathbb{M}^x$
such that $\left(a_{i}\right)_{i\in I_{L}+I_{R}}$ is indiscernible
over $b$, $\models\varphi\left(a_{0},b\right)\iff\models\varphi\left(a_{i},b\right)$
for $i\in I_{L}+I_{R}$. 
\end{defn}
\noindent There is a straightforward relationship between weak semi-equationality and distality:
\begin{remark}\label{rem: distal implies w semieq}
	A formula $\varphi\left(x,y\right)$ is distal if and only if 
both $\varphi\left(x,y\right)$ and $\neg\varphi\left(x,y\right)$
are weak semi-equations. In particular, every distal theory is weakly semi-equational.
\end{remark} 
\begin{problem}
	Is there an NIP theory without a (weakly) semi-equational expansion? We note that while the theory $\ACF_p$ for $p>0$ is known not to have a distal expansion \cite{chernikov2015regularity}, it is equational, and hence semi-equational.
\end{problem}

An NIP theory is distal if and only if every formula admits a \emph{strong honest definition}:
\begin{fact}\cite[Theorem 21]{chernikov2015externally}\label{fac: str honest defs}
A theory $T$ is distal if and only if for every formula $\varphi(x,y)$  
there is a formula $\theta\left(x; y_1, \ldots, y_k\right)$, called a \emph{strong honest
definition} for $\varphi\left(x,y\right)$, such that for any finite
set $C \subseteq \mathbb{M}^y$ ($\left|C\right|\geq2$) and $a\in\mathbb{ M}^{x}$,
there is $b\in C^{k}$ such that $\models\theta\left(a;b\right)$ 
and $\theta\left(x;b\right)\vdash \tp_{\varphi}\left(a/C\right)$.
\end{fact}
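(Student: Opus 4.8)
The plan is to prove both implications, with the forward direction ``$T$ distal $\Rightarrow$ every $\varphi$ has a strong honest definition'' as the substantial one and the converse a compactness argument.

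\emph{The converse.} I would argue contrapositively: if $\varphi(x,y)$ is not distal I produce a formula with no strong honest definition, namely $\varphi^{*}(y,x):=\varphi(x,y)$. Fix a witness to non-distality: an indiscernible sequence $(a_{i})_{i\in I_{L}+(0)+I_{R}}$ with $(a_{i})_{i\neq 0}$ indiscernible over some $b$ and $\models\varphi(a_{i},b)\not\leftrightarrow\varphi(a_{0},b)$ for $i\neq 0$; replacing $\varphi$ by $\neg\varphi$ if necessary, assume $\models\varphi(a_{i},b)$ for $i\neq 0$ and $\models\neg\varphi(a_{0},b)$. Suppose towards a contradiction that $\theta(y;x_{1},\dots,x_{k})$ is a strong honest definition for $\varphi^{*}$. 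The first step is an \emph{amplification}: by a compactness argument one obtains an $\emptyset$-indiscernible sequence $(a_{i})_{i\in I}$ with $I$ dense, a single element $b$, and $k+1$ ``defect'' positions $e_{1}<\dots<e_{k+1}$ in $I$, such that $\models\varphi(a_{i},b)$ for all $i\notin\{e_{1},\dots,e_{k+1}\}$ and $\models\neg\varphi(a_{e_{t}},b)$ for each $t$ -- i.e.\ the single defect is duplicated $k+1$ times over a common witness. Choose a finite $C\subseteq\{a_{i}:i\in I\}$ containing all $k+1$ defects together with enough non-defect elements that, for every $k$-tuple $\bar a$ from $C$ and every defect $a^{*}\in C$, some non-defect $a_{j}\in C$ has the same type over $\bar a$ as $a^{*}$ (possible since the generic positions are dense). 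Applying the strong honest definition to $b$ and $C$ gives $\bar a\in C^{k}$ with $\models\theta(b;\bar a)$ and $\theta(y;\bar a)\vdash\tp_{\varphi^{*}}(b/C)$. Since $|\bar a|\leq k<k+1$, some defect $a^{*}\in C$ is not a coordinate of $\bar a$; pick a non-defect $a_{j}\in C$ with $\tp(\bar a\,a^{*})=\tp(\bar a\,a_{j})$ (over $\emptyset$) and an automorphism $\sigma$ with $\sigma(\bar a)=\bar a$ and $\sigma(a^{*})=a_{j}$. Applying $\sigma$ to $\models\theta(b;\bar a)\wedge\neg\varphi(a^{*},b)$ yields $\models\theta(\sigma b;\bar a)\wedge\neg\varphi(a_{j},\sigma b)$; but $\models\theta(\sigma b;\bar a)$ forces $\tp_{\varphi^{*}}(\sigma b/C)=\tp_{\varphi^{*}}(b/C)$, which contains $\varphi(a_{j},y)$ since $a_{j}$ is a non-defect, so $\models\varphi(a_{j},\sigma b)$ -- a contradiction. (The hypothesis $|C|\geq 2$ is harmless here.)

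\emph{The forward implication.} Assume $T$ is distal and fix $\varphi(x,y)$. I would combine two ingredients. First, the existence of honest definitions in NIP theories: by NIP there is a formula $\theta_{0}(x;\bar y)$ honestly defining $\tp_{\varphi}$ over models, obtained via the $(p,q)$-theorem and finiteness of VC dimension; this controls how the $\varphi$-type of $a$ over an arbitrary set is captured by boundedly many instances, but a priori only over a model, and not pinning down the negative part over an arbitrary finite set. Second, distality is used to show that for every $\varphi$ there is $N$ such that for any $a$ and any indiscernible sequence $(c_{i})_{i\in J}$ in $\mathbb{M}^{y}$, the type $\tp_{\varphi}(a/\{c_{i}:i\in J\})$ is implied by a \emph{fixed} formula in at most $N$ of the $c_{i}$'s, located near the (boundedly many, by NIP) cuts of $J$ where $\varphi(a,c_{i})$ changes truth value: the place where distality rather than mere NIP is used is that such a cut cannot be ``crossed'' by an element keeping the rest of the sequence indiscernible over $a$, so the behaviour of $\varphi(a,\cdot)$ near the cut is rigid and definable from a couple of nearby parameters. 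Given these, one transfers from indiscernible sequences to an arbitrary finite $C$ of size $\geq 2$ as in the NIP honest definitions argument: passing, via Ramsey/compactness and the $(p,q)$-theorem, to the finitely many ``$\varphi$-configurations'' realized in $C$, extracting a short indiscernible-like subsequence for each, and amalgamating the finitely many resulting candidate formulas and parameter slots into a single $\theta(x;y_{1},\dots,y_{k})$ by a disjunction over configurations and padding; one then verifies that $\theta$, with parameters $\bar b\in C^{k}$ read off from the configuration that $a$ realizes, is satisfied by $a$ and satisfies $\theta(x;\bar b)\vdash\tp_{\varphi}(a/C)$.

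\emph{Main obstacle.} The crux is the forward direction, and within it the transfer from indiscernible sequences to an arbitrary finite $C$ with the arity $k$ of $\theta$ bounded independently of $|C|$: this is where the $(p,q)$-theorem is genuinely needed, and where one must ensure that the chosen ``boundary'' parameters determine the \emph{whole} $\varphi$-type over $C$ and not merely over an extracted subsequence -- precisely the point that fails in a non-distal theory and that the distality hypothesis is there to supply. The amplification step in the converse -- duplicating the single defect of a non-distal configuration over a common witness -- also requires an argument, though it is routine.
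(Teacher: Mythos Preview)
The paper does not prove this statement: it is stated as a Fact with a citation to \cite{chernikov2015externally}. However, the paper's proof of Theorem~\ref{thm: w semieq and str hon def} is explicitly modelled on the cited proof, so you can read off the intended structure there. Compared to that, both directions of your proposal have issues.

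\textbf{Converse.} Your amplification step --- passing from a single defect to $k+1$ defects over a \emph{common} parameter $b$ --- is not routine and you have not justified it; it is not clear this is always possible from non-distality alone (and even if it is, it requires a real argument, not just ``by compactness''). More importantly, it is unnecessary. The argument the paper reproduces for $(3)\Rightarrow(1)$ in Theorem~\ref{thm: w semieq and str hon def} works with the single defect $d$: take $I_1\subset I$ of size $N+1$ \emph{entirely inside the non-defect part}, apply the strong honest definition there to get $I_0\subseteq I_1$ with $|I_0|\le N$, pick $b\in I_1\setminus I_0$, and use $\emptyset$-indiscernibility of the full sequence to find an automorphism sending $b\mapsto d$ while keeping $I_0$ inside the non-defect part; $a$-indiscernibility of $I+J$ then transports the honest definition and forces $\models\varphi(a,d)$. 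No duplication of defects is needed.

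\textbf{Forward direction.} Your sketch --- first prove strong honest definitions along indiscernible sequences via the cut structure, then ``transfer to arbitrary finite $C$ via Ramsey/$(p,q)$'' --- does not match the actual argument and is too vague to assess. The proof (as adapted in $(1)\Rightarrow(2)\Rightarrow(3)$ of Theorem~\ref{thm: w semieq and str hon def}) does not go through indiscernible sequences over $C$ at all. It passes to a $|M|^+$-saturated pair $(\mathcal{M}',B')\succ(\mathcal{M},B)$, proves a key claim that $p(x)\cup q(y)\vdash\varphi(x,y)$ for every $q\in S_y(B')$ finitely satisfiable in $B$ (this is where distality is used, via a Morley sequence in $q$), then extracts a single $\theta$ by compactness on the closed space of such $q$'s; only afterwards is the $(p,k)$-theorem invoked to make the number of parameters uniform. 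Your outline does not identify the finitely-satisfiable-types step, which is the heart of the matter, and the ``Ramsey transfer'' you describe goes in the wrong direction (Ramsey extracts indiscernibles from arbitrary sets, not the converse).
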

We  now show that in an NIP theory, weak semi-equationality is equivalent to the existence of \emph{one-sided} strong honest definitions, which is also a generalization of Fact \ref{fact: equation iff isolates positive} (replacing a conjunction of finitely many instances of $\varphi$ by some formula $\theta$). We will need the following $(p,k)$-theorem of Matou\v{s}ek from combinatorics:
\begin{fact}\cite{matousek2004bounded}\label{fac: p,k theorem}
Let $\mathcal{F}$ be a family of subsets of some set $X$. Assume that the VC co-dimension of $\mathcal{F}$ is bounded by $k$. Then for every $p \geq k \in \mathbb{N}$, there is $N \in \mathbb{N}$ such that: for every finite subfamily $\mathcal{G} \subseteq \mathcal{F}$, if $\mathcal{G}$ has the $(p,k)$-property, meaning that among any $p$ subsets of $\mathcal{G}$ some $k$ intersect, then there is a subset of $X$ of size $N$ intersecting all sets in $\mathcal{G}$.
 	
\end{fact}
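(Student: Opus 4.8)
The plan is to obtain this in the style of the Alon--Kleitman proof of the $(p,q)$-theorem, from two ingredients: (a) a \emph{fractional Helly theorem} for families of bounded VC co-dimension (the main result of \cite{matousek2004bounded}); and (b) the passage from fractional Helly to a $(p,k)$-theorem via linear programming duality and $\varepsilon$-nets. Ingredient (a) is the genuinely new input and is where essentially all the difficulty lies; ingredient (b) is by now routine once (a) is available.

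For (a): one first replaces the ground set $X$ by the essentially equivalent ground set of ``cells.'' For a finite $\mathcal{H}\subseteq\mathcal{F}$ with $|\mathcal{H}|=n$, the number of distinct subfamilies $\{F\in\mathcal{H}:x\in F\}$ arising from points $x\in X$ is the number of traces on $\mathcal{H}$ of the dual set system $\mathcal{F}^*$, which by Sauer--Shelah is $O(n^{k})$ once the VC co-dimension (the VC dimension of $\mathcal{F}^*$) is at most $k$. The fractional Helly statement to prove is then: there is a Helly number which one may take to be at most $k$ (this is where the VC co-dimension bound is used), and for each $\alpha\in(0,1]$ a constant $\beta=\beta(\alpha,k)>0$, such that if at least an $\alpha$-fraction of the $k$-element subfamilies of $\mathcal{H}$ have a common point, then some $\beta n$ members of $\mathcal{H}$ share a common point. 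Matou\v{s}ek's argument runs a Tur\'an-type double counting on the $k$-uniform hypergraph of intersecting $k$-subfamilies against the polynomial cell bound: a dense such hypergraph must be so concentrated that many of the witnessing points fall inside a single cell, and every member of $\mathcal{H}$ containing that cell then meets all the others containing it, producing a linear-size subfamily with a common point. Making this counting work and tracking the quantitative dependence is the technical heart, and the main obstacle; I would either cite it from \cite{matousek2004bounded} or reconstruct the argument there.

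For (b): let $\mathcal{G}\subseteq\mathcal{F}$ be finite with the $(p,k)$-property and $n=|\mathcal{G}|$. Double counting over $p$-subfamilies (each contains an intersecting $k$-subfamily, and each $k$-subfamily lies in $\binom{n-k}{p-k}$ of the $p$-subfamilies) shows at least a $1/\binom{p}{k}$-fraction of the $k$-subfamilies of $\mathcal{G}$ are intersecting; since any subfamily of an intersecting family is intersecting, the analogous positive-fraction statement holds for the product of any probability measure on $\mathcal{G}$ with itself on $k$-tuples. Feeding this into the weighted form of (a) with $\alpha=1/\binom{p}{k}$ yields: for every nonnegative weighting of $\mathcal{G}$ some point carries at least a $\beta$-fraction of the total weight, i.e.\ by LP duality the fractional transversal number satisfies $\tau^*(\mathcal{G})\le 1/\beta=:c$. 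So there is a probability measure $\mu$ on $X$ with $\mu(G)\ge 1/c$ for all $G\in\mathcal{G}$. Finally, bounded VC co-dimension implies bounded VC dimension of $\mathcal{F}$ itself (each bounds the other by the usual exponential estimate), so the $\varepsilon$-net theorem of Haussler--Welzl provides a $(1/c)$-net for $\mathcal{G}$ with respect to $\mu$ of size $N=O(c\log c)$; such a net meets every $G\in\mathcal{G}$ and is therefore a transversal of size $N=N(p,k)$, as required. The one-line summary is that everything reduces to Matou\v{s}ek's fractional Helly theorem, and proving that theorem — the dual-shatter-function cell count together with the Tur\'an-type concentration argument — is the sole real difficulty.
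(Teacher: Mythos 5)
This statement is quoted in the paper as an external Fact from \cite{matousek2004bounded}; the paper gives no proof of it, so there is no internal argument to compare against. Your outline is essentially the proof in the cited source itself: Matou\v{s}ek's fractional Helly theorem for families with polynomially bounded dual shatter function (obtained from the Sauer--Shelah bound on the number of cells plus the Tur\'an-type double counting), followed by the Alon--Kleitman machinery (LP duality to bound the fractional transversal number, then the Haussler--Welzl $\varepsilon$-net theorem, using that bounded VC co-dimension bounds the primal VC dimension) to convert the $(p,k)$-property into a transversal of size $N$ depending only on $p$, $k$ and the VC parameters. So the route is the right one, and your honest delegation of ingredient (a) to \cite{matousek2004bounded} is exactly what the paper does.

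One caution on the quantifiers in ingredient (a): Matou\v{s}ek's theorem gives fractional Helly number $k$ under the hypothesis that the dual shatter function is $o(m^{k})$, whereas dual VC dimension $\le k$ only gives the Sauer--Shelah bound $O(m^{k})$, which is $o(m^{k+1})$ but not $o(m^{k})$; taken literally this yields fractional Helly number $k+1$ and hence a $(p,k+1)$-theorem. Your sentence ``$O(n^{k})$ cells, hence Helly number at most $k$'' glosses this off-by-one, though the looseness is inherited from the Fact's own phrasing of ``VC co-dimension bounded by $k$'' (and is harmless for the paper's application, where the parameter is chosen as $\operatorname{VC}(\rho_\theta)+1$). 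If you write the argument out, either assume dual shatter function $o(m^{k})$ (e.g.\ dual VC dimension $\le k-1$) or state the conclusion as a $(p,k+1)$-theorem.
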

\begin{theorem}\label{thm: w semieq and str hon def}
\label{thm: semi-equation iff strong honest def for positive}Let
$T$ be NIP, and let $\varphi\left(x,y\right)$ be a formula. The following
are equivalent:
\begin{enumerate}
\item The formula $\varphi^*\left(y,x\right) := \varphi(x,y)$ is a weak semi-equation.
\item For every small $B \subseteq \mathbb{M}^{y}$ and $a \in \mathbb{M}^x$ with $\models \varphi(a,b)$ for all $b \in B$ there are $\theta(x;y_1, \ldots, y_k), c \in \left( \mathbb{M}^y \right)^k$ such that $c \ind^u_{B} a$, $\models \theta(a,c)$ and $\theta(x,c) \vdash \tp_{\varphi}^+ \left(a/B \right)$.
\item There is some formula $\theta\left(x;y_{1},\ldots,y_{k}\right)$ and number $N$ such
that for any finite set $B \subseteq \mathbb{M}^y$ with $\left|B\right|\geq2$ and $a \in \mathbb{M}^x$,
there is some $B_0 \subseteq B$ with $|B_0| \leq N$
 such that $\tp_{\theta}^+\left(a/B_0\right) \vdash \tp_{\varphi}^{+}\left(a/B\right)$.
%\item There is a regular cardinal $\kappa>\left|T\right|$ such that for
%any $a\in\mathbb{M}^{x}$ and small elementary substructures $M \preceq N \prec \mathbb{M}$
%with $N \ind_{M}^{u} a$ and $\left|N\right|=\kappa$, there is a subset
%$B_{0}$ of $N$ with $\left|B_{0}\right|<\kappa$ such that $\tp\left(a/MB_{0}\right)\vdash\tp_{\varphi}^{+}\left(a/N\right)$.

\end{enumerate}
\end{theorem}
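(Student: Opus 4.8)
The plan is to prove the cycle of implications $(1) \Rightarrow (3) \Rightarrow (2) \Rightarrow (1)$, with $(3) \Rightarrow (2)$ the routine step and $(1) \Rightarrow (3)$ the one requiring real work.

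For $(1) \Rightarrow (3)$, I would argue by contradiction using an Erd\H{o}s--Rado / compactness extraction of indiscernibles. Suppose no pair $(\theta, N)$ works; one must produce, from the instances of $\varphi$ themselves, a witness to the failure of weak semi-equationality of $\varphi^*$. The natural candidate for $\theta$ is a conjunction of finitely many instances of $\varphi$, i.e.\ $\theta(x; y_1, \ldots, y_k) := \bigwedge_{i \le k} \varphi(x, y_i)$; then $\tp^+_\theta(a/B_0)$ is generated by finite conjunctions of instances of $\varphi$ from $B_0$. The key point is that if this fails for all $N$ (equivalently: $\mathcal{F}_{\varphi^*}$-type families built from instances $\varphi(a, -)$ fail the relevant finite-breadth-like property uniformly), then by the $(p,k)$-theorem (Fact \ref{fac: p,k theorem}), applied to the family $\left\{ \varphi^*(b, \mathbb{M}^x) : b \in B \right\}$ restricted appropriately — here is where NIP enters, giving finite VC codimension — we get that the failure cannot be blamed on a bounded transversal, and so we can build an infinite configuration. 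More precisely, I would fix $a$ and a large finite $B$ witnessing failure, extract from $(B, <)$ an indiscernible sequence $(b_i)_{i \in I_L + (0) + I_R}$ in the parameters such that $\models \varphi(a, b_i)$ for $i \ne 0$; the failure of the isolation statement forces $\not\models \varphi(a, b_0)$, i.e.\ the subsequence $(b_i)_{i \ne 0}$ remains $a$-indiscernible with $\varphi(a, b_i)$ holding off $0$ but not at $0$. Swapping the roles of the variables, this is exactly a witness that $\varphi^*(y, x)$ is not a weak semi-equation (using Remark \ref{rem: lin ord dont matter} to replace $\mathbb{Z}$ by $I_L + (0) + I_R$). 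The delicate part, and the main obstacle, is arranging the indiscernibility of $(b_i)_{i \ne 0}$ \emph{over} $a$ simultaneously with getting $\not\models\varphi(a, b_0)$ — this is precisely where the $(p,k)$-theorem is needed rather than a naive pigeonhole, because it lets us choose, inside a family with the $(p,k)$-property failing, a specific bad element $b_0$ that is ``generic'' relative to the $a$-indiscernible tail.

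For $(3) \Rightarrow (2)$: given $\theta$ and $N$ from $(3)$, fix small $B$ and $a$ with $\models \varphi(a,b)$ for all $b \in B$. By $(3)$ applied to finite subsets of $B$ and compactness we obtain $B_0 \subseteq B$ with $|B_0| \le N$ and $\tp^+_\theta(a/B_0) \vdash \tp^+_\varphi(a/B)$; enumerate $B_0$ as $c \in (\mathbb{M}^y)^N$ (padding if necessary, or taking $k := N$). Since $c$ is a tuple from $B$, trivially $\tp(c/Ba)$ is finitely satisfiable in $B$, hence $c \ind^u_B a$. Taking the conjunction of the (finitely many, by NIP and compactness one can reduce to finitely many) instances of $\theta$ over $B_0$ satisfied by $a$ as the required formula — or, more cleanly, replacing $\theta(x; y_1, \ldots, y_k)$ by $\theta(x;c)$ directly — gives $\models \theta(a, c)$ and $\theta(x,c) \vdash \tp^+_\varphi(a/B)$, as desired.

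For $(2) \Rightarrow (1)$: this is the ``honest definition forbids a distality-type witness'' direction, parallel to the equational case in Fact \ref{fact: equation iff isolates positive}. Suppose $\varphi^*$ is not a weak semi-equation; by definition (after exchanging variable names) there is $a \in \mathbb{M}^x$ and an indiscernible sequence $(b_i)_{i \in \mathbb{Z}}$ of elements of $\mathbb{M}^y$ with $(b_i)_{i \ne 0}$ indiscernible over $a$, $\models \varphi(a, b_i)$ for $i \ne 0$, but $\not\models \varphi(a, b_0)$. Let $B := \{ b_i : i \ne 0 \}$, so $\models\varphi(a,b)$ for all $b \in B$, and apply $(2)$ to get $\theta(x;y_1,\ldots,y_k)$, $c = (c_1, \ldots, c_k)$ with $c \ind^u_B a$, $\models\theta(a,c)$, and $\theta(x,c) \vdash \tp^+_\varphi(a/B)$. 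The point is that $c \ind^u_B a$ together with $(b_i)_{i \ne 0}$ being $B$-indiscernible (it is a subsequence of an indiscernible sequence, indiscernible over $a$) allows one to move $b_0$: by finite satisfiability of $\tp(c/Ba)$ in $B$ and indiscernibility, one finds $b'_0 \in B$ realizing $\tp(b_0 / ca)$ over the relevant finite sublanguage, so $\models \theta(a,c) \wedge \neg\varphi(a, b'_0)$ with $b'_0 \in B$, contradicting $\theta(x,c) \vdash \tp^+_\varphi(a/B)$ (since $\varphi(x, b'_0) \in \tp^+_\varphi(a/B)$ would force $\models\varphi(a, b'_0)$). Filling in the indiscernibility bookkeeping that lets $b_0$ be replaced by an element of $B$ — using that $(b_i)_{i \in \mathbb{Z}}$ is indiscernible while only $(b_i)_{i \ne 0}$ is $a$-indiscernible, so that $b_0$ and some $b_j \in B$ are interchangeable over $ac$ at the level of finitely many formulas — is the one genuinely careful step here, but it is exactly the kind of argument used for equations and goes through verbatim.
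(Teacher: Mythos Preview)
Your cycle $(1)\Rightarrow(3)\Rightarrow(2)\Rightarrow(1)$ has a genuine gap in the main implication $(1)\Rightarrow(3)$. You propose to take $\theta(x;y_1,\ldots,y_k):=\bigwedge_{i\le k}\varphi(x,y_i)$ and argue that if this $\theta$ fails the conclusion of $(3)$, one can extract a witness to $\varphi^*$ not being a weak semi-equation. But with this choice of $\theta$, $\tp^+_\theta(a/B_0)\vdash\tp^+_\varphi(a/B)$ says exactly that some $\le N$ instances of $\varphi(x,b)$, $b\in B$, imply all the others --- i.e.\ that $\mathcal{F}_\varphi$ has finite breadth. By Proposition~\ref{prop: semieq iff fin breadth} and Corollary~\ref{stronger honest def}, this characterizes $\varphi$ being a \emph{semi-equation}, which is strictly stronger than $\varphi^*$ being a weak semi-equation. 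Concretely, in $\DLO$ the formula $\varphi(x,y):=x\neq y$ has $\varphi^*$ a weak semi-equation (it is a disjunction of the distal formulas $x<y$ and $x>y$), yet your $\theta$ fails $(3)$ for every $N$: take $B$ any set of $N{+}1$ points and $a\notin B$; no $N$ of the inequalities $x\neq b$ imply the remaining one. So the ``natural candidate'' is the wrong $\theta$, and the Erd\H{o}s--Rado extraction you sketch would only produce a counterexample to semi-equationality, not to weak semi-equationality. The $(p,k)$-theorem does not bridge this gap: it is used in the paper not to manufacture $\theta$ out of $\varphi$, but to pass from a non-uniform isolation (over possibly infinite $B$, with $\theta$ depending on $a,B$) to a uniform one --- that is, in $(2)\Rightarrow(3)$, not in $(1)\Rightarrow(3)$.

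The paper's organization is $(1)\Rightarrow(2)\Rightarrow(3)\Rightarrow(1)$, and the substantive step is $(1)\Rightarrow(2)$: one passes to a saturated pair $(\mathcal{M}',B')\succ(\mathcal{M},B)$, takes $p=\tp(a/B')$, and shows that for every $q\in S_y(B')$ finitely satisfiable in $B$ one has $p(x)\cup q(y)\vdash\varphi(x,y)$. This is where weak semi-equationality is actually used: one builds a Morley sequence $I+(b^*)+J$ in a global coheir $\hat q$, checks (using an NIP lemma on eventual indiscernibility over $a$) that $I+J$ is $a$-indiscernible, and then weak semi-equationality of $\varphi^*$ forces $\models\varphi(a,b^*)$. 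Compactness over the closed set of coheirs then yields $\theta$ with parameters $c\ind^u_B a$. Only after this does the $(p,k)$-theorem enter, in $(2)\Rightarrow(3)$, to uniformize $\theta$ and bound $N$. Your sketch contains none of this machinery, and I do not see how to avoid it.

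A smaller point: your $(2)\Rightarrow(1)$ is also not right as written. With $B=\{b_i:i\neq 0\}$ you have $\models\varphi(a,b)$ for \emph{every} $b\in B$, so there is no $b_0'\in B$ with $\models\neg\varphi(a,b_0')$; the contradiction you describe cannot occur. What is needed (and what the paper does in $(3)\Rightarrow(1)$) is to work with a finite $B_0\subseteq\{b_i:i\neq 0\}$, use indiscernibility of the \emph{full} sequence $(b_i)_{i\in\mathbb Z}$ to find an automorphism sending some $b\in B_0$ to $b_0$ while keeping the rest of $B_0$ inside $\{b_i:i\neq 0\}$, and then invoke $a$-indiscernibility of $(b_i)_{i\neq 0}$ to transport the $\theta$-instances. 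Your ``move $b_0$ into $B$'' is the right instinct but the bookkeeping goes the other way: you move an element of $B$ onto $b_0$, not $b_0$ into $B$.
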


\begin{proof}

(1) implies (2). We follow closely the proof of \cite[Proposition 19]{chernikov2015externally}. Assume that $a, B$ are such that $\models \varphi(a,b)$ for all $b \in B$. Let $\mathcal{M} \preceq \mathbb{M}$ contain $a,B$, and $\left(\mathcal{M}',B' \right) \succ \left(\mathcal{M},B \right)$ be a $\kappa := \left| M \right|^+$-saturated elementary extension (with $B$ named by a new predicate). We may assume $\mathcal{M}' \prec \mathbb{M}$ is a small submodel. Take $p(x) := \tp \left(a / B' \right)$.

\begin{claim}
Assume that $q\left(y\right)\in S_{y}\left(B'\right)$
is a type finitely satisfiable in $B$. Then $p\left(x\right)\cup q\left(y\right)\vdash\varphi\left(x,y\right)$.
\end{claim}
\begin{proof}
Let $\hat{q} \in S_y(\mathbb{M})$ be an arbitrary global type extending $q$ and finitely satisfiable in $B$, and form the Morley product $\hat{q}^{(\omega)}(y_1, y_2, \ldots) := \bigotimes_{i \in \mathbb{N}} q(y_i) \in S_{(y_1, y_2, \ldots)}(\mathbb{M})$, also finitely satisfiable in $B$. For any set $C \subseteq \mathbb{M}$, we let $q|_{C} := \hat{q} \restriction_{C}$ (respectively, $q^{(\omega)}|_{C} := \hat{q}^{(\omega)} \restriction_{C}$) be the restriction of $\hat{q}$ (respectively, of $\hat{q}^{(\omega)}$) to formulas with parameters in $C$.
%
%By finite satisfiability of $q$, the complete type 
%$$q^{(\omega)}(y_1, y_2, \ldots) := \bigcup \left\{  \tp(b_1, b_2, \ldots / C) : B \subseteq C \subseteq B', |C| < \kappa, b_i \models q|_{C b_{<i}}, b_i \in B' \right\}$$
% over $B'$ is finitely satisfiable in $B$.
  As $T$ is NIP, by \cite[Lemma 5]{chernikov2015externally} there is some $D$ with $B \subseteq D \subseteq B', |D| < \kappa$ such that for any two realizations $I, I' \subseteq B'$ of $q^{(\omega)}|_{D}$ we have $aI \equiv_{D} aI'$. Fix some $I \models q^{(\omega)}|_{D}$ in $B'$ (exists by saturation of $(\mathcal{M}',B')$ and finite satisfiability of $q^{(\omega)}|_{\mathbb{M}}$ in $B$) and $J \models q^{(\omega)}|_{\mathbb{M}}$ (in some larger monster model $\mathbb{M}' \succ \mathbb{M}$).
 
 We claim that $I+J$ is indiscernible over $aB$. Indeed, as $q^{(\omega)}|_{\mathbb{M}}$ is finitely satisfiable in $B$, by compactness and saturation of $\left(\mathcal{M}', B' \right)$ there is some $J' \models q^{(\omega)}|_{a D I}$ in $B'$. If $I+J$ is not $aB$-indiscernible, then $I' + J'$ is not $aB$-indiscernible for some finite subsequence $I'$  of $I$. As by construction both $I' + J'$ and $J'$ realize $q^{(\omega)}|_{D}$ in $B'$, it follows by the choice of $D$ that $J'$ is not indiscernible over $aB$ --- contradicting the choice of $J'$.
 
 Now let $b^* \in \mathbb{M}$ be any realization of $q$, then the sequence $I + (b^*) + J$ is Morley in $q|_{\mathbb{M}}$ over $B$, hence indiscernible (even over $B$). And $I+J$ is indiscernible over $a$ (even over $aB$) by the previous paragraph. Note also that $\models \varphi(a,b)$ for every $b \in B'$ (by assumption we had $\models \varphi(a,b)$ for all $b \in B$, but $a \in \mathcal{M}$ and $\left(\mathcal{M}',B' \right) \succ \left(\mathcal{M},B \right)$). Hence $\models \varphi(a,b)$ for every $b \in I +J$. And since $\varphi^*(y,x)$ is a weak semi-equation, this implies $\models \varphi(a, b^*)$. That is, for any $a \models p$ and $b^* \models q$, we have $\models \varphi(a,b^*)$, as wanted.
 \end{proof}
Now let $S'$ be the set of types over $B'$ finitely satisfiable in $B$, then $S'$ is a closed subset of $S_y(B')$. By the claim, for every $q \in S'$ we have $p(x) \cup q(y) \vdash \varphi(x,y)$, hence by compactness $\theta_q(x) \cup \psi_q(y) \vdash \varphi(x,y)$ for some formulas $\theta_q(X) \in p, \psi_q(y) \in q$. As $\left\{\psi_q(y) : q \in S' \right\}$ is a covering of the closed set $S'$, it has a finite sub-covering $\left\{\psi_{q_k} : k \in K\right\}$. Let $\theta(x) := \bigwedge_{k \in K} \theta_{q_k} (x)\in p(x)$. 
As in particular $\tp(b/B) \in S'$ for every $b \in B$, we thus have $\theta(x) \in \mathcal{L}(B')$ (and $B' \ind^u_{B} a$), $\models \theta(a)$ and $\theta(x) \vdash \tp^+_{\varphi}(a/B)$.

\noindent(2) implies (3). Let $a,B$ be given. We either have that $\models \neg \varphi(a,b)$ holds for all $b \in B$, in which case $\tp_{\theta}^+\left(a/B_0\right) = \tp_{\varphi}^{+}\left(a/B\right) = \emptyset$, and $\emptyset \vdash \emptyset$ trivially. Or we replace $B$ by $\left\{ b \in B : \  \models \varphi(a,b) \right\}$, and follow the proof of (1) implies (2) in \cite[Theorem 21]{chernikov2015externally}.

We provide the details. By (2), given small $B\subseteq\mathbb{M}^y$ and $a\in\mathbb{M}^x$ such that $\models\varphi\left(a,b\right)$ for all $b\in B$, there exist $\theta\left(x;y_1,\ldots,y_\ell\right)$ and $c\in\left(\mathbb{M}^y\right)^\ell$ such that $c\ind_B^u a$, $\models\theta\left(a,c\right)$, and $\theta\left(x,c\right)\vdash \tp_\varphi^+\left(a/B\right)$. Then given  any finite $B_0\subseteq B$, there is $d \in B^\ell$ such that $\tp_{\varphi}(d/a B_0) = \tp_{\varphi}(c/a B_0)$, so in particular $\models\theta\left(a,d\right)$ and $\theta\left(x,d\right)\vdash \tp_\varphi^+\left(a/B_0\right)$.

Now fix an arbitrary function $f: \mathcal{L} \to \mathbb{N}$ and let $n_\theta := f \left( \theta\left(x;y_1,\ldots,y_\ell\right) \right)$
for every partitioned formula $\theta \in \mathcal{L}$ with $x$ same as before and $\ell $ arbitrary. Let $T_f$ be a theory in the language $\mathcal{L} \cup \{P(x),a \}$ with $P$ a new unary predicate and $a$ a new constant symbol, so that $T_f$ expands $T$ with the following axioms: $\forall x\in P\,\varphi\left(a,x\right)$ and, for every formula $\theta\left(x;y_1,\ldots,y_\ell\right) \in \mathcal{L}$, an axiom $\exists b_1,\ldots,b_{n_\theta}\in P\,\forall c\in P^\ell\,\left(\neg\theta\left(a,c\right)\right) \lor \exists x\,\left(\theta\left(x,c\right) \land \bigvee_{i\leq n_\theta}\neg\varphi\left(a,b_i\right)\right)$. By the previous paragraph, the theory $T_f$ is inconsistent. By compactness, there is a finite inconsistent subset of $T_f$ only requiring finitely many of these formulas $\theta_1,\ldots,\theta_k$.

Thus there are finitely many formulas $\theta_1\left(x;y_1,\ldots,y_{\ell_1}\right),\ldots,\theta_k\left(x;y_1,\ldots,y_{\ell_k}\right) \in \mathcal{L}$ such that: given $B\subseteq\mathbb{M}^y$ and $a\in\mathbb{M}^x$ such that $\models\varphi\left(a,b\right)$ for all $b\in B$, there is $i\leq k$ such that for all $B_0\subseteq B$ with $\left|B_0\right|\leq n_{\theta_i}$, there is $c\in B^{\ell_i}$ such that $\models\theta_i\left(a;c\right)$ and $\theta_i\left(x;c\right)\vdash \tp_\varphi^+\left(a/B_0\right)$.

For each formula $\theta\left(x;y_1,\ldots,y_\ell\right) \in \mathcal{L}$, let $\rho_\theta\left(x,y;z\right):=\theta\left(x;z\right)\land\forall w\,\theta\left(w;z\right)\rightarrow\varphi\left(w,y\right)$, and $n_\theta:=\text{VC}\left(\rho_\theta\right)+1$ in the above argument (where $\VC$ is the VC-dimension), and let $\theta_1, \ldots, \theta_k$ be as given by the previous paragraph for this choice of the $n_{\theta}$'s. Then for an arbitrary $a\in\mathbb{M}^x$ and finite $B\subseteq\mathbb{M}^y$ such that $\models\varphi\left(a;b\right)$ for $b\in B$, there is $i_{a,B}\leq k$ such that: for all $B_0\subseteq B$ with $\left|B_0\right|\leq n_{\theta_{i_{a,B}}}$, there is $c\in B^{\ell_{i_{a,B}}}$ with $\models\theta_{i_{a,B}}\left(a;c\right)$ and $\theta_{i_{a,B}}\left(x;c\right)\vdash \tp_\varphi^+\left(a/B_0\right)$. For $b\in B$, consider the set 
	$$F_{a,B}^b:=\left\{c\in B^{\ell_{i_{a,B}}}: \,\models\theta_{i_{a,B}}\left(a;c\right),\,\theta_{i_{a,B}}\left(x;c\right)\vdash\varphi\left(x;b\right)\right\}.$$
Note that $F_{a,B}^b=\left\{c\in B^{\ell_{i_{a,B}}} : \,\models\rho_{\theta_{i_{a,B}}}\left(a,b;c\right)\right\}$, 
 and let $\mathcal{F}_{a,B}:=\left\{F_{a,B}^b : b\in B\right\}$. By Fact \ref{fac: p,k theorem} applied to $\mathcal{F}_{a,B}$, with $p=k=n_{\theta_{i_{a,B}}}$, there is $N_{i_{a,B}}$ (depending on $i_{a,B}$ but not otherwise depending on $a,B$) such that if every $n_{\theta_{i_{a,B}}}$ sets from $\mathcal{F}_{a,B}$ intersect, then there is $B_0\subseteq B^{\ell_{i_{a,B}}}$ with $\left|B_0\right|\leq N_{i_{a,B}}$ intersecting all sets from $\mathcal{F}_{a,B}$. Furthermore, by choice of $i_{a,B}$, the condition that every $n_{\theta_{i_{a,B}}}$ sets from $\mathcal{F}_{a,B}$ intersect holds. And there are only $k$ many possible values of $i_{a,B}$, so we let $N:=\max_{1 \leq i \leq k} N_i$.

We thus found $N\in\mathbb N$ such that: for all $a\in\mathbb{M}^x$ and finite $B\subseteq\mathbb{M}^y$ with $\models\varphi\left(a;b\right)$ for all $b\in B$, there is $i_{a,B}\leq k$ and $B_1\subseteq B^{\ell_{i_{a,B}}}$ with $\left|B_1\right|\leq N$ intersecting all sets from $\mathcal{F}_{a,B}$, meaning that for every $b\in B$ there is $c\in B_1$ such that $\models\theta_{i_{a,B}}\left(a;c\right)$ and $\theta_{i_{a,B}}\left(x;c\right)\vdash\varphi\left(x;b\right)$. That is, $\tp_{\theta_{i_{a,B}}}^+\left(a/B_1\right)\vdash \tp_\varphi^+\left(a/B\right)$.

Finally, let $\theta\left(x;y_1,\ldots,y_\ell\right)$ be a formula that can code for any $\theta_i\left(x;y_1,\ldots,y_{\ell_i}\right)$ when parameters range over a set with at least two elements. For all $a\in\mathbb{M}^x$ and finite $B\subseteq\mathbb{M}^y$ with $\left|B\right|\geq2$, for which $\models\varphi\left(a;b\right)$ for all $b\in B$, there is $B_0\subseteq B$ with $2\leq\left|B_0\right|\leq \ell N+2$ (consisting of the coordinates of $B_1$ from the previous paragraph, and two points for coding) such that $\tp_\theta^+\left(a/B_0\right)\vdash \tp_\varphi^+\left(a/B\right)$, as desired.

\noindent(3) implies (1). This follows almost verbatim from the proof of (2) implies (1) in \cite[Theorem 21]{chernikov2015externally}. Let $I+d+J$ be an indiscernible sequence in $\mathbb M^y$, with $I$ and $J$ infinite, and $I+J$ indiscernible over $a\in\mathbb M^x$, and suppose $\models\varphi\left(a,b\right)$ for $b\in I+J$. Let $I_1\subset I$ with $\left|I_1\right|=N+1$. Then there is some $I_0\subseteq I_1$ such that $\left|I_0\right|\leq N$ and $\tp_\theta^+\left(a/I_0\right)\vdash \tp_\varphi^+\left(a/I_1\right)$. Let $b\in I_1\setminus I_0$. By indiscernibility of $I+d+J$, there is some $\sigma\in\text{Aut}\left(\mathbb M\right)$ such that $\sigma\left(I_1\right)\subset I+d+J$ and $\sigma\left(b\right)=d$. We have $\sigma\left(I_0\right)\subseteq I+J$, so by $a$-indiscernibility of $I+J$, $\models\theta\left(a,\sigma\left(c\right)\right)$ for every $c\in I_0^k$ for which $\models\theta\left(a,c\right)$, and hence $a\models\sigma\left(\tp_\varphi^+\left(a/I_1\right)\right)$. And $\varphi\left(x,b\right)\in \tp_\varphi^+\left(a/I_1\right)$, so $\varphi\left(x,d\right)\in \sigma\left(\tp_\varphi^+\left(a/I_1\right)\right)$, and hence $\models \varphi\left(a,d\right)$.
\end{proof}
\begin{problem}
Can the assumption that $T$ is NIP be omitted? (Note that the proof of (3) implies (1) does not use it.)
\end{problem}
Proposition \ref{prop: semieq iff fin breadth} immediately implies an analog of Fact \ref{fact: equation iff isolates positive} for semi-equations, telling us that $\varphi\left(x;y\right)$ is a one-sided strong honest definition for itself:
\begin{cor}\label{stronger honest def}
	A formula $\varphi\left(x,y\right)$  (equivalently, $\varphi^*(y,x)$) is a semi-equation
if and only if there is some $k\in\mathbb{N}$ such that: for every finite $B\subset\mathbb{ M}^{y}$
and $a\in\mathbb{ M}^{x}$ there is some $B_0 \subseteq B$ with $|B_0| \leq k$ such that  $ \tp_{\varphi}^+(a/B_0) \vdash \tp_{\varphi}^{+}\left(a/B\right)$.
\end{cor}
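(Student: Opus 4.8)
The plan is to derive Corollary \ref{stronger honest def} directly from Proposition \ref{prop: semieq iff fin breadth}, which tells us that $\varphi(x,y)$ is a semi-equation if and only if the family $\mathcal{F}_\varphi = \{\varphi(\mathbb{M},b) : b \in \mathbb{M}^y\}$ has finite breadth, say breadth $\leq k$ (equivalently, $\varphi$ is a $(k+1,k)$-semi-equation). The first step is to observe that the condition on the right-hand side of the corollary is literally a reformulation of ``$\mathcal{F}_\varphi$ has breadth $\leq k$'': indeed, $\tp_\varphi^+(a/B) = \{\varphi(x,b) : b \in B, \models \varphi(a,b)\}$, so if we set $B' := \{b \in B : \models \varphi(a,b)\}$, then $a$ realizes $\bigwedge_{b \in B'}\varphi(x,b)$, so this conjunction is consistent; by breadth $\leq k$ it is implied by a sub-conjunction over some $B_0 \subseteq B'$ of size $\leq k$, and then $\tp_\varphi^+(a/B_0) \vdash \tp_\varphi^+(a/B)$ because any $b \in B \setminus B'$ satisfies $\models\neg\varphi(a,b)$ and hence contributes nothing to $\tp_\varphi^+$.

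For the converse direction, I would argue that if the displayed isolation property holds with bound $k$, then $\mathcal{F}_\varphi$ has breadth $\leq k$: given any consistent finite conjunction $\bigwedge_{b \in B}\varphi(x,b)$ witnessed by some $a$, we have $B = B'$ in the notation above (all instances hold of $a$), so the hypothesis gives $B_0 \subseteq B$ with $|B_0| \leq k$ and $\tp_\varphi^+(a/B_0) \vdash \tp_\varphi^+(a/B)$; but this means $\bigwedge_{b \in B_0}\varphi(x,b) \vdash \bigwedge_{b \in B}\varphi(x,b)$ (as every $\varphi(x,b)$ for $b \in B$ lies in $\tp_\varphi^+(a/B)$, and this type is isolated over $B_0$ by instances from $\tp_\varphi^+(a/B_0)$). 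By compactness across all finite consistent conjunctions, the breadth is $\leq k$, so $\varphi$ is a semi-equation by Proposition \ref{prop: semieq iff fin breadth}. One small point to be careful about: the displayed condition quantifies over \emph{all} finite $B$, whereas breadth only concerns \emph{consistent} conjunctions — but this is harmless since for inconsistent $B$ the implication $\bigwedge_{b \in B_0}\varphi \vdash \bigwedge_{b \in B}\varphi$ we want to extract is only needed when the right-hand side is consistent.

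The equivalence of the condition for $\varphi$ and for $\varphi^*$ is immediate from Proposition \ref{prop: Bool combs}(2) (semi-equations are closed under exchanging variables), so this requires no separate argument. I do not anticipate a genuine obstacle here — the corollary is essentially a restatement of Proposition \ref{prop: semieq iff fin breadth} in the language of types, and the only thing to watch is the bookkeeping around passing between a finite set $B$ and the subset $B'$ of $b$'s for which $\varphi(a,b)$ actually holds, so that $\tp_\varphi^+$ (which only records the positive instances) is correctly identified with a conjunction of instances of $\varphi$. The ``in particular'' linkage to Fact \ref{fact: equation iff isolates positive} — namely that in a stable theory this recovers the characterization of equations, with $\theta$ taken to be $\varphi$ itself — can be mentioned but needs nothing beyond Proposition \ref{prop: semieq and stab}(3).
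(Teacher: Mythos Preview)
Your proposal is correct and follows exactly the approach the paper takes: the corollary is stated as an immediate consequence of Proposition~\ref{prop: semieq iff fin breadth}, and you have simply spelled out the straightforward translation between ``$\mathcal{F}_\varphi$ has breadth $\leq k$'' and the type-isolation condition (the paper gives no further argument). One minor remark: the appeal to compactness in your converse direction is unnecessary, since breadth is already a condition on finite conjunctions.
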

\section{Non weakly semi-equational valued fields}\label{sec: val fields}

In this section we demonstrate that many valued fields are not weakly semi-equational. By an \emph{$\ac$-valued field field} we mean a three-sorted structure $\left(K,k,\Gamma, \nu, \ac \right)$ in the Denef-Pas language, where $K$ is a field, $\nu: K \to \Gamma$ is a valuation, with (ordered) value group $\Gamma$ and residue field $k$, and $\ac: K \to k$ the angular component map. As usual, $\mathcal{O} = \mathcal{O}_{\nu}$ denotes the valuation ring of $\nu$, and for $x \in \mathcal{O}$, $\bar{x}$ denotes the residue of $x$ in $k$. The following is the main theorem of the section:
\begin{theorem}\label{thm: non weak semieq val fields}
	Let $K$ be an ac-valued field for which the residue
field $k$ contains a non-constant totally indiscernible sequence (for
instance, if $k$ is infinite and stable), and which eliminates quantifiers
of the main field sort (for example, a Henselian
ac-valued field of equicharacteristic $0$ with an algebraically closed
residue field). Then $K$ is not weakly semi-equational.
\end{theorem}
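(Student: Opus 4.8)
Suppose towards a contradiction that $T=\Th(K)$ is weakly semi-equational. Since by Proposition~\ref{prop: Bool combs}(3) weak semi-equations are closed under finite conjunctions and disjunctions, putting an arbitrary Boolean combination into disjunctive normal form and grouping positive and negative literals shows that every $\mathcal{L}$-formula $\varphi(x,y)$ is equivalent to a disjunction $\bigvee_{i=1}^{m}\bigl(\chi_{i}(x,y)\wedge\neg\rho_{i}(x,y)\bigr)$ in which every $\chi_{i}$ and every $\rho_{i}$ is a weak semi-equation. The engine of the argument is the following consequence of this shape: if $(a_{j})_{j\in\mathbb{Z}}$ is indiscernible, $(a_{j})_{j\neq0}$ is indiscernible over $b$, $\models\varphi(a_{j},b)$ for all $j\neq0$, and $\models\neg\varphi(a_{0},b)$, then there is an index $i\in[m]$ with $\models\rho_{i}(a_{0},b)$ and $\models\neg\rho_{i}(a_{j},b)$ for all $j\neq0$. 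Indeed, choosing $i$ so that $\chi_{i}(a_{j},b)\wedge\neg\rho_{i}(a_{j},b)$ holds for some (hence, by $b$-indiscernibility, every) $j\neq0$, the weak semi-equation $\chi_{i}$ forces $\models\chi_{i}(a_{0},b)$, and then $\neg\varphi(a_{0},b)$ forces $\models\rho_{i}(a_{0},b)$. So each ``switch'' of $\varphi$ of this type must be accounted for by one of the finitely many $\rho_{i}$, and the goal is to produce, for every $n$, a configuration forcing $n$ pairwise distinct such indices.

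\textbf{Choice of the witnessing formula.} By the hypothesis that $K$ eliminates quantifiers in the main field sort (together with the Denef--Pas type relative quantifier elimination), it suffices to find one field-sort formula that is not a Boolean combination of weak semi-equations. We use a formula built from valuations of differences of field elements --- the prototype being $\varphi(x_{1},x_{2};y_{1},y_{2}):=\nu(x_{1}-y_{1})<\nu(x_{2}-y_{2})$, or a close variant adapted to feeding in residue-field data. One first records, via Proposition~\ref{prop: semieq iff fin breadth}, that $\varphi$ is not even a semi-equation: its instances do not form a laminar family, since for a fixed second coordinate $b_{2}$ with $\nu(x_{2}-b_{2})$ pinned the first-coordinate slices are complements of open balls around $b_{1}$, and these have unbounded breadth as $b_{1}$ varies. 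So the whole content is to rule out finite Boolean combinations.

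\textbf{Wide configurations from the residue field (the main obstacle).} Fix a non-constant totally indiscernible sequence $(\alpha_{i})_{i\in I}$ in the residue field $k$. Taking lifts of the scaled elements $\alpha_{i}\cdot\pi^{\gamma}$ into a closed ball of $K$ produces a sequence $(a_{i})_{i\in I}$ in $K$ all of whose pairwise differences have valuation $\gamma$; using stable embeddedness of the residue sort and the quantifier elimination one checks that $(a_{i})$ is totally indiscernible in $K$, and, since deleting a term from a totally indiscernible sequence leaves the rest indiscernible over that term, the relevant deleted subsequences stay indiscernible over the right parameters. One then arranges, for each $n$, a parameter tuple $b=b^{(n)}$ with components chosen inside this ball so as to ``cut'' the sequence at $n$ prescribed special positions: $\models\neg\varphi(a_{j},b^{(n)})$ exactly at the special positions, $\models\varphi(a_{j},b^{(n)})$ elsewhere, the non-special subsequence remains indiscernible over $b^{(n)}$, and --- after passing, for each special position in turn, to a suitable sub-configuration (collapsing indices on either side of that position) --- the hypotheses of the consequence recorded in the Setup hold at each special position separately. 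Applying that consequence at each of the $n$ special positions yields $n$ indices in $[m]$, and a coherence argument (realizing the special parameters themselves as an indiscernible array interlocked with $(a_{i})$, so that two special positions using the same $\rho_{i}$ would make $\rho_{i}$ simultaneously true and false of order-isomorphic tuples, contradicting that $\rho_{i}$ is a weak semi-equation) shows these $n$ indices are pairwise distinct. Hence $n\leq m$ for every $n$, a contradiction; thus $\varphi$ is not a Boolean combination of weak semi-equations and $T$ is not weakly semi-equational.

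\textbf{Where the difficulty lies.} The delicate points are all inside the previous paragraph: (a) verifying that the $K$-lift of the residue-field indiscernible sequence is indiscernible over exactly the parameters needed --- this is where elimination of quantifiers of the field sort and stable embeddedness of $k$ are used; (b) designing $b^{(n)}$ and the special positions so that each special position independently triggers the dichotomy of the Setup without the other special positions destroying the required over-$b^{(n)}$ indiscernibility of the non-special subsequence (note the analogous asymmetry issue flagged in Problem~\ref{prob: weak semieq symm}, which is why one keeps $x$ in its role throughout rather than dualizing); and (c) building in enough coherence among the pieces for the final pigeonhole. The other cases in Remark~\ref{rem: val field more examples} follow by the same scheme, the only input being a non-constant totally indiscernible sequence in the residue field --- e.g.\ any infinite stable residue field in a saturated model.
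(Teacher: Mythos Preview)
Your reduction to the disjunctive normal form $\bigvee_{i}(\chi_{i}\wedge\neg\rho_{i})$ with $\chi_{i},\rho_{i}$ weak semi-equations is correct and matches the paper, as does the choice of the formula $\nu(x_{1}-y_{1})<\nu(x_{2}-y_{2})$. The local consequence you draw (a single ``switch'' forces some $\rho_{i}$ to flip) is also fine. The gap is in your pigeonhole step: you never explain why the indices $i_{1},\ldots,i_{n}$ obtained at the $n$ special positions must be \emph{distinct}. Your ``coherence argument'' asserts that if two special positions used the same $\rho_{i}$, this would contradict $\rho_{i}$ being a weak semi-equation, but the pattern you have is that $\rho_{i}$ holds at the special positions and fails at the non-special ones --- that is a failure of \emph{$\neg\rho_{i}$} being a weak semi-equation, which is not assumed. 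If your configuration is symmetric enough for the special positions to be exchangeable over $b^{(n)}$ (which it must be, for the rest of your argument to work), then the same $\rho_{i}$ can and will be selected at every special position, and no contradiction arises. So the one-dimensional ``many cuts plus pigeonhole'' mechanism, as written, does not close.

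The paper sidesteps this entirely by using a two-dimensional array criterion (its Lemmas~\ref{lem: weak semieq conj} and~\ref{lem: Bool comb of weak semieq}): one builds an array $(a_{i}a'_{j})_{i,j\in\mathbb{Z}}$ and a single parameter $bb'$ such that rows and columns are indiscernible, rows and columns minus index $0$ are $bb'$-indiscernible, $\varphi(a_{i}a'_{j},bb')$ holds iff $i=0\lor j\neq0$, and all ``good'' entries have the same type over $bb'$. The last condition lets you pick a \emph{single} disjunct $(\chi_{k},\rho_{k})$, and then a two-step chaining argument (first along rows using that $\chi_{k}$ is a weak semi-equation, then along the $j=0$ column using that $\rho_{k}$ is) gives the contradiction directly --- no pigeonhole, no need for $n$ distinct indices. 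Realizing this array in the valued field is the actual work: the paper takes an indiscernible sequence $\gamma_{0}<\cdots<\gamma_{6}$ in $\Gamma$ (to get $\mathbb{Z}$-linear independence), two mutually totally indiscernible sequences in $k$, and very specific choices of valuations and angular components for $a_{\infty},a'_{\infty},\alpha,\beta,b,b'$; the verification of the required indiscernibility and same-type conditions is a lengthy induction on formula complexity using valuational independence and the field-sort QE. Your sketch (``lifts of $\alpha_{i}\cdot\pi^{\gamma}$ into a ball'', ``arrange $b^{(n)}$ to cut at $n$ positions'') does not approach this level of detail, and more importantly, even a fully worked-out one-dimensional configuration would not repair the pigeonhole gap above.
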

\noindent Before presenting its proof, we need to develop some auxiliary results. First we provide a general sufficient criterion for when a formula is not a Boolean combination of weak semi-equations in Section \ref{sec: MS simple}. Then we discuss valuational independence in Section \ref{sec: val fields}. In Section \ref{sec: proof of nwsemieq val fields} we describe a particular configuration of elements in a valued field indented to satisfy this sufficient criterion with respect to the formula $\psi(x_1,x_2; y_1,y_2) := \nu\left(x_{1}-y_{1}\right)<\nu\left(x_{2}-y_{2}\right)$, and reduce demonstrating that it has all of the required properties to Claims  \ref{cla: val fields 1} and Claim \ref{cla: val fields 2} which express a certain amount of indiscernibility of our configuration. We also explain  how both claims can be proved by induction on the complexity of the formula and reduce to several essential cases that have to be considered; and show in Claim \ref{cla: val fields 4} valuational independence of some elements of our configuration which will be helpful in the proof of the claims.
We then prove Claim \ref{cla: val fields 1} in Section \ref{sec: claim 1} and Claim \ref{cla: val fields 2} in Section \ref{sec: claim 2}, concluding the proof of Theorem \ref{thm: non weak semieq val fields}. Finally, in Section \ref{sec: apps of main thm} we discuss some further applications of Theorem \ref{thm: non weak semieq val fields} and examples.

\subsection{Boolean combinations of weak semi-equations}\label{sec: MS simple}
We provide a sufficient criterion for when a formula is not a Boolean combination of weak semi-equations (analogous to a criterion for equations from \cite{muller2017nonequational}).

\begin{lemma}\label{lem: weak semieq conj}
	If $\varphi\left(x,y\right)$ and $\psi\left(x,y\right)$
are weak semi-equations, then there are no $b \in \mathbb{M}^{y}$ and array $\left(a_{i,j}\right)_{i,j\in\mathbb{Z}}$ with $a_{i,j} \in \mathbb{M}^x$ such that:
\begin{itemize}
	\item every row (i.e.~$(a_{i,j} : j \in \mathbb{Z})$ for a fixed $i \in \mathbb{Z}$) and every column (i.e.~$(a_{i,j} : i \in \mathbb{Z})$ for a fixed $j \in \mathbb{Z}$) is indiscernible (over $\emptyset$);

\item rows and columns without their $0$-indexed elements
(i.e.~$\left(a_{i,j}\right)_{j\neq0}$ for fixed $i$, and $\left(a_{i,j}\right)_{i\neq0}$
for fixed $j$) are $b$-indiscernible;

\item $\models\varphi\left(a_{i,j},b\right)\land\neg\psi\left(a_{i,j},b\right)\iff i=0\lor j\neq0$.
\end{itemize}
\end{lemma}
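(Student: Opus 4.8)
The plan is to argue by contradiction: suppose $b$ and an array $(a_{i,j})_{i,j\in\mathbb{Z}}$ with the three listed properties exist, and derive that one of $\varphi$, $\psi$ fails to be a weak semi-equation. The starting observation is that the cells where $\varphi\land\neg\psi$ fails are exactly those in the $0$-th column with nonzero row index, i.e.\ the set $\{a_{i,0} : i\neq 0\}$: at each such cell we have $\neg\varphi(a_{i,0},b)\lor\psi(a_{i,0},b)$, whereas at $a_{0,0}$ (since $i=0$) we have $\varphi(a_{0,0},b)\land\neg\psi(a_{0,0},b)$, and at every cell $a_{i,j}$ with $j\neq 0$ we have $\varphi(a_{i,j},b)$.

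First I would apply the second hypothesis to the $0$-th column: the deleted subsequence $(a_{i,0})_{i\neq 0}$ is $b$-indiscernible, so the truth value of $\varphi(a_{i,0},b)$ is constant over $i\neq 0$, and likewise the truth value of $\psi(a_{i,0},b)$. Combined with $\neg\varphi(a_{i,0},b)\lor\psi(a_{i,0},b)$ for every such $i$, this gives a clean dichotomy: either $\neg\varphi(a_{i,0},b)$ holds for all $i\neq 0$, or $\psi(a_{i,0},b)$ holds for all $i\neq 0$.

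In the first case I would fix any $i\neq 0$ and examine the $i$-th row $(a_{i,j})_{j\in\mathbb{Z}}$: it is $\emptyset$-indiscernible, its deletion $(a_{i,j})_{j\neq 0}$ is $b$-indiscernible, $\varphi(a_{i,j},b)$ holds for all $j\neq 0$ (the equivalence in the third hypothesis fires since $j\neq 0$), while $\neg\varphi(a_{i,0},b)$ by the case assumption. Reading $\mathbb{Z}$ as $I_L+(0)+I_R$ with $I_L,I_R$ infinite (Remark \ref{rem: lin ord dont matter}), this is precisely a witness that $\varphi$ is not a weak semi-equation, a contradiction. In the second case I would instead use the $0$-th column $(a_{i,0})_{i\in\mathbb{Z}}$ itself: it is $\emptyset$-indiscernible, $(a_{i,0})_{i\neq 0}$ is $b$-indiscernible, $\psi(a_{i,0},b)$ holds for all $i\neq 0$ by the case assumption, while $\neg\psi(a_{0,0},b)$ since $a_{0,0}$ satisfies $\varphi\land\neg\psi$; again by Remark \ref{rem: lin ord dont matter} this witnesses that $\psi$ is not a weak semi-equation, a contradiction.

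I do not expect a genuine obstacle here: the argument is a two-line dichotomy followed by matching each branch to the definition of a weak semi-equation. The one point requiring care is the built-in asymmetry of the configuration — the unique ``bad'' region sits along a single column, so $\psi$ must be tested on that column while $\varphi$ is tested on a transverse row — together with checking that the index sets on each side of the distinguished element stay infinite, so that Remark \ref{rem: lin ord dont matter} genuinely applies and no separate Ramsey/compactness step is needed.
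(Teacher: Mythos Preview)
Your proof is correct and follows essentially the same approach as the paper. The only cosmetic difference is that the paper avoids the explicit dichotomy: rather than splitting on whether $\neg\varphi(a_{i,0},b)$ or $\psi(a_{i,0},b)$ holds for $i\neq 0$, it applies weak semi-equationality of $\varphi$ directly along a row $i\neq 0$ to conclude $\varphi(a_{i,0},b)$ (the contrapositive of your Case~1), hence $\psi(a_{i,0},b)$ for all $i\neq 0$, and then finishes exactly as in your Case~2.
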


\begin{proof}
	Assume there exist an array $\left( a_{i,j} : i,j \in \mathbb{Z} \right)$ and $b$ with these properties. For any fixed $i\neq0$, we have $\models\varphi\left(a_{i,j},b\right)$
for all $j \neq 0$, $\left(a_{i,j}\right)_{j \in \mathbb{Z}}$ is indiscernible and $\left(a_{i,j}\right)_{j\neq0}$ is $b$-indiscernible,
so, by weak semi-equationality of $\varphi$, $\models\varphi\left(a_{i,0},b\right)$.
But $\not\models\varphi\left(a_{i,0},b\right)\land\neg\psi\left(a_{i,0},b\right)$,
so $\models\psi\left(a_{i,0},b\right)$. Now the sequence $\left(a_{i,0}\right)_{i \in \mathbb{Z}}$ is indiscernible, $\left(a_{i,0}\right)_{i\neq0}$
is $b$-indiscernible and $\models \psi(a_{i,0},b)$ for all $i \neq 0$, so, by weak semi-equationality of $\psi$, $\models\psi\left(a_{0,0},b\right)$ --- 
contradicting $\models\varphi\left(a_{0,0},b\right)\land\neg\psi\left(a_{0,0},b\right)$.
\end{proof}

\begin{lemma}\label{lem: Bool comb of weak semieq}
	If $\varphi\left(x,y\right)$ is a Boolean combination
of weak semi-equations, then there are no  $b \in \mathbb{M}^y$ and array $\left(a_{i,j}\right)_{i,j\in\mathbb{Z}}$ with $a_{i,j} \in \mathbb{M}^x$
such that:
\begin{itemize}
	\item rows and columns of $\left(a_{i,j}\right)_{i,j\in\mathbb{Z}}$
are indiscernible;
\item rows and columns without their $0$-indexed elements
(i.e. $\left(a_{i,j}\right)_{j\neq0}$ for fixed $i$, and $\left(a_{i,j}\right)_{i\neq0}$
for fixed $j$) are $b$-indiscernible;

\item  $\models\varphi\left(a_{i,j},b\right)\iff i=0\lor j\neq0$;

\item all $a_{i,j}$ with $i=0$ or $j\neq0$ have the same type over $b$.
\end{itemize}
\end{lemma}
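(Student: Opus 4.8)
The plan is to deduce the statement from Lemma \ref{lem: weak semieq conj} by writing $\varphi$ in disjunctive normal form over a finite set of weak semi-equations, selecting a disjunct satisfied by $a_{0,0}$, and grouping its positive literals into a single conjunction $\Psi$ and its negative literals into a single disjunction $\Theta$ of weak semi-equations.

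Suppose for contradiction that such $b \in \mathbb{M}^y$ and array $(a_{i,j})_{i,j\in\mathbb{Z}}$ exist. Fix weak semi-equations $\psi_1(x,y),\ldots,\psi_n(x,y)$ together with a Boolean combination of them equivalent to $\varphi(x,y)$, and pass to disjunctive normal form: $\varphi(x,y)$ is equivalent in $T$ to $\bigvee_{k\in K} D_k(x,y)$, where each $D_k$ is a conjunction of literals of the form $\psi_m(x,y)$ or $\neg\psi_m(x,y)$. Since $\models\varphi(a_{0,0},b)$ (as $i=0$), choose $k$ with $\models D_k(a_{0,0},b)$ and put $D := D_k$, $\Psi := \bigwedge\{\psi_m : \psi_m \text{ is a positive literal of } D\}$, and $\Theta := \bigvee\{\psi_m : \neg\psi_m \text{ is a negative literal of } D\}$, so that $D \equiv \Psi \land \neg\Theta$. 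By Proposition \ref{prop: Bool combs}(3), $\Psi$ is a weak semi-equation (a conjunction of weak semi-equations, taken to be the tautology $\top$ if $D$ has no positive literal) and $\Theta$ is a weak semi-equation (a disjunction of weak semi-equations, taken to be $\bot$ if $D$ has no negative literal); note that $D$ must contain at least one literal, since otherwise $\varphi\equiv\top$, contradicting $\not\models\varphi(a_{1,0},b)$.

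Next I would verify that $b$, the array $(a_{i,j})$, and the formulas $\Psi,\Theta$ satisfy the hypotheses of Lemma \ref{lem: weak semieq conj}, with $\Psi$ in the role of ``$\varphi$'' and $\Theta$ in the role of ``$\psi$''. The two indiscernibility conditions there are exactly the first two bullets of the present hypothesis. For the pattern $\models \Psi(a_{i,j},b)\land\neg\Theta(a_{i,j},b)\iff i=0\lor j\neq 0$, observe that $\Psi\land\neg\Theta\equiv D$, so it suffices to check $\models D(a_{i,j},b)\iff i=0\lor j\neq0$: if $i=0$ or $j\neq 0$ then $a_{i,j}$ has the same type over $b$ as $a_{0,0}$ by the fourth bullet, hence $\models D(a_{i,j},b)$; and if $i\neq 0$ and $j=0$ then $\not\models\varphi(a_{i,0},b)$ by the third bullet, while $D$ implies $\varphi$, so $\not\models D(a_{i,j},b)$.

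Finally, Lemma \ref{lem: weak semieq conj} applied to the weak semi-equations $\Psi$ and $\Theta$ asserts that no such $b$ and array can exist, giving the desired contradiction. I do not anticipate a real obstacle: the argument is essentially formal once Lemma \ref{lem: weak semieq conj} and the closure of weak semi-equations under conjunction and disjunction (Proposition \ref{prop: Bool combs}(3)) are in hand, the only mild care being the treatment of disjuncts with no positive (or no negative) literals and the explicit appeal to the ``same type over $b$'' hypothesis to carry the chosen disjunct $D$ from $a_{0,0}$ to every $a_{i,j}$ with $i=0$ or $j\neq 0$. (Alternatively, one can argue directly without grouping: the literal of $D$ that fails at $a_{1,0}$ is, by indiscernibility, already a witness against a single $\psi_m$ being a weak semi-equation, applied along a row if the literal is positive and along the $0$-th column if it is negative.)
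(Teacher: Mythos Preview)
Your proof is correct and follows essentially the same approach as the paper: both write $\varphi$ in disjunctive normal form, use closure of weak semi-equations under conjunction and disjunction (Proposition \ref{prop: Bool combs}(3)) to reduce each disjunct to the form $\Psi\land\neg\Theta$ with $\Psi,\Theta$ weak semi-equations, select a disjunct via the ``same type over $b$'' hypothesis, and invoke Lemma \ref{lem: weak semieq conj}. Your treatment of the degenerate cases (empty positive or negative literal sets) is slightly more explicit than the paper's, but otherwise the arguments coincide.
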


\begin{proof}
	Any conjunction of finitely many weak semi-equations and negations
of weak semi-equations is of the form $\psi\left(x,y\right)\land\neg\theta\left(x,y\right)$
for some weak semi-equations $\psi\left(x,y\right)$ and $\theta\left(x,y\right)$,
because weak semi-equations are closed under conjunction and under disjunction (Proposition \ref{prop: Bool combs}(3)),
so negations of weak semi-equations are also closed under conjunction.
Thus any Boolean combination of weak semi-equations is equivalent, via
its disjunctive normal form, to $\bigvee_{k\in I} \left( \psi_{k}\left(x,y\right)\land\neg\theta_{k}\left(x,y\right) \right)$
for some finite index set $I$ and weak semi-equations $\psi_{k}\left(x,y\right)$
and $\theta_{k}\left(x,y\right)$ for $k\in I$. Given $b$ and $\left(a_{i,j}\right)_{i.j\in\mathbb{Z}}$
as above, since $i=0\lor j\neq0\iff\models\varphi\left(a_{i,j},b\right)\iff\models\bigvee_{k\in I} \left( \psi_{k}\left(a_{i,j},b\right)\land\neg\theta_{k}\left(a_{i,j},b\right) \right)$,
and all $a_{i,j}$ with $i=0$ or $j\neq0$ have the same type over $b$,
there is some $k$ such that $\models\psi_{k}\left(a_{i,j},b\right)\land\neg\theta_{k}\left(a_{i,j},b\right)\iff i=0\lor j\neq0$,
contradicting Lemma \ref{lem: weak semieq conj}.
\end{proof}

\subsection{Valuational independence}\label{sec: val fields}

\begin{defn}
	Let $K$ be a field with valuation $\nu$. 
	\begin{enumerate}
		\item We say that
$a_{1}, \ldots, a_{n}\in K$ are \emph{valuationally independent} if, for every
polynomial $f\left(x_{1}, \ldots, x_{n}\right)=\sum_{i}c_{i}x_{1}^{\alpha_{1,i}} \ldots x_{n}^{\alpha_{n,i}}$
(where $i$ runs over some finite index set, $c_{i},\alpha_{1,i}, \ldots, \alpha_{n,i}\in\mathbb{Z}$,
and $\left(\alpha_{1,i}, \ldots, \alpha_{n,i}\right)\neq\left(\alpha_{1,j}, \ldots, \alpha_{n,j}\right)$
for $i\neq j$) we have 
$$\nu\left(f\left(a_{1}, \ldots, a_{n}\right)\right)=\min_{i}\nu\left(c_{i}a_{1}^{\alpha_{1,i}} \ldots a_{n}^{\alpha_{n,i}}\right).$$
That is, if the valuation of every polynomial applied to $a_{1}, \ldots, a_{n}$
is the minimum of the valuations of its monomials (including their
coefficients).

\item  An infinite set is \emph{valuationally independent} 
if every finite subset is.
	\end{enumerate}
\end{defn}

\begin{expl}
(1) A set of elements with valuation $0$ is valuationally
independent if and only if their residues are algebraically independent.

\noindent(2) In a valued field of pure characteristic, every set of elements whose valuations
are $\mathbb{Z}$-linearly independent is valuationally independent.
In mixed characteristic $\left(0,p\right)$, every set of elements
whose valuations, together with $\nu\left(p\right)$, are $\mathbb{Z}$-linearly
independent, is valuationally independent. In an ac-valued field,
this is the only way for a set of elements with angular component
$1$ to be valuationally independent.
\end{expl}

\subsection{Reducing the proof of Theorem \ref{thm: non weak semieq val fields} to two claims}\label{sec: proof of nwsemieq val fields}
We will show that the partitioned formula $\psi(x_1,x_2; y_1,y_2) := \nu\left(x_{1}-y_{1}\right)<\nu\left(x_{2}-y_{2}\right)$
is not a Boolean combination of weak semi-equations.

Without loss of generality we may assume that  $K$ is a monster model. By Lemma \ref{lem: Bool comb of weak semieq}, it suffices to find $b, b'$
and $\left(a_{i}\right)_{i\in\mathbb{Z}},\left(a_{j}'\right)_{j\in\mathbb{Z}}$ in $K$ 
such that the sequences $\left(a_{i}\right)_{i\in\mathbb{Z}}$ and $\left(a_{j}'\right)_{j\in\mathbb{Z}}$
are mutually indiscernible (so that rows and columns of the array $\left(a_{i}a_{j}'\right)_{i,j\in\mathbb{Z}}$
are indiscernible), $\left(a_{i}\right)_{i\neq0}$ is indiscernible
over $bb'\left(a_{j}'\right)_{j\in\mathbb{Z}}$, $\left(a_{j}'\right)_{j\neq0}$
is indiscernible over $b b'\left(a_{i}\right)_{i\in\mathbb{Z}}$
(so that the rows and the columns of the array $\left(a_{i}a_{j}'\right)_{i,j\in\mathbb{Z}}$
with their $0$-indexed elements removed are indiscernible over $bb'$),
$\models\nu\left(a_{i}-b\right)<\nu\left(a_{j}'-b'\right)\iff i \neq 0\lor j = 0$,
and all pairs $\left(a_{i},a_{j}'\right)$ with $i \neq 0$ or $j = 0$ have
the same type over $bb'$.

To find these elements, first let $0 < \gamma_{0}<\gamma_{1}<\gamma_{2}<\gamma_{3}<\gamma_{4}<\gamma_{5}<\gamma_{6}\in\Gamma$
be an increasing \emph{indiscernible} sequence of positive elements of the
value group (exists by Ramsey and saturation). 

\begin{claim}\label{cla: val fields 3}
The elements	$\gamma_{0},\gamma_{1},\gamma_{2},\gamma_{3},\gamma_{4},\gamma_{5}$
are $\mathbb{Z}$-linearly independent (in $\Gamma$ viewed as a $\mathbb{Z}$-module). If $K$ has mixed characteristic
$\left(0,p\right)$, then $\nu\left(p\right),\gamma_{0},\gamma_{1},\gamma_{2},\gamma_{3},\gamma_{4},\gamma_{5}$
are $\mathbb{Z}$-linearly independent.
\end{claim}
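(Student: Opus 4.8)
The plan is to prove Claim~\ref{cla: val fields 3} by reducing $\mathbb{Z}$-linear independence of the $\gamma_i$ (and, in mixed characteristic, of $\nu(p)$ together with the $\gamma_i$) to a statement about indiscernible sequences in ordered abelian groups. The key observation is that $(\gamma_0, \ldots, \gamma_6)$ is an \emph{increasing} indiscernible sequence of positive elements of $\Gamma$; I will argue that any nontrivial $\mathbb{Z}$-linear relation among finitely many members of such a sequence forces a contradiction with indiscernibility, using the extra terms $\gamma_i$ as spares.

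\medskip

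\noindent\textbf{Step 1.} Suppose toward a contradiction that $\sum_{i=0}^{5} n_i \gamma_i = 0$ with $n_i \in \mathbb{Z}$ not all zero (the argument in mixed characteristic is identical after adjoining the constant $\nu(p)$, treating $\nu(p)$ as $\gamma_{-1}$, say, and noting $\nu(p) > 0$ and $\nu(p)$ is fixed by all automorphisms). Let $k$ be the largest index with $n_k \neq 0$; without loss of generality $n_k > 0$ (otherwise negate the relation). Then $n_k \gamma_k = -\sum_{i<k} n_i \gamma_i$.

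\medskip

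\noindent\textbf{Step 2.} Now exploit that we have \emph{seven} terms $\gamma_0 < \cdots < \gamma_6$ but the relation only involves indices $\le 5$. By indiscernibility of the sequence, for any index-preserving choice we get the ``same'' relation shifted: replacing $\gamma_k$ by $\gamma_{k+1}$ (and keeping $\gamma_i$ for $i < k$ fixed) yields $n_k \gamma_{k+1} = -\sum_{i<k} n_i \gamma_i$ as well, since the tuple $(\gamma_0, \ldots, \gamma_{k-1}, \gamma_{k+1})$ has the same type as $(\gamma_0, \ldots, \gamma_{k-1}, \gamma_k)$ and the relation $n_k x = -\sum_{i<k} n_i \gamma_i$ with parameters $\gamma_0, \ldots, \gamma_{k-1}$ is a definable condition on $x$. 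Subtracting, $n_k(\gamma_{k+1} - \gamma_k) = 0$ in $\Gamma$. Since $\Gamma$ is the value group of a valued field it is torsion-free as an ordered abelian group, and $n_k \neq 0$, so $\gamma_{k+1} = \gamma_k$, contradicting $\gamma_k < \gamma_{k+1}$. (Here one needs $k+1 \le 6$, which holds since $k \le 5$; this is precisely why the sequence was taken of length $7$.)

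\medskip

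\noindent\textbf{Main obstacle.} There is essentially no obstacle — the only point requiring a little care is the bookkeeping for the mixed-characteristic case, where $\nu(p)$ is a fixed (automorphism-invariant, positive) element rather than a member of the indiscernible sequence. One handles this by observing that in the relation $n_{-1}\nu(p) + \sum_{i=0}^{5} n_i \gamma_i = 0$, if some $n_i \neq 0$ for $i \ge 0$ then the argument of Step~2 goes through verbatim after moving $n_{-1}\nu(p)$ into the ``parameter'' side (it is fixed under the automorphism used); and if all $n_i = 0$ for $i \ge 0$ then $n_{-1}\nu(p) = 0$ forces $n_{-1} = 0$ since $\nu(p) \ne 0$ in equicharacteristic-zero–residue mixed characteristic (indeed $\nu(p) > 0$) and $\Gamma$ is torsion-free. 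Hence all $n_i = 0$, the desired contradiction.
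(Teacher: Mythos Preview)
Your proof is correct and follows essentially the same route as the paper: take the largest index $k$ with $n_k\neq 0$, use indiscernibility to replace $\gamma_k$ by a spare term (you use $\gamma_{k+1}$, the paper uses $\gamma_6$), subtract, and invoke torsion-freeness of the ordered group $\Gamma$. The mixed-characteristic case is also handled the same way, with $\nu(p)$ treated as a fixed parameter on the right-hand side.
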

\begin{proof}
	If $n_{0}\gamma_{0}+ \ldots + n_{5}\gamma_{5}=0$ with
$n_{0}, \ldots, n_{5}\in\mathbb{Z}$ not all $0$, let $i\leq5$ be maximal
such that $n_{i}\neq0$. Now $n_{0}\gamma_{0}+ \ldots + n_{i}\gamma_{i}=0$,
and $n_{i}\neq0$. By indiscernibility of the sequence $\left(\gamma_1, \ldots, \gamma_6 \right)$, $n_{0}\gamma_{0}+ \ldots + n_{i-1}\gamma_{i-1}+n_{i}\gamma_{6}=0$,
but then $n_{i}\left(\gamma_{i}-\gamma_{6}\right)=0$, contradicting
that $n_{i}\neq0$, $\gamma_{i}\neq\gamma_{6}$, and $\Gamma$ is
ordered and thus torsion-free. In mixed characteristic, the same argument
can be repeated starting from $n_{0}\gamma_{0}+ \ldots +n_{5}\gamma_{5}=m\nu\left(p\right)$
with $n_{0}, \ldots, n_{5},m\in\mathbb{Z}$.
\end{proof}

Next let $a_{\infty},a_{\infty}'\in K$ be such that $\nu\left(a_{\infty}\right)=\gamma_{0}$,
$\nu\left(a_{\infty}'\right)=\gamma_{1}$, and $\ac\left(a_{\infty}\right)=\ac\left(a_{\infty}'\right)=1$.
Let $\left(\tilde{a}_{i}\right)_{i\in\mathbb{Z}} + \left(\tilde{b} \right)$
and $\left(\tilde{a}_{j}'\right)_{j\in\mathbb{Z}} + \left( \tilde{b}' \right)$ 
be arbitrary mutually totally indiscernible sequences in the residue field  $k$. Such sequences exist by assumption on $k$ and saturation,  e.g.~splitting a totally indiscernible sequence into two disjoint subsequences.  We define $a_{i}:=a_{\infty}+\alpha\lift\left(\tilde{a}_{i}\right)$
and $a_{j}':=a_{\infty}'+\beta\lift\left(\tilde{a}_{j}'\right)$
for $i,j\in\mathbb{Z}$, for some $\alpha,\beta\in K$ with $\nu\left(\alpha\right)=\gamma_{2}$,
$\nu\left(\beta\right)=\gamma_{3}$, and $\ac\left(\alpha\right)=\ac\left(\beta\right)=1$. Here 
 $\lift\left(x\right)$ is some arbitrary element of $\mathcal{O}$
such that $\overline{\lift\left(x\right)}=x$. Let $b,b'$
be such that $\nu\left(a_{0}-b\right)=\gamma_{4}$, $\nu\left(a_{0}'-b'\right)=\gamma_{5}$, $\ac\left(a_{0}-b\right)=\tilde{b}-\tilde{a}_{0}$ and
$\ac\left(a_{0}'-b'\right)=\tilde{b}'-\tilde{a}_{0}'$. All of these elements are fixed for the rest of the section.

It is clear that $\models\nu\left(a_{i}-b\right)<\nu\left(a_{j}'-b'\right)\iff i \neq 0 \lor j = 0$,
because $\nu\left(a_{0}-b\right)=\gamma_{4}$, $\nu\left(a_{i}-b\right)=\gamma_{2}$
for $i\neq0$, $\nu\left(a_{0}'-b'\right)=\gamma_{5}$, and
$\nu\left(a_{j}'-b'\right)=\gamma_{3}$ for $j\neq0$.

We will prove the following two claims. Given a sequence $(x_i)_{i \in I}$ and $J \subseteq I$, we will write $x_{J}$ to denote the subsequence $\left(x_i : i \in J \right)$.
\begin{claim}\label{cla: val fields 1}
(1) Let $\varphi\left(x;z;w;b',a_{J}'\right)$ be a formula with parameters $b'$ and $a_{J}'$ for some $J\subseteq\mathbb{Z}$, tuples
of variables $x$ of sort $K$, $z$ of sort $k$, and $w$
of sort $\Gamma_{\infty}$. Let $I_{1},I_{2}$ be tuples
of distinct indices from $\mathbb{Z}$, with $\left|I_{1}\right|=\left|I_{2}\right|=\left|x\right|$. Let $\sigma\in \Aut\left(k\right)$
be such that $\sigma\left(\tilde{a}_{I_{1}}\right)=\tilde{a}_{I_{2}}$ (preserving the ordering of the tuples), $\sigma\left(\tilde{a}_{J}'\right)=\tilde{a}_{J}'$, and $\sigma\left(\tilde{b}'\right)=\tilde{b}'$.
Then  for any tuples $c \in k^z, d \in \Gamma_{\infty}^w$ we have
$\models\varphi\left(a_{I_{1}};c;d;b';a_{J}'\right)\iff\models\varphi\left(a_{I_{2}};\sigma\left(c\right);d;b';a_{J}'\right)$.

\noindent(2) Likewise, let $\varphi\left(y;z;w;b,a_{I}\right)$ be a formula with parameters $b$ and $a_I$ for some $I\subseteq\mathbb{Z}$, tuples of variables $y$ of sort $K$, $z$ of sort $k$, and $w$ of sort $\Gamma_\infty$. Let $J_1,J_2$ be tuples of distinct indices from $\mathbb{Z}$, with $\left|J_1\right|=\left|J_2\right|=\left|y\right|$, and let $\sigma\in\Aut\left(k\right)$ be such that $\sigma\left(\tilde{a}_{J_1}'\right)=\tilde{a}'_{J_2}$, $\sigma\left(\tilde{a}_I\right)=\tilde{a}_I$, and $\sigma\left(\tilde{b}\right)= \tilde{b}$. Then for any tuples $c \in k^z, d \in \Gamma_{\infty}^w$ we have 
$$\models\varphi\left(a_{J_{1}}';c;d;b;a_{I}\right)\iff\models\varphi\left(a_{J_{2}}';\sigma\left(c\right);d;b;a_{I}\right).$$
\end{claim}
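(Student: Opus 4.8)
The plan is to prove part (1) by induction on the complexity of $\varphi$; part (2) follows by the same argument after exchanging the roles of the primed and unprimed elements and of $b$ and $b'$. Since $K$ eliminates quantifiers of the main field sort, we may assume $\varphi$ has no field-sort quantifiers, only quantifiers over the residue field sort $k$ and the value group sort $\Gamma_\infty$. Boolean connectives pass to the induction hypothesis at once. For an existential quantifier $\exists z_0$ over $k$: if $e\in k$ witnesses the left-hand side, then $\sigma(e)$ witnesses the right-hand side, by the induction hypothesis applied to $\varphi$ with the residue tuple enlarged to $z_0z$ and $c$ to $ec$; here we use that $\sigma$ is a bijection of $k$ and that $\sigma^{-1}$ again satisfies the hypotheses with $I_1,I_2$ interchanged, so that it suffices to prove one implication in each step. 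For an existential quantifier $\exists w_0$ over $\Gamma_\infty$ one transfers the witness unchanged, since the value group parameters are not moved. This reduces the statement to the case of an atomic $\varphi$.

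By the field-quantifier elimination, in an atomic formula the field-sort terms occur only through $\nu$ and $\ac$ (and field equalities), so up to equivalence $\varphi$ is either a field equality $P(x;b';a_J')=0$, or a residue-sort formula $\theta_k\big(z,\ac(P_1(x;b';a_J')),\ldots,\ac(P_r(x;b';a_J'))\big)$ with $\theta_k$ atomic in the language of the residue field, or a value-sort formula $\theta_\Gamma\big(w,\nu(P_1(x;b';a_J')),\ldots,\nu(P_s(x;b';a_J'))\big)$ with $\theta_\Gamma$ atomic in the language of the value group, where the $P_i$ are polynomials over the prime ring. Hence it is enough to prove, for every such polynomial $P$: (A) $\nu(P(a_{I_1};b';a_J'))=\nu(P(a_{I_2};b';a_J'))$, allowing the value $\infty$; and (B) $\sigma\big(\ac(P(a_{I_1};b';a_J'))\big)=\ac(P(a_{I_2};b';a_J'))$ whenever $P(a_{I_1};b';a_J')\neq0$. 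Indeed, (A) handles the field equality (it says $\nu=\infty$) and makes the two value-sort formulas have literally equal parameters, while for the residue-sort formula one applies the $k$-automorphism $\sigma$ to the tuple $c$ together with the residues $\ac(P_i(a_{I_1};b';a_J'))$, which preserves satisfaction of $\theta_k$, and then invokes (B) to identify their images with $\ac(P_i(a_{I_2};b';a_J'))$.

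For (A) and (B) I would argue as follows. Put $c':=a_0'-b'$, so $\nu(c')=\gamma_5$ and $\ac(c')=\tilde{b}'-\tilde{a}_0'$, and substitute $a_i=a_\infty+\alpha\lift(\tilde{a}_i)$, $a_j'=a_\infty'+\beta\lift(\tilde{a}_j')$ and $b'=a_\infty'+\beta\lift(\tilde{a}_0')-c'$ into $P$; expanding and collecting, $P(a_{I_1};b';a_J')$ becomes a $\mathbb{Z}$-linear combination of monomials $a_\infty^{m_1}(a_\infty')^{m_2}\alpha^{m_3}\beta^{m_4}(c')^{m_5}$ with coefficients polynomial over the prime ring in the lifts of the residues $\tilde{a}_i$ $(i\in I_1)$, $\tilde{a}_j'$ $(j\in J)$ and $\tilde{a}_0'$. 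By Claim~\ref{cla: val fields 3} (and, in mixed characteristic, the independence of $\nu(p)$ too) distinct exponent vectors yield monomials of pairwise distinct valuations $m_1\gamma_0+m_2\gamma_1+m_3\gamma_2+m_4\gamma_3+m_5\gamma_5$, and by the valuational independence of Claim~\ref{cla: val fields 4} each coefficient polynomial is either formally zero (its monomial then absent) or takes a value of valuation $0$ whose angular component is that polynomial evaluated at the residues. Thus $\nu(P(a_{I_1};b';a_J'))$ is the least of the $m_1\gamma_0+\ldots+m_5\gamma_5$ over the monomials with nonzero coefficient, and $\ac(P(a_{I_1};b';a_J'))$ is the sum over the minimizing monomials of $(\tilde{b}'-\tilde{a}_0')^{m_5}$ times the coefficient polynomial evaluated at $\tilde{a}_i$ $(i\in I_1)$, $\tilde{a}_j'$ $(j\in J)$, $\tilde{a}_0'$ --- an expression whose combinatorial shape does not depend on which indices form $I_1$. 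By mutual total indiscernibility of $(\tilde{a}_i)_i\,{}^\frown(\tilde{b})$ and $(\tilde{a}_j')_j\,{}^\frown(\tilde{b}')$, which coefficient polynomials are formally zero and which monomials are minimizing depends only on $|I_1|=|x|$, not on the particular indices; and since $\sigma$ carries $\tilde{a}_{I_1}$ to $\tilde{a}_{I_2}$ order-preservingly while fixing $\tilde{a}_j'$ $(j\in J)$, $\tilde{a}_0'$ and $\tilde{b}'$, applying $\sigma$ turns the two displayed expressions into the corresponding ones for $I_2$. This gives (A) and (B), completing the induction.

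The step I expect to be the main obstacle is this last one: one must rule out that, after collecting by monomials, a coefficient polynomial in the lifts vanishes at the residues indexed by $I_1$ but not at those indexed by $I_2$, or has positive valuation conspiring with the $\gamma$-weights so that two distinct monomials acquire equal valuation. Preventing exactly this is the purpose of Claim~\ref{cla: val fields 4}: valuational independence of the relevant elements of the configuration forces every coefficient polynomial to be formally zero or of valuation exactly $0$, after which mutual total indiscernibility transfers both the pattern of vanishing and the set of minimizing monomials from $I_1$ to $I_2$, as required.
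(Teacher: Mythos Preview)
Your argument is correct and follows essentially the same route as the paper's proof: the same induction on formula complexity (Boolean combinations, $\exists$ over $k$ transferring the witness by $\sigma$, $\exists$ over $\Gamma_\infty$ transferring it unchanged), reducing the atomic cases to exactly your (A) and (B), and establishing these via the valuational independence of Claim~\ref{cla: val fields 4} together with algebraic independence of the residues $\tilde a_i,\tilde a_j'$. The only organizational difference is that the paper keeps each $\alpha\lift(\tilde a_i)$ and $\beta\lift(\tilde a_j')$ as a single variable (so Claim~\ref{cla: val fields 4} applies verbatim), whereas you separate $\alpha,\beta$ from the lifts.

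One small inaccuracy to fix: in mixed characteristic your ``coefficient polynomials over the prime ring in the lifts'' need not have valuation $0$, since their integer coefficients may all be divisible by $p$. What is true (and sufficient) is that such a polynomial either vanishes formally or has valuation equal to the minimal $p$-adic valuation among its integer coefficients, because the residues $\tilde a_i,\tilde a_j',\tilde a_0'$ are algebraically independent; this valuation depends only on the formal polynomial, and by Claim~\ref{cla: val fields 3} (which includes $\nu(p)$ in mixed characteristic) the total valuation $\nu(\text{coeff})+m_1\gamma_0+\cdots+m_5\gamma_5$ still distinguishes the monomials and is independent of whether $I_1$ or $I_2$ is plugged in. With this correction your (A) and (B) go through exactly as you describe.
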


\begin{claim}\label{cla: val fields 2}
	Let $\varphi\left(x;y;z;w;b;b'\right)$ be a formula with
parameters $b,b'$, where $x$ and $y$ are single variables of
sort $K$, and $z$ and $w$ are tuples of variables of sort $k$
and $\Gamma_{\infty}$, respectively. Let $\sigma_{i}\in \Aut \left(k\right)$ be 
such that $\sigma_{i}\left(\tilde{a}_{i}\right)=\tilde{b}$, $\sigma_{i}\left(\tilde{a}_{0}\right)=\tilde{a}_{0}$,
$\sigma_{i}\left(\tilde{a}_{0}'\right)=\tilde{a}_{0}'$, and
$\sigma_{i}\left(\tilde{b}'\right)=\tilde{b}'$. Let $\sigma_{j}'\in \Aut \left(k\right)$ be 
such that $\sigma_{j}'\left(\tilde{b}'\right)=\tilde{a}'_j$,
$\sigma_{j}'\left(\tilde{a}_{0}'\right)=\tilde{a}_{0}'$,
$\sigma_{j}'\left(\tilde{a}_{i}\right)=\tilde{a}_{i}$, and $\sigma_{j}'\left(\tilde{b}\right)=\tilde{b}$.
Let $\pi\in \Aut \left(\Gamma_{\infty}\right)$ be such that $\pi\left(\gamma_{2}\right)=\gamma_{4}$,
$\pi\left(\gamma_{0}\right)=\gamma_{0}$, $\pi\left(\gamma_{1}\right)=\gamma_{1}$,
and $\pi\left(\gamma_{5}\right)=\gamma_{5}$, and let $\tau\in \Aut\left(\Gamma_{\infty}\right)$
be such that $\tau\left(\gamma_{5}\right)=\gamma_{3}$, $\tau\left(\gamma_{0}\right)=\gamma_{0}$,
$\tau\left(\gamma_{1}\right)=\gamma_{1}$, and $\tau\left(\gamma_{2}\right)=\gamma_{2}$.
Then, for $i,j\neq0$, $c\in k^{z}$, and $d\in\Gamma_{\infty}^{w}$,
$\models\varphi\left(a_{0};a_{0}';\sigma_{i}\left(c\right);\pi\left(d\right);b;b'\right)\iff\models\varphi\left(a_{i};a_{0}';c;d;b;b'\right)\iff\models\varphi\left(a_{i};a_{j}';\sigma_{j}'\left(c\right);\tau\left(d\right);b;b'\right)$.
\end{claim}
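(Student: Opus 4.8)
The plan is to reduce the statement, via the Denef--Pas quantifier elimination for the main field sort, to a pair of identities for valuations and angular components of polynomials, and then to verify those identities by a leading-monomial analysis resting on the valuational independence of Claim~\ref{cla: val fields 4}.

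First I would record that the automorphisms in the statement exist: since $(\gamma_0,\dots,\gamma_6)$ is order-indiscernible, the increasing tuples $(\gamma_0,\gamma_1,\gamma_2,\gamma_5)$ and $(\gamma_0,\gamma_1,\gamma_4,\gamma_5)$ (resp.\ $(\gamma_0,\gamma_1,\gamma_2,\gamma_3)$) have the same type, so $\pi$ (resp.\ $\tau$) exists by saturation of $\Gamma_\infty$; and since the two residue-field sequences together with $\tilde b,\tilde b'$ are mutually totally indiscernible, the relevant $4$-tuples of distinct elements have the same type, so $\sigma_i$ and $\sigma_j'$ exist for the same reason. Next, by Denef--Pas quantifier elimination every formula $\varphi(x;y;z;w;b;b')$ is a Boolean combination of formulas $\theta(\nu(t_1),\dots,\nu(t_n);w)$, with $\theta$ a parameter-free formula of the value-group sort and $t_1,\dots,t_n$ polynomials over $\mathbb Z$ in $x,y,b,b'$, and of formulas $\theta(\ac(s_1),\dots,\ac(s_m);z)$ of the same shape in the residue-field sort (field-sort atoms $t=0$ being absorbed into $\nu(t)=\infty$). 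Since the three displayed equivalences respect $\wedge,\vee,\neg$, it suffices to treat one atomic formula of either type; and since $\theta$ is parameter-free while $\sigma_i,\pi$ (resp.\ $\sigma_j',\tau$) are automorphisms of the respective sorts, the claim reduces to proving, for every polynomial $t$ over $\mathbb Z$ in $x,y,b,b'$,
\[
\nu\bigl(t(a_0,a_0',b,b')\bigr)=\pi\bigl(\nu(t(a_i,a_0',b,b'))\bigr),\qquad \ac\bigl(t(a_0,a_0',b,b')\bigr)=\sigma_i\bigl(\ac(t(a_i,a_0',b,b'))\bigr),
\]
together with the analogous pair $\nu(t(a_i,a_j',b,b'))=\tau(\nu(t(a_i,a_0',b,b')))$ and $\ac(t(a_i,a_j',b,b'))=\sigma_j'(\ac(t(a_i,a_0',b,b')))$.

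To prove these I would set $r:=a_0-b$, $r':=a_0'-b'$ (so $\nu(r)=\gamma_4$, $\ac(r)=\tilde b-\tilde a_0$, $\nu(r')=\gamma_5$, $\ac(r')=\tilde b'-\tilde a_0'$) and $\eta:=\lift(\tilde a_i)-\lift(\tilde a_0)$, a unit with residue $\tilde a_i-\tilde a_0\neq 0$, so that $a_i=a_0+\alpha\eta$ and $a_i-b=r+\alpha\eta$ with $\nu(\alpha\eta)=\gamma_2<\gamma_4$. Writing $\tilde t(A,A',R,R'):=t(A,A',A-R,A'-R')$, one has $t(a_0,a_0',b,b')=\tilde t(a_0,a_0',r,r')$ and $t(a_i,a_0',b,b')=\tilde t(a_0+\alpha\eta,\,a_0',\,r+\alpha\eta,\,r')$. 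By Claim~\ref{cla: val fields 4} the elements $a_0,a_0',\alpha,r,r'$ are valuationally independent, with $\mathbb Z$-linearly independent valuations $\gamma_0,\gamma_1,\gamma_2,\gamma_4,\gamma_5$ and angular components $1,1,1,\tilde b-\tilde a_0,\tilde b'-\tilde a_0'$; hence $\nu(t(a_0,a_0',b,b'))$ is the valuation of the unique $\nu$-smallest monomial $c_0A^{f_0}A'^{f_1}R^{f_4}R'^{f_5}=:M_0$ of $\tilde t$, and $\ac(t(a_0,a_0',b,b'))=\ac(c_0)(\tilde b-\tilde a_0)^{f_4}(\tilde b'-\tilde a_0')^{f_5}$. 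Expanding $t(a_i,a_0',b,b')$ and again invoking valuational independence, the $\nu$-smallest contribution is the one coming from $M_0$ with all $f_4$ copies of $R$ absorbed into the perturbation $\alpha\eta$, so $\nu(t(a_i,a_0',b,b'))=f_0\gamma_0+f_1\gamma_1+f_4\gamma_2+f_5\gamma_5$ and $\ac(t(a_i,a_0',b,b'))=\ac(c_0)(\tilde a_i-\tilde a_0)^{f_4}(\tilde b'-\tilde a_0')^{f_5}$. The point that makes $M_0$ the correct monomial on the $a_i$-side is that $\pi$ is order-preserving with $\pi(\gamma_2)=\gamma_4$ and fixes $\gamma_0,\gamma_1,\gamma_5$, so it carries the exponent data produced on the $a_i$-side back to the one produced on the $a_0$-side; combined with $\sigma_i(\tilde a_i)=\tilde b$, $\sigma_i(\tilde a_0)=\tilde a_0$, $\sigma_i(\tilde a_0')=\tilde a_0'$, $\sigma_i(\tilde b')=\tilde b'$, the two displayed identities follow. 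The remaining pair is the symmetric argument with the second field coordinate: with $\eta':=\lift(\tilde a_j')-\lift(\tilde a_0')$ one has $a_j'-b'=r'+\beta\eta'$, $\nu(\beta\eta')=\gamma_3<\gamma_5$, and the roles of $(\alpha,\gamma_2,\gamma_4,\pi,\sigma_i)$ are played by $(\beta,\gamma_3,\gamma_5,\tau,\sigma_j')$.

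The main obstacle is the bookkeeping in that leading-monomial analysis, where two points need care. First, one must check that no unintended cancellation occurs among the various monomial-and-perturbation contributions, and that the monomial minimal for the $a_i$-side valuation really is the $\nu$-minimal monomial $M_0$; this is exactly where valuational independence of $a_0,a_0',\alpha,\beta,r,r'$ (Claim~\ref{cla: val fields 4}), ultimately the $\mathbb Z$-linear independence of $\gamma_0,\dots,\gamma_5$ (Claim~\ref{cla: val fields 3}), enters — it forces distinct monomials to have distinct valuations, and the existence of an order-automorphism of $\Gamma_\infty$ sending $\gamma_2\mapsto\gamma_4$ forces the two minimizations to pick out the same monomial. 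Second, in positive residue characteristic the expansion should be phrased with Hasse derivatives (equivalently, carried out monomial by monomial) so that binomial coefficients cause no trouble. Finally, one may use Claim~\ref{cla: val fields 1} to reduce the first equivalence to a single index $i$ and the second to a single index $j$, since it already transfers among the $a_k$ with $k\neq0$; but the computation above is uniform in $i$ and $j$, so this is not strictly necessary.
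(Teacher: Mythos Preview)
Your approach is essentially the same as the paper's: reduce via Denef--Pas quantifier elimination to identities for $\nu$ and $\ac$ of integer polynomials in the four $K$-points, and then run a leading-monomial analysis using valuational independence together with the order-preserving automorphisms $\pi,\tau$. The one real difference is the substitution. The paper sets $f^{*}(x,y,u,v):=f(x+u,y+v,u,v)$, so that $f(a_i,a_j',b,b')=f^{*}(a_i-b,\,a_j'-b',\,b,\,b')$ in \emph{all three} instances; each of the three $4$-tuples $(a_i-b,a_j'-b',b,b')$ is valuationally independent directly (their valuations lie among $\gamma_0,\gamma_1,\gamma_2,\gamma_3,\gamma_4,\gamma_5$, so Claim~\ref{cla: val fields 3} applies), and the same monomial of the \emph{fixed} polynomial $f^{*}$ is minimal in all three cases because $\pi,\tau$ preserve order. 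This sidesteps your perturbation expansion of $\tilde t(a_0+\alpha\eta,a_0',r+\alpha\eta,r')$ and the accompanying no-cancellation check, which you correctly flag as the obstacle but do not justify; it does hold, because any second monomial of $\tilde t$ contributing to the same expanded leading exponent tuple would necessarily have strictly smaller valuation at $(a_0,a_0',r,r')$ than $M_0$, contradicting minimality. Finally, the valuational independence you invoke (of $a_0,a_0',\alpha\eta,r,r'$, with valuations $\gamma_0,\gamma_1,\gamma_2,\gamma_4,\gamma_5$) follows from Claim~\ref{cla: val fields 3} via the remark that $\mathbb{Z}$-linearly independent valuations force valuational independence, not from Claim~\ref{cla: val fields 4}.
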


Assuming these two claims, from the $\left|z\right|=\left|w\right|=0$ case of Claim \ref{cla: val fields 1}, we get that $\left(a_{i}\right)_{i\in\mathbb{Z}}$ is totally indiscernible over $b' \left(a_{j}'\right)_{j\in\mathbb{Z}}$, and $\left(a_{j}'\right)_{j\in\mathbb{Z}}$ is totally indiscernible over $b \left(a_{i}\right)_{i\in\mathbb{Z}}$. In particular $\left(a_{i}\right)_{i\in\mathbb{Z}}$ and $\left(a_{j}'\right)_{j\in\mathbb{Z}}$
are mutually totally indiscernible.

In describing $\left(a_{i}\right)_{i\in\mathbb{Z}},\left(a_{j}'\right)_{j\in\mathbb{Z}},b,b'$, we have made exactly the same assumptions about $a_{0}$ as about
$b$, and the same assumptions about $a_{0}'$ as about $b'$,
in the sense that if we replace $a_{0}$ with $b$ or replace $a_{0}'$
with $b'$, the resulting elements $\left(a_{i}\right)_{i\in\mathbb{Z}},\left(a_{j}'\right)_{j\in\mathbb{Z}},b,b'$
could have come from the same construction. Thus, as Claim \ref{cla: val fields 1} implies
that $\left(a_{i}\right)_{i\neq0}$ is totally indiscernible over $a_0 b' \left(a_{j}'\right)_{j\in\mathbb{Z}}$, and $\left(a_{j}'\right)_{j\neq0}$ is totally indiscernible over $a_{0}'b\left(a_{i}\right)_{i\in\mathbb{Z}}$, it must also be the case that
$\left(a_{i}\right)_{i\neq0}$ is totally indiscernible over $b b' \left(a_{j}'\right)_{j\in\mathbb{Z}}$, and $\left(a_{j}'\right)_{j\neq0}$ is totally indiscernible over $b'b\left(a_{i}\right)_{i\in\mathbb{Z}}$.

From the $\left|z\right|=\left|w\right|=0$
case of Claim \ref{cla: val fields 2}, we get that 
$$\text{tp}\left(a_{i},a_{j}'/b,b'\right)=\text{tp}\left(a_{i},a_{0}'/b,b'\right)=\text{tp}\left(a_{0},a_{0}'/b,b'\right)$$
for $i,j\neq0$, hence all $\left(a_{i},a_{j}'\right)$ with $i \neq 0$
or $j = 0$ have the same type over $bb'$.

Thus these two claims establish the conditions needed for Lemma \ref{lem: Bool comb of weak semieq} to imply that $\nu\left(x_{1}-y_{1}\right)<\nu\left(x_{2}-y_{2}\right)$
is not a Boolean combination of weak semi-equations.

Both claims will be proved by induction on the parse tree of the formula $\varphi$ (without parameters).
There are five cases that must be considered:

\noindent \textbf{Case 1.} The formula $\varphi$ is of the form $t_{1}\leq t_{2}$, where $t_{1},t_{2}$
are terms of sort $\Gamma_{\infty}$. Such terms 
are $\mathbb{N}$-linear combinations of variables of sort $\Gamma_{\infty}$
and valuations of polynomials in variables of sort $K$; i.e.~of the form $\boldsymbol{n}\cdot x+\boldsymbol{m}\cdot\nu\left(f\left(y\right)\right)$,
where $x = \left(x_1, \ldots, x_{\ell_1} \right)$ is a tuple of variables of sort $\Gamma_{\infty}$, $y$
is a tuple of variables of sort $K$, $f$ is a tuple of polynomials $\left( f_1(y), \ldots, f_{\ell_{2}}(y) \right)$,
$\boldsymbol{n} = \left(n_1, \ldots, n_{\ell_1} \right)\in\mathbb{N}^{\left|x\right|}$, $\boldsymbol{m} = \left(m_1, \ldots, m_{\ell_2} \right)\in\mathbb{N}^{\left|f\right|}$, $\nu \left(f(y) \right)$ is an abbreviation for the tuple $\left(\nu \left( f_1(y) \right), \ldots, \nu \left( f_{\ell}(y) \right) \right)$, and ``$\cdot$'' is the dot product.

\noindent \textbf{Case 2.} $\varphi$ is of the form $t_{1}=_{k}t_{2}$, where $t_{1},t_{2}$
are terms of sort $k$. Terms of sort $k$ are polynomials applied
to variables of sort $k$ and angular components of terms of sort
$K$; i.e.~of the form $f\left(x,\ac\left(g\left(y\right)\right)\right)$,
where $f$ is a polynomial, $g = \left(g_1, \ldots, g_{\ell} \right)$ is a tuple of polynomials, $x$ is
a tuple of variables of sort $k$, $y$ is a tuple of variables
of sort $K$, and $\ac\left(g(y) \right)$ is an abbreviation for the tuple $\left(\ac(g_1(y)), \ldots, \ac(g_{\ell}(y)) \right)$. Since $t_{1}=_{k}t_{2}$ if and only if $t_{1}-t_{2}=_{k}0$, every
formula of this form is equivalent to a formula of the form $f\left(x,\ac\left(g\left(y\right)\right)\right)=_{k}0$.

\noindent \textbf{Case 3.} $\varphi$ is a Boolean combination of formulas for which
the claim holds.

\noindent \textbf{Case 4.} $\varphi$ is of the form $\exists u\,\psi$, with $u$
a variable of sort $k$, and the claim holds for $\psi$.

\noindent \textbf{Case 5.} $\varphi$ is of the form $\exists u\,\psi$, with $u$ 
a variable of sort $\Gamma_{\infty}$, and the claim holds for $\psi$.

\noindent There are four more cases for how $\varphi$ could be constructed, but they follow from the
previous five cases: $\varphi$ is of the form $t_{1}=_{\Gamma}t_{2}$, where $t_{1},t_{2}$ are terms of
sort $\Gamma_{\infty}$ --- this is equivalent to $t_{1}\leq t_{2}\land t_{2}\leq t_{1}$,
and is thus redundant with Cases 1 and 3;  $\varphi$ is of the form $t_{1}=_{K}t_{2}$, where $t_{1},t_{2}$ are terms of sort
$K$ --- this is equivalent to $\nu\left(t_{1}-t_{2}\right)=\nu\left(0\right)$,
and is thus redundant with Cases 1 and 3;  $\varphi$ is of the form $\forall u\,\psi$, where $u$ is a variable of sort $k$
or $\Gamma_{\infty}$ --- this is redundant with Cases 3, 4, and 5;  $\varphi$ is of the form $\exists u\,\psi$, or $\forall u\,\psi$, where $u$ is a
variable of sort $K$ --- this case can be neglected by quantifier elimination,
since we can always pick a formula equivalent to $\varphi$ which
has no quantifiers of sort $K$.

The following auxiliary result will be used in the proof of the claims.

\begin{claim}\label{cla: val fields 4}
The elements $a_{\infty},a_{\infty}',\left(\alpha \lift \left( \tilde{a}_{i} \right)\right)_{i\in \mathbb{Z}}, \left(\beta \lift \left( \tilde{a}_{j}' \right)\right)_{j\in \mathbb{Z}}, b-a_0, b'-a_{0}'$ are valuationally independent.
\end{claim}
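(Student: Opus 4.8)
\noindent The plan is to deduce valuational independence of the listed tuple from two inputs: the $\mathbb{Z}$-linear independence of $\gamma_0, \ldots, \gamma_5$ (and, in mixed characteristic, of these together with $\nu(p)$) furnished by Claim~\ref{cla: val fields 3}, which controls the value-group part, and the algebraic independence over the prime field of the residues $\tilde a_i, \tilde a_j'$, which controls the residue-field part. For the second input I would first isolate a general sub-claim: \emph{a non-constant totally $\emptyset$-indiscernible sequence $(e_m)_m$ in a field is algebraically independent over its prime field $k_0$}. Indeed, otherwise pick a minimal algebraically dependent finite subtuple $e_1, \ldots, e_n$; then $e_n$ is algebraic over $k_0(e_1, \ldots, e_{n-1})$, so it satisfies the vanishing of its minimal polynomial over $k_0(e_1, \ldots, e_{n-1})$ --- a relation definable over $\{e_1, \ldots, e_{n-1}\}$ with only finitely many solutions --- while by total indiscernibility each of the infinitely many remaining elements $e_m$ (pairwise distinct, since the sequence is non-constant) satisfies it as well, a contradiction. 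We may and do assume the residue sequences in the construction were obtained by splitting a single non-constant totally $\emptyset$-indiscernible sequence of $k$, so that $\{\tilde a_i\}_i \cup \{\tilde b\} \cup \{\tilde a_j'\}_j \cup \{\tilde b'\}$ is totally $\emptyset$-indiscernible, hence algebraically independent over $k_0$; in particular all these residues are nonzero.

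Next I would record the valuations: $\nu(a_\infty) = \gamma_0$, $\nu(a_\infty') = \gamma_1$, $\nu(b - a_0) = \gamma_4$, $\nu(b' - a_0') = \gamma_5$ by construction; each $\alpha\lift(\tilde a_i)$ has valuation $\gamma_2$ and $\ac(\alpha\lift(\tilde a_i)) = \tilde a_i$, and each $\beta\lift(\tilde a_j')$ has valuation $\gamma_3$ and angular component $\tilde a_j'$. In particular all listed elements are nonzero, so Laurent monomials in them are defined. Fix a finite subtuple $\bar e$ of the listed elements and a polynomial $f = \sum_k c_k M_k$ as in the definition of valuational independence, with the $M_k$ pairwise distinct Laurent monomials in $\bar e$ and each $c_k \in \mathbb{Z} \setminus \{0\}$; the goal is $\nu(f(\bar e)) = \min_k \nu(c_k M_k(\bar e))$. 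Computing, $\nu(c_k M_k(\bar e))$ is a $\mathbb{Z}$-linear combination of $\gamma_0, \ldots, \gamma_5$, plus $v_p(c_k)\,\nu(p)$ in mixed characteristic (in equicharacteristic $0$, $\nu(c_k) = 0$), in which the coefficient of $\gamma_0, \gamma_1, \gamma_4, \gamma_5$ is the exponent in $M_k$ of $a_\infty, a_\infty', b - a_0, b' - a_0'$ respectively, the coefficient of $\gamma_2$ is the sum of the exponents of the factors $\alpha\lift(\tilde a_i)$, and the coefficient of $\gamma_3$ is the sum of the exponents of the factors $\beta\lift(\tilde a_j')$.

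Let $S$ be the set of indices attaining $\delta := \min_k \nu(c_k M_k(\bar e))$; if $|S| = 1$ there is nothing to prove, so assume $|S| \geq 2$. Applying Claim~\ref{cla: val fields 3} to the relation obtained from $\nu(c_k M_k(\bar e)) = \nu(c_{k'} M_{k'}(\bar e))$ for $k, k' \in S$, all $M_k$ with $k \in S$ share each of the exponents above, and all $c_k$ with $k \in S$ share their $p$-adic valuation. Fix $k_0 \in S$. Then for $k \in S$ the quotient $c_k M_k(\bar e) / \bigl(c_{k_0} M_{k_0}(\bar e)\bigr)$ has valuation $0$: the $a_\infty$-, $a_\infty'$-, $(b - a_0)$- and $(b' - a_0')$-factors cancel, and the surviving factor $\prod_i (\alpha\lift(\tilde a_i))^{p_i^k - p_i^{k_0}} \prod_j (\beta\lift(\tilde a_j'))^{q_j^k - q_j^{k_0}}$ has total exponent $0$ in each of the two blocks. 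Being a unit, its residue equals its angular component, namely $\overline{c_k/c_{k_0}} \cdot \prod_i \tilde a_i^{\,p_i^k - p_i^{k_0}} \prod_j (\tilde a_j')^{\,q_j^k - q_j^{k_0}}$ with $\overline{c_k/c_{k_0}} \in k_0^\times$. By the leading-term criterion --- writing $m := c_{k_0} M_{k_0}(\bar e)$, one has $\nu(f(\bar e)) = \delta$ iff $\sum_{k \in S} \overline{c_k M_k(\bar e)/m} \neq 0$ --- and after multiplying through by the nonzero element $\prod_i \tilde a_i^{\,p_i^{k_0}} \prod_j (\tilde a_j')^{\,q_j^{k_0}}$, it remains to see that $\sum_{k \in S} \overline{c_k/c_{k_0}} \cdot \prod_i \tilde a_i^{\,p_i^k} \prod_j (\tilde a_j')^{\,q_j^k} \neq 0$ in $k$. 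Since the $M_k$ ($k \in S$) are pairwise distinct while agreeing on every exponent other than the individual ones in the two blocks, the exponent vectors $\bigl((p_i^k)_i, (q_j^k)_j\bigr)$ for $k \in S$ are pairwise distinct; so this is a nontrivial $k_0$-linear combination of distinct Laurent monomials in the $\tilde a_i, \tilde a_j'$, which is nonzero by the algebraic independence of the first paragraph. This proves the claim.

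The main difficulty is organizational rather than conceptual: one must keep the value-group bookkeeping (which exponents are forced to coincide on $S$) cleanly separated from the residue-field bookkeeping, and in mixed characteristic remember to carry $\nu(p)$ along in the linear relations --- which is exactly why Claim~\ref{cla: val fields 3} was stated with $\nu(p)$ included. The only genuinely non-formal ingredient is the sub-claim that a totally indiscernible sequence in a field is algebraically independent over the prime field, which uses total (rather than merely order-) indiscernibility; $\emptyset$-indiscernibility suffices there because $k_0 \subseteq \dcl(\emptyset)$.
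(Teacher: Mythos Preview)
Your proof is correct and follows essentially the same strategy as the paper's: both use Claim~\ref{cla: val fields 3} to force the minimal-valuation monomials to agree on the exponents of $a_\infty,a_\infty',b-a_0,b'-a_0'$ and on the total degrees in the two blocks, and then use algebraic independence of the residues $\tilde a_i,\tilde a_j'$ to show the remaining sum does not vanish in $k$. The only cosmetic differences are that the paper encodes the first step via an auxiliary valuation $\nu^*$ on a polynomial ring rather than arguing monomial-by-monomial, and that you spell out the sub-claim on algebraic independence of a totally indiscernible sequence which the paper leaves implicit.
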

\begin{proof}
	 Define a valuation $\nu^{*}:\mathbb{Z}\left[u,v,x,y,z,w\right]\rightarrow\Gamma_{\infty}$
(with $\left|u\right|=\left|v\right|=\left|z\right|=\left|w\right|=1$, $\left|x\right|$,$\left|y\right|$
arbitrary), by, for monomials (which in case of mixed characteristic
is taken to include its coefficient),
\begin{gather*}
	\nu^{*}\left(n \cdot u^{r_{\infty}}x_{1}^{r_{1}} \ldots x_{\left|x\right|}^{r_{\left|x\right|}}v^{s_{\infty}}y_{1}^{s_{1}} \ldots y_{\left|y\right|}^{s_{\left|y\right|}}z^{t_1}w^{t_2}\right) \\
	:=\nu\left(n\right)+r_{\infty}\gamma_{0}+s_{\infty}\gamma_{1}+\left(r_{1}+ \ldots +r_{\left|x\right|}\right)\gamma_{2}+\left(s_{1}+ \ldots +s_{\left|y\right|}\right)\gamma_{3} + t_1\gamma_{4} + t_2\gamma_{5},
\end{gather*}
and the valuation of a polynomial is the minimum of the valuations
of its monomials. 
That way, for any $I,J\subseteq\mathbb{Z}$ with
$\left|I\right|=\left|x\right|$ and $\left|J\right|=\left|y\right|$ we have:
\begin{gather*}
	\nu^{*}\left(f\left(u,v,x,y,z,w\right)\right)=\nu\left(f\left(a_{\infty},a_{\infty}',\alpha\cdot\lift\left(\tilde{a}_{I}\right),\beta\cdot\lift\left(\tilde{a}_{J}'\right),b-a_0,b'-a_{0}'\right)\right)
\end{gather*}
when $f$ is a monomial (where $\alpha \cdot \lift(\tilde{a}_I) := \left(\alpha \lift(\tilde{a}_i)\right)_{i\in I}$), and we need to prove that this holds for
all polynomials $f$. Given a polynomial $f\left(u,v,x,y,z,w\right)$,
$$\nu^{*}\left(f\right)=\nu\left(n\right)+m_{0}\gamma_{0}+m_{1}\gamma_{1}+m_{2}\gamma_{2}+m_{3}\gamma_{3}+m_4\gamma_4+m_5\gamma_5$$
for some $n,m_{0},m_{1},m_{2},m_{3},m_{4},m_{5}\in\mathbb{N}$ (with $\nu\left(n\right),m_{0},m_{1},m_{2},m_{3},m_{4},m_{5}$
unique by Claim \ref{cla: val fields 3}). Let $\tilde{f}\left(u,v,x,y,z,w\right)$ be the sum of monomials in $f$ of the same valuation as $f$, so that every monomial appearing in $\tilde{f}\left(u,v,x,y,z,w\right)$ has degree
$m_{0}$ in $u$, degree $m_{1}$ in $v$, total degree $m_{2}$ in
$x$, total degree $m_{3}$ in $y$, degree $m_{4}$ in $z$, degree $m_{5}$ in $w$, and has leading coefficient with
valuation $\nu\left(n\right)$, and $\nu^{*}\left(f-\tilde{f}\right)>\nu^{*}\left(f\right)$.
Thus 
$$\frac{\tilde{f}\left(u,v,x,y,z,w\right)}{n \cdot u^{m_{0}}v^{m_{1}}z^{m_4}w^{m_5}}$$
 is a non-zero polynomial in $x,y$, all coefficients having valuation
$0$, so it reduces under the residue map to a nonzero polynomial
in $x,y$. Since the set of elements in the tuples $\tilde{a}_{I},\tilde{a}_{J}'$ is algebraically
independent (they come from an infinite indiscernible sequence), it follows that 
$$\frac{\tilde{f}\left(u,v,\tilde{a}_{I},\tilde{a}_{J}',z,w\right)}{n \cdot u^{m_{0}}v^{m_{1}}z^{m_4}w^{m_5}}\neq0,$$
 and thus a lift of it, 
 $$\frac{\tilde{f}\left(u,v,\lift\left(\tilde{a}_{I}\right),\lift\left(\tilde{a}_{J}'\right),z,w\right)}{n \cdot u^{m_{0}}v^{m_{1}}z^{m_4}w^{m_5}},$$
has valuation $0$. Thus 
$$\nu\left(\tilde{f}\left(a_{\infty},a_{\infty}',\lift\left(\tilde{a}_{I}\right),\lift\left(\tilde{a}_{J}'\right),b-a_0,b'-a_{0}'\right)\right)=\nu\left(n\right)+m_{0}\gamma_{0}+m_{1}\gamma_{1}+m_4\gamma_4+m_5\gamma_5$$
and, by homogeneity of $\tilde{f}$, 
\begin{gather*}\nu\left(\tilde{f}\left(a_{\infty},a_{\infty}',\alpha\cdot\lift\left(\tilde{a}_{I}\right),\beta\cdot\lift\left(\tilde{a}_{J}'\right),b-a_0,b'-a_{0}'\right)\right)
\\=\nu\left(n\right)+m_{0}\gamma_{0}+m_{1}\gamma_{1}+m_{2}\gamma_{2}+m_{3}\gamma_{3}+m_4\gamma_4+m_5\gamma_5=\nu^{*}\left(f\right).\end{gather*}
We have 
$$\nu\left(\left(f-\tilde{f}\right)\left(a_{\infty},a_{\infty}',\alpha\cdot\lift\left(\tilde{a}_{I}\right),\beta\cdot\lift\left(\tilde{a}_{J}'\right),b-a_0,b'-a_{0}'\right)\right)\geq\nu^{*}\left(f-\tilde{f}\right)>\nu^{*}\left(f\right)$$
(the first inequality holds by the ultrametric property, combined
with the fact that it holds for monomials), so it follows that 
$$\nu\left(f\left(a_{\infty},a_{\infty}',\alpha\cdot\lift\left(\tilde{a}_{I}\right),\beta\cdot\lift\left(\tilde{a}_{J}'\right), b-a_0,b'-a_{0}' \right)\right)=\nu^{*}\left(f\right).\qedhere$$ 
\end{proof}
We are ready to prove the two claims.
\subsection{Proof of Claim \ref{cla: val fields 1}}\label{sec: claim 1}
Let $\varphi\left(x;z;w;b';a_{J}'\right)$ with $x = \left(x_1, \ldots, x_{|x|} \right)$ and $I_{1},I_{2}, \sigma, c, d$ be as in the statement of the claim, and we analyze the five cases described above. We will assume without loss of generality that $j_1 = 0$, where $J = (j_1, \ldots, j_{|J|})$ (since if $0$ appears somewhere else in $J$, $J$ may be re-ordered, and if $0$ does not appear in $J$, it may be added).
The proof for the part regarding a formula $\varphi\left(y;z;w;b;a_I\right)$ is identical, switching the roles of $\left(a_i\right)_{i\in\mathbb{Z}}$ and $\left(a_{j}'\right)_{j\in\mathbb{Z}}$, replacing $b'$ with $b'$, and replacing $\gamma_5$ with $\gamma_4$.
	
\noindent \textbf{Case 1.} $\varphi\left(x;z;w;b';a_{J}'\right)$ is of the form $\boldsymbol{n}_{1}\cdot w+\boldsymbol{m}_{1}\cdot\nu\left(g\left(x,b',a_{J}'\right)\right)\leq\boldsymbol{n}_{2}\cdot w+\boldsymbol{m}_{2}\cdot\nu\left(h\left(x,b',a_{J}'\right)\right)$.

It is enough to show that for any polynomial $f\left(x,q,y\right)$ (with $\left|x\right|=\left|I_1\right|,\left|y\right|=\left|J\right|,\left|q\right|=1$), we have 
$\nu\left(f\left(a_{I_{1}},b',a_{J}'\right)\right)=\nu\left(f\left(a_{I_{2}},b',a_{J}'\right)\right)$,
because then 
\begin{gather*}
	\boldsymbol{m}_{1}\cdot\nu\left(g\left(a_{I_{1}},b',a_{J}'\right)\right)=\boldsymbol{m}_{1}\cdot\nu\left(g\left(a_{I_{2}},b',a_{J}'\right)\right) \textrm{ and }\\
	\boldsymbol{m}_{2}\cdot\nu\left(h\left(a_{I_{1}},b',a_{J}'\right)\right)=\boldsymbol{m}_{2}\cdot\nu\left(h\left(a_{I_{2}},b',a_{J}'\right)\right), \textrm{ so}\\
	\models\boldsymbol{n}_{1}\cdot d+\boldsymbol{m}_{1}\cdot\nu\left(g\left(a_{I_{1}},b',a_{J}'\right)\right)\leq\boldsymbol{n}_{2}\cdot d+\boldsymbol{m}_{2}\cdot\nu\left(h\left(a_{I_{1}},b',a_{J}'\right)\right)\iff \\
	\models\boldsymbol{n}_{1}\cdot d+\boldsymbol{m}_{1}\cdot\nu\left(g\left(a_{I_{2}},b',a_{J}'\right)\right)\leq\boldsymbol{n}_{2}\cdot d+\boldsymbol{m}_{2}\cdot\nu\left(h\left(a_{I_{2}},b',a_{J}'\right)\right).
\end{gather*}
Given a polynomial $f(x,q,y)$, let 
$$f^{*}\left(u,v,x,y,q\right):=f\left(x_{1}+u, \ldots, x_{\left|x\right|}+u,q + y_1 + v,y_{1} + v, \ldots, y_{\left|y\right|} + v\right),$$
with $\left|u\right|=\left|v\right|=\left|q\right|=1$, $\left|x\right|=\left|I_{1}\right|$,
$\left|y\right|=\left|J\right|$, so that 
$$f^{*}\left(a_{\infty},a_{\infty}',\alpha\cdot\lift\left(\tilde{a}_{I_{i}}\right),\beta\cdot\lift\left(\tilde{a}_{J}'\right),b'-a_{0}'\right)=f\left(a_{I_{i}},b',a_{J}'\right)$$
for $i\in\left\{ 1,2\right\} $ (using that $a_i = a_\infty + \alpha\cdot\lift\left(\tilde{a}_i\right)$ and $a_j' = a_\infty' + \beta\cdot\lift\left(\tilde{a}_j'\right)$ and $j_1 = 0$).
\begin{gather*}
	\textrm{Since } \  \nu \Bigg( n \cdot a_{\infty}^{r_{\infty}}\left(\alpha\lift\left(\tilde{a}_{i_{1}}\right)\right)^{r_{1}} \ldots \left(\alpha\lift\left(\tilde{a}_{i_{\left|x\right|}}\right)\right)^{r_{\left|x\right|}}\left(a_{\infty}'\right)^{s_{\infty}}  \cdot \\
	\cdot \left(\beta\lift\left(\tilde{a}_{j_{1}}'\right)\right)^{s_{1}} \ldots \left(\beta\lift\left(\tilde{a}_{j_{\left|y\right|}}'\right)\right)^{s_{\left|y\right|}}\left(b'-a_{0}'\right)^t \Bigg) \\
	=\nu\left(n\right)+r_{\infty}\gamma_{0}+s_{\infty}\gamma_{1}+\left(r_{1}+ \ldots+ r_{\left|x\right|}\right)\gamma_{2}+\left(s_{1}+ \ldots + s_{\left|y\right|}\right)\gamma_{3}+t\gamma_5,
\end{gather*}
\noindent regardless of $i_{1}, \ldots, i_{\left|x\right|}$,
if we let 
$$n \cdot u^{r_{\infty}}v^{s_{\infty}}x_{1}^{r_{1}} \ldots x_{\left|x\right|}^{r_{\left|x\right|}}y_{1}^{s_{1}} \ldots y_{\left|y\right|}^{s_{\left|y\right|}}q^t$$
be a monomial in $f^{*}\left(u,v,x,y,q\right)$ minimizing 
$$\nu\left(n\right)+r_{\infty}\gamma_{0}+s_{\infty}\gamma_{1}+\left(r_{1}+ \ldots +r_{\left|x\right|}\right)\gamma_{2}+\left(s_{1}+ \ldots +s_{\left|y\right|}\right)\gamma_{3}+t\gamma_5,$$
then by Claim \ref{cla: val fields 4},
\begin{gather*}
	\nu\left(f\left(a_{I_{i}},b',a_{J}'\right)\right)=\nu\left(f^{*}\left(a_{\infty},a_{\infty}',\alpha\cdot\lift\left(\tilde{a}_{I_{i}}\right),\beta\cdot\lift\left(\tilde{a}_{J}'\right),b'-a_{0}'\right)\right)\\
	=\nu\left(n\right)+r_{\infty}\gamma_{0}+s_{\infty}\gamma_{1}+\left(r_{1}+ \ldots +r_{\left|x\right|}\right)\gamma_{2}+\left(s_{1}+ \ldots + s_{\left|y\right|}\right)\gamma_{3}+t\gamma_5
\end{gather*}
for $i\in\left\{ 1,2\right\} $.

\noindent \textbf{Case 2.} $\varphi\left(x;z;w;b';a_{J}'\right)$ is of the form $f\left(z,\ac\left(g\left(x,b,a_{J}'\right)\right)\right)=_{k}0$.

It is enough to show that $f\left(\sigma\left(c\right),\ac\left(g\left(a_{I_{2}},b',a_{J}'\right)\right)\right)=\sigma\left(f\left(c,\ac\left(g\left(a_{I_{1}},b',a_{J}'\right)\right)\right)\right)$,
for which it is in turn enough to show that $\ac\left(g\left(a_{I_{2}},b',a_{J}'\right)\right)=\sigma\left(\ac\left(g\left(a_{I_{1}},b',a_{J}'\right)\right)\right)$.
Since $a_{i}=a_{\infty}+\alpha\cdot\lift\left(\tilde{a}_{i}\right)$
and $a_{j}'=a_{\infty}'+\beta\cdot\lift\left(\tilde{a}_{j}'\right)$,
there is a polynomial $h\left(u,v,x,y,q\right)$ (with $\left|u\right|=\left|v\right|=\left|q\right|=1$,
$\left|x\right|=\left|I_{1}\right|$, $\left|y\right|=\left|J\right|$)
such that 
$$h\left(a_{\infty},a_{\infty}',\alpha\cdot\lift\left(\tilde{a}_{I_{i}}\right),\beta\cdot\lift\left(\tilde{a}_{J}'\right),b'-a_{0}'\right)=g\left(a_{I_{i}},b',a_{J}'\right)$$
for $i\in\left\{ 1,2\right\} $. As in the proof of Case 1, there
are $n,m_{0},m_{1},m_{2},m_{3},m_5\in\mathbb{N}$ such that 
\begin{gather*}
	\nu\left(h\left(a_{\infty},a_{\infty}',\alpha\cdot\lift\left(\tilde{a}_{I_{i}}\right),\beta\cdot\lift\left(\tilde{a}_{J}'\right),b'-a_{0}'\right)\right)\\
	=\nu\left(n\right)+m_{0}\gamma_{0}+m_{1}\gamma_{1}+m_{2}\gamma_{2}+m_{3}\gamma_{3}+m_5\gamma_5
\end{gather*}
for $i\in\left\{ 1,2\right\} $. Let $\tilde{h}\left(u,v,x,y,q\right)$
be the sum of monomials in $h$ with degree $m_{0}$ in $u$, degree
$m_{1}$ in $v$, total degree $m_{2}$ in $x$, total degree $m_{3}$
in $y$, degree $m_5$ in $q$, and whose coefficient has valuation $\nu\left(n\right)$.
That way 
\begin{gather*}
	\nu\left(\tilde{h}\left(a_{\infty},a_{\infty}',\alpha\cdot\lift\left(\tilde{a}_{I_{i}}\right),\beta\cdot\lift\left(\tilde{a}_{J}'\right),b'-a_{0}'\right)\right)\\
	=\nu\left(n\right)+m_{0}\gamma_{0}+m_{1}\gamma_{1}+m_{2}\gamma_{2}+m_{3}\gamma_{3}+m_5\gamma_5, \textrm{ and}\\
	\nu\left(\left(h-\tilde{h}\right)\left(a_{\infty},a_{\infty}',\alpha\cdot\lift\left(\tilde{a}_{I_{i}}\right),\beta\cdot\lift\left(\tilde{a}_{J}'\right),b'-a_{0}'\right)\right)\\
	>\nu\left(n\right)+m_{0}\gamma_{0}+m_{1}\gamma_{1}+m_{2}\gamma_{2}+m_{3}\gamma_{3}+m_5\gamma_5.
\end{gather*}
Then 
$h^{*}\left(x,y\right):=\frac{\tilde{h}\left(u,v,x,y,q\right)}{n \cdot u^{m_{0}}v^{m_{1}}q^{m_5}}$
is a non-zero polynomial in $x,y$, all coefficients having valuation
$0$, so it reduces under the residue map to a nonzero polynomial
in $x,y$. Since $\tilde{a}_{I_{i}},\tilde{a}_{J}'$ are algebraically
independent (by indiscernibility), it follows that $\overline{h^{*}}\left(\tilde{a}_{I_{i}},\tilde{a}_{J}'\right)\neq0$,
so $h^{*}\left(\lift\left(\tilde{a}_{I_{i}}\right),\lift\left(\tilde{a}_{J}'\right)\right)$
has valuation $0$, and hence its angular component is its residue,
$\overline{h^{*}}\left(\tilde{a}_{I_{i}},\tilde{a}_{J}'\right)$.
We have
\begin{gather*}
\overline{h^{*}}\left(\tilde{a}_{I_{2}},\tilde{a}_{J}'\right)=\overline{h^{*}}\left(\sigma\left(\tilde{a}_{I_{1}}\right),\sigma\left(\tilde{a}_{J}'\right)\right)=\sigma\left(\overline{h^{*}}\left(\tilde{a}_{I_{1}},\tilde{a}_{J}'\right)\right) \textrm{, thus }\\
	\ac\left(\tilde{h}\left(a_{\infty},a_{\infty}',\alpha\cdot\lift\left(\tilde{a}_{I_{2}}\right),\beta\cdot\lift\left(\tilde{a}_{J}'\right),b'-a_{0}'\right)\right)\\
	=\ac\left(n \cdot a_{\infty}^{m_{0}}\left(a_{\infty}'\right)^{m_{1}}\alpha^{m_{2}}\beta^{m_{3}}\left(b'-a_{0}'\right)^{m_5}\right)\ac\left(\frac{\tilde{h}\left(a_{\infty},a_{\infty}',\lift\left(\tilde{a}_{I_{2}}\right),\lift\left(\tilde{a}_{J}'\right),b'-a_{0}'\right)}{n \cdot a_{\infty}^{m_{0}}\left(a_{\infty}'\right)^{m_{1}}\left(b'-a_{0}'\right)^{m_5}}\right)\\
	=\ac\left(n\right)\left(\tilde{a}_{0}'-\tilde{b}'\right)^{m_5}\overline{h^{*}}\left(\tilde{a}_{I_{2}},\tilde{a}_{J}'\right)
	=\ac\left(n\right)\left(\tilde{a}_{0}'-\tilde{b}'\right)^{m_5}\sigma\left(\overline{h^{*}}\left(\tilde{a}_{I_{1}},\tilde{a}_{J}'\right)\right)\\
	=\sigma\left(\ac\left(n\right)\left(\tilde{a}_{0}'-\tilde{b}'\right)^{m_5}\overline{h^{*}}\left(\tilde{a}_{I_{1}},\tilde{a}_{J}'\right)\right)\\
	=\sigma \Bigg(\ac\left(n \cdot a_{\infty}^{m_{0}}\left(a_{\infty}'\right)^{m_{1}}\alpha^{m_{2}}\beta^{m_{3}}\left(b'-a_{0}'\right)^{m_5}\right) \cdot \\\cdot \ac\left(\frac{\tilde{h}\left(a_{\infty},a_{\infty}',\lift\left(\tilde{a}_{I_{1}}\right),\lift\left(\tilde{a}_{J}'\right),b'-a_{0}'\right)}{n \cdot a_{\infty}^{m_{0}}\left(a_{\infty}'\right)^{m_{1}}\left(b'-a_{0}'\right)^{m_5}}\right)\Bigg)\\
	=\sigma\left(\ac\left(\tilde{h}\left(a_{\infty},a_{\infty}',\alpha\cdot\lift\left(\tilde{a}_{I_{1}}\right),\beta\cdot\lift\left(\tilde{a}_{J}'\right),b'-a_{0}'\right)\right)\right).
	\end{gather*}
	\begin{gather*}
	\textrm{Since } \ \nu\left(\left(h-\tilde{h}\right)\left(a_{\infty},a_{\infty}',\alpha\cdot\lift\left(\tilde{a}_{I_{i}}\right),\beta\cdot\lift\left(\tilde{a}_{J}'\right),b'-a_{0}'\right)\right)\\
	>\nu\left(h\left(a_{\infty},a_{\infty}',\alpha\cdot\lift\left(\tilde{a}_{I_{i}}\right),\beta\cdot\lift\left(\tilde{a}_{J}'\right),b'-a_{0}'\right)\right),\\
\textrm{we have } \ \ac\left(g\left(a_{I_{i}},b',a_{J}'\right)\right)=\ac\left(h\left(a_{\infty},a_{\infty}',\alpha\cdot\lift\left(\tilde{a}_{I_{i}}\right),\beta\cdot\lift\left(\tilde{a}_{J}'\right),b'-a_{0}'\right)\right)	\\
=\ac\left(\tilde{h}\left(a_{\infty},a_{\infty}',\alpha\cdot\lift\left(\tilde{a}_{I_{i}}\right),\beta\cdot\lift\left(\tilde{a}_{J}'\right),b'-a_{0}'\right)\right) \textrm{, hence}\\
	\ac\left(g\left(a_{I_{2}},b',a_{J}'\right)\right)=\ac\left(\tilde{h}\left(a_{\infty},a_{\infty}',\alpha\cdot\lift\left(\tilde{a}_{I_{2}}\right),\beta\cdot\lift\left(\tilde{a}_{J}'\right),b-a_{0}'\right)\right)\\
	=\sigma\left(\ac\left(\tilde{h}\left(a_{\infty},a_{\infty}',\alpha\cdot\lift\left(\tilde{a}_{I_{1}}\right),\beta\cdot\lift\left(\tilde{a}_{J}'\right),b'-a_{0}'\right)\right)\right)=\sigma\left(\ac\left(g\left(a_{I_{1}},b',a_{J}'\right)\right)\right).
\end{gather*}

\noindent \textbf{Case 3.} Clear.

\noindent \textbf{Case 4.} $\varphi\left(x;z;w;b';a_{J}'\right)$ is of the form $\exists u\,\psi\left(x;z,u;w;b';a_{J}'\right)$,
where $u$ is a variable of sort $k$, and the claim holds for $\psi$.
If $\models\varphi\left(a_{I_{1}};c;d;b';a_{J}'\right)$, then
$\models\psi\left(a_{I_{1}};c,e;d;b';a_{J}'\right)$ for some
$e\in k$. Then we have $\models\psi\left(a_{I_{2}};\sigma\left(c\right),\sigma\left(e\right);d;b';a_{J}'\right)$,
so $\models\varphi\left(a_{I_{2}};\sigma\left(c\right);d;b';a_{J}'\right)$.

\noindent \textbf{Case 5.} $\varphi\left(x;z;w;b';a_{J}'\right)$ is of the form $\exists u\,\psi\left(x;z;w,u;b';a_{J}'\right)$,
where $u$ is a variable of sort $\Gamma_{\infty}$, and the claim
holds for $\psi$. 
If $\models\varphi\left(a_{I_{1}};c;d;b';a_{J}'\right)$,
then $\models\psi\left(a_{I_{1}};c;d,e;b';a_{J}'\right)$ for
some $e\in\Gamma_{\infty}$. Then we have $\models\psi\left(a_{I_{2}};\sigma\left(c\right);d,e;b';a_{J}'\right)$,
so $\models\varphi\left(a_{I_{2}};\sigma\left(c\right);d;b';a_{J}'\right)$. This concludes the proof of Claim \ref{cla: val fields 1}.

\subsection{Proof of Claim \ref{cla: val fields 2}}\label{sec: claim 2}

Let $\varphi\left(x;y;z;w;b;b'\right), \sigma_i, \sigma'_j, \pi, \tau$ be as in the claim.

\noindent \textbf{Case 1.} $\varphi\left(x;y;z;w;b;b'\right)$ is of the form $\boldsymbol{n}_{1}\cdot w+\boldsymbol{m}_{1}\cdot\nu\left(g\left(x,y,b,b'\right)\right)\leq\boldsymbol{n}_{2}\cdot w+\boldsymbol{m}_{2}\cdot\nu\left(h\left(x,y,b,b'\right)\right)$.
It is enough to show that for any polynomial $f\left(x,y,u,u'\right)$,
$$\pi^{-1}\left(\nu\left(f\left(a_{0},a_{0}',b,b'\right)\right)\right)=\nu\left(f\left(a_{i},a_{0}',b,b'\right)\right)=\tau^{-1}\left(\nu\left(f\left(a_{i},a_{j}',b,b'\right)\right)\right)$$
for $i,j\neq0$, because then 
\begin{gather*}
	\boldsymbol{n}_{1}\cdot\pi\left(d\right)+\boldsymbol{m}_{1}\cdot\nu\left(g\left(a_{0},a_{0}',b,b'\right)\right)=\pi\left(\boldsymbol{n}_{1}\cdot d+\boldsymbol{m}_{1}\cdot\nu\left(g\left(a_{i},a_{0}',b,b'\right)\right)\right) \textrm{ and}\\
	\boldsymbol{n}_{1}\cdot\tau\left(d\right)+\boldsymbol{m}_{1}\cdot\nu\left(g\left(a_{i},a_{j}',b,b'\right)\right)=\tau\left(\boldsymbol{n}_{1}\cdot d+\boldsymbol{m}_{1}\cdot\nu\left(g\left(a_{i},a_{0}',b,b'\right)\right)\right)
\end{gather*}

\noindent for $i,j\neq0$, and likewise for $\boldsymbol{n}_{2},\boldsymbol{m}_{2},h$,
and, as $\pi$ and $\tau$ preserve order, this implies 
\begin{gather*}
	\models\varphi\left(a_{0};a_{0}';\sigma_i\left(c\right);\pi\left(d\right);b;b'\right)\iff\models\varphi\left(a_{i};a_{0}';c;d;b;b'\right)\\\iff\models\varphi\left(a_{i};a_{j}';\sigma'_j\left(c\right);\tau\left(d\right);b;b'\right).
\end{gather*}
To show this, let $f^{*}\left(x,y,u,v\right):=f\left(x+u,y+v,u,v\right)$. 
By Claim
\ref{cla: val fields 3} and the choice of these elements, for $i,j\in\mathbb{Z}$, the valuations of $a_{i}-b$, $a_{j}'-b'$,
$b$, and $b'$ are $\mathbb{Z}$-linearly independent (together
with $\nu\left(p\right)$ if the characteristic is mixed), and hence
these are valuationally independent. Let $nx^{e_{1}}y^{e_{2}}u^{e_{3}}v^{e_{4}}$
be the monomial in $f^{*}\left(x,y,u,v\right)$ minimizing $\nu\left(n\right)+e_{1}\gamma_{2}+e_{2}\gamma_{5}+e_{3}\gamma_{0}+e_{4}\gamma_{1}$,
so that by valuational independence, 
$$\nu\left(f^{*}\left(a_{i}-b,a_{0}'-b',b,b'\right)\right)=\nu\left(n\right)+e_{1}\gamma_{2}+e_{2}\gamma_{5}+e_{3}\gamma_{0}+e_{4}\gamma_{1}.$$
This monomial is unique by linear independence (Claim \ref{cla: val fields 3}). Since $\pi$
and $\tau$ preserve order, this monomial also minimizes 
\begin{gather*}
	\pi\left(\nu\left(n\right)+e_{1}\gamma_{2}+e_{2}\gamma_{5}+e_{3}\gamma_{0}+e_{4}\gamma_{1}\right)=\nu\left(n\right)+e_{1}\gamma_{4}+e_{2}\gamma_{5}+e_{3}\gamma_{0}+e_{4}\gamma_{1}\\
	=\nu\left(f^{*}\left(a_{0}-b,a_{0}'-b',b,b'\right)\right)\textrm{, and}\\
	\tau\left(\nu\left(n\right)+e_{1}\gamma_{2}+e_{2}\gamma_{5}+e_{3}\gamma_{0}+e_{4}\gamma_{1}\right)=\nu\left(n\right)+e_{1}\gamma_{2}+e_{2}\gamma_{3}+e_{3}\gamma_{0}+e_{4}\gamma_{1}\\
	=\nu\left(f^{*}\left(a_{i}-b,a_{j}'-b',b,b'\right)\right) \textrm{ for } i,j\neq0.
\end{gather*}

\noindent \textbf{Case 2.} $\varphi\left(x;y;z;w;b;b'\right)$ is of the form $f\left(z,\ac\left(g\left(x,y,b,b'\right)\right)\right)=_{k}0$.

It is enough to show that 
\begin{gather*}
	\sigma_{i}\left(f\left(c,\ac\left(g\left(a_{i},a_{0}',b,b'\right)\right)\right)\right)=f\left(\sigma_{i}\left(c\right),\ac\left(g\left(a_{0},a_{0}',b,b'\right)\right)\right) \textrm{ and}\\
	\sigma_{j}'\left(f\left(c,\ac\left(g\left(a_{i},a_{0}',b,b'\right)\right)\right)\right)=f\left(\sigma_{j}'\left(c\right),\ac\left(g\left(a_{i},a_{j}',b,b'\right)\right)\right),
\end{gather*}
for which it is in turn enough to show that 
\begin{gather*}
	\sigma_{i}\left(\ac\left(g\left(a_{i},a_{0}',b,b'\right)\right)\right)=\ac\left(g\left(a_{0},a_{0}',b,b'\right)\right) \textrm{ and}\\
	\sigma_{j}'\left(\ac\left(g\left(a_{i},a_{0}',b,b'\right)\right)\right)=\ac\left(g\left(a_{i},a_{j}',b,b'\right)\right).
\end{gather*}

\noindent Let $h\left(x,y,u,v\right):=g\left(x+u,y+v,u,v\right)$. Let $nx^{m_{1}}y^{m_{2}}u^{m_{3}}v^{m_{4}}$
be the (unique, by Claim \ref{cla: val fields 3}) monomial in $h\left(x,y,u,v\right)$ minimizing
$\nu\left(n\right)+m_{1}\gamma_{2}+m_{2}\gamma_{5}+m_{3}\gamma_{0}+m_{4}\gamma_{1}$,
so that by valuational independence, $\nu\left(h\left(a_{i}-b,a_{0}'-b',b,b'\right)\right)=\nu\left(n\right)+m_{1}\gamma_{2}+m_{2}\gamma_{5}+m_{3}\gamma_{0}+m_{4}\gamma_{1}$.
Since $\pi$ and $\tau$ preserve order, this monomial also minimizes
$\nu\left(n\right)+m_{1}\gamma_{4}+m_{2}\gamma_{5}+m_{3}\gamma_{0}+m_{4}\gamma_{1}$
and $\nu\left(n\right)+m_{1}\gamma_{2}+m_{2}\gamma_{3}+m_{3}\gamma_{0}+m_{4}\gamma_{1}$.

\begin{gather*}
\textrm{For }	i\neq0, \ \ \ac\left(n\left(a_{i}-b\right)^{m_{1}}\left(a_{0}'-b'\right)^{m_{2}}b^{m_{3}}\left(b'\right)^{m_{4}}\right)\\
	=\ac\left(n\right)\left(\ac\left(\alpha\right)\left(\tilde{a}_{i}-\tilde{a}_{0}\right)\right)^{m_{1}}\left(\tilde{b}'-\tilde{a}_{0}'\right)^{m_{2}}\ac\left(a_{\infty}\right)^{m_{3}}\ac\left(a_{\infty}'\right)^{m_{4}}\\
	=\ac\left(n\right)\left(\tilde{a}_{i}-\tilde{a}_{0}\right)^{m_{1}}\left(\tilde{b}'-\tilde{a}_{0}'\right)^{m_{2}}.\\
	\textrm{Similarly, } \ \ac\left(n\left(a_{0}-b\right)^{m_{1}}\left(a_{0}'-b'\right)^{m_{2}}b^{m_{3}}\left(b'\right)^{m_{4}}\right)\\
	=\ac\left(n\right)\left(\tilde{b}-\tilde{a}_{0}\right)^{m_{1}}\left(\tilde{b}'-\tilde{a}_{0}'\right)^{m_{2}}\ac\left(a_{\infty}\right)^{m_{3}}\ac\left(a_{\infty}'\right)^{m_{4}}\\
	=\ac\left(n\right)\left(\tilde{b}-\tilde{a}_{0}\right)^{m_{1}}\left(\tilde{b}'-\tilde{a}_{0}'\right)^{m_{2}} 
	=\sigma_{i}\left(\ac\left(n\right)\left(\tilde{a}_{i}-\tilde{a}_{0}\right)^{m_{1}}\left(\tilde{b}'-\tilde{a}_{0}'\right)^{m_{2}}\right).\\
\textrm{And for } i,j\neq0 \textrm{ we have } \ \ac\left(n\left(a_{i}-b\right)^{m_{1}}\left(a_{j}'-b'\right)^{m_{2}}b^{m_{3}}\left(b'\right)^{m_{4}}\right)\\
	= \ac\left(n\right)\left(\ac\left(\alpha\right)\left(\tilde{a}_{i}-\tilde{a}_{0}\right)\right)^{m_{1}}\left(\ac\left(\beta\right)\left(\tilde{a}_{j}'-\tilde{a}_{0}'\right)\right)^{m_{2}}\ac\left(a_{\infty}\right)^{m_{3}}\ac\left(a_{\infty}'\right)^{m_{4}}\\
	=\ac\left(n\right)\left(\tilde{a}_{i}-\tilde{a}_{0}\right)^{m_{1}}\left(\tilde{a}_{j}'-\tilde{a}_{0}'\right)^{m_{2}} 
	=\sigma_{j}'\left(\ac\left(n\right)\left(\tilde{a}_{i}-\tilde{a}_{0}\right)^{m_{1}}\left(\tilde{b}'-\tilde{a}_{0}'\right)^{m_{2}}\right).\\
	\textrm{ Since } \  \nu\left(h\left(a_{i}-b,a_{0}'-b',b,b'\right)-n\left(a_{i}-b\right)^{m_{1}}\left(a_{0}'-b'\right)^{m_{2}}b^{m_{3}}\left(b'\right)^{m_{4}}\right)\\
	>\nu\left(h\left(a_{i}-b,a_{0}'-b',b,b'\right)\right), \textrm{ we have}\\
	\ac\left(h\left(a_{i}-b,a_{0}'-b',b,b'\right)\right)=\ac\left(n\left(a_{i}-b\right)^{m_{1}}\left(a_{0}'-b'\right)^{m_{2}}b^{m_{3}}\left(b'\right)^{m_{4}}\right)\\
	=\ac\left(n\right)\left(\tilde{a}_{i}-\tilde{a}_{0}\right)^{m_{1}}\left(\tilde{b}'-\tilde{a}_{0}'\right)^{m_{2}}.\\
	\textrm{Likewise, } \ \nu\left(h\left(a_{0}-b,a_{0}'-b',b,b'\right)-n\left(a_{0}-b\right)^{m_{1}}\left(a_{0}'-b'\right)^{m_{2}}b^{m_{3}}\left(b'\right)^{m_{4}}\right)\\
	>\nu\left(h\left(a_{0}-b,a_{0}'-b',b,b'\right)\right), \textrm{ so}\\
	\ac\left(h\left(a_{0}-b,a_{0}'-b',b,b'\right)\right)=\ac\left(n\left(a_{0}-b\right)^{m_{1}}\left(a_{0}'-b'\right)^{m_{2}}b^{m_{3}}\left(b'\right)^{m_{4}}\right)\\
	=\sigma_{i}\left(\ac\left(n\right)\left(\tilde{a}_{i}-\tilde{a}_{0}\right)^{m_{1}}\left(\tilde{b}'-\tilde{a}_{0}'\right)^{m_{2}}\right).\\
	\textrm{And } \ \nu\left(h\left(a_{i}-b,a_{j}'-b',b,b'\right)-n\left(a_{i}-b\right)^{m_{1}}\left(a_{j}'-b'\right)^{m_{2}}b^{m_{3}}\left(b'\right)^{m_{4}}\right)\\
	>\nu\left(h\left(a_{i}-b,a_{j}'-b',b,b'\right)\right), \textrm{ so }\\
	\ac\left(h\left(a_{i}-b,a_{j}'-b',b,b'\right)\right)=\ac\left(n\left(a_{i}-b\right)^{m_{1}}\left(a_{j}'-b'\right)^{m_{2}}b^{m_{3}}\left(b'\right)^{m_{4}}\right)\\
	=\sigma_{j}'\left(\ac\left(n\right)\left(\tilde{a}_{i}-\tilde{a}_{0}\right)^{m_{1}}\left(\tilde{b}'-\tilde{a}_{0}'\right)^{m_{2}}\right).
\end{gather*}
Since $g\left(a_i,a'_0,b,b'\right)=h\left(a_i-b,a'_0-b',b,b'\right)$, $g\left(a_0,a'_0,b,b'\right)=h\left(a_0-b,a'_0-b',b,b'\right)$, and $g\left(a_i,a'_j,b,b'\right)=h\left(a_i-b,a'_j-b',b,b'\right)$, this is what we wanted to show.

\noindent \textbf{Case 3.} Clear.

\noindent \textbf{Case 4.} $\varphi\left(x;y;z;w;b;b'\right)$ is of the form $\exists u\,\psi\left(x;y;z,u;w;b;b'\right)$,
where $u$ is a variable of sort $k$, and the claim holds for $\psi$.

For $i,j\neq0$, if $\models\varphi\left(a_{i};a_{0}';c;d;b;b'\right)$,
then $\models\psi\left(a_{i};a_{0}';c,e;d;b;b'\right)$ for
some $e\in k$. Then $\models\psi\left(a_{0};a_{0}';\sigma_{i}\left(c\right),\sigma_{i}\left(e\right);\pi\left(d\right);b;b'\right)$
and $\models\psi\left(a_{i};a_{j}';\sigma_{j}'\left(c\right),\sigma_{j}'\left(e\right);\tau\left(d\right);b;b'\right)$,
so $\models\varphi\left(a_{0};a_{0}';\sigma_{i}\left(c\right);\pi\left(d\right);b;b'\right)$
and $\models\varphi\left(a_{i};a_{j}';\sigma_{j}'\left(c\right);\tau\left(d\right);b;b'\right)$.
Note that each of these implications are reversible.

\noindent \textbf{Case 5.} $\varphi\left(x;y;z;w;b;b'\right)$ is of the form $\exists u\,\psi\left(x;y;z;w,u;b;b'\right)$,
where $u$ is a variable of sort $\Gamma_{\infty}$, and the claim
holds for $\psi$. 
 For $i,j\neq0$, if $\models\varphi\left(a_{i};a_{0}';c;d;b;b'\right)$,
then $\models\psi\left(a_{i};a_{0}';c;d,e;b;b'\right)$ for
some $e\in \Gamma_\infty$. Then $\models\psi\left(a_{0};a_{0}';\sigma_{i}\left(c\right);\pi\left(d\right),\pi\left(e\right);b;b'\right)$
and $\models\psi\left(a_{i};a_{j}';\sigma_{j}'\left(c\right);\tau\left(d\right),\tau\left(e\right);b;b'\right)$,
hence $\models\varphi\left(a_{0};a_{0}';\sigma_{i}\left(c\right);\pi\left(d\right);b;b'\right)$  and $\models\varphi\left(a_{i};a_{j}';\sigma_{j}'\left(c\right);\tau\left(d\right);b;b'\right)$.
Since $\pi$ and $\tau$ are bijective, each of these implications
is reversible.	
This concludes the proof of  Claim \ref{cla: val fields 2}, and hence of Theorem \ref{thm: non weak semieq val fields}.

\subsection{Some further applications of Theorem \ref{thm: non weak semieq val fields} and examples}\label{sec: apps of main thm}

\begin{remark}\label{rem: val field more examples}
	Our proof of Theorem \ref{thm: non weak semieq val fields} also applies to any reduct of an $\ac$-valued
field $K$ whose residue field has a non-constant totally indiscernible
sequence to a language $\mathcal{ L}\subseteq\mathcal{ L}_{\text{Denef-Pas}}$
such that $\mathcal{ L}$ contains the relation $\nu\left(x_{1}-y_{1}\right)<\nu\left(x_{2}-y_{2}\right)$,
and every $\mathcal{ L}$-formula is equivalent to a Boolean combination
of $\mathcal{ L}_{\text{Denef-Pas}}$-formulas with no quantifiers of
the main sort. This gives us further examples of NIP theories that
are not weakly semi-equational, such as:
\begin{enumerate}
	\item A Henselian valued field of equicharacteristic  $0$ whose residue field is algebraically
closed.

\item An algebraically closed valued field (of any characteristic).

\item The reduct of either of the above to a valued vector space or
valued abelian group.

\item A generic abstract ultrametric space: a two-sorted structure $\left({\mathcal M},\Gamma_{\infty}\right)$,
with a linear order $\leq$ on $\Gamma_{\infty}$ that is dense with maximal element $\infty\in\Gamma_{\infty}$ and no minimal element, and a function $\nu:{\mathcal M}^{2}\rightarrow\Gamma_{\infty}$,
such that $\nu\left(x,y\right)=\infty\iff x=y$, $\nu\left(x,y\right)=\nu\left(y,x\right)$,
and $\nu\left(x,z\right)\geq\max\left(\nu\left(x,y\right),\nu\left(y,z\right)\right)$,
and such that for every $\gamma\in\Gamma$ and $a\in{\mathcal M}$, there
are $\left(b_{i}\right)_{i\in\mathbb{N}}$ in ${\mathcal M}$ such that
$\nu\left(a,b_{i}\right)=\nu\left(b_{i},b_{j}\right)=\gamma$ for
$i,j\in\mathbb{N}$.
\end{enumerate}
\end{remark}

\begin{expl}
	Let $K$ be a valued field (viewed as a structure in the language of rings with a predicate for the valuation ring $\mathcal{O}$), $d \in \omega$ and let $\mathcal{F}$ be the family of all convex subsets of $K^d$ in the sense of Monna (equivalently, the family of all translates of $\mathcal{O}$-submodules of $K^d$).
	Then $\mathcal{F}$ is a definable family, and a  formula defining it is a semi-equation by \cite[Theorem 4.3]{chernikov2021combinatorial} and Proposition \ref{prop: semieq iff fin breadth}.
\end{expl}

\begin{problem}\label{prob: p-adics}
	Is the field $\mathbb{Q}_{p}$ semi-equational? It is weakly semi-equational by distality.
\end{problem}

\section{Weak semi-equationality in expansions by a predicate}\label{sec: exp by pred}

\subsection{Context}

We recall the setting and some results from \cite{chernikov2013externally} (as usual, below $x,y,z$ denote arbitrary finite tuples of variables).
We start with a theory $T$ in a language $\mathcal{L}$, and let
$\mathcal{L}_{\CP} := \mathcal{L}\cup\left\{ \CP\left(x\right)\right\} $, 
where $\CP$ is a new unary predicate. Let $T_{\CP}:=\Th_{\mathcal{L}_{\CP}}\left(M,A\right)$,
where $A$ is some subset of $M$ (interpreted as $\CP$). We fix some monster model $\left(M',A'\right)\succ\left(M,A\right)$
of $T_{\CP}$. An $\mathcal{L}_{\CP}$-formula  $\psi(x)$ is \emph{bounded} if
it is of the form $Q_{0}y_{0}\in\CP\ldots Q_{n}y_{n}\in\CP\varphi\left(x,y\right)$,
where $Q_{i}\in\left\{ \exists,\forall\right\} $ and $\varphi\left(x,y\right)\in\mathcal{L}$.
We denote the set of all bounded $\mathcal{L}_{\CP}$-formulas by $\mathcal{L}_{\CP}^{\bdd}$
and say that the theory $T_{\CP}$ is \emph{bounded} if every $\mathcal{L}_{\CP}$
formula is equivalent modulo $T_{\CP}$ to a bounded one. Finally,
for $\mathcal{L}\subseteq\mathcal{L}'\subseteq\mathcal{L}_{\CP}\left(M\right)$
we denote by $A_{\indu\left(\mathcal{L}'\right)}$ the $\mathcal{L}'(\emptyset)$-induced
structure on $A$, i.e.~the structure $\left( A; \left( R_{\varphi}(x)   \right)_{\varphi(x) \in \mathcal{L}'} \right)$ with $R_{\varphi} := \left\{ a \in A^{|x|} : \left(M, A \right) \models \varphi(a) \right\}$.
\begin{remark}\label{rem: induced subsets}
\begin{enumerate}
\item 	The structures $A_{\indu\left(\mathcal{L}_{\CP}^{\bdd}\right)}$
and $A_{\indu\left(\mathcal{L}\right)}$ have the same definable subsets
of $A^{n}$, for all $n \in \omega$.  Indeed, given $\psi(x) = Q_{0}y_{0}\in\CP\ldots Q_{n}y_{n}\in\CP\varphi\left(x,y\right)$ with $\varphi(x,y) \in \mathcal{L}_{\CP}^{\bdd}$. Then $R_{\psi}(A)$ can be defined in $A_{\indu\left(\mathcal{L}\right)}$ by $Q_{0}y_{0} \ldots Q_{n}y_{n} R_{\varphi}\left(x,y\right)$.
\item If $T_{\CP}$ is bounded, then clearly $A_{\indu\left(\mathcal{L}_{\CP}\right)}$
and $A_{\indu\left(\mathcal{L}_{\CP}^{\bdd}\right)}$ have the same
definable subsets of $A^{n}$, for all $n$.
\end{enumerate}
\end{remark}

\begin{fact}
\label{fac: NIP is preserved in pairs}
\begin{enumerate}
\item \cite[Corollary 2.5]{chernikov2013externally} Assume that $T$ is
NIP, $A_{\indu\left(\mathcal{L}\right)}$ is NIP and $T_{\CP}$
is bounded. Then $T_{\CP}$ is NIP. 
\item \cite[Corollary 2.6]{chernikov2013externally} In particular, if $T$
is NIP, $A\preceq M$ and $T_{\CP}$ is bounded, then $T_{\CP}$
is NIP.
\end{enumerate}
\end{fact}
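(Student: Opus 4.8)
First, (2) is the special case of (1) in which $A \preceq M$: then $A_{\indu(\mathcal{L})}$ is just $A$ equipped with all its $\mathcal{L}(\emptyset)$-definable relations, and its $\mathcal{L}$-theory is $T$ (since $A \preceq M \models T$), which is NIP; hence the induced structure is NIP and (1) applies. So all the work is in (1).

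By boundedness of $T_{\CP}$, and since NIP is invariant under logical equivalence, it suffices to show that every bounded $\mathcal{L}_{\CP}$-formula is NIP. The plan is to argue by induction on the number $n$ of $\CP$-quantifiers. For $n=0$ the formula lies in $\mathcal{L}$ and is NIP because $T$ is. Using that the negation of a bounded formula is bounded with the same number of $\CP$-quantifiers, and that NIP is closed under negation, the inductive step reduces to showing: if $\psi(x,w;z)$ is bounded and NIP (induction hypothesis), then $\Psi(x;z) := \exists w \in \CP\,\psi(x,w;z)$ is NIP. I would also record that, by Remark \ref{rem: induced subsets} together with boundedness, $A_{\indu(\mathcal{L}_{\CP})}$ has the same definable sets as $A_{\indu(\mathcal{L})}$ and is therefore NIP as well.

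For the existential step, suppose $\Psi$ has IP, witnessed by an $\mathcal{L}_{\CP}$-indiscernible sequence $(a_i : i \in \omega)$ and a tuple $b$ with $\models\Psi(a_i,b)\iff i$ even. For even $i$ choose $c_i \in A'$ with $\models\psi(a_i,c_i,b)$; for odd $i$ the single $\mathcal{L}_{\CP}$-formula $\forall w\,(\CP(w)\to\neg\psi(a_i,w,b))$ holds. Applying Ramsey and compactness to the blocks $(a_{2k}\,c_{2k}\,a_{2k+1})_{k\in\omega}$ — stabilising, over the \emph{fixed} parameter $b$, the truth patterns of the finitely many formulas that will matter — one obtains an $\emptyset$-indiscernible sequence $(a_k\,c_k\,a'_k)_{k\in\omega}$ with $c_k\in A'$, $\models\psi(a_k,c_k,b)$ for all $k$, and $\forall w\,(\CP(w)\to\neg\psi(a'_k,w,b))$ for all $k$. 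From such a configuration one derives a contradiction by combining NIP of $\psi$ (induction hypothesis), applied to a suitable sequence built from the $a_k$, $a'_k$ and $c_k$ against $b$, with NIP of $A_{\indu(\mathcal{L}_{\CP})}$, applied to the sequence $(c_k)_k$, which lies entirely inside $A'$. This closes the induction and proves (1); then (2) follows. Following \cite{chernikov2013externally}, the conceptually cleanest route is via honest definitions: since $T$ is NIP, the $\mathcal{L}$-matrix of a bounded formula has an honest definition over $T$ \cite{chernikov2015externally}, which turns each bounded $\CP$-quantifier into a bounded search whose outcome is a condition read off inside the NIP structure $A_{\indu(\mathcal{L})}$, reducing the whole formula's alternation count to a combination of finite ones.

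The main obstacle is precisely this step — showing that $\exists w \in \CP\,\psi$ inherits NIP from $\psi$. The difficulty is that the parameter $b$, over which the sequence is by construction \emph{not} indiscernible, must be carried through the production of the $\CP$-witnesses $c_i$ while ensuring that the ``no $\CP$-witness'' condition for the false instances survives the passage to an indiscernible configuration; this is exactly where the two hypotheses — NIP of $T$ and NIP of the induced structure $A_{\indu(\mathcal{L})}$ — have to be used in tandem, and is the substance of \cite[Corollary 2.5]{chernikov2013externally}.
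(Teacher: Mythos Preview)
The paper does not prove this statement at all: it is recorded as a \emph{Fact} with citations to \cite[Corollaries 2.5 and 2.6]{chernikov2013externally}, and no argument is given. So there is no ``paper's own proof'' to compare against.

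As for your sketch: the overall architecture (reduce to bounded formulas, induct on the number of $\CP$-quantifiers, reduce the inductive step to the existential case $\exists w\in\CP\,\psi$) is correct and is indeed how the cited result is organized. The observation that (2) follows from (1) because $A\preceq M$ makes $A_{\indu(\mathcal{L})}$ a model of $T$ is also fine.

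However, the heart of the matter --- the existential step --- is not actually carried out. After producing the indiscernible sequence $(a_k c_k a'_k)_k$ with $\models\psi(a_k,c_k,b)$ and $\models\forall w\in\CP\,\neg\psi(a'_k,w,b)$, you write that ``one derives a contradiction by combining NIP of $\psi$\ldots with NIP of $A_{\indu(\mathcal{L}_{\CP})}$'', but you do not say how; and in your final paragraph you explicitly acknowledge that this step ``is the substance of \cite[Corollary 2.5]{chernikov2013externally}'' --- which is precisely the statement you are trying to prove. That is circular. The honest-definitions route you allude to is the right idea, but it too is only named, not executed: one needs to show concretely that an honest definition for the $\mathcal{L}$-matrix lets one replace the $\CP$-quantifier by a formula whose truth on the alternating sequence is governed by a formula in the induced structure on $A$, and then invoke NIP there. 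Without that argument, the proposal identifies the difficulty correctly but does not overcome it.
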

\noindent Some results on preservation of equationality under naming a set by a predicate are obtained in \cite{martin2020equational}.
As pointed out in \cite{hieronymi2017distal}, the exact analog with
distality in place of NIP is false:
\begin{fact}
(\cite[Theorem 5.1]{hieronymi2017distal} and the examples after it)
The theory of dense pairs of $o$-minimal structures expanding a group
is not distal (even though it is bounded and the induced structure on the submodel is distal). Their proof shows that the formula $\varphi\left(x,y\right)=\neg\exists u\in\CP\left(x=u+y\right)$
is not a weak semi-equation in the theory of dense pairs.
\end{fact}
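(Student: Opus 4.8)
The plan is to prove the stronger statement that the single formula $\varphi(x,y):=\neg\exists u\in\CP(x=u+y)$ is not a weak semi-equation in the theory $T_{\CP}$ of a dense pair; by Remark \ref{rem: distal implies w semieq} this forces $\neg\varphi$ — and hence $T_{\CP}$ — not to be distal (if $T_{\CP}$ were distal then every formula, in particular $\neg\varphi$, would be distal, so $\varphi=\neg(\neg\varphi)$ would be a weak semi-equation). The parenthetical ``even though'' part is immediate: the structure induced on a submodel $A\prec M$ is just the $o$-minimal structure $A$, which is distal, and boundedness of $T_{\CP}$ is classical, so by Fact \ref{fac: NIP is preserved in pairs} the pair is at least NIP. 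So fix a monster model $(M,A)\models T_{\CP}$, with $M$ $o$-minimal expanding a (necessarily divisible) ordered abelian group and $A$ a dense, codense elementary submodel interpreting $\CP$; here $\varphi(a,b)$ asserts $a-b\notin A$. By Definition \ref{def: weak semi-eq}(2) I must produce $b\in M$ and a sequence $(a_i)_{i\in\mathbb Z}$ in $M$ that is indiscernible over $\emptyset$, with $(a_i)_{i\ne0}$ indiscernible over $b$, such that $a_i-b\notin A$ for all $i\ne0$ but $a_0-b\in A$.

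Next I would record what shape such a configuration must have. Indiscernibility of $(a_i)_{i\in\mathbb Z}$ forces the truth value of $\CP(a_i)$ to be constant, and $a_0\notin A$ (if $a_0\in A$ then, since $a_0-b\in A$, also $b\in A$, and then $a_i-b\notin A$ fails), so every $a_i\notin A$; and since $a_0-b\in A$, for $i\ne0$ the condition $a_i-b\notin A$ is equivalent to $a_i\notin a_0+A$, forcing the $a_i$ into pairwise distinct cosets of $A$ and $b\in a_0+A$. Guided by this, the construction is: choose a strictly increasing sequence $(a_i)_{i\in\mathbb Z}$ of elements of $M\setminus A$, contained in a bounded interval $(l,r)$, $\mathbb Q$-linearly independent over $A$, and $\mathcal{L}_{\CP}$-indiscernible over $\emptyset$; then put $b:=a_0+c$ for some $c\in A$ with $c>r-l$, so that $a_i<b$ for every $i$. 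Such a sequence exists because at each stage the set of elements of $M$ not in $A+\mathbb Q a_{i_1}+\ldots+\mathbb Q a_{i_k}$ (for the finitely many points already chosen) is a union of cosets of a proper $\mathbb Q$-subspace containing the dense set $A$, hence dense in every subinterval, so one can keep picking the next point so as to stay increasing and bounded; one then passes to an $\mathcal{L}_{\CP}$-indiscernible sequence by Ramsey and compactness, all the relevant properties being preserved. With this choice $a_0-b=-c\in A$, and $a_i-b=(a_i-a_0)-c\notin A$ for $i\ne0$ since $a_i-a_0\notin A$; and $(a_i)_{i\in\mathbb Z}$ is indiscernible over $\emptyset$ by construction.

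The remaining point — and the real content of the proof — is that $(a_i)_{i\ne0}$ is $\mathcal{L}_{\CP}$-indiscernible over $b$. Here I would use that $T_{\CP}$ is bounded (near model complete), so that the $\mathcal{L}_{\CP}$-type of a tuple over a set is governed by its $\mathcal{L}$-type together with its ``$A$-configuration'' — the $\mathcal{L}(\emptyset)$-definable-closure structure of the tuple relative to $A$ and which such points lie in $A$. For $i_1<\ldots<i_n$ with all $i_k\ne0$, the tuple $(a_{i_1},\ldots,a_{i_n})$ over $b=a_0+c$ consists of $n$ elements below $b$, strictly increasing, each outside the coset $b+A=a_0+A$, with all pairwise differences outside $A$, and with $\{a_{i_1},\ldots,a_{i_n},b\}$ still $\mathbb Q$-linearly independent over $A$ (because $\{a_{i_1},\ldots,a_{i_n},a_0\}$ is, and $b\equiv a_0\pmod A$); and crucially, since $A$ — hence the coset $a_0+A$ in which $b$ lives — is dense in every interval, the parameter $b$ provides no way to locate $a_0$ among the $a_i$, and so no way to detect the position at which $a_0$ was inserted. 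All of this data depends only on the order type of $i_1<\ldots<i_n$, which yields indiscernibility over $b$ and completes the argument via Lemma-free use of the definition. The main obstacle is exactly this last verification: one must check that no $\mathcal{L}_{\CP}$-formula over $b$ — equivalently, by boundedness, no formula $\exists\bar u\in\CP\,\theta(\bar u;\bar y;b)$ with $\theta\in\mathcal{L}$ — separates $\tp(a_{i_1},\ldots,a_{i_n}/b)$ from $\tp(a_{j_1},\ldots,a_{j_n}/b)$ for order-isomorphic index tuples, and this is where one must invoke the explicit description of definable sets in dense pairs together with density and codensity of $A$ in $M$; it is the substance of \cite[Theorem 5.1]{hieronymi2017distal} and the examples following it.
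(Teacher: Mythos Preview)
The paper supplies no proof of this Fact; it is recorded solely as a citation to \cite{hieronymi2017distal}. Your proposal therefore goes beyond the paper by reconstructing how the argument runs.

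Your sketch is structurally correct. You properly unwind Definition~\ref{def: weak semi-eq}(2) for $\varphi$ to the requirement that $a_0-b\in A$ while $a_i-b\notin A$ for $i\neq0$, deduce that the $a_i$ must lie in pairwise distinct cosets of $A$ with $b\in a_0+A$, and propose $b=a_0+c$ with $c\in A$ large enough that $b$ lies above the whole sequence (so that $b$ cannot detect the position of $a_0$ by order alone). You also correctly isolate the only substantive step --- $\mathcal L_{\CP}$-indiscernibility of $(a_i)_{i\neq0}$ over $b$ --- and defer it to the description of types in dense pairs that underlies \cite{hieronymi2017distal}. This is the right place to defer: the point is that $\tp_{\mathcal L_{\CP}}(\bar a/b)$ is governed by $\tp_{\mathcal L}(\bar a,b)$ together with which $\mathcal L$-definable functions of $(\bar a,b)$ land in $A$, and your $\mathbb Q$-linear-independence hypothesis over $A$ is exactly what makes this data independent of where $0$ sits among the chosen indices.

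One minor point worth making explicit: when you ``pass to an $\mathcal L_{\CP}$-indiscernible sequence by Ramsey and compactness, all the relevant properties being preserved'', boundedness in the fixed interval $(l,r)$ is preserved only if you run the extraction over the parameters $l,r$. You then obtain indiscernibility over $\{l,r\}$, which is stronger than the indiscernibility over $\emptyset$ required by Definition~\ref{def: weak semi-eq}(2), so this is harmless --- but it should be said.
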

In this section we show that at least weak semi-equationality of $T_{\CP}$
can be salvaged. We will need the following properties of indiscernible sequences and definable sets with distal induced structure.
\begin{fact}\cite[Proposition 1.17]{aschenbrenner2020distality}
\label{lem: lifting distality over a predicate}Let $T$ be NIP,
and let $D$ be an $\emptyset$-definable set. Assume that $D_{\indu}$
is distal. Let $\left(c_{i}:i\in\mathbb{Q}\right)$ be an indiscernible
sequence of tuples in $\mathbb{M}$ and let a tuple $b$ from $D$ be given.
Assume that $\left(c_{i}:i\in\mathbb{Q}\setminus\left\{ 0\right\} \right)$
is indiscernible over $b$, then $\left(c_{i}:i\in\mathbb{Q}\right)$
is indiscernible over $b$ as well.
\end{fact}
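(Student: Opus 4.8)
The plan is to argue by contradiction, first reducing to a clean instance of distality for the definable set $D$, and then feeding that instance into distality of $D_{\indu}$ via externally definable sets. So suppose $(c_{i})_{i\in\mathbb{Q}}$ is not indiscernible over $b$. Since it is $\emptyset$-indiscernible and $(c_{i})_{i\neq0}$ is $b$-indiscernible, any witness to the failure must involve $c_{0}$: comparing an $n$-tuple running through $0$ with one avoiding $0$ and absorbing into a parameter tuple $\bar{d}$ all elements of the witnessing configuration other than $c_{0}$, one obtains a formula $\chi(x;z)$ over $\bar{d}$ --- with $z$ of the sort of $b$ (hence ranging over $D$) and $x$ of the home sort --- and a subinterval $I=I_{L}+(0)+I_{R}$ of $\mathbb{Q}$ around $0$ such that $\models\chi(c_{i},b)\iff i=0$ for all $i\in I$. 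Moreover $(c_{i})_{i\in I}$ is indiscernible over $\bar{d}$, and since $\bar{d}$ is a subtuple of $(c_{i})_{i\neq0}$ the subsequence $(c_{i})_{i\in I,\,i\neq0}$ is indiscernible over $b\bar{d}$. Thus $\chi(x;z)$ witnesses failure of the distal-formula condition for the definable set $D$, the only twist being that $x$ ranges over the whole home sort rather than over $D$.

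Now I would use that $b$ lies in $D$. For each $c$ in the home sort, $\chi(c,D):=\{b'\in D:\models\chi(c,b')\}$ is an externally definable subset of $D$, and as $c$ varies these form a uniformly externally definable, hence uniformly NIP, family. The key point is that the structure on $D$ obtained by adjoining all such externally definable sets is still distal: either because $D$ is stably embedded, in which case every $\chi(c,D)$ is already $D_{\indu}$-definable over $D$ and $D_{\indu}$ is distal by hypothesis; or, in general, through the Shelah expansion $(D_{\indu})^{\mathrm{Sh}}$, which is distal because $D_{\indu}$ is (a theorem of Chernikov--Simon), together with the uniform form of Shelah's theorem on externally definable sets in NIP theories, which provides a single $(D_{\indu})^{\mathrm{Sh}}$-formula uniformly defining the sets $\chi(c,D)$. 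Picking codes $e_{i}\in D$ for the sets $\chi(c_{i},D)$ coherently along the sequence, indiscernibility of $(c_{i})_{i\in I}$ over $\bar{d}$ in $\mathbb{M}$ transfers to indiscernibility of $(e_{i})_{i\in I}$ in this distal expansion, and indiscernibility of $(c_{i})_{i\in I,\,i\neq0}$ over $b\bar{d}$ transfers to indiscernibility of $(e_{i})_{i\in I,\,i\neq0}$ over $b$. Distality of the expansion then forces $e_{0}$ to satisfy the same relations over $b$ as the remaining $e_{i}$, i.e. $\models\chi(c_{0},b)\iff\models\chi(c_{j},b)$ for $j\neq0$ --- contradicting the choice of $\chi$.

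The hard part is the second step: matching externally definable subsets of $D$ coming from parameters outside $D$ with the distal structure carried by $D$ itself, and then choosing the codes $e_{i}$ so that both indiscernibility hypotheses transfer at once while the cut at $0$ stays distinguished. This requires the uniform version of Shelah's theorem (so the codes arise from one formula and can be chosen compatibly) followed by an Erd\H{o}s--Rado/Ramsey extraction to straighten the code sequence; one must also allow for distinct $c_{i}$ naming the same, or order-incomparable, externally definable sets, which is controlled using indiscernibility of $(c_{i})_{i\in I}$. An alternative route sidesteps the Shelah expansion: by NIP, $\tp(b/(c_{i})_{i\in\mathbb{Q}})$ splits the sequence into finitely many convex pieces each indiscernible over $b$, and the hypothesis that $(c_{i})_{i\neq0}$ is $b$-indiscernible confines the only possible nontrivial interaction to the cut filled by $c_{0}$; showing that $b\in D$ with $D_{\indu}$ distal cannot interact with a single such cut is then a localized instance of distality of $D_{\indu}$, which is the essential content whichever way one organizes the argument.
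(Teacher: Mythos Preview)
The paper does not give its own proof of this statement: it is quoted as a Fact with a reference to \cite[Proposition 1.17]{aschenbrenner2020distality}. So there is no proof in the present paper to compare your attempt against.

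On the attempt itself: the opening reduction is fine. From the hypotheses one indeed extracts $\chi(x,z)$ over parameters $\bar d$ from the sequence and an interval $I$ around $0$ with $\models\chi(c_{i},b)\iff i=0$ for $i\in I$, $(c_{i})_{i\in I}$ indiscernible over $\bar d$, and $(c_{i})_{i\in I,\,i\neq0}$ indiscernible over $b\bar d$. The gap is in the transfer to $D_{\indu}$. Granting that each $\chi(c_{i},D)$ is externally definable in $D_{\indu}$ (this does hold, via honest definitions in the ambient NIP theory $T$, though not directly via the Shelah theorem you invoke, which presupposes the sets are already known to be externally definable in $D_{\indu}$), the ``codes $e_{i}$'' live in a saturated $N\succ D_{\indu}$, not in $D$, and each is determined only up to its $R_{\theta}$-trace on $D$. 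To apply distality in $N$ you need $(e_{i})$ indiscernible and $(e_{i})_{i\neq0}$ $b$-indiscernible in $N$, i.e.\ with respect to \emph{all} formulas of $\Th(D_{\indu})$; nothing in your argument links the full $N$-types of the $e_{i}$ to the $\mathbb{M}$-types of the $c_{i}$. A Ramsey extraction on $(e_{i})$ would discard the distinguished index $0$, which is exactly the index carrying the contradiction, and your ``alternative route'' in the final paragraph is a restatement of the proposition rather than a proof of it.
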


\begin{lemma}
\label{lem: completing sequence with one point}Assume $T$ is NIP and  
$D$ is an $\emptyset$-definable set with $D_{\indu}$
distal. Let $\left(a_{i}:i\in\mathbb{Q}\right)$ be an $\emptyset$-indiscernible
sequence, $b$ such that $\left(a_{i}:i\in\mathbb{Q}\setminus\left\{ 0\right\} \right)$
is $b$-indiscernible,  and $c\in D$ arbitrary. Then we can find
a sequence $\left(c_{i}:i\in\mathbb{Q}\right)$ such that:
\begin{itemize}
	\item $a_{i}c_{i}\equiv_{b}a_{1}c$
for all $i\in\mathbb{Q}\setminus\left\{ 0\right\} $,

\item  $\left(a_{i}c_{i}:i\in\mathbb{Q}\right)$
is $\emptyset$-indiscernible, and 

\item $\left(a_{i}c_{i}:i\in\mathbb{Q}\setminus\left\{ 0\right\} \right)$
is $b$-indiscernible.
\end{itemize}
\end{lemma}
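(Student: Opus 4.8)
The plan is to realize the desired sequence $(c_i : i \in \mathbb{Q})$ in two stages: first build an appropriate indiscernible sequence extending the given data using a "stretching" argument together with Ramsey and compactness, then invoke Fact~\ref{lem: lifting distality over a predicate} to upgrade $b$-indiscernibility of the deleted-$0$ subsequence to full $b$-indiscernibility of the new sequence.

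Here is the approach in more detail. First I would fix $c \in D$ and consider the type of $a_1 c$ over $b$, noting that by hypothesis $a_i \equiv_b a_1$ for all $i \neq 0$. The goal is to find $c_i$ (for $i \neq 0$) so that $a_i c_i \equiv_b a_1 c$ while maintaining indiscernibility of the whole array $(a_i c_i)_{i \in \mathbb{Q}}$ over $\emptyset$. To do this, I would work with the sequence $(a_i : i \in \mathbb{Q} \setminus \{0\})$, which is $b$-indiscernible, and use a standard argument: extend $(a_i : i \neq 0)$ together with $bc$ to obtain a $b$-indiscernible sequence $(a_i c_i : i \neq 0)$ with $a_i c_i \equiv_b a_1 c$ — this is possible by stretching/Ramsey applied inside the $b$-indiscernible sequence, realizing an Ehrenfeucht–Mostowski type. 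Then I need to insert an appropriate $c_0$ at position $0$ so that the full sequence $(a_i c_i : i \in \mathbb{Q})$ becomes $\emptyset$-indiscernible. By compactness and Ramsey applied to the $\emptyset$-indiscernible sequence $(a_i : i \in \mathbb{Q})$, I can find such a $c_0$: concretely, the $\emptyset$-indiscernibility of $(a_i)_{i\in\mathbb Q}$ lets me extract from the configuration $(a_i c_i)_{i\neq 0}$ (which realizes a consistent EM-type compatible with the $a_i$'s) a point $c_0$ making $(a_i c_i)_{i \in \mathbb{Q}}$ $\emptyset$-indiscernible; here it is essential that the originally given sequence $(a_i : i \in \mathbb{Q})$ is $\emptyset$-indiscernible, so that filling in position $0$ with the $a$-part is already consistent.

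Once I have $(a_i c_i : i \in \mathbb{Q})$ that is $\emptyset$-indiscernible and whose restriction to $i \neq 0$ is $b$-indiscernible with $a_i c_i \equiv_b a_1 c$, the final step is immediate: since $T$ is NIP, $D$ is $\emptyset$-definable with $D_{\indu}$ distal, and the $c_i \in D$, I apply Fact~\ref{lem: lifting distality over a predicate} with the indiscernible sequence $(a_i c_i : i \in \mathbb{Q})$ (the tuple $b$ from $D$ being the parameter; note we only need the $c_i$-coordinates to lie in $D$, which is what the fact requires after possibly passing to the relevant subtuples) to conclude that $(a_i c_i : i \in \mathbb{Q})$ is $b$-indiscernible.

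The main obstacle I anticipate is the bookkeeping in the first stage: ensuring simultaneously that (i) $a_i c_i \equiv_b a_1 c$ on the nose for $i \neq 0$, (ii) the full array over $\mathbb{Q}$ is $\emptyset$-indiscernible with the \emph{original} $a_i$'s unchanged, and (iii) the $c_i$ genuinely lie in $D$. The delicate point is that I must not disturb the given sequence $(a_i)$ when adjoining the $c_i$; this is handled by working with the EM-type of $(a_i c_i)_{i \neq 0}$ over $b$ relative to the fixed $\emptyset$-indiscernible skeleton $(a_i)_{i \in \mathbb{Q}}$ and using that $D$ is $\emptyset$-definable so membership $c_i \in D$ is part of the type and is preserved under the extraction. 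Everything else is a routine application of Ramsey, compactness, and the cited distality-lifting fact.
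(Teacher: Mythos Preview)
Your plan has a genuine gap in the second stage, and your third stage is both unnecessary and misapplied.

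The real difficulty is finding $c_0$ so that $(a_ic_i:i\in\mathbb{Q})$ is $\emptyset$-indiscernible \emph{while keeping the original $a_i$'s fixed}. You call this ``compactness and Ramsey,'' but Ramsey/EM-type arguments let you extract an indiscernible sequence only at the cost of replacing the underlying elements; here the $a_i$'s (in particular $a_0$) are given and cannot be moved. There is no reason, from $\emptyset$-indiscernibility of $(a_i)_{i\in\mathbb{Q}}$ and $b$-indiscernibility of $(a_ic_i)_{i\neq0}$ alone, that a suitable $c_0$ over the \emph{given} $a_0$ exists. This is precisely where the paper spends its effort and where distality of $D_{\indu}$ enters: for each finite $I\subseteq\mathbb{Q}\setminus\{0\}$ one forms the auxiliary sequence $a_i':=(a_i,(a_j)_{j\in I})$ on a small interval around $0$, observes it is $\emptyset$-indiscernible and that its restriction to $i\neq0$ is indiscernible over the tuple $(c_j)_{j\in I}\in D$, and then applies Fact~\ref{lem: lifting distality over a predicate} with \emph{this} tuple from $D$ as the parameter. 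That yields an automorphism moving some $a_{\varepsilon/2}$ to $a_0$ while fixing $(a_jc_j)_{j\in I}$, producing a candidate $\tilde c_0$ good for $I$; compactness then gives $c_0$.

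Your intended final application of Fact~\ref{lem: lifting distality over a predicate} is off on two counts. First, the lemma only asks for $b$-indiscernibility of $(a_ic_i:i\neq0)$, which you already have from stage one; full $b$-indiscernibility over all of $\mathbb{Q}$ is not claimed. Second, the Fact requires the \emph{parameter} tuple to lie in $D$, not the sequence elements; since $b$ is an arbitrary tuple with no assumption that $b\in D$, you cannot invoke it with $b$ as the parameter.
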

\begin{proof}
By $b$-indiscernibility of $\left(a_{i}:i\in\mathbb{Q}\setminus\left\{ 0\right\} \right)$, 
Ramsey, compactness and taking automorphisms we can find a sequence $\left(c_{i}:i\in\mathbb{Q}\setminus\left\{ 0\right\} \right)$
in $D$ such that $\left(a_{i}c_{i}:i\in\mathbb{Q}\setminus\left\{ 0\right\} \right)$
is $b$-indiscernible and $a_{i}c_{i}\equiv_{b}a_{1}c$ for all $i\neq0$.
%Indeed, fix any formula $\varphi$ such that $\models\varphi\left(a_{1},b,c\right)$,
%and let $\left(c_{i}:i\in\mathbb{Q}\setminus\left\{ 0\right\} \right)$
%be arbitrary with $\models D\left(c_{i}\right)\land\varphi\left(a_{i},b,c_{i}\right)$ for all $i \in \mathbb{Q} \setminus \{0\}$ 
%(exist by indiscernibility of $\left(a_{i}:i\neq0\right)$ over $b$).
%For any finite set of formulas $\Delta$, let the formula $\Psi_{\Delta}\left(x_{0}z_{0},\ldots,x_{n}z_{n},b\right)$
%express that $\left(x_{i}z_{i}:i<n\right)$ is $\Delta$-indiscernible
%over $b$ and $\varphi\left(x_{i},b,z_{i}\right)$ holds. Then by Ramsey
%we have some $0<i_{0}<\ldots<i_{n}\in\mathbb{Q}$ such that $\left(a_{i_{j}}c_{i_{j}}:j<n\right)$
%satisfies $\Psi_{\Delta}$. By $b$-indiscernibility of $\left(a_{i}:i\in\mathbb{Q}\setminus\left\{ 0\right\} \right)$
%it follows that for any $i_{0}<\ldots<i_{n}\in\mathbb{Q}\setminus\left\{ 0\right\} $
%we have 
%$$\models\exists c_{0}\in D \ldots c_{n}\in D \Psi_{\Delta}\left(a_{i_{0}}c_{0},\ldots,a_{i_{n}}c_{n};b\right).$$
%By compactness we can conclude.
%
It remains to find a $c_{0}\in D$ such that $\left(a_{i}c_{i}:i\in\mathbb{Q}\right)$
is $\emptyset$-indiscernible. Let $I\subseteq\mathbb{Q}\setminus\left\{ 0\right\} $
be an arbitrary finite set and let $\bar{a}_{0}:=\left(a_{i}:i\in I\right)$.
Let $\varepsilon>0$ in $\mathbb{Q}$ be such that $I\subseteq\mathbb{Q}\setminus\left(-\varepsilon,\varepsilon\right)$.
For each $i\in\mathbb{Q}$, let $a_{i}':= (a_{i}, \bar{a}_{0})$ and consider
the sequence $\left(a_{i}':i\in\left(-\varepsilon,\varepsilon\right)\right)$.
It is $\emptyset$-indiscernible since the sequence $\left(a_{i}:i\in\mathbb{Q}\right)$
is, and moreover $\left(a_{i}':i\in\left(-\varepsilon,\varepsilon\right)\setminus\left\{ 0\right\} \right)$
is indiscernible over $\left(c_{i}:i\in I\right)\subseteq D$ (since the sequence of pairs $\left(a_{i}c_{i}:i\in\mathbb{Q}\setminus\left\{ 0\right\} \right)$
is indiscernible). Then
by Fact \ref{lem: lifting distality over a predicate} we have that
$\left(a_{i}':i\in\left(-\varepsilon,\varepsilon\right)\right)$
is indiscernible over $\left(c_{i}:i\in I\right)$. In particular, there exists an automorphism $\sigma$ sending $a'_{\frac{\varepsilon}{2}}$ to $a'_0$ and fixing $\left(c_i : i \in I \right)$; hence sending $a_{\frac{\varepsilon}{2}}$ to $a_0$ and fixing $\left(a_i c_i : i \in I \right)$.
As by assumption
$\left(a_{i}c_{i}:i\in I,i<-\varepsilon\right)+\left(a_{\frac{\varepsilon}{2}}c_{\frac{\varepsilon}{2}}\right)+\left(a_{i}c_{i}:i\in I,i>\varepsilon\right)$
is indiscernible, applying $\sigma$ we have that there is $\tilde{c}_{0} := \sigma\left( c_{\frac{\varepsilon}{2}} \right) \in D$ such that $\left(a_{i}c_{i}:i\in I,i<-\varepsilon\right)+\left(a_{0}\tilde{c}_{0}\right)+\left(a_{i}c_{i}:i\in I,i>\varepsilon\right)$
is indiscernible. As $I$ was arbitrary, we can then find $c_{0}$ as wanted
by compactness.
\end{proof}

\begin{defn}\label{def: almost model compl}
A theory $T_{\CP}$ is \emph{almost model complete} if, modulo $T_{\CP}$,
every $\mathcal{L}_{\CP}$-formula $\psi\left(x\right)$ is equivalent
to a Boolean combination of formulas of the form $\exists y_{0}\in\CP\ldots\exists y_{n-1}\in\CP\varphi\left(x,y\right)$,
where $\varphi\left(x,y\right)$ is an $\mathcal{L}$-formula.
\end{defn}

\begin{theorem}
\label{thm: semi-equational pairs} Assume that $T$ is distal, $A_{\indu\left(\mathcal{L}\right)}$
is distal and $T_{\CP}$ is almost model complete. Then $T_{\CP}$ is weakly 
semi-equational.
\end{theorem}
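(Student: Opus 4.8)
The plan is to combine almost model completeness of $T_{\CP}$ with Lemma~\ref{lem: completing sequence with one point}. By Definition~\ref{def: almost model compl}, every $\mathcal{L}_{\CP}$-formula is a Boolean combination of formulas of the form
$$\theta(x,y) \ := \ \exists y_0 \in \CP \ldots \exists y_{n-1} \in \CP \ \varphi(x,y,\bar y), \qquad \varphi(x,y,\bar y) \in \mathcal{L},$$
where $\bar y = (y_0, \ldots, y_{n-1})$. Since, for a fixed partition of the variables, the class of Boolean combinations of weak semi-equations is trivially closed under Boolean combinations, it suffices to show that every such $\theta(x,y)$ is a weak semi-equation in $T_{\CP}$ (for the indicated partition; dummy variables being harmless).

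Before the main argument I would record three preliminary observations. First, $T_{\CP}$ is NIP: almost model completeness implies that $T_{\CP}$ is bounded (negations of bounded formulas are bounded, and conjunctions and disjunctions of bounded formulas become bounded after putting a Boolean combination in disjunctive normal form and merging quantifier prefixes over $\CP$), while $T$ and $A_{\indu(\mathcal{L})}$ are NIP since they are distal (distality implies weak semi-equationality by Remark~\ref{rem: distal implies w semieq}, hence NIP by Proposition~\ref{prop: semieq and stab}(1)), so Fact~\ref{fac: NIP is preserved in pairs}(1) applies. Second, the $\emptyset$-definable set $\CP$ has distal induced structure in $(M',A')$: by Remark~\ref{rem: induced subsets}, boundedness of $T_{\CP}$ gives that $A_{\indu(\mathcal{L}_{\CP})}$, $A_{\indu(\mathcal{L}_{\CP}^{\bdd})}$ and $A_{\indu(\mathcal{L})}$ have the same definable subsets of $A^n$ for every $n$, and $A_{\indu(\mathcal{L})}$ is distal by hypothesis. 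Third, viewing $\varphi$ as the partitioned $\mathcal{L}$-formula $\varphi'(x\bar y;y) := \varphi(x,y,\bar y)$, it is a weak semi-equation in $T$ since $T$ is distal (Remark~\ref{rem: distal implies w semieq}), hence it is a weak semi-equation in $T_{\CP}$ by Proposition~\ref{prop: semieq reduct}(2).

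Now I would argue by contradiction. Suppose $\theta(x,y)$ is not a weak semi-equation in $T_{\CP}$; by Remark~\ref{rem: lin ord dont matter} there are $b \in (M')^y$ and an $\emptyset$-indiscernible sequence $(a_i : i \in \mathbb{Q})$ in $(M')^x$ such that $(a_i : i \in \mathbb{Q}\setminus\{0\})$ is $b$-indiscernible, $\models \theta(a_i,b)$ for all $i \neq 0$, and $\not\models \theta(a_0,b)$. Fix a witness tuple $\bar c \in (A')^{\bar y}$ with $\models \varphi(a_1,b,\bar c)$. Applying Lemma~\ref{lem: completing sequence with one point} in the monster $(M',A')$ of the NIP theory $T_{\CP}$, with $\emptyset$-definable set $\CP$ (which has distal induced structure), indiscernible sequence $(a_i : i \in \mathbb{Q})$, parameter $b$, and element $\bar c$ (the lemma holds verbatim for tuples, or one runs it on $\CP^{\bar y}$, whose induced structure is interdefinable with that of $\CP$), I obtain a sequence $(c_i : i \in \mathbb{Q})$ in $(A')^{\bar y}$ with $a_i c_i \equiv_b a_1 \bar c$ for $i\neq 0$, with $(a_i c_i : i \in \mathbb{Q})$ $\emptyset$-indiscernible, and with $(a_i c_i : i \in \mathbb{Q}\setminus\{0\})$ $b$-indiscernible. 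In particular $\models \varphi(a_i,b,c_i)$ and $c_i \in \CP^{\bar y}$ for every $i\neq 0$. Writing $\mathbb{Q} = I_L + (0) + I_R$ with $I_L = \mathbb{Q}\cap(-\infty,0)$, $I_R = \mathbb{Q}\cap(0,\infty)$ and $d_i := a_i c_i$, the sequence $(d_i)_{i\in\mathbb{Q}}$ then witnesses that weak semi-equationality of $\varphi'(x\bar y;y)$ in $T_{\CP}$ forces $\models \varphi'(d_0,b)$, that is $\models\varphi(a_0,b,c_0)$; since $c_0 \in \CP^{\bar y}$ this yields $\models\theta(a_0,b)$, contradicting the choice of the sequence. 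Hence each such $\theta$ is a weak semi-equation and $T_{\CP}$ is weakly semi-equational.

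I expect the crux to be the twofold, differently-purposed use of distality: distality of the induced structure on the predicate $\CP$ is what lets Lemma~\ref{lem: completing sequence with one point} complete the row $(c_i)_{i\neq 0}$ of $\CP$-witnesses to a genuinely $\emptyset$-indiscernible sequence with a legitimate $0$-th entry still lying in $\CP$, whereas distality of the ambient theory $T$ (in the guise of $\varphi'$ being a weak semi-equation) is what propagates the truth of $\varphi$ from the positions $i\neq 0$ to the position $0$. The points requiring the most care are the exact bookkeeping of which sequences are indiscernible over which parameters ($\emptyset$-indiscernibility of the whole sequence versus $b$-indiscernibility after removing the $0$-th term), and verifying that the hypotheses of Lemma~\ref{lem: completing sequence with one point} (NIP of $T_{\CP}$, and distal induced structure on $\CP$) really are supplied by almost model completeness together with the distality assumptions; everything else is routine.
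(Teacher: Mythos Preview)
Your proof is correct and follows essentially the same route as the paper's: reduce via almost model completeness to existential-in-$\CP$ formulas, use Lemma~\ref{lem: completing sequence with one point} (with NIP of $T_{\CP}$ and distality of the induced structure on $\CP$) to promote a single witness tuple to an indiscernible sequence of witnesses, and then apply weak semi-equationality of the underlying $\mathcal{L}$-formula $\varphi'(x\bar y;y)$ coming from distality of $T$. Your presentation is in fact slightly more careful than the paper's in two places: you explicitly invoke Proposition~\ref{prop: semieq reduct}(2) to pass from weak semi-equationality in $T$ to $T_{\CP}$, and you note that $c_0 \in \CP^{\bar y}$ (which follows from $\emptyset$-indiscernibility of $(a_i c_i)$ and $\emptyset$-definability of $\CP$) is needed to conclude $\models\theta(a_0,b)$.
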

%
%\begin{problem}
%Compare this to the results of Martin-Pizzaro \textendash{} Ziegler
%on preservation of equationality (what was his counterexample in the
%pair of ACF's?). Can we start with $T$, $A_{\indu\left(\mathcal{L}\right)}$
%semi-equational instead of distal?
%\end{problem}

\begin{proof}
We know by Fact \ref{fac: NIP is preserved in pairs} that $T_{\CP}$
is NIP. As $T_{\CP}$ is almost model complete, so in particular
bounded, by Lemma \ref{rem: induced subsets}(1) and (2) 
the structures $A_{\indu\left(\mathcal{L}_{\CP}\right)}$
and $A_{\indu\left(\mathcal{L}\right)}$ have the same definable subsets
of $A^{n}$, for all $n$. Hence the full structure induced on $\CP$
in $T_{\CP}$ is distal, so  Lemma \ref{lem: completing sequence with one point} can be applied to $T_{\CP}$ with $D := \CP$.

Let $\left(M',A'\right)$ be a sufficiently saturated elementary extension
of $\left(M,A\right) \models T_{\CP}$. As $T_{\CP}$ is almost model complete by assumption,
it is enough to show that every formula in $\mathcal{L}_{\CP}$ of
the form 
\[
\varphi\left(x,y\right)=\exists z_{0}\in\CP\ldots\exists z_{n-1}\in\CP\psi\left(x,y,z\right)\text{,}
\]
where $\psi\left(x,x,z\right)\in\mathcal{L}$, is
a weak semi-equation in $T_{\CP}$.

To check Definition \ref{def: weak semi-eq}, assume (using Remark \ref{rem: lin ord dont matter}) that we are
given an $\mathcal{L}_{\CP}$-indiscernible sequence of finite tuples
$\left(a_{i}:i\in\mathbb{Q}\right)$ and a finite tuple $b$, both
in $M'$, such that the sequence $\left(a_{i}:i\in\mathbb{Q}\setminus\left\{ 0\right\} \right)$
is $\mathcal{L}_{P}\left(b\right)$-indiscernible and $\models\varphi\left(a_{i},b\right)$
for all $i\neq0$.
In particular, there is some tuple $c$ in $\CP$ such that $\models\psi\left(a_{1},b,c\right)$
holds.
By Lemma \ref{lem: completing sequence with one point} applied in $T_{\CP}$ with $D := \CP$, it follows
that there is a sequence $\left(c_{i}:i\in\mathbb{Q}\right)$ with
$c_{i}\in\CP$ such that $\left(a_{i}c_{i}:i\in\mathbb{Q}\right)$
is $\mathcal{L}_{\CP}$-indiscernible, $\left(a_{i}c_{i}:i\neq0\right)$
is $\mathcal{L}_{\CP}\left(b\right)$-indiscernible and $a_{i}c_{i}\equiv^{\mathcal{L}_{\CP}}_{b}a_{1}c$
for  $i\neq0$. In particular $\models\psi\left(a_{i},b,c_{i}\right)$
for  $i\neq0$. But $\psi'\left(x,z;y\right):=\psi\left(x,y,z\right)\in\mathcal{L}$
is a semi-equation in $T$ as $T$ is distal, hence  $\models\psi\left(a_{0},b,c_{0}\right)$,
and so $\models\varphi\left(a_{0},b\right)$ holds --- as wanted.
\end{proof}

\begin{cor}
Dense pairs of $o$-minimal structures, as well as the other  examples discussed after \cite[Theorem 5.1]{hieronymi2017distal}, are weakly semi-equational.
\end{cor}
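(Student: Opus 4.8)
The plan is to deduce this directly from Theorem \ref{thm: semi-equational pairs}: the work consists entirely in verifying, for each of the named examples, the three hypotheses of that theorem — namely that $T$ is distal, that $A_{\indu(\mathcal{L})}$ is distal, and that $T_{\CP}$ is almost model complete. Write $(M,A)$ for a dense pair of $o$-minimal structures, $T := \Th(M)$ in a language $\mathcal{L}$, and $T_{\CP}$ for the theory of the pair. The first hypothesis is immediate: every $o$-minimal theory is distal (see \cite{simon2013distal}). The second is also easy in the dense pair case, since $A \preceq M$, so the $\mathcal{L}$-induced structure $A_{\indu(\mathcal{L})}$ is just $A$ regarded as a model of $T$, which is again $o$-minimal and hence distal.

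The only substantive point is the third hypothesis. Here I would invoke the analysis of dense pairs due to van den Dries \cite{vandendries1998dense}: every $\mathcal{L}_{\CP}$-formula is equivalent modulo $T_{\CP}$ to a Boolean combination of formulas of the form $\exists y_1 \in \CP \ldots \exists y_n \in \CP\, \varphi(x,y)$ with $\varphi(x,y) \in \mathcal{L}$. This is precisely the statement of Definition \ref{def: almost model compl}, so $T_{\CP}$ is almost model complete. (As recalled in the excerpt after \cite[Theorem 5.1]{hieronymi2017distal}, $T_{\CP}$ is in any case bounded and the structure induced on the submodel is distal; what is needed in addition is that the bounded normal form can be taken with only \emph{existential} quantifiers over $\CP$, which is exactly the near-quantifier-elimination result.) Feeding these three facts into Theorem \ref{thm: semi-equational pairs} yields that $T_{\CP}$ is weakly semi-equational.

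For the remaining examples listed after \cite[Theorem 5.1]{hieronymi2017distal} — such as expansions of an $o$-minimal structure by a dense proper elementary substructure, by a dense divisible subgroup, or by a dense independent set — the same three-step check applies verbatim: the base theory $T$ is $o$-minimal hence distal, each such expansion is almost model complete by the corresponding quantifier-elimination result in the literature, and the structure induced on the new predicate is distal (being either $o$-minimal, or an ordered dp-minimal group, or a pure infinite set, depending on the example). Theorem \ref{thm: semi-equational pairs} then gives weak semi-equationality in each case.

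The part requiring the most care is confirming almost model completeness rather than mere boundedness for each example, since Theorem \ref{thm: semi-equational pairs} is stated for almost model complete $T_{\CP}$; but in all the cases under consideration this is available from the known structure theory, so the conclusion is automatic.
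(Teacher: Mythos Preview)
Your approach is exactly the paper's: the corollary is stated without proof there, as an immediate application of Theorem \ref{thm: semi-equational pairs}, and you have correctly spelled out the verification of its three hypotheses for dense pairs (distality of $T$ by $o$-minimality, distality of $A_{\indu(\mathcal{L})}$ since $A \preceq M$, and almost model completeness via van den Dries).

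One slip to flag in your treatment of the ``other examples'': you list ``a pure infinite set'' among the possible induced structures and assert it is distal. The theory of a pure infinite set is stable and not distal (take $(a_i)_{i \in \mathbb{Q}}$ distinct and $b = a_0$; then $(a_i)_{i \neq 0}$ is $b$-indiscernible but $\models a_0 = b$ while $\not\models a_i = b$ for $i \neq 0$). In the dense-independent-set case the induced structure is not a pure set anyway --- it inherits at least the order from $M$, so is a dense linear order and hence distal --- so the conclusion survives, but the parenthetical as written is incorrect.
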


%\begin{remark}
%	Unlike in the general NIP case, there is a reasonable
%sufficient criterion for the boundedness of $T_{\CP}$ for distal $T$. 
%We say that $T$ satisfies $\dnfcp'$ (\emph{definable nfcp}) if for any $M\prec N\models T$
%and any $\varphi\left(x,y\right),\psi\left(y,z\right)\in \mathcal{L}$ there is
%$k<\omega$ such that for any $b\in N$, if the set $\left\{ \varphi\left(x,a\right):a\in\psi\left(M,b\right)\right\} $
%is $k$-consistent, then it is consistent. By \cite[Theorem 43]{chernikov2015externally}, if $T$ be distal, $A\subseteq M$ is 
%small and uniformly stably embedded, and $A_{\indu}$ has
%$\dnfcp'$, then $T_{\CP}$ is bounded. 
%\end{remark}

\begin{problem}
(1) In Theorem \ref{thm: semi-equational pairs}, can we relax the assumption to ``$T$ and $A_{\indu(\mathcal{L})}$ are weakly semi-equational''?

(2) Is there an analog of Theorem \ref{thm: semi-equational pairs} for semi-equationality? Even a general result for equationality seems to be missing (the argument in \cite{martin2020equational} for \emph{belles paires} of algebraically closed fields is specific to algebraically closed fields).
	\end{problem}

\bibliographystyle{alpha}
\bibliography{refs}

\newcommand{\etalchar}[1]{$^{#1}$}
\begin{thebibliography}{ADH{\etalchar{+}}16}

\bibitem[ACGZ22]{aschenbrenner2020distality}
Matthias Aschenbrenner, Artem Chernikov, Allen Gehret, and Martin Ziegler.
\newblock Distality in valued fields and related structures.
\newblock {\em Trans. Amer. Math. Soc.}, 375(7):4641--4710, 2022.

\bibitem[ADH{\etalchar{+}}16]{aschenbrenner2016vapnik}
Matthias Aschenbrenner, Alf Dolich, Deirdre Haskell, Dugald Macpherson, and
  Sergei Starchenko.
\newblock Vapnik-{C}hervonenkis density in some theories without the
  independence property, {I}.
\newblock {\em Transactions of the American Mathematical Society},
  368(8):5889--5949, 2016.

\bibitem[And21]{anderson2021combinatorial}
Aaron Anderson.
\newblock Combinatorial bounds in distal structures.
\newblock {\em Preprint (arXiv:2104.07769)}, 2021.

\bibitem[BCS{\etalchar{+}}21]{basit2021zarankiewicz}
Abdul Basit, Artem Chernikov, Sergei Starchenko, Terence Tao, and Chieu-Minh
  Tran.
\newblock Zarankiewicz's problem for semilinear hypergraphs.
\newblock {\em Forum of Mathematics, Sigma}, 9, 2021.

\bibitem[BS76]{baldwin1976logical}
John~T Baldwin and Jan Saxl.
\newblock Logical stability in group theory.
\newblock {\em Journal of the Australian Mathematical Society}, 21(3):267--276,
  1976.

\bibitem[CH11]{cluckers2011quantifier}
Raf Cluckers and Immanuel Halupczok.
\newblock Quantifier elimination in ordered abelian groups.
\newblock {\em Confluentes Mathematici}, 3(04):587--615, 2011.

\bibitem[CK12]{chernikov2012forking}
Artem Chernikov and Itay Kaplan.
\newblock Forking and dividing in {NTP$_2$} theories.
\newblock {\em The Journal of Symbolic Logic}, 77(1):1--20, 2012.

\bibitem[CKS15]{chernikov2015groups}
Artem Chernikov, Itay Kaplan, and Pierre Simon.
\newblock Groups and fields with {NTP$_2$}.
\newblock {\em Proceedings of the American Mathematical Society},
  143(1):395--406, 2015.

\bibitem[CM22]{semieqExp}
Artem Chernikov and Alex Mennen.
\newblock Semi-equational theories.
\newblock {\em Extended version of this paper, arXiv:2204.13790v1}, 2022.

\bibitem[CM23]{chernikov2021combinatorial}
Artem Chernikov and Alex Mennen.
\newblock Combinatorial properties of non-archimedean convex sets.
\newblock {\em Pacific Journal of Mathematics, accepted (arXiv:2109.04591)},
  2023.

\bibitem[CS13]{chernikov2013externally}
A.~Chernikov and P.~Simon.
\newblock Externally definable sets and dependent pairs.
\newblock {\em Israel J. Math.}, 194(1):409--425, 2013.

\bibitem[CS15]{chernikov2015externally}
Artem Chernikov and Pierre Simon.
\newblock Externally definable sets and dependent pairs {II}.
\newblock {\em Transactions of the American Mathematical Society},
  367(7):5217--5235, 2015.

\bibitem[CS18]{chernikov2015regularity}
A.~Chernikov and S.~Starchenko.
\newblock Regularity lemma for distal structures.
\newblock {\em J. Eur. Math. Soc. (JEMS)}, 20(10):2437--2466, 2018.

\bibitem[DG18]{dolich2018characterization}
Alfred Dolich and John Goodrick.
\newblock A characterization of strongly dependent ordered abelian groups.
\newblock {\em Revista Colombiana de Matem{\'a}ticas}, 52(2):139--159, 2018.

\bibitem[Far17]{farre2017strong}
Rafel Farr{\'e}.
\newblock Strong ordered abelian groups and dp-rank.
\newblock {\em Preprint (arXiv:1706.05471)}, 2017.

\bibitem[GL13]{guingona2013vc}
Vincent Guingona and Michael~C Laskowski.
\newblock On {VC}-minimal theories and variants.
\newblock {\em Archive for Mathematical Logic}, 52(7):743--758, 2013.

\bibitem[HH19]{halevi2019strongly}
Yatir Halevi and Assaf Hasson.
\newblock Strongly dependent ordered abelian groups and henselian fields.
\newblock {\em Israel Journal of Mathematics}, 232(2):719--758, 2019.

\bibitem[HN17]{hieronymi2017distal}
Philipp Hieronymi and Travis Nell.
\newblock Distal and non-distal pairs.
\newblock {\em The Journal of Symbolic Logic}, 82(1):375--383, 2017.

\bibitem[HS89]{hrushovski1989stable}
E~Hrushovski and G~Srour.
\newblock On stable non-equational theories.
\newblock {\em Unpublished manuscript}, 1989.

\bibitem[JK02]{junker2002theories}
Markus Junker and Ingo Kraus.
\newblock Theories with equational forking.
\newblock {\em The Journal of Symbolic Logic}, 67(1):326--340, 2002.

\bibitem[JL01]{junker2001indiscernible}
Markus Junker and Daniel Lascar.
\newblock The indiscernible topology: a mock {Z}ariski topology.
\newblock {\em Journal of Mathematical Logic}, 1(01):99--124, 2001.

\bibitem[Jun00]{junker2000note}
Markus Junker.
\newblock A note on equational theories.
\newblock {\em The Journal of Symbolic Logic}, 65(4):1705--1712, 2000.

\bibitem[LP93]{loveys1993linear}
James Loveys and Ya'acov Peterzil.
\newblock Linear o-minimal structures.
\newblock {\em Israel Journal of Mathematics}, 81(1):1--30, 1993.

\bibitem[Mat04]{matousek2004bounded}
Jir{\'\i} Matousek.
\newblock Bounded vc-dimension implies a fractional helly theorem.
\newblock {\em Discrete \& Computational Geometry}, 31(2):251--255, 2004.

\bibitem[Men22]{mennuni2022weakly}
Rosario Mennuni.
\newblock Weakly binary expansions of dense meet-trees.
\newblock {\em Mathematical Logic Quarterly}, 68(1):32--47, 2022.

\bibitem[MPZ20]{martin2020equational}
Amador Martin-Pizarro and Martin Ziegler.
\newblock Equational theories of fields.
\newblock {\em The Journal of Symbolic Logic}, 85(2):828--851, 2020.

\bibitem[MPZ21]{martin2021trois}
Amador Martin-Pizarro and Martin Ziegler.
\newblock Trois couleurs: A new non-equational theory.
\newblock {\em Fundamenta Mathematicae}, 254:313--333, 2021.

\bibitem[MS23]{muller2017nonequational}
Isabel M{\"u}ller and Rizos Sklinos.
\newblock Nonequational stable groups.
\newblock {\em Proceedings of the AMS, accepted (arXiv:1703.04169)}, 2023.

\bibitem[O'H11]{OHara}
Allen O'Hara.
\newblock An introduction to equations and equational theories.
\newblock Master's thesis, University of Waterloo, 2011.

\bibitem[Pil83]{pillay1983countable}
Anand Pillay.
\newblock Countable models of stable theories.
\newblock {\em Proceedings of the American Mathematical Society},
  89(4):666--672, 1983.

\bibitem[Pil96]{pillay1996geometric}
Anand Pillay.
\newblock {\em Geometric stability theory}.
\newblock Number~32. Oxford University Press, 1996.

\bibitem[PS84]{pillay1984closed}
Anand Pillay and Gabriel Srour.
\newblock Closed sets and chain conditions in stable theories.
\newblock {\em The Journal of symbolic logic}, 49(4):1350--1362, 1984.

\bibitem[PS98]{peterzil1998trichotomy}
Ya'acov Peterzil and Sergei Starchenko.
\newblock A trichotomy theorem for o-minimal structures.
\newblock {\em Proceedings of the London Mathematical Society}, 77(3):481--523,
  1998.

\bibitem[Sel12]{sela2012free}
Z~Sela.
\newblock Free and hyperbolic groups are not equational.
\newblock {\em Preprint (arXiv:1204.5075)}, 2012.

\bibitem[Sim11]{simon2011dp}
Pierre Simon.
\newblock On dp-minimal ordered structures.
\newblock {\em The Journal of Symbolic Logic}, 76(2):448--460, 2011.

\bibitem[Sim13]{simon2013distal}
Pierre Simon.
\newblock Distal and non-distal {NIP} theories.
\newblock {\em Annals of Pure and Applied Logic}, 164(3):294--318, 2013.

\bibitem[Sro86]{srour1986independence}
Gabriel Srour.
\newblock The independence relation in separably closed fields.
\newblock {\em The Journal of Symbolic Logic}, 51(3):715--725, 1986.

\bibitem[Sro88a]{srour1988notion1}
Gabriel Srour.
\newblock The notion of independence in categories of algebraic structures,
  {P}art {I}: basic properties.
\newblock {\em Annals of pure and applied logic}, 38(2):185--213, 1988.

\bibitem[Sro88b]{srour1988notion2}
Gabriel Srour.
\newblock The notion of independence in categories of algebraic structures,
  {P}art {II}: s-minimal extensions.
\newblock {\em Annals of pure and applied logic}, 39(1):55--73, 1988.

\bibitem[Sro90]{srour1990notion}
Gabriel Srour.
\newblock The notion of independence in categories of algebraic structures,
  {P}art {III}: {E}quational classes.
\newblock {\em Annals of pure and applied logic}, 47(3):269--294, 1990.

\bibitem[TW17]{tran2017family}
Minh~Chieu Tran and Erik Walsberg.
\newblock A family of dp-minimal expansions of $(\mathbb{Z}, +)$.
\newblock {\em Preprint (arXiv:1711.04390)}, 2017.

\bibitem[Wal21]{walsberg2021notes}
Erik Walsberg.
\newblock Notes on trace equivalence.
\newblock {\em Preprint (arXiv:2101.12194)}, 2021.

\end{thebibliography}
\end{document}